\newcommand{\R}{\mathbb{R}}
\renewcommand{\S}{\mathbb{S}}
\newcommand{\frB}{\mathfrak B}
\renewcommand{\P}{\mathbb{P}}
\newcommand{\N}{\mathbb{N}}
\newcommand{\T}{\mathbb{T}}
\newcommand{\HH}{\mathscr{H}}
\newcommand{\KK}{\mathscr{K}}
\newcommand{\PP}{\mathscr{P}}
\renewcommand{\SS}{\mathscr{S}}
\newcommand{\UU}{\mathscr{U}}
\newcommand{\BB}{\mathscr{B}}
\newcommand{\CC}{\mathscr{C}}
\newcommand{\AC}{\mathrm{AC}}
\newcommand{\FF}{\mathscr{F}}
\newcommand{\cC}{{\ensuremath{\mathcal C}}}
\newcommand{\cA}{{\ensuremath{\mathcal A}}}
\newcommand{\cB}{{\ensuremath{\mathcal B}}}
\newcommand{\cL}{{\ensuremath{\mathcal L}}}
\newcommand{\cK}{{\ensuremath{\mathcal K}}}
\newcommand{\cP}{{\ensuremath{\mathcal P}}}
\newcommand{\cN}{{\ensuremath{\mathcal N}}}
\newcommand{\Leb}{{\ensuremath{\mathrm{Leb}}}}
\newcommand{\sfd}[1]{\mathsf d_{#1}}
\newcommand{\eeta}{\boldsymbol\eta}
\newcommand{\de}{\partial}
\renewcommand{\div}{\mathrm{div}}
\newcommand{\m}{\mathrm{m}}
\newcommand{\BM}{\mathrm{M}}
\newcommand{\Bor}{\mathrm{B}}
\newcommand{\Parts}{\mathcal{S}}
\newcommand{\mm}{\mathfrak m}
\renewcommand{\d}{{\mathrm d}}
\newcommand{\ubar}{\underline}
\newcommand{\mres}{\mathbin{\vrule height 1.6ex depth 0pt width
0.13ex\vrule height 0.13ex depth 0pt width 1.3ex}}
\newcommand{\argmin}{\mathop{\rm argmin}\limits}
\newcommand{\supp}{\mathop{\rm supp}\nolimits}
\newcommand{\eps}{\varepsilon}
\newcommand{\pL}{\textup{\textbf{L}}} 
\renewcommand{\L}{\textup{\textbf{L}}} 
\newcommand{\RL}{\textup{\textbf{RL}}} 
\newcommand{\E}{\textup{\textbf{E}}} 
\newcommand{\FL}{\textup{\textbf{FL}}} 
\newcommand{\K}{\textup{\textbf{K}}} 
\newcommand{\rmC}{\mathrm C}
\newenvironment{system} 
{\left\lbrace\begin{array}{@{}l@{}}}
{\end{array}\right.}
\newtheorem{theorem}{\textbf{Theorem}}[section]
\newtheorem*{theorem*}{\textbf{Theorem}}
\newtheorem{proposition}[theorem]{\textbf{Proposition}}
\newtheorem{lemma}[theorem]{\textbf{Lemma}}
\theoremstyle{definition}
\newtheorem{definition}[theorem]{\textbf{Definition}}
\newtheorem{assumption}[theorem]{\textbf{Assumption}}
\theoremstyle{remark}
\newtheorem{remark}[theorem]{Remark}
\numberwithin{equation}{section}
\title[Lagrangian, Eulerian and Kantorovich control problems]{Lagrangian, Eulerian and Kantorovich formulations of multi-agent optimal control problems: Equivalence and Gamma-convergence}
\author[G. Cavagnari]{Giulia Cavagnari}
\address{\hspace{-0.5em}\begin{tabular}{ll}Giulia Cavagnari:& Politecnico di Milano,\\&Dipartimento di Matematica ``F. Brioschi''\\& 
Piazza Leonardo da Vinci, 32, I-20133 Milano, Italy.\end{tabular}}
\email{giulia.cavagnari@polimi.it}
\author[S. Lisini]{Stefano Lisini}
\address{\hspace{-0.5em}\begin{tabular}{ll}Stefano Lisini:&Universit\`a di Pavia,\\& Dipartimento di Matematica ``F. Casorati''\\ &Via Ferrata 5, I-27100 Pavia, Italy.\end{tabular}}
\email{stefano.lisini@unipv.it}
\author[C. Orrieri]{Carlo Orrieri}
\address{\hspace{-0.5em}\begin{tabular}{ll}Carlo Orrieri:&Universit\`a di Pavia,\\& Dipartimento di Matematica ``F. Casorati''\\ &Via Ferrata 5, I-27100 Pavia, Italy.\end{tabular}}
\email{carlo.orrieri@unipv.it}
\author[G. Savar\'e]{Giuseppe Savar\'e}
\address{\hspace{-0.5em}\begin{tabular}{ll}Giuseppe Savar\'e:&Universit\`a Bocconi,\\& Dipartimento di Scienze delle Decisioni\\ &Via Roentgen 1, I-20136 Milano, Italy.\end{tabular}}
\email{giuseppe.savare@unibocconi.it}
\date{\today}
\keywords{Wasserstein distance, optimal control, mean-field optimal control, Gamma convergence}
\begin{document}

\begin{abstract}
This paper is devoted to the study of multi-agent deterministic optimal control problems.
We initially provide a thorough analysis of the Lagrangian, Eulerian and Kantorovich formulations of the problems, as well as of their relaxations.
Then we exhibit some equivalence results among the various representations and compare the respective value functions.
To do it, we combine techniques and ideas from optimal transportation, control theory, Young measures and  evolution equations in Banach spaces.  
We further exploit the connections among Lagrangian and Eulerian descriptions  to derive consistency results as the number of particles/agents tends to infinity.
To that purpose we prove an empirical version of the Superposition Principle and obtain suitable Gamma-convergence results for the controlled systems.
\end{abstract}

\maketitle

\tableofcontents

\section{Introduction}

{In recent years there has been an impressive increase in the analysis of models with interactions and associated optimal control problems.}
The motivations {and fields of interest} are various and range from statistical mechanics to biology, from crowd dynamics to the description of economical and financial phenomena, and many others.      
A lot of different mathematical models and techniques have been proposed in the literature and it seems impossible to be exhaustive in accounting here all the developments.        
We refer to \cite{carrillo2014derivation, fornasier2014mean, carmona2018probabilistic, fornasier2018mean}
and the references therein for  some of the recent results.

A large part of the literature concentrates the attention on the evolution of populations of similar individuals where the single agent feels the interaction with the others through an averaged term.
In this case, when the number of individuals is very large, an aggregation effect takes place and the (discrete) collection of agents is usually replaced by its spatial density.
This idea comes from the so called \emph{mean field} approach in statistical physics where it has been fruitfully used to develop a limit theory when the number of particles goes to infinity.

Within this framework, optimal control problems, both at the microscopic and macroscopic levels, are naturally considered, see e.g. \cite{fornasier2014mean, piccoli2015control, carrillo2020mean, jimenez2020optimal, cavagnari2020generalized, cristiani2014}.
A first motivation for the introduction of (centralized) controls is the incompleteness of the concept of self-organization.
In fact, for a population of interacting particles/agents, global coordination or pattern formation is not a priori guaranteed and the intervention of a central planner on the dynamics could promote these mechanisms: this leads to the definition of \emph{multi-agent} control problems.
A further motivation is the analysis of interacting rational agents with similar optimization goals.
In this case, the mean field approach consists in approximating a large number of agents with a single representative individual, whose aim is to solve a control problem constrained to a field equation (encoding the averaged behaviour of the population).
The mean field term influences both the dynamics and the cost functional. 
Whether the representative agent can or cannot influence the mean field term depends on the model under consideration.
The interested reader is referred to the books \cite{carmona2018probabilistic, bensoussan2013mean} for a detailed description of various aspects of mean field models.

\medskip

In the present paper we study different formulations of \emph{multi-agent optimal control problems}, that we denote respectively with \emph{Lagrangian}, \emph{Eulerian} and \emph{Kantorovich} as well as the corresponding \emph{limit theory}. 

\smallskip
    
As already mentioned above, \emph{multi-agent optimal control}, also known in literature as centralized optimal control of Vlasov dynamics, represents non-standard optimal control problems where each individual is influenced by the averaged behaviour of all the others and
the central planner aims at minimizing a cost functional which depends on the distribution of all the agents.
 
A large effort has been devoted in the last years to extend results of classical optimal control theory to the mean field setting, with a particular attention to the measure-formulation of the problems in Wasserstein spaces.
In this direction, let us mention \cite{bonnet2019pontryagin, pogodaev2016} for necessary conditions for optimality in the form of a Pontryagin maximum principle,  \cite{cavagnari2020generalized} for a generalized version of dynamic programming, \cite{bonnet2020mean} for the study of differential inclusions and  the contribution \cite{jimenez2020optimal} for the analysis of {necessary and sufficient conditions for optimality in the form of a Hamilton-Jacobi-Bellman} equation in the Wasserstein space.	  

\smallskip
 
\emph{Lagrangian}, \emph{Eulerian} and \emph{Kantorovich} refer to different points of view that can be adopted to study the dynamics of the problems.
The Lagrangian and Eulerian terminologies come from fluid-dynamics and they have been recently adopted in the theory of optimal transport, from which we also took inspiration for the Kantorovich formulation.  
In general terms, the \emph{Lagrangian} approach consists in labelling each particle and following the corresponding trajectory.
The \emph{Eulerian} description, on the other hand, aims at measuring the velocity of particles flowing at a point at a fixed time.
In this paper we introduce a further point of view, that we name \emph{Kantorovich} in analogy with the Kantorovich extension of Monge problem (in the same spirit, see also \cite{ambrosio2009geodesics, ambrosio2008gradient, benamou2017variational}). 
The Kantorovich formulation turns out to be fundamental in connecting the Lagrangian and Eulerian points of view and it is based on the representation of solutions of the continuity equation provided by the superposition principle (see e.g. \cite[Theorem 8.2.1]{ambrosio2008gradient} or \cite[Theorem 5.8]{bernard2008young}).

\smallskip

\emph{Limit theory} refers instead to the question of connecting optimal control problems with a finite number of agents with the infinite dimensional description given by a continuum of players.  
The study of interacting particle systems becomes intractable when the population is very large and, in many circumstances, the connection with a limit (mean field) approximation heavily simplifies the study.
For this reason, many results concerning asymptotic behaviour when the size of the system grows have been developed in the literature, both from the theoretical and applicative point of view.
Even if mean field approximations  are mainly proved in the uncontrolled stochastic setting,  some applications to deterministic models can be found e.g.  in \cite{naldi2010mathematical} and \cite{difrancesco2015rigorous}.
Concerning the controlled case, an important effort has been directed to the case of mean field games (see below for some references) but, to the best of our knowledge, only few rigorous results for deterministic multi-agent systems can be found in the literature. 
Let us mention \cite{fornasier2014mean}, \cite{fornasier2018mean} for Gamma-convergence techniques under different assumptions on the controls and velocity fields, and  \cite{cesaroni20201} for an extension when distance constraints among the agents are imposed.

\smallskip

The analysis we develop here aims at providing a unifying framework for the study of deterministic multi-agent optimal control problems and surely benefits from the connections with the theory of optimal transport.  
This is more evident when dealing with a measure-formulation of the problem, where a non-local continuity equation is guiding the dynamics, or with the Kantorovich description, where a superposed measure in the space of continuous paths selects the trajectories of the system.
Another source of inspiration is the theory of Young measures, classically used in control theory, which plays a crucial role in the description of relaxed problems. 

Apart from its theoretical interest, we trust the present investigation  could serve as a founding step into a general treatment of optimality conditions in the rapidly-growing context of Wasserstein spaces.  
Within this context a major role is played by the Hamilton-Jacobi-Bellman (HJB) equations in the space of probability measures, for which different notions of solution has been already proposed in the literature.
We think that the analysis of HJB equation could benefit from the equivalence results and the limit theory developed here and we leave it to future investigation .
Let us just briefly mention some contributions in this direction:
a general analysis in metric setting can be found  in \cite{ambrosio2008hamiltonan, gangbo2015metric}, see also  \cite{gangbo2019differentiability} for the particular choice of the  Wasserstein space. 
Viscosity solutions for HJB are studied e.g. in  \cite{cardaliaguet2008deterministic, cosso2019} for the case of random differential games and in \cite{jimenez2020optimal} for multi-agent systems.

\smallskip

Let us finally report on the stochastic counterpart of the theory (that we do not treat herein) and make some further comments on the connections with  mean field games.  

\smallskip 

\textbf{Stochastic counterpart.}
In the stochastic setting, mean field behaviour of interacting particles systems is classically referred to as  \emph{propagation of chaos}.
The literature on the subject is far too vast to be discussed here and the interested reader is referred to Sznitman's Saint-Flour lectures \cite{sznitman1991topics} for a beautiful treatment of the subject and to the references in e.g. \cite{djete2020mckean, orrieri2018large} for some of the more recent  developments.

Concernig stochastic control problems, a rigorous consistency result  for controlled  McKean-Vlasov dynamics has been obtained by Lacker in \cite{lacker2017limit}
using martingale problems and relaxation.
It is interesting to notice that the result contained in \cite{lacker2017limit} allows also for degenerate diffusion.
Further extensions has been pursued in 
\cite{djete2020mckean}, where a common noise is also introduced,  and 
 in \cite{djete2020extended} where the state dynamics depends upon the joint distribution of state and control.

For what concerns equivalence results, to the best of our knoledge, the more general study in the stochastic setting  is formulated in  \cite{djete2020mckean}.
There, the authors prove existence of optimal controls and show the equivalence at the level of value functions of the so-called strong, weak (in a probabilistic sense) and relaxed (Lagrangian) formulations of the stochastic control problems.
Notably, the results contained in \cite{djete2020mckean} extends the ones in \cite{lacker2017limit} to the far more general case of common noise.
Observe also that the equivalence between the weak and strong formulations has a fundamental role in establishing a dynamic programming principle in \cite{djete2019mckean}.

To conclude, let us just remark on a possible application of deterministic consistency results to the study of limit behaviour of uncontrolled stochastic particle systems.
The idea stems from the link between Gamma-convergence and large deviations developped by Mariani in \cite{mariani2018gamma}.
If a consistency result for deterministic control problems is proved in terms of Gamma-convergence, then it is possible to translate the $\Gamma-\liminf$ and $\Gamma-\limsup$ inequalities in corresponding lower and upper bound estimates for suitable associated stochastic systems.
An example of this technique is contained in \cite{orrieri2018large} (see also \cite{budhiraja2020asymptotic} for a recent extension) where the limit theory developed in \cite{fornasier2018mean} has been used to prove a large deviations principle for stochastic equations in the mean-field and small-noise regime.

\smallskip

\textbf{Connections with MFGs.} The theory of Mean Field Games (MFGs), separately introduced by Lasry and Lions \cite{lasry2007mean} and Huang, Caines, Malham\'e \cite{huang2006large} aims at describing non-cooperative indistinguishable players interacting through their empirical distribution. 
Opposed to Vlasov control problems, the optimization problem of each agent in mean field games leads to the notion of Nash equilibrium for the system. 
Differences and similarities between centralized optimal control of (McKean)-Vlasov dynamics and equilibria in MFGs are discussed in the literature, see e.g. \cite{carmona2013control}, \cite{bensoussan2013mean} and \cite{cardaliaguet2019inefficiency}.
For what concerns the limit of $N$-players  differential games towards the MFG system, a fundamental result was obtained in \cite{cardaliaguet2019master} via the so-called Master equation. 
Among the various extensions of \cite{cardaliaguet2019master}, let us mention the convergence of open and closed-loop Nash equilibria to MFGs equilibria obtained respectively in \cite{fischer2017connection} and \cite{lacker2020convergence}. 
The case of first-order MFGs has been taken into account in  \cite{fischer2020asymptotic}.

\bigskip    
    
We now briefly describe the various formulations of the optimal control problems we deal with and we present the main results contained in the paper.

To improve the readability of the introduction, we just sketch the essential features of the problems, omitting the details and heavily simplifying the setting and theorems whenever possible.
Precise statements are given in the forthcoming sections.    
Throughout the paper we use interchangeably the terms particles/agents as they differ only in view of the different applications.

\smallskip

\textbf{Lagrangian formulation {($\pL$)}.} The Lagrangian formulation has a probabilistic flavour and it is built upon a probability space $(\Omega, \frB, \P)$ which acts as a parametrization space for the particles.
More precisely, given a finite time horizon $T >0$ and a (compact metric) space of control actions $U$,  
the controlled dynamics $X:[0,T] \times \Omega \to \R^d$ is given by 
\begin{equation}\label{eq:systemL_intro}
\begin{cases}
\dot X_t(\omega)=f(X_t(\omega),u_t(\omega),(X_t)_\sharp\P), &\textrm{for a.e. }t\in (0,T)\\ 
X_{|t=0}(\omega)=X_0(\omega),&
\end{cases}
\end{equation}
where the dependence of  the vector field $f$ on the measure $(X_t)_\sharp \P$ models the interaction among particles {and/or the interaction of the mass with the surrounding environment} and it is usually referred to as a \emph{mean field} interaction.

 A natural motivation for the introduction of a parametrization space comes from a large variety of problems where a finite number of particles/agents are involved.
In this case $\Omega$ can be simply  interpreted as a set of labels $\Omega^N = \{1, \ldots, N\}$, with $\frB^N$ the associated algebra of parts and $\P^N(\{\omega\}) = \frac{1}{N}$, $\omega = 1, \ldots, N$, the normalized counting measure.
Each particle is indistinguishable from the others and the interaction enters the system through the empirical measure $\mu^N_t=\frac{1}{N}\sum_{\omega=1}^N \delta_{X_t(\omega)}$, with $t \in [0,T]$.
\smallskip

Given $X_0\in L^p(\Omega;\R^d)$, for $p\geq 1$, an admissible pair $(X,u) \in \cA_{\pL}(X_0)$ 
 for the Lagrangian optimal control problem ($\pL$) consists in a measurable control  $u:[0,T] \times \Omega \to U$ and a solution (in a suitable sense) of \eqref{eq:systemL_intro}.
Associated to the dynamics, the \emph{cost functional} to be minimized has the form   
\[J_{\pL}(X,u):= \int_\Omega \int_0^T\mathcal C(X_t(\omega),u_t(\omega),(X_t)_\sharp \P)\,\d t \, \d\P(\omega) +\int_\Omega \mathcal C_T(X_T(\omega),(X_T)_\sharp \P)\,\d\P(\omega),\]
where the running cost $\mathcal C$ and the final cost $\mathcal C_T$ are non-local as they could depend on the measures $(X_\cdot)_\sharp \P$ and $(X_T)_\sharp \P$. 

{The optimization of the cost functional among admissible pairs leads to the definition of the so-called} \emph{value function} which, for the Lagrangian problem, can be written as   
\begin{equation}\label{value:funct:L_intro}
V_{\pL}(X_0):=\inf\left\{J_{\pL}(X,u)\,:\,(X,u)\in\mathcal A_{\pL}(X_0)\right\}.
\end{equation}
Let us notice that existence of optimal pairs $(X,u)$ (for which the minimum is achieved in \eqref{value:funct:L_intro}) is not guaranteed in general. 
A counterexample for the Lagrangian problem, even in the relaxed formulation,  is given by the Wasserstein barycenter problem with suitable initial distribution (see Section \ref{sec:counterexample} for details).
From a probabilistic point of view, the Lagrangian formulation can be thought as a \emph{random} optimal control problem in strong formulation, where the randomness is encoded in the initial distribution of the dynamics. 
In this context, we do not consider stochastic perturbation of the dynamics given e.g. from independent Brownian motions and/or common noise.

\smallskip

\textbf{Relaxed Lagrangian formulation {($\RL$)}.} A classical generalization of optimal control problems is the so called \emph{relaxed} version, where controls are allowed to take values in the space of probability measures $\sigma: [0,T] \times \Omega \to \PP(U)$.
This greatly enlarges the class of admissible pairs (classical controls can be recovered choosing $\sigma:= \delta_u$, with $u \in U$) and provides a convexification of the problem under consideration. Indeed, the controlled trajectories satisfy the linear (in the control action) dynamics
\begin{equation*}
\begin{cases}
\dot X_t(\omega)=\displaystyle\int_U f(X_t(\omega),u,(X_t)_\sharp\P)\,\d\sigma_{t,\omega}(u), &\textrm{for a.e. }t\in]0,T]\\
X_{|t=0}(\omega)=X_0(\omega), &
\end{cases}
\end{equation*}
with control $\sigma_{t,\omega}:=\sigma(t,\omega)\in\mathscr P(U)$.
Furthermore, the cost functional takes the form 
\begin{equation*}
J_{\RL}(X,\sigma):=\int_{\Omega}\int_0^T\int_U \mathcal C(X_t(\omega),u,(X_t)_\sharp\P)\,\d\sigma_{t,\omega}(u)\, \d t \,\d\P(\omega) \ +\int_\Omega \mathcal C_T(X_T(\omega),(X_T)_\sharp\P)\,\d\P(\omega),
\end{equation*}
with associated \emph{value function} $V_{\RL}:L^p(\Omega;\R^d)\to[0,+\infty)$ given by
\begin{equation*}
V_{\RL}(X_0):=\inf\left\{J_{\RL}(X,\sigma)\,:\,(X,\sigma)\in\mathcal A_{\RL}(X_0)\right\}.
\end{equation*}

Relaxation is a fundamental concept in optimal control theory and has its roots in the theory of Young measures.
In Section \ref{sec_relaxed} of the paper,  we provide a detailed analysis of the relaxation procedure in the context of multi-agent systems, emphasizing its connections with the Lagrangian problem. 
Of particular interest is the extension of a suitable version of the chattering theorem, which permits to approximate the Relaxed Lagrangian formulation with a sequence of (not relaxed) Lagrangian ones.  This readily implies the equality of the respective value functions: $V_{\RL}(X_0) = V_\pL(X_0)$ for any $X_0 \in L^p(\Omega;\R^d)$.

Let us finally notice that the Relaxed Lagrangian formulation is the prototype for the class of control problems satisfying suitable  \emph{Convexity Assumptions} (see Assumption \ref{CA} below).
In this particular case, the control space is the convex space of measures $\PP(U)$, the dynamics is affine in the controls and the cost functional is convex (actually linear).   
However, as already observed before, this relaxation procedure is not sufficient to guarantee existence of minimizers for a general optimal control problem (see Section \ref{sec:counterexample}).
A further step in this direction is the introduction of the Eulerian formulation of the problem. 
\smallskip

\textbf{Eulerian formulation {($\E$)}.} 
To simplify the presentation, here we suppose to directly deal with relaxed controls, which are represented by a Borel measurable map $\sigma: [0,T] \times \R^d \to \PP(U)$: this is fundamental to get existence of minimizers.
Then, the evolution of the system is guided by the following non-local Vlasov equation
   \begin{equation*}
\begin{cases}
\partial_t\mu_t+\mathrm{div}\,(v_t\mu_t)=0, &\textrm{in }[0,T]\times\R^d\\ 
\mu_{t=0}=\mu_0,&
\end{cases}
\end{equation*}
where the controlled vector field $v$ depends on the evolving state itself and it is given by $v_t(x):=\int_{U}f(x, u,\mu_t)\d \sigma_{t,x}(u)$.
Within this framework,  the cost functional takes the form
\begin{equation*}
J_{\E}(\mu,\sigma):=\int_0^T\int_{\R^d}\int_U\cC(x, u,\mu_t)\, \d \sigma_{t,x}(u) \,\d\mu_t(x)\,\d t+\int_{\R^d} \cC_T(x,\mu_T)\,\d\mu_T(x),
\end{equation*}

and the \emph{value function} $V_{\E}:\PP_p(\R^d)\to[0,+\infty)$ is given by
\begin{equation*}
V_{\E}(\mu_0):=\inf\{J_{\E}(\mu,\sigma)\,:\,(\mu,\sigma)\in\mathcal A_{\E}(\mu_0)\}.
\end{equation*}

In the Eulerian description of the optimal control problem, the system can be described by a curve of probability measures. 
This point of view is intimately connected with the theory of optimal transport from which ideas and techniques are borrowed.

\smallskip

\textbf{Kantorovich formulation {($\K$)}.}
A somewhat intermediate formulation is given by the \emph{Kantorovich} optimal control problem (in analogy to the Kantorovich formulation of the optimal transport problem).
This formulation has its roots in the representation of solutions of the continuity equation by superposition of continuous curves belonging to $\Gamma_T:= C([0,T];\R^d)$. 
An admissible pair for the Kantorovich problem is given by $(\eeta,\sigma)$ where $\eeta \in \PP(\Gamma_T)$ is a probability measure on the space of continuous curves, and $\sigma: [0,T] \times \Gamma_T \to \PP(U)$ is a relaxed control.
Furthermore, given $\mu_0\in\PP(\R^d)$ with finite $p$-moment, an admissible measure $\eeta$ has to match the initial condition in the form $(e_0)_\sharp\eeta = \mu_0$.
Even more important, defining  $\mu_t:=(e_t)_\sharp\eeta$ for all $t\in[0,T]$, 
$\eeta$ has to be concentrated on the set of absolutely continuous solutions  of the differential equation
\[\dot\gamma(t)=\int_U f(\gamma(t),u,\mu_t)\d \sigma_{t, \gamma}(u) ,\qquad \text{for $\cL_T$-a.e. }t\in[0,T].\]
This clearly links the Kantorovich formulation with the Eulerian one via the superposition principle (see Theorem \ref{thm:sup_princ}). 
The \emph{cost functional} associated to the Kantorovich formulation is written in the form 
\begin{equation*}
J_{\K}(\eeta,\sigma):=\int_{\Gamma_T}\int_0^T\int_U\cC(\gamma(t),u,\mu_t)\, \d \sigma_{t, \gamma}(u)\,\d t \, \d\eeta(\gamma) +\int_{\R^d} \cC_T(x,\mu_T)\,\d\mu_T(x),
\end{equation*}
where $\Gamma_T$ act as a parametrization space (as in the Lagrangian framework), but the minimization involves measures $\eeta \in \PP(\Gamma_T)$ instead of trajectories, in line with the Kantorovich formulation of the optimal transportation problem: $\mu_t = (e_t)_\sharp \eeta$ can be considered as a time dependent family of marginals of $\eeta$.
The associated \emph{value function} is given by
\begin{equation*}
V_{\K}(\mu_0):=\inf\{J_{\K}(\eeta,\sigma)\,:\,(\eeta,\sigma)\in\cA_{\K}(\mu_0)\}.
\end{equation*}
Let us stress that, also in this setting, the choice of relaxed controls is sufficient to prove the existence of minimizers.

\smallskip

\textbf{Equivalence results.}
A natural question is whether the problems introduced above are somewhat related.
One of  the aims of the present paper is to prove equivalences among the various formulations introduced above.
At the level of the value functions, we can summarize the main result in the following theorem (see Theorem \ref{cor:VL=VE'} for a precise statement)
\begin{theorem*}[equivalence]
Let $(\Omega, \cB, \P)$ be a Polish space such that $\P$ is without atoms.
If $X_0\in L^p(\Omega;\R^d)$, then 
$$V_{\pL}(X_0)=V_{\RL}(X_0)= V_{\E}((X_0)_\sharp\P)=V_{\K}((X_0)_\sharp\P).$$
\end{theorem*}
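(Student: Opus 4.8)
The plan is to prove the chain of equalities by relating consecutive formulations, separating the three routine passages ($\pL\leftrightarrow\RL$, $\RL\to\K$ and $\K\leftrightarrow\E$) from the one genuinely delicate reconstruction $\K\to\RL$. Since classical controls embed in relaxed ones with $J_{\RL}(X,\delta_u)=J_\pL(X,u)$ we get $V_{\RL}(X_0)\le V_\pL(X_0)$ for free, while the reverse inequality is exactly the chattering-type approximation of relaxed controls by classical ones established in the relaxation section; hence $V_\pL(X_0)=V_{\RL}(X_0)$, and it suffices to work with $V_{\RL}$.

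For $V_{\RL}(X_0)\ge V_\K((X_0)_\sharp\P)$, given $(X,\sigma)\in\cA_{\RL}(X_0)$ the map $\omega\mapsto X_\cdot(\omega)\in\Gamma_T$ is measurable (a.e.\ trajectory is absolutely continuous), so $\eeta:=(X_\cdot)_\sharp\P$ has $(e_t)_\sharp\eeta=(X_t)_\sharp\P=\mu_t$, in particular $(e_0)_\sharp\eeta=(X_0)_\sharp\P$. Pushing $\d t\,\d\P(\omega)\,\d\sigma_{t,\omega}(u)$ on $[0,T]\times\Omega\times U$ forward by $(t,\omega,u)\mapsto(t,X_\cdot(\omega),u)$ and disintegrating against its marginal $\d t\,\d\eeta(\gamma)$ yields a Borel relaxed control $\bar\sigma_{t,\gamma}$; admissibility of $\eeta$ is the crux here and is forced by the fibres, since for $\eeta$-a.e.\ $\gamma$ and every $\omega$ with $X_\cdot(\omega)=\gamma$ one has $\dot\gamma(t)=\int_U f(\gamma(t),u,\mu_t)\,\d\sigma_{t,\omega}(u)$ a.e., so averaging over the fibre keeps $\gamma$ a solution of the ODE driven by $\bar\sigma_{t,\gamma}$; the pushforward/disintegration identities give $J_\K(\eeta,\bar\sigma)=J_{\RL}(X,\sigma)$. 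The equality $V_\E(\mu_0)=V_\K(\mu_0)$ follows in both directions from the superposition principle (Theorem \ref{thm:sup_princ}): from a Kantorovich pair, disintegrate the control along $e_t$ to recover an Eulerian control and check that $\mu_t=(e_t)_\sharp\eeta$ solves the continuity equation by testing against $C^1$ functions; conversely, from an Eulerian pair $(\mu,\sigma)$, whose velocity satisfies the integrability requirement via the growth assumptions on $f$, superposition produces $\eeta$ concentrated on absolutely continuous solutions of $\dot\gamma=v_t(\gamma)$, and the lift $\sigma_{t,\gamma}:=\sigma_{t,\gamma(t)}$ is admissible with the same cost by Fubini and $(e_t)_\sharp\eeta=\mu_t$.

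The heart of the proof — and the place where atomlessness of $\P$ enters — is the reverse inequality $V_{\RL}(X_0)\le V_\K((X_0)_\sharp\P)$: from a Kantorovich pair $(\eeta,\sigma)$ with $(e_0)_\sharp\eeta=\mu_0=(X_0)_\sharp\P$ one must produce an admissible relaxed Lagrangian pair on the \emph{given} space $(\Omega,\P)$ with initial condition exactly $X_0$ and with no larger cost. The natural ansatz is $X_t:=e_t\circ\Phi$, $\sigma^\Omega_{t,\omega}:=\sigma_{t,\Phi(\omega)}$ for a Borel $\Phi\colon\Omega\to\Gamma_T$ with $\Phi_\sharp\P=\eeta$ and $e_0\circ\Phi=X_0$ $\P$-a.s.: then $(X_t)_\sharp\P=(e_t)_\sharp\eeta=\mu_t$ makes the mean-field terms consistent, $\eeta$-a.e.\ curve solves the required ODE, and the costs match by change of variables. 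Building such a $\Phi$ is a measurable-selection problem: disintegrating $\eeta$ along $e_0$ and $\P$ along $X_0$, one needs for $\mu_0$-a.e.\ $x$ a measure-preserving map of $(\Omega,\P(\cdot\mid X_0=x))$ onto $(\Gamma_T,\eeta_x)$, chosen measurably in $x$, which is why one uses that $(\Omega,\P)$ is a standard atomless space; when $X_0$ is essentially injective one first replaces $(\eeta,\sigma)$, at the cost of an arbitrarily small increase of $J_\K$ (using the standing continuity assumptions, a space-regularization of the control, and a fixed-point argument for the coupled mean field), by a pair whose path measure is carried by a Lagrangian flow, so that each $\eeta_x$ collapses to a Dirac and the selection is immediate. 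I expect this parametrization step to be the main obstacle; the remaining ingredients — Borel measurability of all disintegrated controls (here $\PP(U)$ is compact metrizable) and the identification of the cost integrals (Fubini and the disintegration theorem, with the coercivity/growth assumptions ensuring both sides are finite or $+\infty$ together) — are routine.
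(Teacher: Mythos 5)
Your outer architecture is sound and three of the four passages are essentially correct: $V_\pL=V_{\RL}$ by chattering; the inequality $V_{\RL}(X_0)\ge V_\K((X_0)_\sharp\P)$ by pushing $\P$ forward along $\omega\mapsto X_\cdot(\omega)$ and averaging the relaxed control over the fibres (legitimate because $\FF$ and $\CC$ are affine, respectively linear, in the control measure, so both the dynamics and the cost are preserved); and $V_\E=V_\K$ via the superposition principle, exactly as in Propositions \ref{prop:E>K} and \ref{prop:K>E}. The genuine gap is in the step you yourself single out as the heart of the matter, $V_{\RL}(X_0)\le V_\K((X_0)_\sharp\P)$. Your ansatz --- a single Borel map $\Phi\colon\Omega\to\Gamma_T$ with $\Phi_\sharp\P=\eeta$ and $e_0\circ\Phi=X_0$ --- cannot exist in general: disintegrating along $X_0$ and $e_0$, it requires transporting the conditional $\P(\cdot\mid X_0=x)$ onto $\eeta_x$ by a map, and when $X_0$ is essentially injective that conditional is a Dirac while $\eeta_x$ need not be. The paper's Wasserstein-barycenter counterexample in Section \ref{sec:counterexample} (with $X_0(\omega)=(1/2,\omega)$) is built precisely on this obstruction: the Kantorovich/Eulerian optimum is not representable over that $X_0$ and the Lagrangian infimum is not attained, so no argument producing an exact-cost Lagrangian competitor from a Kantorovich one can succeed; only an approximating sequence with converging costs is possible. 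Your dichotomy (atomless conditionals versus essentially injective $X_0$) is moreover not exhaustive.

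Your proposed fallback --- replacing $(\eeta,\sigma)$, at an arbitrarily small increase of $J_\K$, by a pair carried by a Lagrangian flow so that each $\eeta_x$ collapses to a Dirac --- is exactly the hard content and is left unproved. The Eulerian control obtained by projecting $\sigma$ is merely Borel, so the ODE it drives has no uniqueness of characteristics; mollifying it in space changes the induced curve $t\mapsto\mu_t$ and forces the coupled fixed-point argument you allude to, and controlling the cost increase while approximating a genuinely superposed solution by a flow is a delicate problem in its own right, not a routine regularization. The paper takes a different route (Theorem \ref{prop:K>L}): it first reads $(\eeta,u)$ as a Lagrangian problem on $(\Gamma_T,\cB_{\Gamma_T},\eeta)$ with trajectory $Z_t(\gamma)=\gamma(t)$ (Lemma \ref{l:fromKtoL}) and approximates the control by continuous ones (Proposition \ref{lemmaB}); then, instead of an exact transport, it invokes the density of Young measures induced by maps (Lemma \ref{lemma:young}) on the atomless space $\Omega\times\R^d$ equipped with $(i_\Omega,X_0)_\sharp\P$ to produce Borel maps $w_n$ with $e_0(w_n(\omega,x))=x$, so that $\tilde X^n_t(\omega):=Z^n_t(w_n(\omega,X_0(\omega)))$ matches the prescribed initial datum exactly while the induced laws $\eeta^n$ converge only weakly to $\eeta$; the convergence of the costs then follows from the stability of the controlled dynamics under weak perturbation of the parametrizing measure (Proposition \ref{lemmaA}), which is precisely why the preliminary passage to continuous controls is needed. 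This Young-measure/weak-stability mechanism is the ingredient missing from your argument.
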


The first step of the proof consists in the approximation of the Lagrangian problem by piecewise constant controls (see Theorem \ref{prop:RLntoRL}).
This is possible whenever the probability space $(\Omega, \frB, \P)$ under consideration satisfies a suitable 
\emph{finite approximation property} (Definition \ref{approx_prop}), which surely holds in the Polish framework.  
Once the piecewise approximation is established, we are able to formulate a suitable version of the \emph{chattering theorem}  (see Theorem \ref{thm:chat}) where trajectories, controls and cost functional of the Relaxed Lagrangian formulation are approximated by the corresponding objects in the Lagrangian setting.

The comparison between the Lagrangian and Eulerian formulations (see { Theorems \ref{cor:VL=VE}, \ref{cor:VL=VE'} and Section \ref{sec:LEK}}) is 
more delicate and it is achieved by exploiting the Kantorovich description of the control problem on the space of curves $\Gamma_T$. 
The idea is to separately connect the Eulerian and Kantorovich descriptions (Theorem \ref{cor:E=K}) and then the Kantorovich and Lagrangian ones (Theorem \ref{prop:K>L}). 

Starting from an admissible pair for the Eulerian problem, the application of the superposition principle given in Theorem \ref{thm:sup_princ} easily provides a candidate admissible pair for the Kantorovich problem paying the same cost (see  the proof of Proposition \ref{prop:E>K} for details). 
Conversely, if a pair $(\eeta,\sigma)$ for the Kantorovich problem is given, the Eulerian control action can be obtained by averaging with respect to a suitable disintegration of the measure $\eeta$ on curves, as it is shown in the proof of Proposition  \ref{prop:K>E}. 

For what concerns the Kantorovich/Lagrangian comparison, in Lemma \ref{l:fromKtoL} we firstly interpret the Kantorovich problem as a Lagrangian one with parametrization space given by $\Gamma_T$, i.e. the space of curves, and with trajectories given by the evaluation map.
Due to the continuity of the initial datum (i.e. the evaluation map $e_0$),  we then approximate controls with continuous ones (see Proposition \ref{lemmaB} for a general result in this direction) and finally we approximate the {obtained Lagrangian problem in $\Gamma_T$} with Lagrangian {ones which are set in} a generic parametrization space $\Omega$, not necessarily the space $\Gamma_T$.
A precise description of this technique is contained in the proof of Theorem \ref{prop:K>L}.  

An immediate consequence of the equivalence theorem is the equality of the value functions for different initial data, whenever the respective laws coincide (see Theorem \ref{cor:VL=VE}). In fact, if $\mu_0 := (X_0)_\sharp\P=(X_0')_\sharp\P$, then it holds that
\[V_{\pL}(X_0)=V_{\pL}(X_0') \quad \big( \, =V_{\E}(\mu_0) \big).\]
A further consequence is the continuity of the value functions with respect to the initial data, see Theorems \ref{th:contVE} and \ref{th:contVL} for precise statements.

\smallskip

Equivalence results between Lagrangian, Eulerian and Kantorovich formulations represent a first step towards a general analysis of optimality conditions  for multi-agents control systems. A second step in this direction is the study of the corresponding limit theory.   
\smallskip

\textbf{Approximation by finite particle systems.} 
We aim to provide a rigorous limit theory for multi-agent optimal control problems both for the Lagrangian and Eulerian formulations.
To do it, we have to define appropriate discrete versions of the two formulations. 
The $N$-particle Lagrangian control problem $\pL^N$ simply relies on the choice of  $\Omega^N= \{1, \ldots, N \}$ as parametrization space.
On the other hand, a genuine discrete Eulerian problem $\E^N$  requires the introduction of a constraint on the number of particles (see Definition \ref{def:E^N}), precisely an admissible trajectory $\mu$ 
satisfies $\mu_t\in\PP^N(\R^d)$, where
$$\PP^N(\R^d):= \left\{\mu = \frac{1}{N} \sum_{i =1}^N \delta_{x_i} \; \text{ for some } x_i \in \R^d \right\}.$$ 
The discrete Eulerian and Lagrangian control problems $\pL^N$ and $\E^N$ turn out to be equivalent (see Theorem \ref{cor:LN=EN}) in the sense that 
\begin{equation}\label{equiv_E_L_N}
V_{\pL^N}(X_0)= V_{\E^N}((X_0)_\sharp\P^N), \quad \text{ for any } X_0\in L^p(\Omega^N;\R^d).
\end{equation}
Note that this is not a direct consequence of the general equivalence result given above, where the reference probability measure $\P$ was required to be without atoms. 
To prove the equality in \eqref{equiv_E_L_N} we derive a discrete formulation of the superposition principle for empirical probability measures  (see Theorem \ref{lem:EN}) that we believe might be of interest in itself. 

Once the equivalence at the level of $N$-particle systems is established, we derive Gamma-convergence results respectively for the Lagrangian and Eulerian problems as the number of particles diverges (see Propositions \ref{prop:gamma_conv} and \ref{prop:ENtoE}). 
A major consequence is contained in the following theorem (see Theorem  \ref{thm:convVE} for a detailed description).
\begin{theorem*}\label{thm:convVE_intro}
Let $(\Omega,\cB,\P)$ be a Polish space such that $\P$ is
without atoms. Assume that the Convexity assumption \ref{CA} holds.
\begin{itemize}
\item If  $X_0\in L^p(\Omega;\R^d)$ and $X^N_0:\Omega^N\to\R^d$, $N\in\N$, satisfy   $X^N_0 \to X_0$ as $N\to+\infty$ (see \eqref{CNOmega}), 
then  
\[\lim_{N \to +\infty} V_{\pL^N} (X_0^N) = V_{\pL}(X_0).\]
\item If $\mu_0\in\PP_p(\R^d)$ and $\mu_0^N\in\PP^N(\R^d)$, $N\in\N$, satisfy $W_p(\mu_0^N,\mu_0)\to 0$ as $N\to+\infty$,
then 
\[\lim_{N \to +\infty} V_{\E^N} (\mu_0^N) = V_{\E}(\mu_0).\]
\item Moreover, if $(X_0^N)_\sharp \P^N = \mu_0^N $ it holds that  
\[\lim_{N \to +\infty} V_{\pL^N} (X_0^N) = V_{\E}(\mu_0).\]
\end{itemize}
\end{theorem*}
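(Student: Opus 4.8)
The plan is to reduce all three statements to a single $\Gamma$-convergence fact on the Eulerian side and then transfer it through the equivalence theorems. By Theorem~\ref{cor:LN=EN} one has $V_{\pL^N}(X_0^N)=V_{\E^N}\big((X_0^N)_\sharp\P^N\big)$ for every $N$, and by the equivalence theorem (Theorem~\ref{cor:VL=VE'}, applicable since $\P$ is atomless) $V_{\pL}(X_0)=V_{\E}\big((X_0)_\sharp\P\big)$. Moreover the convergence $X_0^N\to X_0$ in \eqref{CNOmega} entails $W_p\big((X_0^N)_\sharp\P^N,(X_0)_\sharp\P\big)\to 0$, and $(X_0^N)_\sharp\P^N\in\PP^N(\R^d)$. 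Hence both the first and the third bullet follow from the second: the first by choosing $\mu_0:=(X_0)_\sharp\P$, $\mu_0^N:=(X_0^N)_\sharp\P^N$ (alternatively, one could invoke the Lagrangian $\Gamma$-convergence of Proposition~\ref{prop:gamma_conv} directly, but the reduction is shorter); for the third, note that $(X_0^N)_\sharp\P^N=\mu_0^N$ forces $\mu_0=(X_0)_\sharp\P$ (both are the $W_p$-limit of $(X_0^N)_\sharp\P^N$), so that $\lim_N V_{\pL^N}(X_0^N)=\lim_N V_{\E^N}(\mu_0^N)=V_{\E}(\mu_0)$ by Theorem~\ref{cor:LN=EN} and the second bullet. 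It therefore suffices to prove that $\lim_N V_{\E^N}(\mu_0^N)=V_{\E}(\mu_0)$ whenever $\mu_0^N\in\PP^N(\R^d)$ and $W_p(\mu_0^N,\mu_0)\to 0$, which is Proposition~\ref{prop:ENtoE} together with equi-coercivity; I would organize it as a $\liminf$ inequality plus a recovery sequence.

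For $\liminf_N V_{\E^N}(\mu_0^N)\ge V_{\E}(\mu_0)$ I would pick near-optimal pairs $(\mu^N,\sigma^N)\in\cA_{\E^N}(\mu_0^N)$ with $J_{\E^N}(\mu^N,\sigma^N)\le V_{\E^N}(\mu_0^N)+1/N$ and lift them to the Kantorovich level via the empirical superposition principle (Theorem~\ref{lem:EN}), obtaining $\eeta^N\in\PP^N(\Gamma_T)$ with $(e_t)_\sharp\eeta^N=\mu^N_t$ and $\eeta^N$ concentrated on solutions of the $N$-particle ODE. The sublinear growth bound on $f$ gives, via Gr\"onwall, uniform $p$-moment and equicontinuity estimates on the trajectories, hence tightness of $\{\eeta^N\}$ in $\PP(\Gamma_T)$; together with compactness of the Young measures $\sigma^N$ (here $U$ is compact), a subsequence converges to some $(\eeta,\sigma)$. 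Stability of the continuity equation under the mean-field argument (Lipschitz dependence of $f$ on the measure in $W_p$, plus the uniform convergence $\mu^N_t\to\mu_t:=(e_t)_\sharp\eeta$) shows $(\eeta,\sigma)\in\cA_{\K}(\mu_0)$, and lower semicontinuity of the cost — where the Convexity Assumption~\ref{CA} enters, ruling out a relaxation gap — yields $\liminf_N V_{\E^N}(\mu_0^N)\ge J_{\K}(\eeta,\sigma)\ge V_{\K}(\mu_0)=V_{\E}(\mu_0)$ (using Theorem~\ref{cor:E=K}).

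For the recovery sequence I would fix $\eps>0$ and a Kantorovich pair $(\eeta,\sigma)$ with $J_{\K}(\eeta,\sigma)\le V_{\K}(\mu_0)+\eps=V_{\E}(\mu_0)+\eps$, after possibly regularizing $\sigma$ to a continuous control at negligible extra cost via Proposition~\ref{lemmaB}. Then I would quantize $\eeta$ by empirical measures $\hat\eeta^N=\frac1N\sum_{i\le N}\delta_{\gamma^N_i}$ with $W_p(\hat\eeta^N,\eeta)\to 0$, reconcile their initial marginals with the prescribed $\mu_0^N$ by a further $o(1)$ perturbation (possible since $W_p(\mu_0^N,\mu_0)\to 0$), and correct the curves: solve the $N$-particle controlled system with initial data $\gamma^N_i(0)$ and controls pulled back from $\sigma$, which is well posed by Cauchy--Lipschitz; a Gr\"onwall estimate bounds the corrected trajectories against the $\gamma^N_i$, hence the resulting empirical flow $\mu^N_t$ against $\mu_t$ in $W_p$, uniformly in $t$. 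The uniform continuity of $\cC,\cC_T$ in all their arguments then gives $J_{\E^N}(\mu^N,\sigma^N)\le J_{\K}(\eeta,\sigma)+o(1)$, so $\limsup_N V_{\E^N}(\mu_0^N)\le V_{\E}(\mu_0)+\eps$, and $\eps\to 0$ closes the argument; combined with the previous paragraph this proves the second bullet, and the first and third follow by the reduction above.

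I expect this last recovery step to be the main obstacle: one must simultaneously discretize the superposed measure $\eeta$, force the discretized curves to solve the genuinely nonlinear $N$-particle ODE (whose mean-field term is the \emph{evolving} empirical measure $\mu^N_t$, not the fixed limit $\mu_t$), and keep the non-local running and final costs under control — which is exactly where the empirical superposition principle (Theorem~\ref{lem:EN}), the continuous-control approximation (Proposition~\ref{lemmaB}) and the quantitative stability estimates for the Vlasov flow must be combined. The $\liminf$ inequality and the transfer through the equivalence theorems are comparatively routine.
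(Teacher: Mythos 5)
Your overall architecture matches the paper's: reduce the Lagrangian statements to the Eulerian one via Theorem~\ref{cor:LN=EN} and the equivalence theorems, prove a $\liminf$ inequality, and build a recovery sequence from a (near-)optimal Kantorovich/Eulerian pair by superposition, discretization, matching of initial data, and a Gr\"onwall stability estimate. Your reduction of the first and third bullets to the second is valid (indeed $W_p\big((X_0^N)_\sharp\P^N,(X_0)_\sharp\P\big)\le\|X_0^N\circ\psi^N-X_0\|_{L^p}$ by \eqref{eq:WpLp}) and is arguably tidier than the paper, which proves the Lagrangian bullet separately in Proposition~\ref{prop:convVL} using the Lagrangian $\Gamma$-$\limsup$ of Proposition~\ref{prop:gamma_conv}. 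Where you genuinely diverge is the $\liminf$: the paper's argument is one line, namely $\cA_{\E^N}(\mu_0^N)\subseteq\cA_{\E}(\mu_0^N)$ gives $V_{\E^N}(\mu_0^N)\ge V_{\E}(\mu_0^N)$, and then Proposition~\ref{prop:lscVE} (lower semicontinuity of $V_\E$, itself resting on the Eulerian compactness of Proposition~\ref{prop:compactness}) concludes. Your route through tightness of the empirical superposed measures $\eeta^N$ in $\PP(\Gamma_T)$ and lower semicontinuity of $J_\K$ can be made to work, but it forces you to redo compactness and semicontinuity at the Kantorovich level, which the paper never needs; I would replace it by the inclusion argument.

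In the recovery step there is one point you gloss over that deserves to be made explicit. Your construction (quantize $\eeta$, match initial data, re-solve the $N$-particle system with controls pulled back from the regularized $\sigma$) naturally produces an $N$-particle \emph{Lagrangian} pair: the control is a function of $(t,i)$, i.e.\ of the particle label, not of $(t,x)$. To exhibit an admissible pair in $\cA_{\E^N}(\mu_0^N)$ you must pass to a genuine feedback control, and this is nontrivial precisely when distinct particles collide while carrying different control values. The paper handles this in Step~3 of Proposition~\ref{prop:ENtoE} by disintegrating the joint law $\rho^N_t=(X^N_t,\tilde u^N_t)_\sharp\cL_1$ over the position variable and taking the barycenter $\ubar u^N(t,x)=\int_U u\,\d\rho^N_{t,x}(u)$; the affinity of $f$ in $u$ keeps the continuity equation, and Jensen's inequality with the convexity of $\cC$ gives $J_{\E^N}(\mu^N,\ubar u^N)\le J_{\pL}(X^N,\tilde u^N)$. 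Equivalently you could stay Lagrangian and invoke Proposition~\ref{prop:LN>EN} (or Theorem~\ref{cor:LN=EN}, which you already use) to pass to $V_{\E^N}$ at no extra cost. Either fix is available with the tools you cite, but as written your $\limsup$ stops one step short of producing an element of $\cA_{\E^N}(\mu_0^N)$; once that barycentric projection is inserted, the argument closes and coincides in substance with Proposition~\ref{prop:ENtoE}.
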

Notice that the usual \textit{mixed} Lagrangian-Eulerian consistency in the third item is a simple byproduct of the equivalence in \eqref{equiv_E_L_N}.
  
\medskip

\textbf{Structure of the paper.}
The paper is organized as follows.
In Section \ref{sec_prel} we fix the notation and present some preliminary material.
We start by revising some properties of Borel probability measures and we recall a refined version of the Skorohod representation theorem. 
We further provide some material on optimal transport, Wasserstein spaces and we present the classical superposition principle.
Finally, we discuss the disintegration theorem and give some properties of Young measures.

Section \ref{sec:assumptions} contains our standing hypotheses, divided into two sets:  the \emph{Basic} Assumptions and the \emph{Convexity} Assumptions.  
The first ones require compactness of the metrizable space of controls and Lipschitz continuity of the velocity field with respect to the state and the mass distribution.
The cost functional has to be continuous and to satisfy a polynomial  growth condition.
The convexity assumptions impose convexity of the control set and of the cost functional (with respect to controls). Furthermore, the dynamics has to be affine with respect to the control actions.   
The relaxed setting is the guiding example of the convex case.

In Section \ref{sec_lagrangian} we present and study the Lagrangian optimal control problem. 
In particular, we exhibit two different approximation procedures: the first one by piecewise constant controls and the second one by continuous controls and trajectories. 
 
The Relaxed Lagrangian problem is defined in Section \ref{sec_relaxed}, where its representation as a Lagrangian problem in the lifted space of measures is also discussed.
We prove the equivalence between Lagrangian and Relaxed Lagrangian formulations of the control problem by approximating relaxed controls with (non-relaxed) ones in the sense of Young convergence.  
This procedure, known as chattering theorem, exploits the approximation by piecewise constant controls developed in the Lagrangian setting. 

Section \ref{sec_eulerian} contains the definition and properties of the Eulerian control problem. 
Under the convexity Assumptions we are able to prove existence of minimizers for the control problem via a direct method.

In Section \ref{sec:K} we introduce the Kantorovich problem and  we prove its equivalence with the Eulerian one under the convexity assumptions. 
 
The main equivalence results are contained in Section  \ref{sec:E=L}.
Exploiting the Kantorovich formulation, we firstly show the equality between value functions of Lagrangian and Eulerian problems under the Convexity Assumptions.
The general case is then obtained by interpreting the  Relaxed Lagrangian as a  (convex) Lagrangian problem in the space of probability measures.
As a consequence, we also get the continuity of the value functions, with respect to the initial condition, for the various formulations.
In Subsection \ref{sec:counterexample} we discuss the possible non-existence of minimizers for the Lagrangian and Relaxed Lagrangian problems.
This is not guaranteed, if the initial condition is assigned, even under the Convexity Assumptions. 
We produce a counterexample to the existence of minimizers given by the Wasserstein barycenter problem. 

Section \ref{sec:finite} contains all the material regarding finite particle problems and the respective limit theory.
We define a discrete version of the Eulerian control problem imposing a constraint on the number of particles.
To connect the Eulerian and Lagrangian formulations in the $N$-particle case we introduce a \emph{Feedback} Lagrangian control problem, where control actions are indeed in feedback form, and we make use of the already mentioned discrete superposition principle. 
A Gamma-convergence result both for the Lagrangian and Eulerian formulations is then established  as the number of particles tends to infinity.
We finally conclude proving the convergence of the associated value functions.
\smallskip

The appendix contains various technical tools.
In particular, in Appendix \ref{app:SP} we state and prove the superposition principle for the evolution of empirical measures.

\section{Preliminaries and notations}
\label{sec_prel}

{We list here the main notation.}

{\small\begin{longtable}{ll}
$\# A$&the cardinality of a set $A$;\\
$i_X(\cdot)$&the identity function on a set $X$, $i_X:X\to X$ defined by $i_X(x)=x$;\\
$\mathds 1_A(\cdot)$&the characteristic function of $A\subset X$,\\ 
&$\mathds 1_A:X\to \R$ defined by
  $\mathds 1_A(x)=1$ if $x\in A$, $\mathds 1_A(x)=0$ if $x \in X\setminus A$;\\
$(S, \frB)$&measurable space $S$ with $\sigma$-algebra $\frB$;\\
$(S, \cB_{S})$&topological space $S$ with Borel $\sigma$-algebra $\cB_S$;\\   
$\BM(X;Y)$&the set of measurable functions from the measurable space $X$ \\
&to the measurable space $Y$;\\
$\Bor(X;Y)$&the set of Borel measurable functions from the topological space $X$ \\
&to the topological space $Y$;\\
$C(X;Y)$&the set of continuous functions from the topological space $X$ \\
&to the topological space $Y$;\\
$C_c(X;Y)$&the set of continuous compactly supported functions from the topological space $X$\\
& to the topological space $Y$;\\
$C_b(X;Y)$&the set of continuous bounded functions from the topological space $X$\\
& to the metric space $Y$;\\
$\AC^p([0,T];X)$&the set of absolutely continuous functions from $[0,T]$ to the metric space $X$\\
&with metric derivative in $L^p([0,T];\R)$;\\
$\Gamma_T$&the set of continuous curves from $[0,T]$ to $\R^d$, i.e., $\Gamma_T=C([0,T];\R^d)$;\\
$e_t$&the evaluation map at time $t\in[0,T]$, $e_t:\Gamma_T\to\R^d$ defined by $e_t(\gamma)=\gamma(t)$;\\
$\mathscr P(X)$&the set of probability measures on the measurable space $X$;\\
{$\PP^N(\R^d)$}&{the set of empirical probability measures on $\R^d$ defined in \eqref{eq:discreteProb};}\\
$\mathrm{m}_p(\mu)$&the $p$-th moment of a probability measure $\mu\in\PP(\R^d)$, defined by\\ & ${\m}_p(\mu)=\left(\int_{\R^d}|x|^p\,\d\mu(x)\right)^{1/p}$;\\
$r_\sharp\mu$&the push-forward of the measure $\mu\in\mathscr P(X)$ by the measurable map $r\in\BM(X;Y)$;\\
$\mu\otimes\nu$&the product measure of $\mu \in\mathscr P(X)$ and $\nu \in \PP(Y)$;\\
$\pi^i$&the $i$-th projection map $\pi^i:X_1\times\cdots\times X_N\to X_i$ defined by $\pi^i(x_1,\dots,x_N)=x_i$;\\
$\pi^{i,j}$&the $(i,j)$-th projection map  $\pi^{i,j}:X_1\times\cdots\times X_N\to X_i\times X_j$\\
& defined by $\pi^{i,j}(x_1,\dots,x_N)=(x_i,x_j)$;\\
$W_p(\mu,\nu)$&the $p$-Wasserstein distance between $\mu$ and $\nu$ (see Definition \ref{def:was});\\
$\mathscr P_p(\R^d)$&the metric space of the elements in $\mathscr P(X)$ with finite $p$-moment, \\
&endowed with the $p$-Wasserstein distance;\\
$\cL_T$&the normalized Lebesgue measure restricted to the interval $[0,T]$,\\
&i.e. $\cL_T:=\frac{1}{T}\cL \mres [0,T]$.\\
\end{longtable}}

\subsection{Borel probability measures}
Let $(S,\frB)$ be a measurable space.
When $S$ is a Polish topological space, we will
implicitely assume that $\frB$ coincides
with the Borel $\sigma$-algebra $\cB_S$ of $S$.
We say that 
$(S,\frB)$
is \emph{a standard Borel space} if it is isomorphic (as a
measurable space) to a Borel subset
of a complete and separable metric space; equivalently, one can find a
Polish topology
$\tau$ on $S$ such that $\frB=\cB_{(S,\tau)}$.

If $(E,\tilde \frB)$ is another measurable space, we denote by $\BM(S;E)$ the set of
measurable functions from $S$ to $E$.
If $S$ is a topological space we denote with $\Bor(S;E)$ the set of Borel measurable functions. 
$\PP(S)$ is the set of probability measures on $S$;
when $S$ is a Polish space (and $\frB=\cB_S$) 
we will endow $\PP(S)$ with the
weak (Polish) topology induced by the duality
with the continuous and bounded functions of $\mathrm C_b(S):= C_b(S;\R)$.

Given $\mu\in\PP(S)$ and $r:S\to E$ a measurable map, 
we define the \emph{push forward of $\mu$ through} $r$, denoted by $r_\sharp\mu\in\PP(E)$, by 
$r_\sharp\mu(B):=\mu(r^{-1}(B))$ for all measurable sets
$B\in\tilde \frB$ {(the $\sigma$-algebra on $E$)}, or equivalently,
\[\int_S f(r(x))\,\d\mu(x)=\int_E f(y)\,\d r_\sharp\mu(y),\]
for every positive, or $r_\sharp\mu$-integrable, function $f:E\to\R$. \\
Given another measurable space $Z$, $\mu\in\PP(S)$, and $r:S\to E$, $s:E\to Z$ measurable maps, 
the following composition rule holds
\begin{equation}\label{eq:composition}
(s\circ r)_\sharp\mu=s_\sharp(r_\sharp\mu).
\end{equation}
Moreover, if $r:S\to E$ is a continuous map (with respect to suitable
Polish topologies in $S$ and $E$) then
$r_\sharp:\PP(S)\to\PP(E)$ is continuous as well.

The following proposition generalizes to some extent the classical Skorohod representation Theorem, see e.g. \cite[Theorem 6.7]{billingsley1999convergence}. 
For a (more general) result and the proof we refer to \cite[Theorems~3.1~and~3.2]{berti2007skorohod}.

\begin{proposition}\label{prop:Skorohod}
Let $(\Omega, \frB, \P)$ be a probability space such that $\P$ is without atoms and let $S$ be a Polish space. 
\begin{itemize}
\item[(i)] If $\nu \in \PP(S)$, then there exists a measurable map $X: \Omega \to S$ such that $X_{\sharp} \P = \nu$.
\item[(ii)] If $\nu^n, \nu \in \PP(S)$ with $\nu^n \to \nu$ weakly,  then there exist measurable maps $X^n,X : \Omega \to S$, $n \in \N$,  such that $X^n_\sharp \P = \nu^n$, $X_\sharp \P = \nu$ and $X^n(\omega) \to X(\omega)$ for $\P$-a.e. $\omega \in \Omega$. 
\end{itemize}  
\end{proposition}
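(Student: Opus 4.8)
The plan is to reduce both statements to the model probability space $([0,1],\cB_{[0,1]},\Leb\mres[0,1])$, for which the corresponding facts are classical, via a single measure-preserving ``uniformization'' map $T\colon\Omega\to[0,1]$. The point is that the hypothesis that $\P$ is without atoms is used \emph{only} to construct $T$; once $T$ is available, both (i) and (ii) follow by composition with maps defined on $[0,1]$, using the push-forward rule \eqref{eq:composition} and the fact that a measure-preserving map sends null sets to null sets.

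\emph{Step 1: construction of $T$.} Since $\P$ is without atoms it enjoys the Sierpi\'nski halving property: for every $A\in\frB$ and every $t\in[0,\P(A)]$ there is $B\in\frB$ with $B\subseteq A$ and $\P(B)=t$. Applying this dyadically I would build a family $\{A_q\}$ indexed by the dyadic rationals $q\in(0,1)$, nondecreasing in $q$, with $\P(A_q)=q$: pick $A_{1/2}$ of measure $1/2$, and at each new generation split the ``new mass'' into halves, ensuring $A_q\subseteq A_{q'}$ whenever $q<q'$. Set $T(\omega):=\inf\{q\ \text{dyadic}:\omega\in A_q\}$, with $\inf\emptyset:=1$. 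Then $T$ is $\frB$-measurable because $\{T<q\}=\bigcup_{q'<q}A_{q'}\in\frB$, and continuity of $\P$ along the increasing family gives $\P(T<q)=\sup_{q'<q}\P(A_{q'})=q$ for every dyadic $q$, hence for every $q\in[0,1]$; therefore $T_\sharp\P=\Leb\mres[0,1]$.

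\emph{Step 2: part (i).} For $\nu\in\PP(S)$ with $S$ Polish, the classical quantile construction (for $S=\R$), combined with the Borel isomorphism theorem for standard Borel spaces to reduce a general Polish $S$ to a Borel subset of $[0,1]$ (and carrying $\nu$ along), produces a Borel map $g\colon[0,1]\to S$ with $g_\sharp(\Leb\mres[0,1])=\nu$. Then $X:=g\circ T$ is measurable and, by \eqref{eq:composition}, $X_\sharp\P=g_\sharp(T_\sharp\P)=g_\sharp(\Leb\mres[0,1])=\nu$. (Alternatively, (i) is the special case $\nu^n\equiv\nu$ of (ii) below.)

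\emph{Step 3: part (ii).} I would invoke the classical Skorohod representation theorem in its fixed-probability-space form on $[0,1]$ (e.g.\ Billingsley): since $\nu^n\to\nu$ weakly in $\PP(S)$, there exist Borel maps $g^n,g\colon[0,1]\to S$ with $g^n_\sharp(\Leb\mres[0,1])=\nu^n$, $g_\sharp(\Leb\mres[0,1])=\nu$, and $g^n(u)\to g(u)$ for $\Leb$-a.e.\ $u\in[0,1]$. Put $X^n:=g^n\circ T$ and $X:=g\circ T$; the push-forward identities follow as in Step 2. Finally, letting $N:=\{u\in[0,1]:g^n(u)\not\to g(u)\}$, which is a $\Leb$-null Borel set, we get $\P(T^{-1}(N))=(T_\sharp\P)(N)=\Leb(N\cap[0,1])=0$, and for every $\omega\notin T^{-1}(N)$ we have $X^n(\omega)=g^n(T(\omega))\to g(T(\omega))=X(\omega)$. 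The only genuine obstacle is Step 1, namely producing the uniform random variable $T$; this is precisely where atomlessness enters, and it cannot be dropped. The remaining steps are a routine transfer of the classical statements on $[0,1]$ through $T$.
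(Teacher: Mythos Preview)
The paper does not supply its own proof of this proposition: it simply cites \cite[Theorems~3.1 and~3.2]{berti2007skorohod} for a more general statement. Your proposal is a correct, self-contained argument. The reduction you carry out---building a uniform random variable $T\colon\Omega\to[0,1]$ from atomlessness via the Sierpi\'nski intermediate-value property and a dyadic nesting, then pulling back the classical Skorohod representation on $([0,1],\Leb)$ through $T$---is exactly the standard route and matches in spirit what the cited reference does (they also factor through an atomless ``rich enough'' space). Your Step~1 is the key step and is handled correctly: the measurability of $T$ and the identity $T_\sharp\P=\Leb$ follow from the monotone family $\{A_q\}$ as you indicate. Steps~2 and~3 are then legitimate transfers via the composition rule \eqref{eq:composition}, and the $\P$-a.e.\ convergence in (ii) is inherited from the $\Leb$-a.e.\ convergence on $[0,1]$ because $T_\sharp\P=\Leb$ sends Lebesgue-null sets to $\P$-null preimages.

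One minor remark on Step~2: rather than appealing to the Borel isomorphism theorem to reduce a general Polish $S$ to a Borel subset of $[0,1]$, you can simply observe that part~(i) is the constant-sequence case of part~(ii), or invoke directly the well-known fact (also in Billingsley) that every law on a Polish space is the push-forward of Lebesgue measure on $[0,1]$ by some Borel map. Either way the argument goes through.
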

Notice that, when $(\Omega,\frB,\P)$ is a standard Borel space, $\tau$ a Polish topology on $\Omega$ such that $\frB = \cB_{(\Omega,\tau)}$, then the maps $X$ and $X^n$ in Proposition \ref{prop:Skorohod} are Borel measurable.
A particular and significant case occurs when we choose $(\Omega, \frB, \P) = ([0,1], \cB, \cL_1)$, with $\cB$ the Borel $\sigma$-algebra and $\cL_1$ the Lebesgue measure restricted to $[0,1]$.

\medskip

If $ \mm\in\PP(S)$ and $E$ is a separable 
Banach space, we denote by 
$L^p_\mm(S;E)$ the space of (the equivalence classes of) $\mm$-measurable  functions $f:S\to E$
such that $\int_S \|f(x)\|^p\,\d\mm(x)<+\infty$. Since $E$ is separable, the notions of weak and strong
measurability coincide.
We will often adopt the notation $L^p(S;E)$ in place of $L^p_\mm(S;E)$ when the measure $\mm$ is clear from the context.\\
We say that a sequence of measurable functions $u_n\in\BM(S;E)$ converges in $\mm$-measure to 
$u\in\BM(S;E)$ if
 \begin{equation}
   \forall\,\varepsilon > 0,
   \quad
   \lim_{n \to +\infty} \mm \left( \lbrace x \in S: \|u_n(x) - u(x)\| \geq \varepsilon \rbrace  \right) = 0.
 \end{equation} 
If $u_n$ take values in a compact subset $U$ of $E$,  $u_n\in\BM(S;U)$,
the convergence of $u_n$ to $u\in\BM(S;U)$ in $\mm$-measure
is equivalent to the convergence of $u_n$ to $u$ in $L^p(S;E)$ for every $p\in[1,+\infty)$.

Given $(S, d)$ a metric space and $p \in [1,+\infty]$, we say that a curve $\gamma: [0,T] \to S$ belongs to $\AC^p([0,T];S)$ if there exists $m \in L^p(0,T;\R)$ such that 
\begin{equation*}
d(\gamma(t_1), \gamma(t_2)) \leq \int_{t_1}^{t_2} m(s) \, \d s , \qquad  \forall \, t_1, t_2 \in [0,T], \; t_1 \leq t_2. 
\end{equation*}

\medskip

\subsection{The Wasserstein metric and the Superposition Principle}
We provide a brief collection of the main notions on optimal transport and Wasserstein distance, 
addressing the reader to \cites{ambrosio2008gradient,santambrogio2015optimal,villani2003topics}. \par\medskip\par

Given $\mu\in\PP(\R^d)$ and $p\geq 1$, we define the $p$-moment of $\mu$ by 
$$\mathrm{m}_p(\mu):=\left(\int_{\R^d}|x|^p\,\d\mu(x)\right)^{1/p}.$$
We define $\PP_p(\R^d):=\{\mu\in\PP(\R^d):\mathrm{m}_p(\mu)<+\infty \}.$
The set $\PP_p(\R^d)$ can be metrized by the following distance.

\begin{definition}[Wasserstein distance]\label{def:was}
Let  $p\ge 1$. Given $\mu_1,\mu_2\in\mathscr P_p(\R^d)$, we define the $p$-\emph{Wasserstein distance} between $\mu_1$ and $\mu_2$
by setting
\begin{equation}\label{eq:wasserstein}
W_p(\mu_1,\mu_2):=\left(\min\left\{\int_{\R^d\times \R^d}|x_1-x_2|^p\,d\gamma(x_1,x_2)\,:\,\gamma\in \Gamma(\mu_1,\mu_2)\right\}\right)^{1/p}\,,
\end{equation}
where the set of \emph{admissible transport plans} $\Gamma(\mu_1,\mu_2)$ is given by
\begin{align*}
\Gamma(\mu_1,\mu_2):=&\left\{\gamma\in \mathscr P(\R^d\times \R^d):\,\pi^1_\sharp\gamma=\mu_1,\pi^2_\sharp\gamma=\mu_2 \right\},
\end{align*}
with $\pi^i:\R^d\times\R^d\to\R^d$, $\pi^i(x^1,x^2)=x^i$, the projection operator, $i=1,2$.
\end{definition}
By the previous definitions,
given a measurable space $\Omega$ and $\P\in\PP(\Omega)$, it follows immediately that
 for any $Z\in  L^p(\Omega;\R^d)$, we have $\mu:=Z_\sharp\P\in\PP_p(\R^d)$ and 
\begin{equation}\label{eq:MpLp}
	\mathrm m_p(\mu)= \|Z\|_{ L^p(\Omega;\R^d)},
\end{equation}
moreover
\begin{equation}\label{eq:WpLp}
	W_p(Z^1_\sharp\P,Z^2_\sharp\P)\leq \|Z^1-Z^2 \|_{ L^p(\Omega;\R^d)}, \qquad \forall\, Z^1,Z^2\in  L^p(\Omega;\R^d).
\end{equation}

\medskip 

The space 
$\PP_p(\R^d)$ endowed with the $p$-Wasserstein metric $W_p$ is a complete and separable metric space.

\medskip

The existence of a minimizer in \eqref{eq:wasserstein} can be proved
by the direct method in Calculus of Variations.
When the measure $\mu_1$ is absolutely continuous with respect to Lebesgue measure $\cL^d$ on $\R^d$,
 the minimizer $\gamma$ is unique and it is concentrated on the graph
of a map, $\gamma=(i_{\R^d},T)_\sharp\mu_1$, where $i_{\R^d}$ is the identity map of $\R^d$ and $T$ 
is a minimizer in the Monge transport problem
\begin{equation}\label{eq:Monge}
\inf\left\{\int_{\R^d}|x-S(x)|^p\,\d\mu_1(x)\,:\,S_\sharp\mu_1=\mu_2\right\}.
\end{equation}
The Wasserstein distance has the following characterization, known as Benamou-Brenier formula:
\begin{equation}\label{B&B}
	W_p^p(\mu_0,\mu_1)=\min \left\{ \int_0^1\int_{\R^d} |v_t(x)|^p \,\d\mu_t(x)\,\d t : (\mu,v) \in \operatorname{CE}, \, \mu_{t=0}=\mu_0,  \mu_{t=1}=\mu_1\right\},
\end{equation}
where 
\begin{equation*}
\begin{aligned}
\operatorname{CE}:= \Big\{&(\mu,v): \mu \in C([0,T];\PP_p(\R^d)), v \in L^p([0,1] \times \R^d; \mu_t \otimes \d t)\\
 &\text{ such that  } \de_t\mu_t+\div(v_t\mu_t)= 0 \text{ in the sense of distributions } \Big\}.
\end{aligned}
\end{equation*}
Notice that the minimizers are the constant speed geodesics joining $\mu_0$ to $\mu_1$, i.e. $\{\sigma_t\}_{t\in[0,1]}$ such that
 $\sigma_0=\mu_0$,  $\sigma_1=\mu_1$ and $W_p(\sigma_t,\sigma_s)=|t-s|W_p(\mu_0,\mu_1)$ for any $t,s\in[0,1]$.

\medskip 

We recall the following definition as in \cite[Definition~2.2]{fornasier2018mean}.
\begin{definition}\label{def:admphi}
We say that $\psi:[0,+\infty)\to[0,+\infty)$ is an \emph{admissible} function if $\psi(0)=0$, 
$\psi$ is strictly convex and of class $C^1$ with $\psi'(0)=0$, 
superlinear at $+\infty$, i.e., $\lim_{r\to+\infty}\dfrac{\psi(r)}{r}=+\infty$, 
and doubling, i.e., there exists $A>0$ such that
\[\psi(2r)\le A (1+\psi(r))\quad \textrm{for any }r\in[0,+\infty).\]
\end{definition}

\medskip

We observe that an admissible function $\psi$ satisfies
\begin{equation}\label{doubling}
	r\psi'(r)\leq A(1+\psi(r)),\qquad \forall\,r\in [0,+\infty).
\end{equation}

\medskip
 
The following result provides equivalent conditions for the convergence in the space $\PP_p(\R^d)$ and the 
characterization of compactness.

\begin{proposition}\label{prop:wassconv}
Let $\{\mu_n\}_{n\in\N}\subseteq\mathscr P_p(\R^d)$ and $\mu\in\mathscr P_p(\R^d)$, the following assertions are equivalent:
\begin{enumerate}
\item $\displaystyle\lim_{n\to\infty}W_p(\mu_n,\mu)=0$;
\item \label{Nmoment} $\mu_n$ weakly converges to $\mu$ and $\m_p(\mu_n)\to \m_p(\mu)$ as $n\to+\infty$;
\item $\displaystyle\lim_{n\to+\infty}\int_{\R^d} \varphi(x)\,\d\mu_n(x)=\int_{\R^d}\varphi(x)\,\d\mu(x)$,\\
for every continuous function $\varphi:\R^d\to\R$ s.t. $|\varphi(x)|\leq C(1+|x|^p)$ for any $x\in\R^d$;
\item  \label{Npsi} $\mu_n$ weakly converges to $\mu$ and there exists $\psi:[0,+\infty)\to[0,+\infty)$ admissible, 
according to Definition \ref{def:admphi},
such that
\begin{equation}\label{eq:unifmompsi}
\sup_{n \in \N}\int_{\R^d} \psi(|x|^p)\,\d\mu_n(x) < +\infty.
\end{equation}
\end{enumerate}
Moreover, a family $\KK\subset \PP_p(\R^d)$ is relatively compact if and only if
there exists an admissible function $\psi:[0,+\infty)\to[0,+\infty)$ such that
\begin{equation}\label{eq:unifmompsiF}
\sup_{\mu \in \KK}\int_{\R^d} \psi(|x|^p)\,\d\mu(x) < +\infty.
\end{equation}

\end{proposition}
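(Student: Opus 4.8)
Everything hinges on the notion of \emph{uniform integrability} of $x\mapsto|x|^p$ against a family of probability measures, i.e.\ $\lim_{R\to+\infty}\sup_\mu\int_{\{|x|>R\}}|x|^p\,\d\mu=0$, together with the classical de la Vall\'ee--Poussin criterion, which relates it to the existence of a convex increasing $\theta$ with $\theta(r)/r\to+\infty$ and $\sup_\mu\int\theta(|x|^p)\,\d\mu<+\infty$. The plan is to take (2) as a hub and prove $(1)\Rightarrow(2)$, $(2)\Rightarrow(3)$, $(3)\Rightarrow(2)$, $(2)\Rightarrow(4)$, $(4)\Rightarrow(2)$ and the key $(2)\Rightarrow(1)$. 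The implication $(1)\Rightarrow(2)$ is immediate: $W_1\le W_p$ and $W_1$ dominates the bounded--Lipschitz distance, so $W_p\to0$ forces weak convergence, while $\m_p(\mu)=W_p(\mu,\delta_0)$ gives $|\m_p(\mu_n)-\m_p(\mu)|\le W_p(\mu_n,\mu)\to0$; and $(3)\Rightarrow(2)$ is obtained by testing (3) against bounded continuous functions and against $x\mapsto|x|^p$. The common ingredient for $(2)\Rightarrow(3)$ and $(2)\Rightarrow(4)$ is that weak convergence plus $\m_p(\mu_n)\to\m_p(\mu)$ implies uniform integrability of $\{|x|^p\}$ with respect to $\{\mu_n\}$, via the standard truncation argument: write $|x|^p=|x|^p\chi_R+|x|^p(1-\chi_R)$ for a continuous cut-off $\chi_R$, pass to the limit in $n$ in the (bounded continuous) first term, then let $R\to+\infty$. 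Granted this, $(2)\Rightarrow(3)$ follows by the same decomposition applied to $\varphi$ with $|\varphi(x)|\le C(1+|x|^p)$: $\int\varphi\chi_R\,\d\mu_n\to\int\varphi\chi_R\,\d\mu$, and the tail $\big|\int\varphi(1-\chi_R)\,\d\mu_n\big|\le C\int_{\{|x|>R\}}(1+|x|^p)\,\d\mu_n$ is small uniformly in $n$.

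For $(2)\Rightarrow(4)$, the uniform integrability just obtained and de la Vall\'ee--Poussin yield a convex increasing superlinear $\theta$ with $\sup_n\int\theta(|x|^p)\,\d\mu_n<+\infty$, and I would then replace $\theta$ by an \emph{admissible} function $\psi$ (Definition \ref{def:admphi}) with $\psi\le\theta$ up to an additive constant; this is the one genuinely technical point (commented on below). Conversely, for $(4)\Rightarrow(2)$, an admissible $\psi$ is in particular superlinear, so the bound $\sup_n\int\psi(|x|^p)\,\d\mu_n<+\infty$ forces uniform integrability of $\{|x|^p\}$ (the easy half of de la Vall\'ee--Poussin); lower semicontinuity of $\mu\mapsto\int\psi(|x|^p)\,\d\mu$ under weak convergence gives the same bound for $\mu$, hence $\m_p(\mu)<+\infty$, and weak convergence together with uniform integrability yields $\m_p(\mu_n)\to\m_p(\mu)$.

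The analytic core is $(2)\Rightarrow(1)$. For $R>0$ let $f_R\colon\R^d\to\R^d$ be the radial truncation, $f_R(x)=x$ if $|x|\le R$ and $f_R(x)=Rx/|x|$ otherwise; it is continuous, so $(f_R)_\sharp\mu_n\rightharpoonup(f_R)_\sharp\mu$ and both are supported on the compact ball $\overline{B}_R$. On a fixed compact set weak convergence of probability measures implies $W_p$-convergence, since $W_p^p(\alpha,\beta)\le(2R)^{p-1}W_1(\alpha,\beta)$ for $\alpha,\beta$ supported on $\overline{B}_R$ and $W_1\to0$ there; hence $W_p\big((f_R)_\sharp\mu_n,(f_R)_\sharp\mu\big)\to0$ for each fixed $R$. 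Moreover, using the plan $(i_{\R^d},f_R)_\sharp\mu_n\in\Gamma\big(\mu_n,(f_R)_\sharp\mu_n\big)$,
\begin{align*}
W_p^p\big(\mu_n,(f_R)_\sharp\mu_n\big)&\le\int_{\R^d}|x-f_R(x)|^p\,\d\mu_n(x)\\
&=\int_{\{|x|>R\}}(|x|-R)^p\,\d\mu_n(x)\le\int_{\{|x|>R\}}|x|^p\,\d\mu_n(x),
\end{align*}
and by the uniform integrability coming from (2) the right-hand side tends to $0$ as $R\to+\infty$ uniformly in $n$; the analogous estimate holds for $\mu$ since $\m_p(\mu)<+\infty$. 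Given $\eps>0$, I would fix $R$ so that both tail terms are $<\eps/3$ for all $n$, then $n_0$ with $W_p((f_R)_\sharp\mu_n,(f_R)_\sharp\mu)<\eps/3$ for $n\ge n_0$, and conclude $W_p(\mu_n,\mu)<\eps$ by the triangle inequality.

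Finally, the compactness characterization. If $\sup_{\mu\in\KK}\int\psi(|x|^p)\,\d\mu=:C<+\infty$ with $\psi$ admissible (hence superlinear), then $\mu(\{|x|>R\})\le C/\psi(R^p)\to0$ uniformly over $\KK$ by Chebyshev, so $\KK$ is tight and weakly relatively compact by Prokhorov; any weak limit $\nu$ of a sequence in $\KK$ satisfies $\int\psi(|x|^p)\,\d\nu\le C$ by lower semicontinuity, so $\nu\in\PP_p(\R^d)$, and uniform integrability (de la Vall\'ee--Poussin) together with $(2)\Rightarrow(1)$ turns weak convergence into $W_p$-convergence, so $\KK$ is relatively compact in $\PP_p(\R^d)$. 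Conversely, a $W_p$-relatively compact $\KK$ is weakly relatively compact, hence tight, and $\{|x|^p\}$ is uniformly integrable over it: otherwise one could pick $\mu_j\in\KK$, $R_j\to+\infty$ with $\int_{\{|x|>R_j\}}|x|^p\,\d\mu_j\ge\eps$, extract a $W_p$-convergent subsequence, and contradict $(1)\Rightarrow(2)$ together with the uniform--integrability consequence of weak-plus-moment convergence; then de la Vall\'ee--Poussin and the regularization produce the admissible $\psi$. I expect this regularization to be the \emph{main obstacle}: de la Vall\'ee--Poussin delivers only a convex superlinear $\theta$, whereas an admissible function must in addition be $C^1$, strictly convex with $\psi'(0)=0$, and \emph{doubling}. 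Smoothness and strict convexity are arranged by mollifying $\theta'$ and adding a vanishing strictly convex correction; the doubling property is secured by running the de la Vall\'ee--Poussin construction with thresholds $R_k$ satisfying both $\phi(R_k)\le2^{-k}$, where $\phi(R):=\sup_{\mu\in\KK}\int_{\{|x|^p>R\}}|x|^p\,\d\mu\to0$, and $R_k\ge2R_{k-1}$: the piecewise-linear primitive $\psi(r)=\int_0^r\#\{k:R_k\le s\}\,\d s$ then obeys $\sup_{\mu\in\KK}\int\psi(|x|^p)\,\d\mu\le\sum_k\phi(R_k)\le\sum_k2^{-k}<+\infty$, is superlinear, and — the extra spacing $R_k\ge2R_{k-1}$ preventing $2r$ from skipping a breakpoint — is doubling.
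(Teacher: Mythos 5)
Your proof is correct, and it follows the same route the paper takes: the paper gives no self-contained argument but defers to \cite{ambrosio2008gradient} (Lemma 5.1.7 and Proposition 7.1.5) for the truncation/uniform-integrability equivalences and to De la Vall\'ee Poussin, Dunford--Pettis and \cite{fornasier2018mean} (Lemma 2.3) for the passage from a generic convex superlinear $\theta$ to an \emph{admissible} doubling $\psi$ in $(2)\Rightarrow(4)$ and in the compactness statement. Your reconstruction of those ingredients — in particular the breakpoint construction with $R_k\ge 2R_{k-1}$ and $\phi(R_k)\le 2^{-k}$, which is exactly what secures the doubling property — is the one genuinely delicate step, and you have handled it correctly.
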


The proof can be carried on using \cite[Lemma 5.1.7, Proposition 7.1.5]{ambrosio2008gradient}.
Concerning the implication \eqref{Nmoment} to \eqref{Npsi}, it follows by De la Vall\'ee Poussin and  Dunford-Pettis theorems
together with \cite[Lemma~2.3]{fornasier2018mean} for the admissibility property.

\medskip

The following representation result for the (absolutely continuous) solutions of the continuity equation will play a key role in the sequel
(see \cite[Theorem 8.2.1]{ambrosio2008gradient}).
We denote by $\Gamma_T=\rmC([0,T];\R^d)$ the Banach space of the continuous functions, 
endowed with the $\sup$ norm. We denote by $e_t:\Gamma_T\to\R^d$  the evaluation map at time $t\in[0,T]$ 
 defined by $e_t(\gamma):=\gamma(t)$. 
 We say that $\eeta \in \PP(\Gamma_T)$ is concentrated on a set $B$ if $\eeta(\Gamma_T \setminus B) = 0$. 

\begin{theorem}[Superposition principle]\label{thm:sup_princ} Let $p\ge1$.
Let $\mu=\{\mu_t\}_{t\in [0,T]}\in C([0,T];\PP_p(\R^d))$ be a distributional solution
of the continuity equation $\partial_t \mu_t+\mathrm{div}(v_t\mu_t)=0$
for a Borel vector field $v:[0,T]\times\R^d\to\R^d$ satisfying
\begin{equation}\label{Summabilityv}
\int_0^T\int_{\R^d}|v_t(x)|^p\,\d\mu_t(x)\,\d t<+\infty.
\end{equation}
Then there exists a probability measure $\eeta\in \PP(\Gamma_T)$
such that
\begin{enumerate}
\item[(i)] $\mu_t=(e_t)_\sharp\eeta$ for every $t\in [0,T]$;
\item[(ii)] $\eeta$ is concentrated on the set of curves $ \gamma \in \AC^p([0,T];\R^d)$ satisfying
$$\dot\gamma(t)=v_t(\gamma(t)),\qquad \text{for $\cL_T$-a.e. }t\in[0,T].$$
\end{enumerate}
Conversely, given $\eeta\in\PP(\Gamma_T)$ satisfying item (ii) {and \eqref{Summabilityv} with} $\mu_t:= (e_t)_\sharp\eeta$ for every $t \in [0,T]$,  
then  $(\mu,v)$ is a distributional solution of $\partial_t\mu_t+\mathrm{div}(v_t\mu_t)=0$ .
\end{theorem}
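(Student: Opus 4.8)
The plan is to prove the Superposition Principle (Theorem \ref{thm:sup_princ}) in two directions. The forward direction is the substantial one: given a distributional solution $\mu=\{\mu_t\}$ of the continuity equation with a Borel vector field $v$ satisfying the $L^p$-integrability bound \eqref{Summabilityv}, I need to build a measure $\eeta\in\PP(\Gamma_T)$ concentrated on solutions of the ODE $\dot\gamma=v_t(\gamma)$ with $\mu_t=(e_t)_\sharp\eeta$. The strategy I would follow is the classical smoothing-and-compactness argument of Ambrosio (as in \cite[Theorem 8.2.1]{ambrosio2008gradient}). First I would mollify: convolving $\mu_t$ and $v_t\mu_t$ in space with a smooth kernel $\rho_\eps$, set $\mu_t^\eps:=\mu_t*\rho_\eps$ (which is now a smooth positive density, though possibly not a probability measure after localization — one handles this with a standard normalization) and $v_t^\eps:=(v_t\mu_t*\rho_\eps)/\mu_t^\eps$; then $(\mu^\eps,v^\eps)$ still solves the continuity equation, now with a smooth bounded-in-$x$ velocity field, so the classical Cauchy–Lipschitz flow $\bS^\eps$ of $v^\eps$ exists and $\mu_t^\eps=(\bS^\eps_t)_\sharp\mu_0^\eps$. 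Lifting to path space, I define $\eeta^\eps:=(\bS^\eps_\cdot)_\sharp\mu_0^\eps\in\PP(\Gamma_T)$, i.e. the law of the characteristic curves.

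The second step is to obtain compactness of the family $\{\eeta^\eps\}$ in $\PP(\Gamma_T)$ and pass to the limit. Tightness follows from a uniform moment/energy estimate: one shows $\int_{\Gamma_T}\big(\sup_t|\gamma(t)|^p + \int_0^T|\dot\gamma(t)|^p\,\d t\big)\,\d\eeta^\eps(\gamma)$ is bounded, using \eqref{Summabilityv} together with Jensen's inequality applied to the convolution (the key point being $\int_{\R^d}|v_t^\eps|^p\,\d\mu_t^\eps\le \int_{\R^d}|v_t|^p\,\d\mu_t$, a convexity estimate for the operation $(\mu,E)\mapsto |E/\mu|^p\mu$). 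This bound, via the Arzelà–Ascoli-type compactness in $\AC^p$ and Proposition \ref{prop:wassconv}, gives that $\{\eeta^\eps\}$ is tight, hence (up to subsequence) $\eeta^\eps\rightharpoonup\eeta$ weakly. Property (i), $\mu_t=(e_t)_\sharp\eeta$, passes to the limit because $e_t$ is continuous and $(e_t)_\sharp\eeta^\eps=\mu_t^\eps\to\mu_t$.

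The third step — which I expect to be the main obstacle — is proving property (ii), that $\eeta$ is concentrated on solutions of $\dot\gamma(t)=v_t(\gamma(t))$. The difficulty is that $v$ is merely Borel, so the functional $\gamma\mapsto\int_0^T|\dot\gamma(t)-v_t(\gamma(t))|\wedge 1\,\d t$ is not continuous under weak convergence of $\eeta^\eps$, and moreover $v_t^\eps\not\to v_t$ pointwise in any naive sense. The standard resolution is to test against a fixed continuous bounded vector field $w$: one estimates $\int_{\Gamma_T}\int_0^T |\dot\gamma(t)-w_t(\gamma(t))|\wedge 1\,\d t\,\d\eeta^\eps$ from above by a term converging to $\int_{\Gamma_T}\int_0^T|v_t(\gamma(t))-w_t(\gamma(t))|\wedge 1\,\d t\,\d\eeta$ (using $\mu_t^\eps\to\mu_t$ and the $L^1_{loc}$ approximation $v^\eps\to v$ in the measure $\mu_t\otimes\d t$ along a suitable sub-subsequence), while the left side is lower semicontinuous and passes to $\int_{\Gamma_T}\int_0^T|\dot\gamma-w_t(\gamma)|\wedge 1\,\d t\,\d\eeta$; then one takes $w\to v$ in $L^1(\mu_t\otimes\d t)$ along continuous functions (density), and since $\mu_t=(e_t)_\sharp\eeta$ the error terms vanish, forcing $\dot\gamma(t)=v_t(\gamma(t))$ for $\eeta$-a.e. $\gamma$ and a.e. $t$. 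Care is needed to keep the same subsequence through all these limits, and to justify that $\eeta$-a.e. curve lies in $\AC^p$ (which comes from lower semicontinuity of $\gamma\mapsto\int_0^T|\dot\gamma|^p\,\d t$ under uniform convergence and the uniform energy bound).

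Finally, the converse direction is elementary: given $\eeta$ satisfying (ii) with the integrability \eqref{Summabilityv} for $\mu_t:=(e_t)_\sharp\eeta$, one checks the weak formulation directly. For $\varphi\in C_c^\infty(\R^d)$ and $\gamma$ in the full-measure set, the chain rule gives $\frac{\d}{\d t}\varphi(\gamma(t))=\nabla\varphi(\gamma(t))\cdot v_t(\gamma(t))$ for a.e. $t$; integrating in $t$, then in $\eeta$, and using Fubini (justified by \eqref{Summabilityv} and boundedness of $\nabla\varphi$) together with $(e_t)_\sharp\eeta=\mu_t$ yields
\[
\int_{\R^d}\varphi\,\d\mu_T-\int_{\R^d}\varphi\,\d\mu_0=\int_0^T\!\!\int_{\R^d}\nabla\varphi(x)\cdot v_t(x)\,\d\mu_t(x)\,\d t,
\]
which is exactly the distributional formulation of $\partial_t\mu_t+\div(v_t\mu_t)=0$; a standard density argument extends it to all test functions and yields $\mu\in C([0,T];\PP_p(\R^d))$.
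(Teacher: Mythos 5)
The paper does not prove this theorem itself: it is quoted directly from \cite[Theorem 8.2.1]{ambrosio2008gradient}, and your outline is a faithful and correct reconstruction of exactly that argument (mollification with the Jensen-type estimate $\int|v^\eps_t|^p\,\d\mu^\eps_t\le\int|v_t|^p\,\d\mu_t$, lifting the regularized flow to $\PP(\Gamma_T)$, tightness from the uniform $\AC^p$-energy bound, and the identification of the limit via continuous test fields $w$ approximating $v$ in $L^1(\mu_t\otimes\d t)$), together with the standard direct computation for the converse. The only cosmetic slip is the remark about renormalizing $\mu_t*\rho_\eps$: convolution with a probability density preserves total mass, so no normalization is needed.
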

In Theorem \ref{lem:EN} in Appendix \ref{app:SP}, we prove a version of the superposition principle in the discrete setting.

\subsection{Disintegration and Young measures}

Let $\S$ and $S$ be Polish spaces. We say that a map $x\in S \mapsto \mu_x\in\PP(\S)$  is a Borel map 
if $x\mapsto \mu_x(A)$ is a Borel map for any open set $A\subset\S$. 

If $x\in S \mapsto \mu_x\in\PP(\S)$ is a Borel map and $\lambda\in\PP(S)$ we define the measure
$\mu_x\otimes\lambda \in \PP(\S)$ by 
$$(\mu_x\otimes\lambda) (A):= \int_S \mu_x(A)\,\d\lambda(x)$$ for any Borel set $A\subset\S$.
Equivalently 
$$\int_\S \varphi(z)\,\d(\mu_x\otimes\lambda)(z):= \int_S \int_\S \varphi(z)\,\d\mu_x(z)\,\d\lambda(x)$$
for any bounded Borel function $\varphi:\S\to\R$.

We state the following disintegration result (see for instance \cite[Section 5.3]{ambrosio2008gradient}).
\begin{theorem}[Disintegration]\label{thm:disint}
Let $\S$ and $S$ be Polish spaces.
Let $\mu\in\PP(\S)$ and $r:\S\to S$ a Borel map.
Then there exists a Borel measurable family of probability
measures $\{\mu_x\}_{x\in S}\subset \PP(\S)$,
uniquely defined for $r_\sharp\mu$-a.e. $x\in S$, such that 
$\mu_x(\S\setminus r^{-1}(x))=0$ for $r_\sharp\mu$-a.e. $x\in S$, and
$\mu=\mu_x\otimes(r_\sharp \mu)$.
In particular, for any bounded Borel map $\varphi:\S\to \R$ we have
\begin{equation}\label{disintegration}
\int_{\S}\varphi(z)\,d\mu(z)=\int_S \int_{r^{-1}(x)}\varphi(z)\,d\mu_x(z)\,d(r_\sharp \mu)(x).
\end{equation}
\end{theorem}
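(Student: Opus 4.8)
The plan is to reduce the statement to the special case of a product space with its first‑coordinate projection, and then to build the conditional measures by a Radon--Nikodym argument, passing to a compact model of the fibre space and invoking the Riesz representation theorem in order to bypass the usual difficulty of discarding uncountably many exceptional sets.

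\emph{Reduction.} First I would replace $r$ by a projection. The graph map $\iota:\S\to S\times\S$, $\iota(z):=(r(z),z)$, is injective and Borel, so by the Lusin--Souslin theorem its image $G$ is Borel in $S\times\S$ and $\iota:\S\to G$ is a Borel isomorphism. Setting $\tilde\mu:=\iota_\sharp\mu$ and $\nu:=r_\sharp\mu=\pi^1_\sharp\tilde\mu$, it suffices to disintegrate $\tilde\mu$ along $\pi^1$ as $\tilde\mu=\tilde\mu_x\otimes\nu$ with $\tilde\mu_x$ concentrated on $\{x\}\times\S$: then $\mu_x:=\pi^2_\sharp\tilde\mu_x$ is concentrated on the projection of $(\{x\}\times\S)\cap G=\{x\}\times r^{-1}(x)$, that is on $r^{-1}(x)$, and $\mu=\mu_x\otimes\nu$ follows by pushing $\tilde\mu=\tilde\mu_x\otimes\nu$ forward through $\pi^2$. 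Hence from now on I may assume $\S=S\times Z$ with $Z$ Polish and $r=\pi^1$; and since every Polish space is Borel isomorphic to a Borel subset $Z_0$ of a compact metric space $\hat Z$ (e.g.\ $\hat Z=[0,1]^\N$), after transporting $\tilde\mu$ through $\mathrm{id}_S\times\varphi$ (with $\varphi:Z\to Z_0$ the isomorphism) I may even assume $Z=\hat Z$ compact, with $\tilde\mu$ concentrated on $S\times Z_0$.

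\emph{Construction.} Next I would fix a countable, dense, $\mathbb{Q}$‑linear subalgebra $\mathcal D=\{\varphi_k\}_{k\in\N}\subset C(Z)$ containing the constant $1$. For each $k$ the finite signed measure $B\mapsto\int_{B\times Z}\varphi_k\,\d\tilde\mu$ on $\cB_S$ is absolutely continuous with respect to $\nu$; let $L_k\in L^1_\nu(S)$ be a Borel density. Discarding a $\nu$‑null set, I obtain a Borel set $S'$ with $\nu(S')=1$ on which all the countably many relations hold pointwise --- $\mathbb{Q}$‑linearity of $k\mapsto L_k$, $L_k\ge0$ whenever $\varphi_k\ge0$ (tested against $B=\{L_k<0\}$), and $L_k\equiv1$ when $\varphi_k\equiv1$ --- so that for each $x\in S'$ the assignment $\varphi_k\mapsto L_k(x)$ is a positive, normalized $\mathbb{Q}$‑linear functional on $\mathcal D$, hence continuous, and extends uniquely to a positive normalized linear functional on $C(Z)$; by Riesz representation the latter is integration against a unique $\mu_x\in\PP(Z)$ (and I put $\mu_x:=\delta_{z_0}$ for $x\notin S'$). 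Since $x\mapsto\int_Z\varphi_k\,\d\mu_x=L_k(x)$ is Borel and $\mathcal D$ is dense, $x\mapsto\mu_x$ is Borel into $\PP(Z)$. A monotone‑class argument then upgrades the identity $\int_{B\times Z}\varphi_k\,\d\tilde\mu=\int_B\big(\int_Z\varphi_k\,\d\mu_x\big)\,\d\nu(x)$ to $\tilde\mu=\tilde\mu_x\otimes\nu$ (with $\tilde\mu_x$ the image of $\mu_x$ on $\{x\}\times Z$); testing against $\mathds 1_{S\times(Z\setminus Z_0)}$ gives $\mu_x(Z\setminus Z_0)=0$ for $\nu$‑a.e.\ $x$, so the $\mu_x$ live on the original fibre space, and unwinding the reductions yields $\mu_x(\S\setminus r^{-1}(x))=0$ together with $\mu=\mu_x\otimes\nu$ and the Borel dependence.

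\emph{Uniqueness and main obstacle.} Uniqueness is then routine: if $\{\mu_x\}$ and $\{\mu'_x\}$ both satisfy the conclusion, then $\int_B\mu_x(A)\,\d\nu=\mu(r^{-1}(B)\cap A)=\int_B\mu'_x(A)\,\d\nu$ for every $B\in\cB_S$ and every $A$ in a fixed countable algebra generating $\cB_\S$, whence $\mu_x=\mu'_x$ for $\nu$‑a.e.\ $x$ after intersecting countably many null sets. The one genuinely delicate point is the construction: each density $L_k$ is defined only up to a $\nu$‑null set, and assembling the $L_k$ into fibre measures for \emph{every} $x$ would naively require discarding uncountably many null sets. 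Working with a countable dense subalgebra of $C(Z)$ keeps the exceptional set countable, and the compactness of $Z$ together with the Riesz theorem turns the resulting finitely additive data directly into a genuine probability measure --- so no separate $\sigma$‑additivity (continuity‑from‑above at $\emptyset$) verification is needed. One should also take care, in the reduction step, that the graph image is Borel (Lusin--Souslin), so that $\mu_x$ genuinely sits on $r^{-1}(x)$ and not merely on $\S$.
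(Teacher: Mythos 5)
The paper does not actually prove this theorem: it is quoted as a classical result with a pointer to \cite[Section 5.3]{ambrosio2008gradient}, so there is no in-paper argument to compare against. Your proposal is correct and is essentially the classical proof that underlies the cited reference (reduction to a first-coordinate projection via the graph map, compactification of the fibre space, Radon--Nikodym densities indexed by a countable dense $\mathbb{Q}$-subalgebra of $C(Z)$, Riesz representation on each fibre, monotone class, and a countable generating algebra for uniqueness). Two small points you may wish to make explicit: (i) the continuity of the $\mathbb{Q}$-linear functional $\varphi_k\mapsto L_k(x)$ on $\mathcal D$ is not automatic from linearity but follows from positivity together with normalization, since for every rational $q>\|\varphi_k\|_\infty$ the function $q\cdot 1-\varphi_k$ lies in $\mathcal D$ and is nonnegative, whence $|L_k(x)|\le\|\varphi_k\|_\infty$ on $S'$; (ii) the Borelness of the graph $G=\{(x,z):x=r(z)\}$ does not require Lusin--Souslin, as $G$ is the preimage of the (closed) diagonal of $S\times S$ under the Borel map $(x,z)\mapsto(x,r(z))$, and $\iota^{-1}=\pi^2|_G$ is continuous; Lusin--Souslin is only needed later if you want $x\mapsto\mu_x(A)$ Borel for arbitrary Borel $A$ after transporting back through the embedding, which is more than the statement requires.
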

\begin{remark}\label{rem:disint_proj}
A typical case is given by $\S=S\times Y$, where $Y$ is a Polish space, and $r=\pi^1$. 
Since $(\pi^1)^{-1}(x)=\{x\}\times Y$ for all $x\in S$, we identify 
each measure $\mu_x\in\PP(S\times Y)$, which is concentrated in $\{x\}\times Y$, with a measure $\mu_x\in\PP(Y)$.
With this identification, the formula \eqref{disintegration} takes the form
\begin{equation}\label{disintegration2}
\int_{S\times Y}\varphi(x,y)\,d\mu(x,y)=\int_S \int_{Y}\varphi(x,y)\,d\mu_x(y)\,d(r_\sharp \mu)(x).
\end{equation}
\end{remark}

\medskip

Let  $\T$ and $S$ be Polish spaces, $\lambda\in\PP(\T)$ and $E$ be a Banach space. 
We say that $h: \T \times S \to E$ is a Carath\'eodory function if
$$ \text{for $\lambda$-a.e. }t\in \T, \qquad x\mapsto h(t,x) \text{ is continuous}, $$
$$ \forall\,  x\in S, \qquad t\mapsto h(t,x) \text{ is $\lambda$-measurable}. $$

\medskip

Let us now recall the definition of \emph{Young measure} (see \cite{bernard2008young,valadier2004young}) and a density
 result which will turn out to be a crucial tool in our treatment.
\begin{definition}\label{def:Youngconv}
Let $\T$ and $S$ be Polish spaces and $\lambda\in\PP(\T)$. 
We say that $\nu \in \PP(\T \times S)$ is a \textit{Young measure} on $\T\times S$ if $\pi^1_\sharp \nu = \lambda$. 
Furthermore given $\nu^n, \nu \in \PP(\T\times S)$ Young measures, 
we say that $\nu^n \xrightarrow{\mathcal{Y}} \nu$ as $n \to +\infty$ if
\[ \lim_{n \to + \infty} \int_{\T \times S} h(\tau, u) \,\d \nu^n(\tau,u) = \int_{\T \times S} h(\tau, u) \,\d \nu(\tau,u), \]
for any $h: \T \times S \to \R$ Carath\'eodory and bounded.
\end{definition}

\begin{remark}\label{rem:Young-weak}
Let $\T$, $S$ be Polish spaces, $\lambda\in\PP(\T)$ and $\nu^n,\nu$ Young measures on $\T\times S$. 
Then $\nu^n \xrightarrow{\mathcal{Y}} \nu$ in the sense of Definition \ref{def:Youngconv} if and only if $\nu^n\to\nu$ weakly. One implication follows immediately from the definitions, while the other comes from \cite[Theorem~7]{valadier1990young} (see also \cite{valadier2004young}).

We also recall that weak convergence in $\PP([0,T] \times S)$ is induced by a distance $\delta$.
When $S$ is compact, we can choose as $\delta$ any Wasserstein distance on $\PP([0,T] \times S)$. 
\end{remark}

To any Borel map $u:\T\to S$ we can associate the Young measure
$\nu:=(i_\T,u)_\sharp\lambda$, which is concentrated on the graph of $u$.
In this case, $\nu$ can be written as $\nu=\delta_{u(\tau)}\otimes\lambda$ and,
using the disintegration Theorem \ref{thm:disint}, we have that $\nu_\tau=\delta_{u(\tau)}$ for $\lambda$-a.e. $\tau\in\T$.
Given a Young measure $\nu$, in general the disintegration  $\nu_\tau$ of $\nu$ w.r.t. $\lambda$
 is not of the form $\delta_{u(\tau)}$ on a set of $\lambda$ positive measure, 
for some $u:\T\to S$. 
The following classical Lemma states that the Young measures induced by maps are ``dense'', in the set of Young measures, 
provided $\lambda$ is non atomic. 
We say that a measure $\lambda\in\PP(\T)$ is non atomic if $\lambda(\{\tau\})=0$ for any $\tau\in\T$. 

\begin{lemma}[{see \cite[Theorem~2.2.3]{valadier2004young}}]\label{lemma:young}
Let $\T$ and $S$ be Polish spaces and $\lambda\in\PP(\T)$ non atomic. 
If $\nu \in \PP(\T \times S)$ is a Young measure, then there exists a sequence of Borel maps $u^n: \T \to S$ such that 
\[ \nu^n:= (i_\T,u^n)_\sharp \lambda \xrightarrow{\;\;\mathcal{Y}\;\;\;} \nu=\nu_\tau\otimes\lambda.  \]
Precisely,
\begin{equation}
\lim_{n \to +\infty} \int_{\T} h(\tau, u^n(\tau)) \,\d \lambda(\tau) = \int_{\T} \int_S h (\tau,u) \,\d \nu_\tau(u)\,\d\lambda(\tau),
\end{equation}
for every $h: \T \times S \to \R $ Carath\'eodory and bounded.
\end{lemma}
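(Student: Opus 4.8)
The plan is to reduce the statement to a density assertion and then establish it by two successive approximations. By Remark \ref{rem:Young-weak} the convergence $\nu^n\xrightarrow{\mathcal Y}\nu$ is the same as weak convergence in $\PP(\T\times S)$, and since $\T\times S$ is Polish the weak topology on $\PP(\T\times S)$ is metrizable; hence it suffices to show that the set $\mathcal D:=\{(i_\T,u)_\sharp\lambda:\,u\in\Bor(\T;S)\}$ is weakly dense in the set $\mathcal Y_\lambda:=\{\nu\in\PP(\T\times S):\pi^1_\sharp\nu=\lambda\}$ of Young measures with first marginal $\lambda$, and then to extract the required sequence $u^n$ by a diagonal argument. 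Non-atomicity of $\lambda$ is indispensable here (an atom at $\tau$ would force $\nu_\tau$ to be a Dirac for every element of $\mathcal D$), so it must be consumed somewhere in the construction.

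\emph{Step 1: reduction to piecewise constant disintegrations.} I would first write $\nu=\nu_\tau\otimes\lambda$ via Theorem \ref{thm:disint} and note that $\tau\mapsto\nu_\tau$ is a $\lambda$-measurable map from $\T$ into the Polish space $\PP(S)$. By Lusin's theorem, for every $\eps>0$ there is a compact set $L\subset\T$ with $\lambda(\T\setminus L)<\eps$ on which $\tau\mapsto\nu_\tau$ is continuous, hence has relatively compact (totally bounded) image; covering the image by finitely many small weak-balls and pulling back gives a finite Borel partition $L=\bigcup_{i=1}^{N}A_i$ and measures $m_i\in\PP(S)$ with $\nu_\tau$ weakly close to $m_i$ for $\tau\in A_i$. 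Then the piecewise constant Young measure $\bar\nu:=\sum_{i=0}^{N}\mathds 1_{A_i}\,m_i\otimes\lambda$ (with $A_0:=\T\setminus L$ and $m_0$ arbitrary) is weakly $O(\eps)$-close to $\nu$, so it is enough to approximate measures of this special form by elements of $\mathcal D$.

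\emph{Step 2: localization and use of non-atomicity.} Since $(i_\T,u)_\sharp\lambda=\sum_{i=0}^{N}(i_{A_i},u|_{A_i})_\sharp(\lambda\mres A_i)$, I would approximate $m_i\otimes(\lambda\mres A_i)$ separately on each $A_i$ and glue, reducing everything to the claim: given a non-atomic finite Borel measure $\rho$ on a Polish space and $m\in\PP(S)$, for every $\eps>0$ there is a Borel map $u$ with $(i_\T,u)_\sharp\rho$ weakly $\eps$-close to $m\otimes\rho$. For this I would use tightness to pick compacts $L'\subset\T$ and $K\subset S$ carrying all but $\eps$ of the mass of $\rho$ and of $m$; partition $K$ into small-diameter Borel sets $B_1,\dots,B_M$ (plus $B_0:=S\setminus K$) with chosen points $s_j\in B_j$; partition $L'$ into small-diameter Borel sets $D_\ell$; and, \emph{invoking the Sierpi\'nski/Lyapunov splitting lemma for non-atomic measures}, split each $D_\ell$ into Borel pieces $C_{\ell,j}$ with $\rho(C_{\ell,j})=m(B_j)\rho(D_\ell)$. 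Setting $u:=\sum_{\ell,j}s_j\mathds 1_{C_{\ell,j}}$ (arbitrary off $L'$), a direct estimate against a bounded continuous $h$ that is uniformly continuous on the compact set $L'\times K$ shows $(i_\T,u)_\sharp\rho$ is close to $m\otimes\rho$ once the two partitions are fine enough.

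Putting Steps 1 and 2 together yields the weak density of $\mathcal D$ in $\mathcal Y_\lambda$, hence, by metrizability of the weak topology, a sequence $u^n\in\Bor(\T;S)$ with $(i_\T,u^n)_\sharp\lambda\to\nu$ weakly, which is exactly $\nu^n\xrightarrow{\mathcal Y}\nu$; the displayed limit in the statement is then just the definition of Young convergence read against Carath\'eodory bounded functions $h(\tau,u)$. The step I expect to be the main obstacle is Step 1 — replacing the merely measurable dependence $\tau\mapsto\nu_\tau$ by a piecewise constant one, for which Lusin's theorem together with total boundedness of continuous images of compacta is the key — together with, inside Step 2, the bookkeeping needed to control the non-compactness of $\T$ and $S$ through tightness, the measure-splitting lemma being the precise point where the hypothesis that $\lambda$ has no atoms is used.
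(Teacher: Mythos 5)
The paper does not prove this lemma at all: it is imported verbatim from the literature (Valadier, Theorem~2.2.3), so there is no in-paper argument to compare against. Your proposal is, in substance, the classical proof of that cited result, and I do not see a genuine gap in it. The reduction to weak density of $\{(i_\T,u)_\sharp\lambda\}$ among Young measures with marginal $\lambda$ is legitimate because the passage from continuous bounded to Carath\'eodory test functions is exactly Remark~\ref{rem:Young-weak} (itself a nontrivial imported fact — be aware you are leaning on it, not proving it); Step~1 (Lusin applied to the Borel map $\tau\mapsto\nu_\tau\in\PP(S)$, total boundedness of the continuous image of the compact set, pullback partition) correctly handles the only genuinely delicate point, namely replacing a merely measurable disintegration by a piecewise constant one; and Step~2 correctly locates where non-atomicity enters, via the Sierpi\'nski splitting of each cell $D_\ell$ into pieces of prescribed $\rho$-measure. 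Two points deserve explicit care in a full write-up. First, in Step~1 the metric on $\PP(S)$ must be chosen so that $\sup_{\tau\in A_i} d(\nu_\tau,m_i)<\delta$ transfers to an estimate on $\big|\int h\,\d\nu-\int h\,\d\bar\nu\big|$ uniformly over the test class defining the metric on $\PP(\T\times S)$; the bounded-Lipschitz (Dudley) metric does this, since $h(\tau,\cdot)$ inherits the Lipschitz bound of $h$, whereas the L\'evy--Prokhorov metric would require an extra argument. Second, in Step~2 both partitions (of $K$ into the $B_j$ and of $L'$ into the $D_\ell$) must be refined simultaneously, since the error term contains both $\operatorname{diam}(B_j)$ (through $h$'s modulus of continuity in $u$) and $\operatorname{diam}(D_\ell)$ (because the sets $C_{\ell,j}$ produced by the splitting lemma sit in uncontrolled positions inside $D_\ell$, so you compare $\int_{C_{\ell,j}}h(\tau,s_j)\,\d\rho$ with $m(B_j)\int_{D_\ell}h(\tau,s_j)\,\d\rho$ only up to the oscillation of $h(\cdot,s_j)$ over $D_\ell$). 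With those two points made explicit, the argument closes.
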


\section{Structural assumptions for the dynamics of the optimal control
problems}\label{sec:assumptions}

In this section we collect our main structural assumptions on the
system $\S=(U,f,\cC,\cC_T)$ characterizing the dynamics and the
cost of the control problems under study, where $U$ is the space of controls, $f$
is the vector field driving the particles motion, $\cC$ and
$\cC_T$ are the running and terminal cost functionals. 

We fix $p\in[1,+\infty)$ and denote by $\sfd p$ the following metric on $\R^d\times\mathscr P_p(\R^d)$:
\[\sfd p((x,\mu),(y,\nu)):=\left(|x-y|^p+W_p^p(\mu,\nu)\right)^{1/p}.\]

\begin{assumption}[Basic Assumption]\label{BA}
We assume that the system $\S:=(U,f,\mathcal C,\mathcal C_T)$ satisfies:
\begin{enumerate}[label=(A.\arabic*)]
\item \label{hp:U} $U$ is a compact metrizable space;
\item \label{itemf:main}  
$f:\R^d\times U\times\mathscr P_p(\R^d)\to\R^d $ is continuous and Lipschitz continuous w.r.t. the metric $\sfd p$, uniformly in $u\in U$. Precisely, there exists $L>0$ such that 
\begin{equation}\label{eq:Lipf}
 |f(x,u,\mu)-f(y,u,\nu)|\le L\, \sfd p((x,\mu),(y,\nu)) \ ,
 \end{equation}
for every $u\in U$ and $(x,\mu),(y,\nu)\in\R^d\times\mathscr P_p(\R^d)$.
\item \label{itemg:main}
$\mathcal C:\R^d\times U\times\mathscr P_p(\R^d)\to[0,+\infty)$ 
and $\mathcal C_T:\R^d\times\mathscr P_p(\R^d)\to[0,+\infty)$ are continuous functions such that 
\begin{equation}\label{eq:growthC}
\begin{aligned}
& \mathcal C(x,u,\mu) \le D\left(1+|x|^p+\mathrm{m}_p^p(\mu)\right) \qquad \forall\, (x,u,\mu)\in\R^d\times U\times\mathscr P_p(\R^d)\\
& \mathcal C_T(x,\mu) \le D\left(1+|x|^p+\mathrm{m}_p^p(\mu)\right)  \qquad \forall\, (x,\mu)\in\R^d\times \mathscr P_p(\R^d),
\end{aligned}
\end{equation}
for some $D>0$. 
\end{enumerate}
\end{assumption}

\begin{remark}
From Assumption \ref{BA} it holds
\begin{equation}\label{f:growth}
|f(x,u,\mu)|\le C\left(1+|x|+\mathrm{m}_p(\mu)\right), \qquad \forall\,(x,u,\mu)\in\R^d\times U\times\mathscr P_p(\R^d),
\end{equation}
for some $C>0$.
Indeed, it is sufficient to choose $(y,\nu)=(0,\delta_0)$ in \eqref{eq:Lipf} and observe that 
$f(0,u,\delta_0)$ is bounded and $W_p(\mu,\delta_0)=\mathrm{m}_p(\mu)$.
\end{remark}

Concerning item \ref{hp:U} of Assumption \ref{BA}, let us recall the following result.
\begin{proposition}
If $U$ is compact metrizable space then, for every distance $d_{U}$ inducing the original topology of $U$, there exists a separable Banach space $V$ and an isometry $j: U \to V$. 
In particular, the image $j(U)$ is a compact subset of $V$. 
\end{proposition}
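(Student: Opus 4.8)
The statement to prove is the following: if $U$ is a compact metrizable space then, for every distance $d_U$ inducing the original topology of $U$, there exists a separable Banach space $V$ and an isometry $j\colon U\to V$, and moreover $j(U)$ is a compact subset of $V$.

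The plan is to use the classical Kuratowski embedding, adapting it so that the target is separable. Fix a distance $d_U$ inducing the topology of $U$. Since $U$ is compact metrizable, it is separable; choose a countable dense set $\{x_n\}_{n\in\N}\subset U$, and also fix a basepoint $x_0\in U$. For $x\in U$ define the sequence $j(x):=\big(d_U(x,x_n)-d_U(x_0,x_n)\big)_{n\in\N}$. The triangle inequality gives $|d_U(x,x_n)-d_U(x_0,x_n)|\le d_U(x,x_0)$ for every $n$, so $j(x)\in\ell^\infty$ and in fact $\|j(x)\|_{\ell^\infty}\le d_U(x,x_0)$. More importantly, again by the triangle inequality, $\big|\,(d_U(x,x_n)-d_U(x_0,x_n))-(d_U(y,x_n)-d_U(x_0,x_n))\,\big|=|d_U(x,x_n)-d_U(y,x_n)|\le d_U(x,y)$, so $\|j(x)-j(y)\|_{\ell^\infty}\le d_U(x,y)$. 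For the reverse inequality, given $x,y\in U$ and $\eps>0$, pick $x_n$ with $d_U(x,x_n)<\eps$; then the $n$-th coordinate of $j(x)-j(y)$ equals $d_U(x,x_n)-d_U(y,x_n)$, whose absolute value is at least $d_U(y,x_n)-d_U(x,x_n)\ge d_U(x,y)-2d_U(x,x_n)> d_U(x,y)-2\eps$. Letting $\eps\to 0$ yields $\|j(x)-j(y)\|_{\ell^\infty}\ge d_U(x,y)$. Hence $j$ is an isometry from $(U,d_U)$ into $\ell^\infty$.

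The space $\ell^\infty$ is not separable, so the last step is to replace it by a separable Banach space without losing the isometry property. Simply take $V$ to be the closed linear span in $\ell^\infty$ of the set $j(U)\cup\{0\}$; equivalently, $V:=\overline{\mathrm{span}}\,j(U)$. Since $j(U)$ is the continuous image of the compact space $U$ under the (isometric, hence continuous) map $j$, it is a compact, in particular separable, subset of $\ell^\infty$; the closed linear span of a separable subset of a Banach space is a separable Banach space (finite rational linear combinations of a countable dense subset of $j(U)$ are dense in $V$). Thus $V$ is a separable Banach space, $j\colon U\to V$ is an isometry, and $j(U)$ is compact in $V$ because $U$ is compact and $j$ is continuous. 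This completes the proof.

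No step here is a serious obstacle; the only point requiring a little care is verifying the reverse isometry inequality $\|j(x)-j(y)\|_{\ell^\infty}\ge d_U(x,y)$, which is where density of $\{x_n\}$ is genuinely used, and the observation that passing to the closed span of $j(U)$ preserves both the isometry and separability. Alternatively, one could invoke that $C(U)$ itself is separable (since $U$ is compact metrizable) and use the embedding $x\mapsto \big(z\mapsto d_U(x,z)\big)\in C(U)$ after subtracting a fixed function to center it, but the $\ell^\infty$ construction above is self-contained.
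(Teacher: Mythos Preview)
Your proof is correct. The paper takes a different route: it fixes a basepoint $u_0\in U$, defines the Banach space $B:=\{F\in\mathrm{Lip}(U):F(u_0)=0\}$ with the Lipschitz seminorm as norm, and embeds $U$ into the dual $B'$ via $\langle j(u),F\rangle:=F(u)$. The isometry is checked by testing against $F(z)=d_U(z,u)-d_U(u,u_0)$, and separability is obtained, as you do, by passing to $V:=\overline{\mathrm{span}\,j(U)}$ in $B'$.

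Your Kuratowski embedding is more elementary and entirely self-contained; it avoids dual spaces altogether. The paper's construction, on the other hand, is chosen with an eye to what comes next: in the proof of Proposition~\ref{prop:ConvexRL} they embed $\mathscr P(U)$ into the \emph{same} dual $B'$ and recognize the induced norm as the $1$-Wasserstein distance via Kantorovich--Rubinstein duality. So their choice of $V\subset B'$ is tailored to the relaxed setting, while yours is a clean standalone argument for the proposition as stated.
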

\begin{proof}
Fix a point $u_0\in U$ and consider the Banach space $B:=\{F\in \mathrm{Lip}(U): F(u_0)=0\}$ endowed with the norm
$$ \|F\|_B:=\sup_{u,v\in U, u\not=v}\frac{|F(u)-F(v)|}{d_{U}(u,v)}.$$
Denoting by $B'$ the dual space of $B$, we define the map $j: U \to B'$ by $\langle j(u),F\rangle_{B',B}:=F(u)$. 
By the definition of dual norm, it is immediate to check that
$$\|j(u)-j(v) \|_{B'}\leq d_{ U}(u,v), \quad \forall \, u,v \in U$$
On the other hand, evaluating $\langle j(u)-j(v), F\rangle_{B',B}$ with $F(z):=d_{U}(z,u)-d_{ U}(u,u_0)$,
we obtain that
 $$\|j(u)-j(v) \|_{B'}= d_{U}(u,v),$$
so that $j$ is an isometry from $U$  to $j(U)\subset B'$.
 We eventually set $V:=\overline{\mathrm{span}(j(U))}^{B'}$, which is a separable Banach space since $U$, and therefore $j( U)$, 
 is separable.
\end{proof}

{When specified, we will assume the following further hypothesis.}

\begin{assumption}[Convexity Assumption]\label{CA}
We say that $\S =(U,f,\cC,\cC_T)$ satisfies  the \emph{convexity assumption} if $\S$ satisfies Assumption \ref{BA} and
\begin{enumerate}[label=(C.\arabic*)]
\item $U$ is a compact convex subset of a separable Banach space $V$;
\item for any $x\in\R^d$ and $\mu\in\PP_p(\R^d)$, the map $u\mapsto f(x,u,\mu)$ satisfies the \emph{affinity condition}: 
\begin{equation*}
f(x,\alpha u + (1-\alpha) v,\mu) = \alpha f(x,u,\mu) + (1-\alpha) f(x,v,\mu), \quad \forall\,u,v \in U,\; \forall\, \alpha \in [0,1];
\end{equation*}
\item for any $x\in\R^d$ and $\mu\in\PP_p(\R^d)$ the map $u \mapsto\mathcal C(x,u,\mu)$ is convex:
\begin{equation*}
\cC(x,\alpha u + (1-\alpha) v,\mu) \leq \alpha \cC(x,u,\mu) + (1-\alpha) \cC(x,v,\mu), \quad \forall\,u,v \in U,\; \forall\, \alpha \in [0,1].
\end{equation*}
 \end{enumerate}
\end{assumption}

\subsection{The relaxed setting}\label{sub:Relax}
For later use, we define a so-called \emph{relaxation/lifting} of $\S$ as follows.

\begin{definition}\label{def:relax_setting}
Given  the system $\S = (U,f,\cC,\cC_T)$ satisfying Assumption \ref{BA}, 
we define $\S'=(\UU,\FF,\CC,\CC_T)$ as follows:
\begin{enumerate}[label=(\roman*)]
\item $\UU:=\PP(U)$;
\item $\FF:\R^d\times\UU\times\PP_p(\R^d)\to \R^d$ with 
$$\FF(x,\sigma,\mu):=\int_U f(x,u,\mu)\,\d\sigma(u);$$
\item $\CC:\R^d\times\UU\times\PP_p(\R^d)\to [0,+\infty)$ with 
$$\CC(x,\sigma,\mu):=\int_U \cC(x,u,\mu)\,\d\sigma(u).$$
\item $\CC_T := \cC_T$.
\end{enumerate}
\end{definition}

\begin{proposition}\label{prop:ConvexRL}
If $\S = (U,f,\cC,\cC_T)$ satisfies Assumption \ref{BA},
then {its relaxation}  $\S' = (\UU,\FF,\CC,\cC_T)${, given in Definition \ref{def:relax_setting},} satisfies the Convexity Assumption \ref{CA}.
Moreover, defining $D_U:=\{\delta_u: u\in U\}\subset\UU$, the maps $\FF$ and $\CC$ restricted to  
$\R^d\times D_U\times\PP_p(\R^d)$ coincide with $f$ and $\cC$ respectively. 
\end{proposition}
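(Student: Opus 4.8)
The plan is to verify the four conditions in Assumption \ref{CA} one at a time for $\S' = (\UU, \FF, \CC, \cC_T)$, treating the underlying Banach space as the space $V$ provided by the Proposition preceding this one (so that $U$ isometrically embeds as a compact set $j(U) \subset V$), and then taking $\UU = \PP(U)$ to sit inside the Banach space of finite signed measures on $U$, or more precisely inside $C(U)'$ equipped with the weak$^*$ topology. First I would recall that $\PP(U)$ is convex and, since $U$ is compact metrizable, weak$^*$-compact (Prokhorov / Banach-Alaoglu), and that it is separable, so condition (C.1) holds once we identify the appropriate separable Banach space (one can take the closed span of $\PP(U)$ in $C(U)'$, exactly mirroring the construction in the Proposition above). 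The affinity condition (C.2) and the convexity condition (C.3) are then immediate from the linearity of the integral: for fixed $(x,\mu)$, the map $\sigma \mapsto \FF(x,\sigma,\mu) = \int_U f(x,u,\mu)\,\d\sigma(u)$ is linear in $\sigma$, hence affine, and $\sigma \mapsto \CC(x,\sigma,\mu) = \int_U \cC(x,u,\mu)\,\d\sigma(u)$ is likewise linear in $\sigma$, hence in particular convex. So the only real content is checking that $\S'$ satisfies the Basic Assumption \ref{BA}, i.e. items \ref{hp:U}, \ref{itemf:main}, \ref{itemg:main}.

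For \ref{hp:U} applied to $\S'$, I need $\UU = \PP(U)$ to be a compact metrizable space, which follows from $U$ compact metrizable: weak convergence on $\PP(U)$ is metrized (e.g. by a Wasserstein or Lévy–Prokhorov metric) and Prokhorov's theorem gives compactness. For \ref{itemf:main}, I must show $\FF$ is continuous on $\R^d \times \UU \times \PP_p(\R^d)$ and Lipschitz in $(x,\mu)$ uniformly in $\sigma$. The Lipschitz bound is the easy direction: for any $\sigma \in \PP(U)$,
\[
|\FF(x,\sigma,\mu) - \FF(y,\sigma,\nu)| \le \int_U |f(x,u,\mu) - f(y,u,\nu)|\,\d\sigma(u) \le L\,\sfd p((x,\mu),(y,\nu)),
\]
using \eqref{eq:Lipf} and $\sigma(U) = 1$. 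Continuity in the $\sigma$ variable requires a bit more care: I would fix a sequence $(x_n, \sigma_n, \mu_n) \to (x,\sigma,\mu)$, use the uniform Lipschitz bound to reduce to the case $x_n = x$, $\mu_n = \mu$ fixed, and then observe that $u \mapsto f(x,u,\mu)$ is a bounded continuous function on the compact set $U$ (bounded by \eqref{f:growth}), so $\sigma_n \to \sigma$ weakly in $\PP(U)$ gives $\int_U f(x,u,\mu)\,\d\sigma_n \to \int_U f(x,u,\mu)\,\d\sigma$; a standard $3\varepsilon$ argument combining this with the uniform Lipschitz estimate yields joint continuity. For \ref{itemg:main}, the growth bounds for $\CC$ transfer verbatim by integrating \eqref{eq:growthC} against the probability measure $\sigma$, and $\CC_T = \cC_T$ already satisfies its bound; continuity of $\CC$ is proved exactly as for $\FF$, using that $u \mapsto \cC(x,u,\mu)$ is continuous and bounded on the compact $U$.

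The main obstacle, such as it is, is the joint continuity of $\FF$ and $\CC$ in all three variables simultaneously — one has to be slightly careful that the convergence $f(x_n, u, \mu_n) \to f(x, u, \mu)$ is uniform in $u \in U$ (which it is, by the uniform-in-$u$ Lipschitz estimate \eqref{eq:Lipf}, so that the dependence on $(x,\mu)$ and the dependence on $\sigma$ genuinely decouple) before invoking weak convergence of $\sigma_n$. Everything else is bookkeeping. Finally, for the last assertion: for $\sigma = \delta_u \in D_U$ we have $\FF(x,\delta_u,\mu) = \int_U f(x,u',\mu)\,\d\delta_u(u') = f(x,u,\mu)$ and likewise $\CC(x,\delta_u,\mu) = \cC(x,u,\mu)$, so $\FF$ and $\CC$ restricted to $\R^d \times D_U \times \PP_p(\R^d)$ agree with $f$ and $\cC$ under the canonical identification $u \leftrightarrow \delta_u$; since $\CC_T = \cC_T$ by definition, this completes the proof.
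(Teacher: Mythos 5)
Your overall strategy is the same as the paper's (embed $\PP(U)$ into a dual Banach space, get compactness from Prokhorov, and read off affinity of $\FF$ and convexity of $\CC$ from linearity of the integral), and your verification that $\S'$ satisfies the Basic Assumption \ref{BA} -- the uniform Lipschitz bound for $\FF$, the growth bound for $\CC$, and the joint continuity via decoupling the $(x,\mu)$-dependence from the $\sigma$-dependence -- is correct and in fact more explicit than what the paper writes. The identification $\FF(x,\delta_u,\mu)=f(x,u,\mu)$, $\CC(x,\delta_u,\mu)=\cC(x,u,\mu)$ is also fine.

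There is, however, a genuine gap in your treatment of (C.1), which is the one step where the paper's proof has real content. You place $\PP(U)$ inside $C(U)'$ ``equipped with the weak$^*$ topology'' and propose the closed span of $\PP(U)$ in $C(U)'$ as the separable Banach space $V$. This does not work: condition (C.1) requires $\UU$ to be a compact convex subset of a separable Banach space, and the way this is used elsewhere in the paper (Dugundji's extension in Proposition \ref{lemmaB}, the piecewise-constant approximation in Theorem \ref{prop:RLntoRL}) forces compactness in the \emph{norm} topology of $V$. The dual norm of $C(U)'$ restricted to measures is the total variation norm, under which $\|\delta_u-\delta_v\|=2$ for $u\neq v$; hence for infinite $U$ the set $\PP(U)$ is not norm-compact and its closed linear span is not norm-separable, while Banach--Alaoglu only yields weak$^*$ compactness, which is not a Banach-space topology. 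The correct choice -- and the point of the paper's proof -- is to take $V$ inside the dual of the Lipschitz space $B=\{F\in\mathrm{Lip}(U):F(u_0)=0\}$, so that by Kantorovich--Rubinstein the induced norm on $\PP(U)$ is exactly $W_1$; since $U$ is compact, $W_1$ metrizes weak convergence, Prokhorov then gives \emph{norm}-compactness of $\PP(U)$, and separability of $(\PP(U),W_1)$ gives separability of $V:=\overline{\mathrm{span}}(\PP(U))$ in $B'$. Your phrase ``mirroring the construction in the Proposition above'' points in this direction, but naming $C(U)'$ as the ambient space is not a cosmetic slip: the two dual norms induce genuinely different topologies on $\PP(U)$ and only one of them is compatible with (C.1). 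A further minor imprecision: for the continuity of $\CC$ you cannot argue ``exactly as for $\FF$,'' since \eqref{eq:growthC} gives no Lipschitz modulus for $\cC$ in $(x,\mu)$ uniform in $u$; instead one should use uniform continuity of $\cC$ on the compact set $\overline{\{x_n\}}\times U\times\overline{\{\mu_n\}}$ to get $\sup_{u\in U}|\cC(x_n,u,\mu_n)-\cC(x,u,\mu)|\to 0$ before invoking the weak convergence $\sigma_n\to\sigma$.
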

\begin{proof}
The space $\UU:=\PP(U)$ can be identified with a subset of  the dual space $B'$, 
where $B$ is the Banach space $B:=\{F\in \mathrm{Lip}(U): F(u_0)=0\text{ for some  }u_0\in U\}$.
The identification is given associating to  $\sigma \in \PP(U)$ the continuous linear functional $F \mapsto \int_U F(u)\,\d\sigma(u)$. 
With this identification, the norm in $\PP(U)$ is given by $$\|\sigma\|=\sup_{F\in B, \|F\|_B\leq 1} \int_U F(u)\,\d\sigma(u).$$
By the Kantorovich-Rubinstein Theorem (see e.g. \cite[Theorem 1.14]{villani2003topics}) it holds that $ \|\sigma\|=W_1(\sigma,\delta_{u_0})$ and
 $ \|\sigma^1-\sigma^2\|=W_1(\sigma^1,\sigma^2)$. 
Hence, the topology on $\PP(U)$ induced by $B'$ coincides with the topology induced by the Wasserstein distance $W_1$. 
Since $U$ is compact, this coincides with the topology induced by the weak convergence. 
By Prokhorov Theorem, $\PP(U)$ is compact. 
Finally, $(\PP(U), \| \cdot \|)$ is a separable Banach space thanks to the separability of the (complete) metric space $(\PP(U),W_1)$.  
The convexity of $\UU$, the affinity of $\FF$ and the convexity of $\CC$ with respect to $\sigma$ easily follows from their definitions.
\end{proof}

\section{Lagrangian optimal control problem}
\label{sec_lagrangian}

In this section we deal with a (finite-horizon) optimal control
problem in Lagrangian formulation.
It relies on a system $\S=(U,f,\cC,\cC_T)$ satisfying Assumptions
\ref{BA} and on a probability space $(\Omega,\frB,\P)$, whose elements
act as parameters of the particles.
We also fix a final time horizon $T>0$ and we denote with $\Leb_{[0,T]}$  the  $\sigma$-algebra of Lebesgue measurable sets on $[0,T]$ and with $\cL_T$ the normalized Lebesgue measure restricted to $[0,T]$.
Recall that $\BM([0,T] \times \Omega;U)$ denotes the set of measurable functions with respect to the product $\sigma$-algebra $\Leb_{[0,T]} \otimes \frB$.

\begin{definition}[Lagrangian optimal control problem $(\pL)$]\label{def:L}
Let $\S:=(U,f,\mathcal C,\mathcal C_T)$ satisfy Assumption \ref{BA}
and let $(\Omega,\frB,\P)$ be a probability space.

Given $X_0\in L^p(\Omega;\R^d)$, we say that
$(X,u)\in\cA_{\pL}(X_0)$ if
\begin{itemize}
\item [(i)]
$u\in \BM([0,T]\times\Omega;U)$;
\item  [(ii)] $X\in L^p(\Omega;\AC^p([0,T];\R^d))$ and for 
$\P$-a.e. $\omega\in\Omega$, $X(\omega)$ is a 
solution of the following Cauchy problem
\begin{equation}\label{eq:systemL}
\begin{cases}
\dot X_t(\omega)=f(X_t(\omega),u_t(\omega),(X_t)_\sharp\P), &\textrm{for $\cL_T$-a.e. }t\in (0,T)\\ 
X_{|t=0}(\omega)=X_0(\omega),&
\end{cases}
\end{equation}
where $X_t: \Omega \to \R^d$ is defined by $X_t(\omega):= X(t,\omega)$ for $\P$-a.e. $\omega \in \Omega$. 
\end{itemize}
We refer to $(X,u)\in\mathcal A_{\pL}(X_0)$ as to an \emph{admissible pair}, with $X$ a \emph{trajectory} and $u$ a \emph{control}.\\
We define the \emph{cost functional}  
$J_{\pL}:L^p(\Omega;C([0,T];\R^d))\times \BM([0,T]\times\Omega;U)\to[0,+\infty)$,  by
\[J_{\pL}(X,u):= \int_\Omega \int_0^T\mathcal C(X_t(\omega),u_t(\omega),(X_t)_\sharp \P)\,\d t \, \d\P(\omega) +\int_\Omega \mathcal C_T(X_T(\omega),(X_T)_\sharp \P)\,\d\P(\omega),\]
and the \emph{value function} $V_{\pL}:L^p(\Omega;\R^d)\to[0,+\infty)$ by
\begin{equation}
V_{\pL}(X_0):=\inf\left\{J_{\pL}(X,u)\,:\,(X,u)\in\mathcal A_{\pL}(X_0)\right\}.
\end{equation}
\end{definition}

\smallskip
 In the following, $\pL(\Omega, \frB,\P;\S)$ denotes the Lagrangian problem given in Definition \ref{def:L}.
We will frequently shorten the notation to $\pL(\Omega, \frB, \P)$  when the system $\S$ is clear from the context.  
 
\begin{remark}\label{rem:lagrangian}
Observe that, thanks to condition \eqref{eq:growthC}, the functional $J_{\pL}$ is finite.
Moreover, from Proposition \ref{prop:existL} below it follows that $\cA_{\pL}(X_0)\not=\emptyset$, for any $X_0\in L^p(\Omega;\R^d)$, and so the value function
$V_{\pL}$ is well defined.
We point out that existence of minimizers for the Lagrangian problem is not guaranteed in general, even under the Convexity Assumption \ref{CA}.
This will be further discussed in Section \ref{sec:counterexample}.
\end{remark}

\begin{remark}\label{re:equiv}
In view of Proposition \ref{prop:equivSpaces} in Appendix A, we will frequently identify $X \in L^p(\Omega;\AC^p([0,T];\R^d))$ and $X \in \AC^p([0,T];L^p(\Omega;\R^d))$, depending on the convenience.
\end{remark}

Let us introduce a suitable equivalence relation among Lagrangian problems when the parametrization space is varying.

\begin{definition}[Equivalence of Lagrangian problems]\label{def:equiv_L}
Let $\S:=(U,f,\mathcal C,\mathcal C_T)$ satisfy Assumption \ref{BA}.
Let $(\Omega_1,\frB_1,\P_1)$ and $(\Omega_2,\frB_2,\P_2)$ be probability spaces. 
We say that $\pL_1 := \pL(\Omega_1,\frB_1,\P_1; \S)$ and $\pL_2:=\pL(\Omega_2,\frB_2,\P_2; \S)$ are \emph{equivalent} (and we write $\pL_1 \sim \pL_2$) if 
\begin{itemize}
\item[(i)] for every $X_0^1\in L^p(\Omega_1;\R^d)$ and every $(X^1,u^1) \in \cA_{\pL_1}(X_0^1)$ there exist $X_0^2\in L^p(\Omega_2;\R^d)$ and $(X^2,u^2) \in \cA_{\pL_2}(X_0^2)$ such that 
\[ J_{\pL_1}(X^1,u^1) = J_{\pL_2}(X^2,u^2), \qquad \quad  V_{\pL_1}(X^1_0) = V_{\pL_2}(X^2_0);\] 
\item[(ii)] for every $X_0^2\in L^p(\Omega_2;\R^d)$ and every $(X^2,u^2) \in \cA_{\pL_2}(X_0^2)$ there exist $X_0^1\in L^p(\Omega_1;\R^d)$ and $(X^1,u^1) \in \cA_{\pL_1}(X_0^1)$ such that 
\[ J_{\pL_2}(X^2,u^2) = J_{\pL_1}(X^1,u^1), \qquad \quad  V_{\pL_2}(X^2_0) = V_{\pL_1}(X^1_0).\] 
\end{itemize}
\end{definition}

\begin{remark}

The relation $\sim$ of Definition \ref{def:equiv_L} is an equivalence relation on the set of Lagrangian problems $\lbrace \pL(\Omega, \frB,\P) :  (\Omega, \frB,\P) \text{ probability space} \rbrace$.
\end{remark}

\begin{proposition}\label{p:equivalence}
Let $\S:=(U,f,\mathcal C,\mathcal C_T)$ satisfy Assumption \ref{BA}.
Let $(\Omega_1,\frB_1,\P_1)$ and $(\Omega_2,\frB_2,\P_2)$ be probability spaces. 
Suppose there exist measurable maps $\psi: \Omega_1 \to \Omega_2$ and $\phi: \Omega_2 \to \Omega_1$ such that 
$\psi_\sharp\P_1 = \P_2$,  $\phi_\sharp\P_2 = \P_1$ and
\begin{align}
\label{eq:strana1} &\forall\, X_0^1 \in L^p(\Omega_1;\R^d) \text{ it holds } X_0^1 = X_0^1 \circ \phi \circ \psi \, ; \\
\label{eq:strana2} &\forall\, X_0^2 \in L^p(\Omega_2;\R^d) \text{ it holds } X_0^2 = X_0^2 \circ \psi \circ \phi.
\end{align}
Then $\pL(\Omega_1,\frB_1,\P_1; \S) \sim \pL(\Omega_2,\frB_2,\P_2; \S)$.
\end{proposition}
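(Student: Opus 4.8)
The plan is to read off from $\phi$ and $\psi$ two explicit ``pull-back'' operations on admissible pairs, to show that each of them preserves admissibility and the cost functional, and then to combine the two resulting one-sided inequalities for the value functions into an equality using precisely the relations \eqref{eq:strana1}--\eqref{eq:strana2}. The whole argument is a transfer of structure along $\phi$, $\psi$; no deep new idea is needed.

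\textbf{Construction and admissibility.} Given $X_0^1\in L^p(\Omega_1;\R^d)$ and $(X^1,u^1)\in\cA_{\pL_1}(X_0^1)$, I would set $X_0^2:=X_0^1\circ\phi\in L^p(\Omega_2;\R^d)$, $u^2_t(\omega_2):=u^1_t(\phi(\omega_2))$ and $X^2_t(\omega_2):=X^1_t(\phi(\omega_2))$ for $(t,\omega_2)\in[0,T]\times\Omega_2$. Since $(t,\omega_2)\mapsto(t,\phi(\omega_2))$ is measurable from $\Leb_{[0,T]}\otimes\frB_2$ to $\Leb_{[0,T]}\otimes\frB_1$, one gets $u^2\in\BM([0,T]\times\Omega_2;U)$; moreover $X^2\in L^p(\Omega_2;\AC^p([0,T];\R^d))$ with $\|X^2\|_{L^p(\Omega_2;\AC^p)}=\|X^1\|_{L^p(\Omega_1;\AC^p)}$ because $\phi_\sharp\P_2=\P_1$, which also shows that the pull-back is well defined on $L^p$-equivalence classes (null sets are transported by $\phi$). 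The key point is that the interaction term is left unchanged: by the composition rule \eqref{eq:composition},
\[(X^2_t)_\sharp\P_2=(X^1_t\circ\phi)_\sharp\P_2=(X^1_t)_\sharp(\phi_\sharp\P_2)=(X^1_t)_\sharp\P_1\qquad\text{for all }t\in[0,T].\]
Therefore, discarding the $\phi$-preimage of the $\P_1$-null set on which $X^1(\cdot)$ fails to solve \eqref{eq:systemL} (a $\P_2$-null set, again since $\phi_\sharp\P_2=\P_1$), for $\P_2$-a.e.\ $\omega_2$ the curve $X^2(\omega_2)$ solves $\dot X^2_t(\omega_2)=f\big(X^2_t(\omega_2),u^2_t(\omega_2),(X^2_t)_\sharp\P_2\big)$ with $X^2_{|t=0}(\omega_2)=X_0^1(\phi(\omega_2))=X_0^2(\omega_2)$; hence $(X^2,u^2)\in\cA_{\pL_2}(X_0^2)$. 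An entirely symmetric construction, with $\phi$ replaced by $\psi$ and $\phi_\sharp\P_2=\P_1$ by $\psi_\sharp\P_1=\P_2$, produces from any $(Y,w)\in\cA_{\pL_2}(Y_0)$ a pair in $\cA_{\pL_1}(Y_0\circ\psi)$.

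\textbf{Cost and value functions.} For the cost I would apply the change-of-variables formula for $\phi_\sharp\P_2=\P_1$ together with Fubini--Tonelli (all integrands are nonnegative and jointly measurable) and the invariance of the interaction term above, obtaining $J_{\pL_1}(X^1,u^1)=J_{\pL_2}(X^2,u^2)$ term by term in the definition of $J_{\pL}$; the $\psi$-pull-back is cost-preserving in the same way. Passing to infima, the $\phi$-construction gives $V_{\pL_2}(X_0^1\circ\phi)\le V_{\pL_1}(X_0^1)$ for every $X_0^1\in L^p(\Omega_1;\R^d)$, while the $\psi$-construction gives $V_{\pL_1}(Y_0\circ\psi)\le V_{\pL_2}(Y_0)$ for every $Y_0\in L^p(\Omega_2;\R^d)$. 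Choosing $Y_0:=X_0^1\circ\phi$ and invoking \eqref{eq:strana1} in the form $(X_0^1\circ\phi)\circ\psi=X_0^1$, the second inequality becomes $V_{\pL_1}(X_0^1)\le V_{\pL_2}(X_0^1\circ\phi)$, whence $V_{\pL_1}(X_0^1)=V_{\pL_2}(X_0^1\circ\phi)$. This is exactly condition (i) of Definition \ref{def:equiv_L} with $X_0^2:=X_0^1\circ\phi$ and $(X^2,u^2)$ as above; condition (ii) is obtained symmetrically, starting from $\Omega_2$ and using \eqref{eq:strana2}. Hence $\pL(\Omega_1,\frB_1,\P_1;\S)\sim\pL(\Omega_2,\frB_2,\P_2;\S)$.

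\textbf{Main difficulty.} There is no serious obstacle; the only points requiring care are the measurability bookkeeping --- that the pull-back is well defined on $L^p$-classes and Bochner measurable into $L^p(\Omega;\AC^p([0,T];\R^d))$, and that ``$\P_1$-a.e.'' statements pass to ``$\P_2$-a.e.'' statements through $\phi$ (resp.\ $\psi$) --- all of which follow from $\phi_\sharp\P_2=\P_1$ and $\psi_\sharp\P_1=\P_2$. The conceptual content is the invariance $(X^1_t\circ\phi)_\sharp\P_2=(X^1_t)_\sharp\P_1$ of the mean-field argument (so the dynamics and the cost are genuinely transported, not merely approximated) and the use of \eqref{eq:strana1}--\eqref{eq:strana2} to upgrade the two one-sided value-function inequalities to an equality.
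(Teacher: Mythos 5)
Your proposal is correct and follows essentially the same route as the paper: pull back admissible pairs along $\phi$ and $\psi$, observe that $\phi_\sharp\P_2=\P_1$ (resp. $\psi_\sharp\P_1=\P_2$) preserves the interaction term and the cost, and then combine the two resulting one-sided value-function inequalities with \eqref{eq:strana1}--\eqref{eq:strana2} to get the equalities required by Definition \ref{def:equiv_L}. Your version is somewhat more explicit about the measurability and null-set bookkeeping, but the substance is identical.
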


\begin{proof}
For every $(X^1,u^1) \in \cA_{\pL_1}(X^1_0)$, we define $X^2:= X^1\circ \phi$, and $u^2(t,\omega_2):= u^1(t, \phi(\omega_2))$, for every $(t, \omega_2) \in [0,T] \times \Omega_2$. 
Using that $\phi_\sharp \P_2 = \P_1$, it easily follows that $(X^2,u^2)\in\cA_{\pL_2}(X_0^1\circ\phi)$ and  $J_{\pL_1}(X^1,u^1) = J_{\pL_2}(X^2,u^2)$. 
Hence, for every $X_0^1 \in L^p(\Omega_1; \R^d)$, we have 
\begin{equation}\label{V1geV2}
V_{\pL_1}(X^1_0) \geq V_{\pL_2}(X^1_0 \circ \phi).
\end{equation}
Analogously, for every $(X^2,u^2) \in \cA_{\pL_2}(X^2_0)$, we define $X^1:= X^2\circ \psi$, and $u^1(t,\omega_1):= u^2(t, \psi(\omega_1))$, for every $(t, \omega_1) \in [0,T] \times \Omega_1$. So that, from $\psi_\sharp \P_1 = \P_2$ it holds $(X^1,u^1)\in\cA_{\pL_1}(X_0^2\circ\psi)$ and  $J_{\pL_1}(X^1,u^1) = J_{\pL_2}(X^2,u^2)$.
Moreover, for every $X_0^2 \in L^p(\Omega_2; \R^d)$ we have 
\begin{equation}\label{V2geV1}
V_{\pL_2}(X^2_0) \geq V_{\pL_1}(X^2_0 \circ \psi).
\end{equation}
The combination of \eqref{V1geV2} and \eqref{V2geV1} gives
\[ V_{\pL_2}(X^2_0) \geq V_{\pL_1}(X^2_0 \circ \psi) \geq V_{\pL_2}(X^2_0 \circ \psi \circ \phi)\] 
hence, using \eqref{eq:strana1} we have
\begin{equation}\label{eq:prima}
V_{\pL_2}(X^2_0) = V_{\pL_1}(X^2_0 \circ \psi).
\end{equation}
Thanks to \eqref{eq:strana2} and \eqref{eq:prima} we finally get
\[V_{\pL_1}(X^1_0) = V_{\pL_1}(X^1_0 \circ \phi \circ \psi) = V_{\pL_2}(X^1_0 \circ \phi). \]
\end{proof}

\begin{remark}
\
\begin{enumerate}
\item Notice that the assumptions of Proposition \ref{p:equivalence} are satisfied if there exists a bijective function $\psi:\Omega_1\to\Omega_2$ such that $\psi$ and $\psi^{-1}$ are measurable and $\psi_\sharp\P_1=\P_2$. Indeed, it sufficies to choose $\phi=\psi^{-1}$.
\item Proposition \ref{p:equivalence} still holds when the maps $\psi$ and $\phi$ are defined up to sets of null measure, meaning that
\[\psi: \Omega_1\setminus\cN_1 \to \Omega_2\setminus\cN_2,\quad \phi: \Omega_2\setminus\cN_2 \to \Omega_1\setminus\cN_1\]
 for some $\cN_1\in \frB_1$ such that $\P_1(\cN_1)=0$ and  $\cN_2\in \frB_2$ such that $\P_2(\cN_2)=0$.
\end{enumerate}

\end{remark}

\subsection{Basic results}
Here we collect some properties of the Lagrangian problem. 
In particular, we show existence  and uniqueness of solutions, a priori estimates, compactness for the associated laws and we derive a stability result for trajectories and cost when initial data and control converge in a suitable sense.

\begin{proposition}[Existence and uniqueness]\label{prop:existL}
Let $\S:=(U,f,\mathcal C,\mathcal C_T)$ satisfy Assumption \ref{BA}
and $(\Omega,\frB,\P)$ be a probability space.
Let $X_0\in L^p(\Omega;\R^d)$ and $u\in \BM([0,T]\times\Omega;U)$ be given.
Then there exists a unique $X\in L^p(\Omega;\AC^p([0,T];\R^d))$ such that $(X,u)\in\mathcal A_{\pL}(X_0)$. 
Moreover, if $(X^i,u^i)\in\cA_\pL(X_0)$, $i=1,2$, and $u^1=u^2$ $\cL_T\otimes\P$-a.e., then $X^1=X^2$.
\end{proposition}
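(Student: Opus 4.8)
The plan is to recast the non-local Cauchy problem \eqref{eq:systemL} as an ordinary differential equation in the Banach space $L^p(\Omega;\R^d)$ and then invoke the classical Cauchy--Lipschitz theory there. Using the identification of Remark \ref{re:equiv} (Proposition \ref{prop:equivSpaces}), a map $X\in L^p(\Omega;\AC^p([0,T];\R^d))$ corresponds to an absolutely continuous curve $t\mapsto X_t\in L^p(\Omega;\R^d)$, and \eqref{eq:systemL} is equivalent to the equation $\dot X_t=F(t,X_t)$ with $X_{|t=0}=X_0$, where $F\colon[0,T]\times L^p(\Omega;\R^d)\to L^p(\Omega;\R^d)$ is defined by $F(t,Z)(\omega):=f(Z(\omega),u_t(\omega),Z_\sharp\P)$.

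First I would verify that $F$ is well defined with the required structural properties. Joint $\Leb_{[0,T]}\otimes\frB$-measurability of $u$ together with the continuity of $f$ from \ref{itemf:main} gives that $\omega\mapsto F(t,Z)(\omega)$ is $\frB$-measurable for a.e.\ $t$ and that $t\mapsto F(t,Z)$ is Bochner measurable; the growth bound \eqref{f:growth} combined with \eqref{eq:MpLp} yields $\|F(t,Z)\|_{L^p}\le C(1+\|Z\|_{L^p}+\m_p(Z_\sharp\P))$, so $F(t,Z)\in L^p(\Omega;\R^d)$ and $t\mapsto F(t,Z)$ is bounded, hence Bochner integrable on $[0,T]$. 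The analytic heart of the matter is the Lipschitz bound in the state variable: integrating \eqref{eq:Lipf} over $\Omega$, using the definition of $\sfd p$ and the contraction property \eqref{eq:WpLp} of the push-forward (note that $W_p^p(Z^1_\sharp\P,Z^2_\sharp\P)$ is constant in $\omega$), one gets
\[
\|F(t,Z^1)-F(t,Z^2)\|_{L^p}^p\le L^p\Big(\|Z^1-Z^2\|_{L^p}^p+W_p^p(Z^1_\sharp\P,Z^2_\sharp\P)\Big)\le 2L^p\|Z^1-Z^2\|_{L^p}^p,
\]
so $F(t,\cdot)$ is Lipschitz with constant $2^{1/p}L$, uniformly in $t$.

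With these properties in hand, the global Cauchy--Lipschitz theorem in Banach spaces (global because the Lipschitz constant is independent of time and of the solution, and $t\mapsto F(t,0)$ is bounded as $f(0,\cdot,\delta_0)$ is bounded on the compact $U$) provides a unique $X\in\AC([0,T];L^p(\Omega;\R^d))$ with initial datum $X_0$ and $\dot X_t=F(t,X_t)$ for a.e.\ $t$. Writing $X_t=X_0+\int_0^tF(s,X_s)\,\d s$ and using $\|F(s,X_s)\|_{L^p}\le C(1+\sup_{[0,T]}\|X_s\|_{L^p})$, the supremum being finite by a Grönwall estimate, shows $\dot X\in L^\infty([0,T];L^p(\Omega;\R^d))$, hence $X\in\AC^p([0,T];L^p(\Omega;\R^d))$, i.e.\ $X\in L^p(\Omega;\AC^p([0,T];\R^d))$ after the identification. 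It then remains to descend from the $L^p$-valued identity to the pointwise one: evaluating the Bochner integral identity at $\omega$ (which commutes with integration in $t$ up to $\P$-null sets, as one checks by testing against $L^{p'}(\Omega)$ and applying Fubini) gives $X_t(\omega)=X_0(\omega)+\int_0^tf(X_s(\omega),u_s(\omega),(X_s)_\sharp\P)\,\d s$ for all $t$ and $\P$-a.e.\ $\omega$; differentiating for $\cL_T$-a.e.\ $t$ yields \eqref{eq:systemL}, so $(X,u)\in\cA_\pL(X_0)$.

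For uniqueness one runs the argument backwards: any $(X,u)\in\cA_\pL(X_0)$ produces, via the same identification, a curve $t\mapsto X_t\in L^p(\Omega;\R^d)$ solving $\dot X_t=F(t,X_t)$ with the prescribed datum, so applying Grönwall to $t\mapsto\|X^1_t-X^2_t\|_{L^p}$ forces $X^1=X^2$; and if $u^1=u^2$ $\cL_T\otimes\P$-a.e., the corresponding right-hand sides $F^1(t,\cdot)$ and $F^2(t,\cdot)$ coincide for a.e.\ $t$, so the same estimate applies to $(X^1,u^1)$ and $(X^2,u^2)$. I expect the only genuinely delicate points to be the measurability bookkeeping for $F$ and the passage between the pointwise and the Banach-space formulations of \eqref{eq:systemL}; everything else is the standard fixed-point/Grönwall machinery, with the contraction inequality \eqref{eq:WpLp} doing the essential work in the Lipschitz estimate.
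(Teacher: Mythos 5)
Your proposal is correct and follows essentially the same route as the paper: it defines the same map $F_u(t,Z)(\omega)=f(Z(\omega),u(t,\omega),Z_\sharp\P)$, verifies the Carathéodory, Lipschitz (via \eqref{eq:Lipf} and \eqref{eq:WpLp}) and boundedness conditions, applies the Banach-space Cauchy--Lipschitz theorem (Theorem \ref{th:ExistenceODE}) with $E=L^p(\Omega;\R^d)$, and passes to the pointwise formulation via the equivalence of Proposition \ref{prop:EquivLp}. The uniqueness argument via Grönwall and the observation that $F_{u^1}=F_{u^2}$ when $u^1=u^2$ a.e.\ also match the paper's proof.
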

\begin{proof}
We define  
 $F_u: [0,T] \times L^p(\Omega;\R^d)\to L^p(\Omega;\R^d)$ by
\begin{equation}\label{eq:defFu}
F_u(t,Z)(\omega):=f(Z(\omega),u(t,\omega),Z_\sharp\P).
\end{equation}
We observe that the continuity of $f$ and the  measurability of $u$ imply that $F_u$ is a Carath\'eodory function.
Moreover, by \eqref{eq:Lipf} and \eqref{eq:WpLp}, $F_u$ satisfies condition \eqref{LipFE}.
Since $F_u(t,0)(\omega)=f(0,u_t(\omega),\delta_0)$, by continuity of $f$ and compactness of $U$ it follows that
$F_u$ satisfies \eqref{boundFE}.
Theorem \ref{th:ExistenceODE} with the choice $E=L^p(\Omega;\R^d)$ and $F=F_u$ yields the existence and uniqueness of a curve  $X \in\AC^p([0,T];L^p(\Omega;\R^d))$ solving
\begin{equation*}
	X_t=X_0+\int_0^t F_u(s,X_s)\,\d s, \qquad \forall\,t\in[0,T].
\end{equation*}
Thanks to Proposition \ref{prop:EquivLp} we finally get $X\in L^p(\Omega;\AC^p([0,T];\R^d))$ which is the unique solution of \eqref{eq:systemL}. 
The last assertion follows from the equality $F_{u^1}(t,Z)=F_{u^2}(t,Z)$ for $\cL_T$-a.e. $t\in[0,T]$ and for every $Z \in L^p(\Omega;\R^d)$.
\end{proof}

\begin{proposition}[A priori estimates]\label{prop:estimatesL}
Let $\S:=(U,f,\mathcal C,\mathcal C_T)$ satisfy Assumption \ref{BA}
and $(\Omega,\frB,\P)$ be a probability space.
Let $X_0\in L^p(\Omega;\R^d)$ and $(X,u)\in\mathcal A_{\pL}(X_0)$.
Then there exist $C$ and $C_T$ independent of $u$ and $X_0$ such that
\begin{equation}\label{boundXt}
	\sup_{t\in[0,T]}\|X_t\|_{L^p(\Omega;\R^d)}\le e^{2CT}\left(\|X_0\|_{L^p(\Omega;\R^d)}+CT\right), 
\end{equation}
\begin{equation}\label{LipXt}
	\|X_t-X_s\|_{L^p(\Omega;\R^d)}\le C_T\,|t-s|\,\left(1+\|X_0\|_{L^p(\Omega;\R^d)}\right)  \qquad \forall\, s,t\in[0,T],
\end{equation}
\begin{equation}\label{boundXtomega}
	\sup_{t\in[0,T]}|X_t(\omega)| \le e^{CT}\left(|X_0(\omega)|+ C_T(1+ \|X_0\|_{L^p(\Omega;\R^d)})\right),  \quad
	\text{for }\P\text{-a.e. }\omega\in\Omega.
\end{equation}
\end{proposition}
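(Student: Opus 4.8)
The plan is to prove the three estimates in order, exploiting the equivalent viewpoint $X\in\AC^p([0,T];L^p(\Omega;\R^d))$ (Remark \ref{re:equiv}) so that we may work with the integral identity $X_t=X_0+\int_0^t F_u(s,X_s)\,\d s$, and then read off the pointwise-in-$\omega$ bound from the original Lagrangian formulation. Throughout I write $\|\cdot\|_p:=\|\cdot\|_{L^p(\Omega;\R^d)}$. The key facts are the growth bound \eqref{f:growth} on $f$, the identity $\m_p((X_t)_\sharp\P)=\|X_t\|_p$ from \eqref{eq:MpLp}, and the elementary estimate $\|F_u(t,Z)\|_p=\big\|f(Z(\cdot),u_t(\cdot),Z_\sharp\P)\big\|_p\le C(1+\|Z\|_p+\m_p(Z_\sharp\P))=C(1+2\|Z\|_p)$, which is uniform in $u$ and depends only on the structural constant $C$.

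First I would prove \eqref{boundXt}. From $X_t=X_0+\int_0^t F_u(s,X_s)\,\d s$ and the triangle inequality in $L^p(\Omega;\R^d)$ we get
\begin{equation*}
\|X_t\|_p\le \|X_0\|_p+\int_0^t\|F_u(s,X_s)\|_p\,\d s\le \|X_0\|_p+\int_0^t C\big(1+2\|X_s\|_p\big)\,\d s = \|X_0\|_p+CT+2C\int_0^t\|X_s\|_p\,\d s.
\end{equation*}
Applying Grönwall's inequality to $t\mapsto\|X_t\|_p$ yields $\sup_{t\in[0,T]}\|X_t\|_p\le e^{2CT}\big(\|X_0\|_p+CT\big)$, which is precisely \eqref{boundXt} with the stated constant; note both $C$ and the resulting bound are independent of $u$ and of $X_0$ in the required sense.

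Next, \eqref{LipXt} follows by using the bound just obtained inside the integral: for $s\le t$,
\begin{equation*}
\|X_t-X_s\|_p\le \int_s^t\|F_u(r,X_r)\|_p\,\d r\le \int_s^t C\big(1+2\|X_r\|_p\big)\,\d r\le |t-s|\,C\Big(1+2e^{2CT}(\|X_0\|_p+CT)\Big),
\end{equation*}
and collecting the $X_0$-independent factors into a constant $C_T$ (depending on $C$ and $T$ only) gives $\|X_t-X_s\|_p\le C_T|t-s|(1+\|X_0\|_p)$. For the pointwise bound \eqref{boundXtomega} I would return to the $\P$-a.e.\ formulation \eqref{eq:systemL}: for a.e.\ $\omega$, $|X_t(\omega)|\le |X_0(\omega)|+\int_0^t|f(X_r(\omega),u_r(\omega),(X_r)_\sharp\P)|\,\d r\le |X_0(\omega)|+\int_0^t C(1+|X_r(\omega)|+\m_p((X_r)_\sharp\P))\,\d r$, and then bound $\m_p((X_r)_\sharp\P)=\|X_r\|_p$ by the already-established \eqref{boundXt}; this turns the interaction term into a constant, leaving a scalar Grönwall argument in $t\mapsto|X_t(\omega)|$ that produces $\sup_t|X_t(\omega)|\le e^{CT}\big(|X_0(\omega)|+C_T(1+\|X_0\|_p)\big)$ after absorbing the $T$-dependent terms into $C_T$.

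The only mild subtlety — not really an obstacle — is bookkeeping: one must check that the constants $C,C_T$ depend only on the structural data $(L,D,T,p)$ through $C$ in \eqref{f:growth}, and in particular not on the control $u$ (which is handled by the uniform-in-$u$ growth bound) nor on $X_0$ (which enters only additively). A second point requiring a line of care is the passage between the Banach-space integral identity and the $\P$-a.e.\ pointwise ODE: this is exactly what Propositions \ref{prop:equivSpaces}/\ref{prop:EquivLp} in the appendix guarantee, and it justifies using \eqref{eq:systemL} directly for \eqref{boundXtomega} while using the lifted integral equation for \eqref{boundXt} and \eqref{LipXt}. No fixed-point or compactness argument is needed here since existence and uniqueness are already supplied by Proposition \ref{prop:existL}; the proof is purely a matter of a priori integral estimates plus Grönwall, once in $L^p(\Omega;\R^d)$ and once fibrewise.
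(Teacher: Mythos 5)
Your proof is correct and follows essentially the same route as the paper: the paper obtains \eqref{boundXt} and \eqref{LipXt} by citing the general Banach-space a priori bounds \eqref{boundZt}--\eqref{LipZt} of Theorem \ref{th:ExistenceODE} applied to $F_u$ in $E=L^p(\Omega;\R^d)$, which is exactly the Grönwall computation you carry out inline, and it proves \eqref{boundXtomega} precisely as you do, by the pointwise integral form of \eqref{eq:systemL}, the growth bound \eqref{f:growth}, the already-established \eqref{boundXt} for the mean-field term, and a fibrewise Grönwall argument. The only cosmetic difference is that you derive the $L^p$ estimates directly from the growth bound rather than invoking the appendix lemma, which incidentally makes the stated constant $e^{2CT}$ transparent.
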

\begin{proof}
The estimates \eqref{boundXt} and \eqref{LipXt} follows from \eqref{boundZt} and \eqref{LipZt} 
for $F=F_u$ defined in \eqref{eq:defFu} and $E=L^p(\Omega;\R^d)$ .

In order to prove \eqref{boundXtomega} we write \eqref{eq:systemL} in integral form
\begin{equation}\label{systemLintp}
X_t(\omega)= X_0(\omega)+\int_0^t f(X_s(\omega),u(s,\omega),(X_s)_\sharp\P)\,\d s, \quad \forall\,t\in[0,T] \quad
	\text{for }\P\text{-a.e. }\omega\in\Omega.
\end{equation}
Then by \eqref{f:growth} we have
\begin{align*}
|X_t(\omega)| &=
\left |X_0(\omega) + \int_0^t f(X_s(\omega),u(s,\omega),(X_s)_\sharp\P)\,\d s \right |\\
&\le |X_0(\omega)| + \int_0^t | f(X_s(\omega),u(s,\omega),(X_s)_\sharp\P) | \,\d s\\
&\le |X_0(\omega)| + \int_0^t C\left( 1+|X_s(\omega)|+ \|X_s\|_{L^p(\Omega;\R^d)}\right) \,\d s.
\end{align*}
Using \eqref{boundXt} and Gronwall inequality we obtain \eqref{boundXtomega}.

\end{proof}

In the following Lemma, we derive a compactness result for the laws of Lagrangian trajectories, when the initial data belong to 
a compact subset of $L^p(\Omega;\R^d)$. 

\begin{lemma}\label{lemma:cpt}
Let $\S:=(U,f,\mathcal C,\mathcal C_T)$ satisfy Assumption \ref{BA}
and $(\Omega,\frB,\P)$ be a probability space.
Let $K \subseteq L^p(\Omega;\R^d)$ compact. Then the set
\begin{equation}\label{eq:defcompactKmu}
\cK_K := \left\{ \mu \in \operatorname{AC}([0,T]; \PP_p(\R^d)) : \mu_t = (X_t)_\sharp \P, \; (X,u) \in \mathcal A_{\pL}(X_0), \; X_0 \in K \right\}
\end{equation}
is relatively compact in $C([0,T]; \PP_p(\R^d))$.
\end{lemma}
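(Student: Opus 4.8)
The plan is to apply a refined Arzelà--Ascoli-type compactness criterion in the space $C([0,T];\PP_p(\R^d))$. By the characterization of compactness in $\PP_p(\R^d)$ recalled in Proposition \ref{prop:wassconv} (the condition \eqref{eq:unifmompsiF} involving an admissible function $\psi$) together with the classical Arzelà--Ascoli theorem, it suffices to establish two facts: (a) a uniform equicontinuity estimate, namely that every $\mu\in\cK_K$ satisfies $W_p(\mu_t,\mu_s)\le \omega(|t-s|)$ for some modulus of continuity $\omega$ independent of $\mu$; and (b) a pointwise relative compactness in $\PP_p(\R^d)$, i.e. for each fixed $t$ the set $\{\mu_t:\mu\in\cK_K\}$ is relatively compact in $\PP_p(\R^d)$. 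Since the admissible functions are the right compactness tool, for (b) I will actually prove the slightly stronger uniform-in-$t$-and-$\mu$ bound $\sup_{t\in[0,T]}\sup_{\mu\in\cK_K}\int_{\R^d}\psi(|x|^p)\,\d\mu_t(x)<+\infty$ for a suitable admissible $\psi$.

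First I would prove (a). Given $\mu\in\cK_K$ with $\mu_t=(X_t)_\sharp\P$ for some $(X,u)\in\cA_\pL(X_0)$, $X_0\in K$, the estimate \eqref{eq:WpLp} gives $W_p(\mu_t,\mu_s)\le\|X_t-X_s\|_{L^p(\Omega;\R^d)}$, and then the Lipschitz-in-time bound \eqref{LipXt} from Proposition \ref{prop:estimatesL} yields
\[
W_p(\mu_t,\mu_s)\le C_T\,|t-s|\,\bigl(1+\|X_0\|_{L^p(\Omega;\R^d)}\bigr)\le C_T\,|t-s|\,\bigl(1+M_K\bigr),
\]
where $M_K:=\sup_{Z\in K}\|Z\|_{L^p(\Omega;\R^d)}<+\infty$ since $K$ is compact (hence bounded) in $L^p(\Omega;\R^d)$. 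This is the desired uniform Lipschitz modulus, which in particular gives equicontinuity of the family $\{t\mapsto\mu_t\}$.

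Next I would prove (b). Since $K$ is compact in $L^p(\Omega;\R^d)$, the family of pushforwards $\{(X_0)_\sharp\P:X_0\in K\}$ is compact in $\PP_p(\R^d)$ (the map $Z\mapsto Z_\sharp\P$ is continuous from $L^p(\Omega;\R^d)$ to $\PP_p(\R^d)$ by \eqref{eq:WpLp}), so by the compactness criterion in Proposition \ref{prop:wassconv} there is an admissible $\psi_0$ with $\sup_{X_0\in K}\int_{\R^d}\psi_0(|x|^p)\,\d(X_0)_\sharp\P(x)<+\infty$; equivalently $\sup_{X_0\in K}\int_\Omega\psi_0(|X_0(\omega)|^p)\,\d\P(\omega)<+\infty$. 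To propagate this to time $t$, I use the pointwise trajectory bound \eqref{boundXtomega}: for $\P$-a.e. $\omega$,
\[
|X_t(\omega)|\le e^{CT}\bigl(|X_0(\omega)|+C_T(1+M_K)\bigr)=:a\,|X_0(\omega)|+b,
\]
with $a=e^{CT}$ and $b=e^{CT}C_T(1+M_K)$ constants independent of $\mu$ and $t$. Hence $|X_t(\omega)|^p\le 2^{p-1}(a^p|X_0(\omega)|^p+b^p)$, and I would like to conclude $\int_\Omega\psi_1(|X_t(\omega)|^p)\,\d\P<+\infty$ uniformly, for a possibly different admissible function $\psi_1$. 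This is exactly the kind of statement covered by \cite[Lemma~2.3]{fornasier2018mean} (cited after Proposition \ref{prop:wassconv}): from a uniform bound $\sup\int\psi_0(|X_0|^p)\,\d\P<\infty$ one obtains an admissible $\psi_1$ with $\sup\int\psi_1(|X_t|^p)\,\d\P<\infty$, where the supremum is over the whole family, using the doubling property of $\psi_0$ to absorb the affine change of variables $r\mapsto a^pr+b^p$ (at worst one iterates the doubling inequality a bounded number of times). Changing variables, $\int_{\R^d}\psi_1(|x|^p)\,\d\mu_t(x)=\int_\Omega\psi_1(|X_t(\omega)|^p)\,\d\P(\omega)$ is then bounded uniformly in $t\in[0,T]$ and $\mu\in\cK_K$, so by Proposition \ref{prop:wassconv} the set $\{\mu_t:t\in[0,T],\,\mu\in\cK_K\}$ is relatively compact in $\PP_p(\R^d)$.

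Combining (a) and (b), the Arzelà--Ascoli theorem (for maps from the compact interval $[0,T]$ into the metric space $\PP_p(\R^d)$, whose relevant subset is now known to be relatively compact) shows that $\cK_K$ is relatively compact in $C([0,T];\PP_p(\R^d))$, which is the claim. The main obstacle, and the only non-cosmetic point, is step (b): turning the uniform $\psi$-moment bound on the initial data into a uniform $\psi$-moment bound on $\mu_t$ for all $t$ simultaneously. The affine pointwise bound \eqref{boundXtomega} handles the growth of the trajectories, and the doubling/admissibility machinery (via \cite[Lemma~2.3]{fornasier2018mean}) handles the bookkeeping of which admissible function survives; one should just be careful that all constants ($M_K$, $a$, $b$, and the number of doubling iterations) depend only on $K$ and the structural constants of $\S$, not on the individual pair $(X,u)$.
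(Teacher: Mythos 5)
Your proposal is correct and follows essentially the same route as the paper: a uniform-in-time Lipschitz estimate via \eqref{eq:WpLp} and \eqref{LipXt}, a uniform admissible-$\psi$ moment bound propagated from the compact set of initial data to all times via the pointwise bound \eqref{boundXtomega} together with the doubling property, and then Ascoli--Arzel\`a. The only (immaterial) difference is that the paper keeps the \emph{same} admissible $\psi$ throughout, absorbing the affine change of variables directly by monotonicity and doubling, rather than passing to a possibly different $\psi_1$ via \cite[Lemma~2.3]{fornasier2018mean}.
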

\begin{proof}
Let $\{\mu^n\}_{n\in\N}\subset\cK_K$ be a sequence. By definition, there exist $(X^n,u^n) \in \mathcal A_{\pL}(X^n_0)$,  $X^n_0 \in K$ such that
$\mu_t = (X_t)_\sharp \P$ for all $t\in[0,T]$.
Since $\sup_{n\in\N}\|X^n_0\|_{L^p(\Omega;\R^d)}<+\infty$, by the estimate \eqref{boundXtomega} there exits a constant
$C>0$ such that
\begin{equation}\label{eqK}
	|X^n_t(\omega)|^p \le C\left(1+|X^n_0(\omega)|^p\right),  \quad \forall\, n\in\N, \; \forall\, t\in[0,T], 
	\text{ for }\P\text{-a.e. }\omega\in\Omega.
\end{equation}
Since $K$ is compact in $L^p(\Omega;\R^d)$, there exists an admissible $\psi:[0,+\infty)\to[0,+\infty)$, 
according to Definition \ref{def:admphi}, such that
\[ \sup_{n\in\N}\int_\Omega \psi(|X_0^n(\omega)|^p)\,\d\P(\omega)<+\infty.\]
By the doubling and monotonicity property of $\psi$ and \eqref{eqK} we have
\begin{equation*}\label{}
	\psi(|X^n_t(\omega)|^p) \le C\left(1+\psi(|X^n_0(\omega)|^p)\right),  \quad \forall\, n\in\N, \; \forall\, t\in[0,T], 
	\text{ for }\P\text{-a.e. }\omega\in\Omega.
\end{equation*}
and then
\[ \sup_{t\in[0,T],n\in\N}\int_\Omega \psi(|X_t^n(\omega)|^p)\,\d\P(\omega)<+\infty,\]
that can be rewritten as
\[ \sup_{t\in[0,T],n\in\N}\int_{\R^d} \psi(|x|^p)\,\d\mu_t^n(x)<+\infty.\]
By Proposition \ref{prop:wassconv} there exists a compact $\KK\subset  \PP_p(\R^d)$ such that $\mu^n_t\in\KK$ for any $t\in[0,T]$ and $n\in\N$. 

Moreover, by \eqref{LipXt} and the boundedness of   $\|X^n_0\|_{L^p(\Omega;\R^d)}$, there exists $C>0$ such that 
\[ W_p(\mu^n_t,\mu^n_s)\le \|X^n_t-X^n_s\|_{L^p(\Omega;\R^d)} \le C |t-s|, \quad \forall\, s,t \in[0,T], \; \forall\,n\in\N.  \]
We can thus apply Ascoli-Arzel\`a theorem in $C([0,T]; \PP_p(\R^d))$ to conclude.
\end{proof}

\medskip

 We conclude the subsection proving a  first stability result for the Lagrangian problem. 
\begin{proposition}[Stability for \pL]\label{prop:costRLntoRL}
Let $\S:=(U,f,\mathcal C,\mathcal C_T)$ satisfy Assumption \ref{BA}
and $(\Omega,\frB,\P)$ be a probability space.
Let $X_0\in L^p(\Omega;\R^d)$ and $(X,u)\in\mathcal A_{\pL}(X_0)$.
Let $X_0^n \in L^p(\Omega;\R^d)$ be a sequence such that $\|X^n_0-X_0\|_{L^p(\Omega;\R^d)}\to 0$, as $n \to +\infty$.
If $(X^n,u^n)\in\cA_{\pL}(X^n_0)$, for any $n\in\N$,  and 
$u^n\to u$  in $\cL_T\otimes\P$-measure as $n\to+\infty$, then 
\begin{equation}\label{convX}
\sup_{t\in[0,T]}\|X^n_t-X_t\|_{L^p(\Omega;\R^d)}\to 0,\quad \textrm{as }n\to+\infty,
\end{equation}
and
\begin{equation}\label{convJ}
J_{\pL}(X^n,u^n)\to J_{\pL}(X,u),\quad \textrm{as }n\to+\infty.
\end{equation}
\end{proposition}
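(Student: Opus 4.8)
The plan is to compare the two trajectories through the integral form of \eqref{eq:systemL} and to close a Grönwall estimate for $\phi_n(t):=\|X^n_t-X_t\|_{L^p(\Omega;\R^d)}$, $t\in[0,T]$. Writing both $X^n$ and $X$ in integral form, subtracting and taking $L^p(\Omega;\R^d)$-norms, one gets for every $t\in[0,T]$
\[
\phi_n(t)\le \|X^n_0-X_0\|_{L^p(\Omega;\R^d)}+\int_0^t\big\|\,f(X^n_s(\cdot),u^n_s(\cdot),(X^n_s)_\sharp\P)-f(X_s(\cdot),u_s(\cdot),(X_s)_\sharp\P)\,\big\|_{L^p(\Omega;\R^d)}\,\d s .
\]
I would decompose the integrand by inserting the intermediate term $f(X_s(\cdot),u^n_s(\cdot),(X_s)_\sharp\P)$. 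By \eqref{eq:Lipf}, the ``Lipschitz part'' $f(X^n_s,u^n_s,(X^n_s)_\sharp\P)-f(X_s,u^n_s,(X_s)_\sharp\P)$ is bounded pointwise by $L\,\sfd p\big((X^n_s(\omega),(X^n_s)_\sharp\P),(X_s(\omega),(X_s)_\sharp\P)\big)\le L\big(|X^n_s(\omega)-X_s(\omega)|+W_p((X^n_s)_\sharp\P,(X_s)_\sharp\P)\big)$, so its $L^p(\Omega;\R^d)$-norm is at most $2L\,\phi_n(s)$ by \eqref{eq:WpLp}; the remaining ``control part'' $g^n_s(\omega):=f(X_s(\omega),u^n_s(\omega),(X_s)_\sharp\P)-f(X_s(\omega),u_s(\omega),(X_s)_\sharp\P)$ contributes $\rho_n:=\int_0^T\|g^n_s\|_{L^p(\Omega;\R^d)}\,\d s$. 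Granting $\rho_n\to0$ (dealt with below), Grönwall's lemma gives $\sup_{t\in[0,T]}\phi_n(t)\le e^{2LT}\big(\|X^n_0-X_0\|_{L^p(\Omega;\R^d)}+\rho_n\big)\to0$, i.e.\ \eqref{convX}.

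The main obstacle is exactly the claim $\rho_n\to0$, i.e.\ deducing $L^p$-convergence of the composed vector fields from convergence of the controls in $\cL_T\otimes\P$-measure. I would note that $h(s,\omega,w):=f(X_s(\omega),w,(X_s)_\sharp\P)$ defines a Carathéodory function on $([0,T]\times\Omega)\times U$: it is continuous in $w$ by Assumption \ref{BA}, and jointly measurable in $(s,\omega)$ since $(s,\omega)\mapsto X_s(\omega)$ is measurable and $s\mapsto(X_s)_\sharp\P$ is continuous into $\PP_p(\R^d)$. By the growth bound \eqref{f:growth}, $|g^n_s(\omega)|\le 2C\big(1+|X_s(\omega)|+\m_p((X_s)_\sharp\P)\big)=:G(s,\omega)$, with $G\in L^p([0,T]\times\Omega;\cL_T\otimes\P)$ a fixed dominating function independent of $n$. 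Since $u^n\to u$ in $\cL_T\otimes\P$-measure, every subsequence admits a further subsequence along which $u^{n_k}\to u$ $\cL_T\otimes\P$-a.e.; continuity of $h$ in its last argument gives $g^{n_k}\to0$ a.e., and dominated convergence gives $g^{n_k}\to0$ in $L^p(\cL_T\otimes\P;\R^d)$. As every subsequence has such a further subsequence, $g^n\to0$ in $L^p(\cL_T\otimes\P;\R^d)$, whence $\rho_n\le T^{1-1/p}\|g^n\|_{L^p(\cL_T\otimes\P;\R^d)}\to0$ by Hölder's inequality.

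Finally, for the cost convergence \eqref{convJ} I would run a Vitali (generalized dominated convergence) argument on the running and terminal terms separately. From \eqref{convX} we have $X^n\to X$ in $L^p([0,T]\times\Omega;\cL_T\otimes\P;\R^d)$, hence in $\cL_T\otimes\P$-measure, while $u^n\to u$ in measure by hypothesis and $W_p((X^n_t)_\sharp\P,(X_t)_\sharp\P)\le\phi_n(t)\to0$ uniformly in $t$; therefore $(X^n_t(\omega),u^n_t(\omega),(X^n_t)_\sharp\P)\to(X_t(\omega),u_t(\omega),(X_t)_\sharp\P)$ in $\cL_T\otimes\P$-measure and, $\mathcal C$ being continuous, so does the running integrand. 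By \eqref{eq:growthC} this integrand is dominated by $D\big(1+|X^n_t(\omega)|^p+\m_p^p((X^n_t)_\sharp\P)\big)$, where $\{|X^n|^p\}_n$ is uniformly integrable on $([0,T]\times\Omega,\cL_T\otimes\P)$ (being $L^1$-convergent) and $\m_p^p((X^n_t)_\sharp\P)=\|X^n_t\|_{L^p(\Omega;\R^d)}^p$ is bounded uniformly in $n$ and $t$ by \eqref{boundXt} (recall $\sup_n\|X^n_0\|_{L^p(\Omega;\R^d)}<+\infty$); hence the family is uniformly integrable and the running part of $J_\pL(X^n,u^n)$ converges. The terminal term is handled identically: $X^n_T\to X_T$ in $L^p(\Omega;\R^d)$ yields convergence in $\P$-measure and $W_p((X^n_T)_\sharp\P,(X_T)_\sharp\P)\to0$, $\mathcal C_T$ is continuous, and \eqref{eq:growthC} provides the uniformly integrable domination $D\big(1+|X^n_T(\omega)|^p+\m_p^p((X^n_T)_\sharp\P)\big)$. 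This gives \eqref{convJ} and completes the proof.
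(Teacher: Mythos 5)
Your proof is correct and takes essentially the same route as the paper's: the paper packages your Grönwall estimate with the intermediate-term decomposition into the abstract stability result of Proposition \ref{prop:StabilityODE} and verifies its hypothesis \eqref{Resto} by exactly your subsequence-plus-dominated-convergence argument for the control part. The convergence of the cost is likewise handled in the paper by the same combination of in-measure convergence of the integrands, the growth bound \eqref{eq:growthC}, and a generalized dominated convergence (Vitali) theorem.
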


\begin{proof}
In order to prove \eqref{convX} we apply Proposition \ref{prop:StabilityODE} with the choice
$E=L^p(\Omega;\R^d)$, $F=F_u$ and $F^n=F_{u^n}$, defined as in \eqref{eq:defFu}.
We have to check that \eqref{Resto} holds.
Defining $G^n,G:[0,T]\times\Omega\to\R^d$ by $G^n(t,\omega):=F_{u^n}(t,X_t)(\omega)$ and $G(t,\omega):=F_{u}(t,X_t)(\omega)$,
it is sufficient to prove that $G^n\to G$ in $L^p([0,T]\times\Omega;\R^d)$.
Since $u^n$ converges to $u$ in $\cL_T\otimes\P$-measure, there exists a subsequence $u^{n_k}$ such that  
$u^{n_k}_t(\omega)$ converges to $u_t(\omega)$ for  $\cL_T\otimes\P$-a.e. $(t,\omega)\in[0,T]\times\Omega$.
By the continuity of $f$, we have that
$$|G^{n_k}(t,\omega) - G(t,\omega)|=|F_{u^{n_k}}(t,X_t)(\omega) - F_{u}(t,X_t)(\omega)|\to 0$$ for $\cL_T\otimes\P$-a.e. $(t,\omega)\in[0,T]\times\Omega$.
Moreover 
$$|G^{n}(t,\omega) - G(t,\omega)|^p = |F_{u^{n}}(t,X_t)(\omega) - F_{u}(t,X_t)(\omega) |^p 
\le C\left(1+|X_t(\omega)|^p+\|X_t\|^p_{L^p(\Omega;\R^d)}\right).$$
By dominated convergence we conclude that $G^{n_k}\to G$ in $L^p([0,T]\times\Omega;\R^d)$.
Since the limit is independent of the subsequence, we conclude that
\begin{equation*}
\int_0^T \int_\Omega |F_{u^n}(t,X_t)(\omega) - F_{u}(t,X_t)(\omega)|^p\,\d\P(\omega)\,\d t \to 0.
\end{equation*}

Let us prove \eqref{convJ}. 
For any $t\in[0,T]$, we use the notation $\mu_t^n:=(X^n_t)_\sharp\P$ and $\mu_t:=(X_t)_\sharp\P$.
By \eqref{convX} we have
\begin{equation}\label{convmu}
\sup_{t\in[0,T]}W_p(\mu^n_t,\mu_t) \to 0,\quad \textrm{as }n\to+\infty.
\end{equation}
We focus on the running cost $\cC$. 
Since
\begin{align}\label{eq:estimCchat}
\begin{split}
&\left|\int_0^T\int_\Omega\left(\cC(X^n_t(\omega),u^n_t(\omega),\mu_t^n)- \cC(X_t(\omega),u_t(\omega),\mu_t)\right)\,\d\P(\omega)\, \d t\right|\\
\le& \int_0^T\int_\Omega\left|\cC(X^n_t(\omega),u^n_t(\omega),\mu_t^n)-\cC(X_t(\omega),u_t(\omega),\mu_t)\right|\,\d\P(\omega)\, \d t,
\end{split}
\end{align}
defining $H^n,H:[0,T]\times\Omega\to\R^d$ by $H^n(t,\omega):=\cC(X^n_t(\omega),u^n_t(\omega),\mu_t^n)$ and 
$H(t,\omega):=\cC(X_t(\omega),u_t(\omega),\mu_t)$,
it is sufficient to prove that $H^n\to H$ in $L^1([0,T]\times\Omega;\R^d)$.
Since $u^n$ converges to $u$ in $\cL_T\otimes\P$-measure, \eqref{convX} and  \eqref{convmu} hold, and $\cC$ is continuous,
then there exists a subsequence $H^{n_k}$ such that 
$H^{n_k}(t,\omega)$ converges to $H(t,\omega)$ for  $\cL_T\otimes\P$-a.e. $(t,\omega)\in[0,T]\times\Omega$.
Moreover, by the growth assumptions \eqref{eq:growthC} we have
\begin{equation*}
H^n(t,\omega) \leq C(1+|X^{n}_t(\omega)|^p+\|X^n_t\|^p_{L^p(\Omega;\R^d)}).
\end{equation*}
By a variant of the dominated convergence Theorem (see Theorem 1.20 in \cite{evans2015measure}) we conclude
that $H^{n_k}\to H$ in $L^1([0,T]\times\Omega;\R^d)$.
For the same argument as before we obtain that the whole sequence $H^n\to H$ in $L^1([0,T]\times\Omega;\R^d)$.

The proof that
$$\int_\Omega \cC_T(X^n_T(\omega),\mu^n_T)\,\d\P(\omega) \to \int_\Omega \cC_T(X_T(\omega),\mu_T)\,\d\P(\omega),
\quad \textrm{as }n\to+\infty$$
follows from the same argument.
\end{proof}

\begin{proposition}[Upper semicontinuity of the value function]\label{prop:uscVL}
Let $\S:=(U,f,\cC,\cC_T)$ satisfy Assumption \ref{BA}
and $(\Omega,\frB,\P)$ be a probability space.
If $X_0^n,X_0 \in L^p(\Omega;\R^d) $  satisfy 
$\|X^n_0 - X_0\|_{L^p(\Omega;\R^d)} \to 0$ as $n \to +\infty$, then 
\begin{equation*}
\limsup_{n\to +\infty} V_{\pL} (X_0^n) \leq V_{\pL}(X_0).
\end{equation*}

\end{proposition}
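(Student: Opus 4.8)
The plan is to bound $V_{\pL}(X_0^n)$ from above by testing the $n$-th problem with a control that is almost optimal for the limiting initial datum, and then to invoke the stability result already established in Proposition \ref{prop:costRLntoRL}. The point is that admissibility of a control $u\in\BM([0,T]\times\Omega;U)$ does not depend on the initial datum, so the same $u$ can be reused for every $X_0^n$.

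First I would fix $\eps>0$ and, using that $\cA_{\pL}(X_0)\neq\emptyset$ (Proposition \ref{prop:existL}) together with the definition of $V_{\pL}$ as an infimum, choose an admissible pair $(X,u)\in\cA_{\pL}(X_0)$ with $J_{\pL}(X,u)\le V_{\pL}(X_0)+\eps$. Then, for each $n\in\N$, I keep this same control $u$ and solve the Cauchy problem \eqref{eq:systemL} with initial datum $X_0^n$: Proposition \ref{prop:existL} yields a unique trajectory $X^n\in L^p(\Omega;\AC^p([0,T];\R^d))$ with $(X^n,u)\in\cA_{\pL}(X_0^n)$. In particular $V_{\pL}(X_0^n)\le J_{\pL}(X^n,u)$ for every $n$.

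Next I apply Proposition \ref{prop:costRLntoRL} with the initial data $X_0^n\to X_0$ in $L^p(\Omega;\R^d)$ and the constant sequence of controls $u^n:=u$, which trivially converges to $u$ in $\cL_T\otimes\P$-measure. This gives $J_{\pL}(X^n,u)\to J_{\pL}(X,u)$ as $n\to+\infty$, whence
\[
\limsup_{n\to+\infty} V_{\pL}(X_0^n)\le \limsup_{n\to+\infty} J_{\pL}(X^n,u)=J_{\pL}(X,u)\le V_{\pL}(X_0)+\eps.
\]
Since $\eps>0$ is arbitrary, letting $\eps\downarrow 0$ yields the claim.

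There is no real obstacle here: all the analytical work — continuous dependence of both the trajectory and the cost functional on the initial datum (and the control) — is precisely the content of Proposition \ref{prop:costRLntoRL}, which in turn rests on the ODE stability estimates of Section \ref{sec_lagrangian}. The only minor point to keep in mind is the legitimacy of ``freezing'' the control: this is exactly why $\cA_{\pL}(X_0^n)$ was designed so that the control is just a measurable $U$-valued map and the trajectory is then uniquely determined, so that $(X^n,u)$ is genuinely admissible for $X_0^n$. (Note the reverse inequality — lower semicontinuity of $V_{\pL}$ — is a separate and more delicate matter, deferred to Theorem \ref{th:contVL}.)
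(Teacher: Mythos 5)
Your argument is correct and coincides with the paper's own proof: both fix an $\eps$-optimal pair $(X^\eps,u^\eps)\in\cA_{\pL}(X_0)$, reuse the control $u^\eps$ for each $X_0^n$ via the existence result of Proposition \ref{prop:existL}, and pass to the limit in the cost by the stability result of Proposition \ref{prop:costRLntoRL} applied with the constant sequence of controls. Nothing is missing.
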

\begin{proof}
Let $\varepsilon>0$ and $(X^\varepsilon,u^\varepsilon)\in\cA_{\pL}(X_0)$ such that 
$J_{\pL}(X^\varepsilon,u^\varepsilon) \leq V_{\pL}(X_0) + \varepsilon$. 
By Proposition \ref{prop:existL}, for any $n\in\N$ there exists $X^{\eps,n}$ such that $(X^{\eps,n},u^\varepsilon)\in\cA_{\pL}(X_0^n)$. 
By Proposition \ref{prop:costRLntoRL}, 
$J_{\pL}(X^{\eps,n},u^\varepsilon) \to J_{\pL}(X^\varepsilon,u^\varepsilon)$, as $n\to +\infty$. 
Hence \[ \limsup_{n\to +\infty}V_{\pL}(X_0^n) \leq \limsup_{n\to +\infty} J_{\pL}(X^{\eps,n},u^\varepsilon) = J_{\pL}(X^\varepsilon,u^\varepsilon) \leq V_{\pL}(X_0) + \varepsilon. \]
Since $\varepsilon$ is arbitrary, we conclude.
\end{proof}

\subsection{Approximation by piecewise constant controls}\label{sec:approxLpiecew}
In this subsection, we approximate admissible controls for the Lagrangian problem with a sequence of suitable piecewise constant controls (i.e. measurable with respect to finite algebras of $\Omega$)
so that the corresponding trajectories and costs converge.  
This is the content of Theorem \ref{prop:RLntoRL}.
The same result is then rephrased in the context of finite particle approximations in  Proposition  
\ref{prop:approx_lagrangian_n}.

\smallskip

Let $(\Omega,\bar \frB, \P)$ be a probability space with $\bar \frB $ a finite algebra.
It can be shown that $\bar \frB$ induces a unique 
minimal (with respect to the inclusion) partition of $\Omega$, that we denote by 
\begin{equation}\label{eq:partBn}
\cP(\bar \frB)=\{A_k:k=1,\ldots,m\}.
\end{equation}
Given a topological space $E$, observe that, since $\bar \frB$ is finite, $g \in \BM((\Omega, \bar\frB); (E,\cB_E))$  if and only if $g$ is constant on the elements of $\cP$.

Let us give the following definition.
{
\begin{definition}\label{def:discreteL}
Let $(\Omega,\bar\frB, \P)$ be a probability space with $\bar\frB$ a finite algebra and $\cP(\bar \frB)$ the associated unique minimal  partition \eqref{eq:partBn}.
Given $m := \# \cP(\bar \frB)$, we define the probability space
$(\Omega^m, \Parts(\Omega^m), \P^m)$, where $\Omega^m:=\{1,\dots,m\}$, $\Parts(\Omega^m)$ is the algebra generated by $( \lbrace 1\rbrace, \ldots \lbrace m\rbrace )$ and $\P^m(\{k\}):=\P(A_k)$, $k=1,\dots,m$.
\end{definition}
}
\begin{proposition}\label{prop:correspLdiscrete}
Let $\S:=(U,f,\mathcal C,\mathcal C_T)$ satisfy Assumption \ref{BA}.
 Let $(\Omega,\bar\frB, \P)$ and $(\Omega^m, \Parts(\Omega^m), \P^m)$ as in Definition \ref{def:discreteL}. 
 Then the Lagrangian problems $\pL_{\bar \frB}:=\pL(\Omega,\bar\frB,\P)$ and $\pL^m:=\pL(\Omega^m, \Parts(\Omega^m), \P^m)$ are equivalent in the sense of Definition \ref{def:equiv_L}.
 \end{proposition}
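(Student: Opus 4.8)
The plan is to apply Proposition \ref{p:equivalence} with $(\Omega_1,\frB_1,\P_1) = (\Omega,\bar\frB,\P)$ and $(\Omega_2,\frB_2,\P_2) = (\Omega^m,\Parts(\Omega^m),\P^m)$, so the task reduces to producing measurable maps $\psi\colon \Omega\to\Omega^m$ and $\phi\colon\Omega^m\to\Omega$ with $\psi_\sharp\P = \P^m$, $\phi_\sharp\P^m = \P$, and the two ``left-inverse on $L^p$'' identities \eqref{eq:strana1}--\eqref{eq:strana2}. The natural choice is $\psi(\omega) := k$ whenever $\omega\in A_k$ (well-defined since $\{A_k\}_{k=1}^m = \cP(\bar\frB)$ is a partition of $\Omega$), and, picking once and for all a representative $\omega_k\in A_k$ for each $k$ (assuming $\P(A_k)>0$; indices with $\P(A_k)=0$ can be discarded up to null sets, invoking the second item of the Remark following Proposition \ref{p:equivalence}), setting $\phi(k) := \omega_k$.

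First I would check the pushforward identities: for any $k$, $\psi_\sharp\P(\{k\}) = \P(\psi^{-1}(\{k\})) = \P(A_k) = \P^m(\{k\})$, so $\psi_\sharp\P = \P^m$; and $\phi_\sharp\P^m(A_k) = \P^m(\{k\}) = \P(A_k)$ for each element $A_k$ of the generating partition, hence $\phi_\sharp\P^m = \P$ on all of $\bar\frB$. Measurability of $\psi$ holds because $\psi^{-1}(\{k\}) = A_k\in\bar\frB$, and $\phi$ is trivially measurable on the finite space $\Omega^m$.

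Next I would verify \eqref{eq:strana2}: for $X_0^2\in L^p(\Omega^m;\R^d)$, the composition $\psi\circ\phi$ sends $k\mapsto\phi(k) = \omega_k\mapsto\psi(\omega_k) = k$ (since $\omega_k\in A_k$), i.e. $\psi\circ\phi = i_{\Omega^m}$, so $X_0^2\circ\psi\circ\phi = X_0^2$ identically. The point requiring a little care is \eqref{eq:strana1}: $\phi\circ\psi$ sends $\omega\in A_k$ to $\omega_k$, which is generally \emph{not} the identity on $\Omega$; however, any $X_0^1\in L^p(\Omega,\bar\frB,\P;\R^d)$ is $\bar\frB$-measurable, hence (as noted in the text after \eqref{eq:partBn}) constant on each $A_k$, so $X_0^1(\phi(\psi(\omega))) = X_0^1(\omega_k) = X_0^1(\omega)$ for every $\omega\in A_k$, and thus $X_0^1\circ\phi\circ\psi = X_0^1$. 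This is the main (though still mild) obstacle: one must remember that the relevant $L^p$ space is built on the finite algebra $\bar\frB$, so its elements are automatically piecewise constant, which is exactly what makes \eqref{eq:strana1} hold despite $\phi\circ\psi\neq i_\Omega$.

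With all hypotheses of Proposition \ref{p:equivalence} checked, we conclude $\pL(\Omega,\bar\frB,\P;\S)\sim\pL(\Omega^m,\Parts(\Omega^m),\P^m;\S)$, which is the assertion. I would close by noting that if some $A_k$ have zero measure, one restricts $\psi$ and $\phi$ to the complement of the corresponding null sets and applies part (2) of the Remark after Proposition \ref{p:equivalence}; the argument is otherwise unchanged.
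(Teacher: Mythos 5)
Your proof is correct and follows essentially the same route as the paper: the same maps $\psi(\omega)=k$ for $\omega\in A_k$ and $\phi(k)=\omega_k$, the same verification of the pushforward identities, and the same key observation that $\bar\frB$-measurable functions are constant on the atoms $A_k$, so that $X_0^1\circ\phi\circ\psi=X_0^1$ despite $\phi\circ\psi\neq i_\Omega$. The only (immaterial) differences are that you check $\phi_\sharp\P^m=\P$ on the generating partition rather than by integrating test functions, and that your discussion of null-measure atoms is an unnecessary but harmless precaution, since the minimal partition has nonempty atoms and the construction goes through regardless of whether $\P(A_k)=0$.
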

\begin{proof}
{Let $\psi: \Omega \to \Omega^m$ the function given by
\[ \psi(\omega)=k, \quad \text{ if } \omega\in A_k, \quad k=1,\dots,m\]
and $\phi:\Omega^m \to \Omega$ defined by
\[ \phi(k)=\omega_k, \quad k=1,\dots,m,\]
for a fixed $\omega_k\in A_k$.
We prove that the maps $\psi$ and $\phi$ satisfy} the assumptions of Proposition \ref{p:equivalence}. Measurability of the map $\psi$ follows from the fact that $\psi^{-1}(\{k\})=A_k$ for any $k=1,\dots,m$, while the measurability of $\phi$ is trivial since $\Omega^m$ is equipped with the algebra $\Parts(\Omega^m)$. Moreover, it is immediate to verify that $\psi\circ\phi=i_{\Omega^m}$, which implies \eqref{eq:strana1}. We have to verify \eqref{eq:strana2}: given $X_0\in L^p(\Omega;\R^d)$, we have that $X_0$ is constant on the elements $A_k$ of the partition $\cP(\bar\frB)$, hence it easily follows that $X_0=X_0\circ\phi\circ\psi$. 
Let us verify that $\psi_\sharp\P=\P^m$: for any $k=1,\dots,m$, we have $(\psi_\sharp\P)(\{k\})=\P(\psi^{-1}(\{k\}))=\P(A_k)=\P^m(\{k\})$. Finally, we verify that $\phi_\sharp\P^m=\P$:
for any measurable function $g:\Omega\to\R$, recalling that $g$ is piecewise constant on the elements of $\cP(\bar\frB)$, we have
\begin{align*}
\int_\Omega g(\omega)\,\d(\phi_\sharp\P^m)(\omega)&=\int_{\Omega^m}g(\phi(k))\,\d\P^m(k)=\sum_{k=1}^m g(\omega_k)\P(A_k)\\
&=\int_\Omega g(\omega)\,\d\P(\omega).
\end{align*}
\end{proof}

We recall the notation $\cB_{[0,T]}$ for the Borel $\sigma$-algebra on $[0,T]$.

\begin{definition}\label{approx_prop}[Finite Approximation Property]
Let $(\Omega,\frB, \P)$ be a probability space. 
We say that the family of finite algebras $\frB^n \subset \frB$, $n\in\N$,
satisfies the \emph{finite approximation property} if
for any Banach space $E$ and any $g \in L_\P^1(\Omega; E)$, 
there exists a sequence $g^n: \Omega \to E$ such that  
\begin{enumerate}[label=\textbf{(\roman*)}]
\item \label{FAP1} $g^n$ is $\frB^n$-measurable for any $n\in\N$;
\item  \label{FAP2} $g^n(\Omega) \subset \mathrm{\overline{co}}\left(g(\Omega)\right)$, where $\mathrm{\overline{co}}\left(g(\Omega)\right)$ 
denotes the closed convex hull of $g(\Omega)$;
\item  \label{FAP3} $\left\|  g^n - g \right\|_{L_\P^1(\Omega;E)} \to 0$, as $n \to +\infty$;
\item  \label{FAP4} {if $G:[0,T]\times\Omega\to E$ is $(\cB_{[0,T]}\otimes\frB)$-measurable
and $g_t(\cdot):=G(t,\cdot)\in L_\P^1(\Omega; E)$ for any $t\in[0,T]$,
then the maps
$G^n:[0,T]\times\Omega\to E$ defined by $G^n(t,\omega):=g^n_t(\omega)$, where $g^n_t$ 
is a sequence associated to $g_t$ satisfying items \ref{FAP1},\ref{FAP2},\ref{FAP3},
are $(\cB_{[0,T]}\otimes\frB^n)$-measurable for any $n\in\N$.} \end{enumerate}
\end{definition}

\begin{proposition}\label{prop:FAP}
Let $(\Omega,\frB,\P)$ be a standard Borel space. 
\begin{enumerate}
\item Then there exists a family of  finite algebras
$\frB^n \subset \frB$, $n \in \N$, satisfying the finite approximation property of Definition \ref{approx_prop}.
\item If $\P$ is without atoms, then there exists a family $\frB^n \subset \frB$, $n \in \N$, satisfying the finite approximation property of Definition \ref{approx_prop} such that the associated minimal partition $\cP(\frB^n)= \{A_k^n:k=1,\ldots,n\} $ contains exactly $n$ elements and
\begin{equation}\label{eq:equipart}
\P(A^n_k)=\frac1n, \qquad  k=1,\ldots,n.
\end{equation}
\end{enumerate}
\end{proposition}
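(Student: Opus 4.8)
The plan is to construct, in both cases, the approximating operator as a \emph{conditional expectation with respect to a finite partition}, and then to check the four items of Definition~\ref{approx_prop} by reducing the $L^1$-convergence in item~\ref{FAP3} first to the scalar case and then to a dense class of functions. Concretely, given a finite partition $\{A_1,\dots,A_m\}$ of $\Omega$ into measurable cells, a Banach space $E$ and $g\in L^1_\P(\Omega;E)$, I would set $g^n$ equal, on each cell $A_k$ with $\P(A_k)>0$, to the mean value $\frac1{\P(A_k)}\int_{A_k}g\,\d\P$, and, on each $\P$-null cell $A_k$, to the constant $g(\omega_k)$ for an arbitrarily fixed $\omega_k\in A_k$ (this ad hoc choice on null cells is only needed to make item~\ref{FAP2} hold pointwise rather than a.e.). Only the choice of the partitions will differ between the two parts.

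For part~(1), since a standard Borel $\sigma$-algebra is countably generated, I would fix a countable generating family $\{B_i\}_{i\in\N}$ and let $\frB^n$ be the (finite) algebra generated by $B_1,\dots,B_n$, with its unique minimal partition $\cP(\frB^n)$. For part~(2), I would invoke the classical measure-isomorphism theorem: since $\P$ is atomless and $(\Omega,\frB,\P)$ is standard Borel, there is a Borel map $h\colon\Omega\to[0,1]$ with $h_\sharp\P=\cL_1$ which is a measure isomorphism, so that $\sigma(h)=\frB$ up to $\P$-null sets; then, setting $I^n_k:=[\tfrac{k-1}n,\tfrac kn)$ (with $I^n_n$ closed at $1$) and $A^n_k:=h^{-1}(I^n_k)$, the sets $A^n_1,\dots,A^n_n$ are nonempty, pairwise disjoint, cover $\Omega$ and have measure $\tfrac1n$, so they are exactly the minimal partition of the algebra $\frB^n$ they generate; this yields both \eqref{eq:equipart} and the count of exactly $n$ elements, and note that here no null cell occurs.

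Items~\ref{FAP1} and~\ref{FAP2} should be routine: $g^n$ is visibly $\frB^n$-measurable, and the mean value of a Bochner-integrable map over a set of positive measure lies in the closed convex hull of its (essential) range, hence in $\overline{\mathrm{co}}(g(\Omega))$, while on null cells $g^n$ takes a value of $g$ itself. For item~\ref{FAP4} I would use the measurability in $t$ of the partial Bochner integrals $t\mapsto\int_{A^n_k}G(t,\cdot)\,\d\P$: taking $g^n_t$ to be the above construction applied to $g_t$, the map $G^n$ is then, on each $[0,T]\times A^n_k$, constant in $\omega$ with Borel dependence on $t$, and gluing the finitely many cells gives $(\cB_{[0,T]}\otimes\frB^n)$-measurability.

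The substantive point is item~\ref{FAP3}. Since $g\mapsto g^n$ is linear and an $L^1_\P$-contraction and simple functions $\sum_j\mathds 1_{B_j}e_j$ are dense in $L^1_\P(\Omega;E)$, it is enough to prove $\int_\Omega|\mathbb E[\mathds 1_B\mid\frB^n]-\mathds 1_B|\,\d\P\to0$ for every $B\in\frB$. In part~(1) this is immediate, because $\bigcup_n\frB^n$ is an algebra with $\sigma(\bigcup_n\frB^n)=\frB$, hence dense in $(\frB,\P)$ for the symmetric-difference pseudometric, and $\mathbb E[\mathds 1_B\mid\frB^n]=\mathds 1_B$ as soon as $B\in\frB^n$. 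In part~(2) the cells are \emph{not} nested in $n$, so martingale convergence is unavailable; instead I would write $\mathds 1_B=\mathds 1_{\tilde B}\circ h$ $\P$-a.e.\ for a Borel $\tilde B\subseteq[0,1]$ and use $h_\sharp\P=\cL_1$ to obtain
\[
\int_\Omega\bigl|\mathbb E[\mathds 1_B\mid\frB^n]-\mathds 1_B\bigr|\,\d\P=\int_0^1\bigl|E_n\mathds 1_{\tilde B}(s)-\mathds 1_{\tilde B}(s)\bigr|\,\d s,
\]
where $E_n$ is the conditional expectation on $[0,1]$ with respect to the partition into $n$ equal intervals; the right-hand side tends to $0$ because it does so for continuous integrands (the mesh vanishes and such functions are uniformly continuous), the operators $E_n$ are $L^1([0,1])$-contractions, and $C([0,1])$ is dense in $L^1([0,1])$. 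The main obstacle I anticipate is precisely this item~\ref{FAP3} in part~(2): the non-nestedness of the equipartitions rules out a direct martingale argument and forces the passage through the isomorphism with $([0,1],\cL_1)$ together with the one-dimensional estimate above; the remaining difficulties — the measurability of partial Bochner integrals in item~\ref{FAP4}, and the bookkeeping for the possible $\P$-null cells of $\cP(\frB^n)$ in part~(1) — are standard but should not be overlooked.
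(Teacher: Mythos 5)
Your construction coincides with the paper's at the structural level: the same averaging operators $P_ng=\sum_k\mathds 1_{A_k}\fint_{A_k}g\,\d\P$, the algebras generated by the first $n$ elements of a countable generating family for part~(1), and the pull--back of the equal--interval partition of $[0,1]$ through the Borel isomorphism of Proposition~\ref{prop:BeqL1} for part~(2) (the paper's Lemmas in Appendix~\ref{app_FP} do exactly this, attaching the exceptional null set to the first cell). Items \ref{FAP1}, \ref{FAP2} and \ref{FAP4} are handled identically (mean value in the closed convex hull, Fubini for the joint measurability); your extra care on $\P$-null cells, where the paper simply sets the average to $0$, is if anything a small improvement, since it makes \ref{FAP2} hold for every $\omega$ and not just $\P$-a.e.

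Where you genuinely diverge is in the verification of item~\ref{FAP3}. For part~(1) the paper argues via topological regularity: it sandwiches a Borel set $A$ between a compact $K_\eps$ and an open $A_\eps$, covers $K_\eps$ by finitely many basis elements contained in $A_\eps$, and uses that these all lie in $\frB^n$ for $n$ large; you instead use the purely measure-theoretic fact that the algebra $\bigcup_n\frB^n$ generates $\frB$ and is therefore dense for the symmetric-difference pseudometric, combined with the $L^1$-contractivity of $P_n$. Your route avoids any appeal to the topology beyond the countable generation of $\frB$, and is arguably cleaner. For part~(2) the paper proves the vector-valued convergence on $[0,1]$ directly, via the Bochner version of the Lebesgue differentiation theorem plus a de la Vall\'ee Poussin equi-integrability bound to pass from a.e.\ to $L^1$ convergence; you first reduce to scalar indicators (linearity, contraction, density of simple functions in $L^1_\P(\Omega;E)$) and then use density of $C([0,1])$ in $L^1([0,1])$ together with the vanishing mesh. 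Both are correct; the paper's argument yields as a by-product the $L^p$ statement for $p>1$ recorded in Lemma~\ref{lemma:partL1}, while yours is more elementary. Your observation that the non-nestedness of the equipartitions blocks a direct martingale argument, and that the transfer $\mathds 1_B=\mathds 1_{\tilde B}\circ h$ requires $h$ to be a genuine measure isomorphism (not merely measure-preserving), are exactly the two points one must not gloss over, and you have both.
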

The proof of Proposition \ref{prop:FAP} is postponed in Appendix \ref{app_FP}.
Results similar to item (1) of Proposition \ref{prop:FAP} can be found in \cite[Theorem 6.1.12]{stroock2011probability}, where a martingale approach is employed.

\begin{theorem}[Approximation by piecewise constant controls for \pL]\label{prop:RLntoRL}
Let $\S:=(U,f,\mathcal C,\mathcal C_T)$ satisfy Assumption \ref{BA} with $U$ a convex compact subset of a separable Banach space $V$.
Let $(\Omega,\frB,\P)$ be a probability space and assume that there exists $\{\frB^n\}_{n\in\N}$ satisfying the finite approximation property of Definition \ref{approx_prop}.
Let $X_0\in L^p(\Omega;\R^d)$ and $(X,u)\in\mathcal A_{\pL}(X_0)$.
If $\{X^n_0\}_{n\in\N}\subset L^p(\Omega;\R^d)$ satisfies $\|X^n_0-X_0\|_{L^p(\Omega;\R^d)}\to0$,
then there exists a sequence $(X^n,u^n)\in\mathcal A_{\pL}(X^n_0)$ such that
\begin{enumerate}
\item $u^n$ is $(\cB_{[0,T]}\otimes\frB^n)$-measurable;
\item $u^n\to u$ in $(\cL_T\otimes\P)$-measure, as $n\to+\infty$;
\item $\sup_{t\in[0,T]}\|X^n_t-X_t\|_{L^p(\Omega;\R^d)}\to 0$, as $n\to+\infty$;
\item $J_{\pL}(X^n,u^n)\to J_{\pL}(X,u)$, as $n\to+\infty$.
\end{enumerate}
Moreover, if $X_0^n$ is $\frB^{n}$-measurable, $n\in\N$, then
$X_t^n$ is $\frB^{n}$-measurable for any $t\in[0,T]$.
\end{theorem}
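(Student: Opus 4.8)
The plan is to obtain $u^n$ by applying the finite approximation property to $u$ in the $\omega$-variable, to let $X^n$ be the trajectory uniquely associated to $(X_0^n,u^n)$ by Proposition \ref{prop:existL}, and then to invoke the stability statement of Proposition \ref{prop:costRLntoRL} for the convergence of trajectories and costs. More precisely, I would first regard $U$ as a convex compact subset of the separable Banach space $V$ of Assumption \ref{BA}, fix $M>0$ with $\|a\|_V\le M$ for every $a\in U$, and view $u\in\BM([0,T]\times\Omega;U)$ as an element of $L^1([0,T]\times\Omega;V)$ for the measure $\cL_T\otimes\P$. Possibly after modifying $u$ on an $(\cL_T\otimes\P)$-negligible set---which by the last assertion of Proposition \ref{prop:existL} does not change the trajectory $X$---I may assume that $u$ is $(\cB_{[0,T]}\otimes\frB)$-measurable, so that item \ref{FAP4} of Definition \ref{approx_prop} applies with $E:=V$ and $G:=u$. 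It produces $(\cB_{[0,T]}\otimes\frB^n)$-measurable maps $u^n:[0,T]\times\Omega\to V$ whose sections $u^n_t:=u^n(t,\cdot)$ are, for every $t\in[0,T]$, an approximating sequence of $u_t=u(t,\cdot)$ in the sense of items \ref{FAP1}--\ref{FAP3}. By item \ref{FAP2} we get $u^n_t(\Omega)\subset\mathrm{\overline{co}}\,(u_t(\Omega))\subset\mathrm{\overline{co}}\,(U)=U$, the last equality because $U$ is convex and compact; hence $u^n\in\BM([0,T]\times\Omega;U)$, which is item (1).

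For item (2), item \ref{FAP3} gives $\|u^n_t-u_t\|_{L^1_\P(\Omega;V)}\to0$ for every $t\in[0,T]$, and this quantity is dominated by $2M$ uniformly in $n$ and $t$; by dominated convergence on $[0,T]$ we obtain $\int_0^T\|u^n_t-u_t\|_{L^1_\P(\Omega;V)}\,\d t\to0$, i.e.\ $u^n\to u$ in $L^1([0,T]\times\Omega;V)$ for the measure $\cL_T\otimes\P$, and therefore $u^n\to u$ in $(\cL_T\otimes\P)$-measure. Let then $X^n$ be the unique curve with $(X^n,u^n)\in\cA_{\pL}(X_0^n)$ furnished by Proposition \ref{prop:existL}. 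Since $\|X^n_0-X_0\|_{L^p(\Omega;\R^d)}\to0$ and $u^n\to u$ in $(\cL_T\otimes\P)$-measure, Proposition \ref{prop:costRLntoRL} yields at once item (3), $\sup_{t\in[0,T]}\|X^n_t-X_t\|_{L^p(\Omega;\R^d)}\to0$, and item (4), $J_{\pL}(X^n,u^n)\to J_{\pL}(X,u)$.

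For the last assertion, assume $X_0^n$ is $\frB^n$-measurable and let $\cP(\frB^n)=\{A^n_k\}$ be the associated minimal partition; then $X_0^n$, as well as each section $\omega\mapsto u^n_t(\omega)$ (which is $\frB^n$-measurable, being a section of the $(\cB_{[0,T]}\otimes\frB^n)$-measurable map $u^n$), is constant on every $A^n_k$. Fixing $k$ and $\omega,\omega'\in A^n_k$ and writing $\mu^n_t:=(X^n_t)_\sharp\P$, for $\P$-a.e.\ such $\omega$ the curve $t\mapsto X^n_t(\omega)$ solves in $\R^d$ the Cauchy problem $\dot y(t)=f(y(t),u^n_t(\omega),\mu^n_t)$, $y(0)=X_0^n(\omega)$, whose data are identical for $\omega$ and $\omega'$; since $f(\cdot,u,\mu)$ is Lipschitz by \eqref{eq:Lipf}, this problem has a unique solution, so $X^n_\cdot(\omega)=X^n_\cdot(\omega')$, and consequently $X^n_t$ agrees $\P$-a.e.\ with a function that is constant on each $A^n_k$, i.e.\ it is $\frB^n$-measurable, for every $t$. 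The only genuine subtlety I foresee is the measurability bookkeeping---justifying that $u$ may be taken $(\cB_{[0,T]}\otimes\frB)$-measurable so that item \ref{FAP4} applies, and checking that the approximants $u^n$ are indeed $U$-valued (which is exactly where the hypothesis that $U$ is convex and compact enters)---while the analytic content behind items (3) and (4) is entirely delegated to the already established Propositions \ref{prop:existL} and \ref{prop:costRLntoRL}.
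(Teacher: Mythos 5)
Your proof is correct and follows essentially the same route as the paper's: apply the finite approximation property to the sections $u_t$ (using \ref{FAP2} with the convexity and compactness of $U$ to keep the values in $U$, and \ref{FAP4} for joint measurability), deduce $L^1(\cL_T\otimes\P)$-convergence of $u^n$ by dominated convergence, and delegate items (3)--(4) to Propositions \ref{prop:existL} and \ref{prop:costRLntoRL}. Your extra care in replacing $u$ by a $(\cB_{[0,T]}\otimes\frB)$-measurable representative before invoking \ref{FAP4}, and your explicit uniqueness argument for the final $\frB^n$-measurability claim (which the paper leaves implicit here, though it uses the same argument in Proposition \ref{p:chattering:finito}), are both welcome additions rather than deviations.
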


\begin{proof} 
For any $t \in [0,T]$,  we denote with $u_t: \Omega \to U$ the measurable control function $u$ at time $t$.
For any $n\in\N$ and $t\in[0,T]$ let $u^n_t:\Omega\to U$ be the $\frB^n$-measurable approximation of $u_t$ given by Definition \ref{approx_prop}.
By the convexity of $U$ and its compactness, from the property \ref{FAP2} of Definition \ref{approx_prop} 
it follows that $u^n_t(\Omega) \subset U$. 
Defining $u^n(t,\omega):=u^n_t(\omega)$, thanks to the property \ref{FAP4} of Definition \ref{approx_prop}, we have that $u^n$ is $(\cB_{[0,T]}\otimes\frB^n)$-measurable. 
By Proposition \ref{prop:existL} we have the existence of $X^n$ with $(X^n,u^n)\in\mathcal A_{\pL}(X^n_0)$.

From the compactness of $U$ and the dominated convergence theorem it follows that 
$\|u^n-u\|_{L^1([0,T]\times\Omega;V)}\to 0$, as $n\to+\infty$.
Consequently, (2) holds.
Properties (3) and (4) follow by Proposition \ref{prop:costRLntoRL}.
\end{proof}

{In the following, we reformulate the approximation result of Theorem \ref{prop:RLntoRL} with the language of particles.}
Let $(\Omega,\frB, \P)$ be a probability space and $\frB^n$ a finite algebra, $n \in \N$. 
Denote with $\cP(\frB^n):= \{ A_k^n: k = 1, \ldots, k(n) \}$ the associated unique minimal partition and define $(\Omega^{k(n)}, \Parts(\Omega^{k(n)}), \P^{k(n)})$ by
\begin{equation}\label{eq:omega_kn}
\begin{aligned}
&\Omega^{k(n)}:=\{1,\dots,k(n)\}, \quad \Parts(\Omega^{k(n)}):= \sigma ( \lbrace 1\rbrace, \ldots \lbrace k(n)\rbrace) \\
&\P^{k(n)}(\{k\}):=\P(A^{n}_k), \quad k=1,\dots,{k(n)}.
\end{aligned}
\end{equation}
In order to approximate trajectories, controls and costs  of  a Lagrangian problem $\pL=\pL(\Omega,\frB,\P)$ with the respective quantities in $\pL^{k(n)}=\pL(\Omega^{k(n)},\Parts(\Omega^{k(n)}),\P^{k(n)})$ we introduce, for every $n \in \N$,  the maps $\psi^n, \phi^n$ and $\cK^n$.
This is necessary due to the fact that the trajectories are not defined on the same space.   

For every $n \in \N$,  we denote with $\psi^n, \phi^n$ the maps 
\begin{equation}\label{def:psinphin}
\begin{split}
&\psi^n: \Omega \to \Omega^{k(n)},\quad\psi^n(\omega)=k, \quad \text{ if } \omega\in A^n_k, \quad k=1,\dots,k(n);\\
&\phi^n:\Omega^{k(n)} \to \Omega,\quad \phi^n(k)=\omega_{k}^n, \quad k=1,\dots,k(n), \quad \text{for a fixed }\omega_{k}^n\in A^n_k.
\end{split}
\end{equation}
Moreover, for every $n \in \N$, we introduce the map $\cK^n: L^p(\Omega^{k(n)}; C([0,T];\R^d)) \times \BM([0,T] \times \Omega^{k(n)};U)$ $\to L^p((\Omega, \frB^n, \P); C([0,T];\R^d)) \times \BM([0,T] \times \Omega;U)$ given by
\begin{equation}\label{def:Kn}
\begin{aligned}
&\cK^n( Y, v):= (Y\circ\psi^n, \hat v), \\
&\text{ where} \qquad \hat v(t,\omega):= v(t,\psi^n(\omega)), \quad  \forall \, (t,\omega)\in[0,T]\times\Omega.\\
\end{aligned}
\end{equation}

\begin{proposition}\label{prop:approx_lagrangian_n}
Let $\S:=(U,f,\mathcal C,\mathcal C_T)$ satisfy Assumption \ref{BA} with $U$ a convex compact subset of a separable Banach space $V$.
Let $(\Omega,\frB,\P)$ and  be a probability space and assume that there exists $\{\frB^n\}_{n\in\N}$ satisfying the finite approximation property of Definition \ref{approx_prop}.
For every $n \in \N$, let $(\Omega^{k(n)},\Parts(\Omega^{k(n)}),\P^{k(n)})$ as in \eqref{eq:omega_kn}.
 
Let $X_0\in L^p(\Omega;\R^d)$ and $(X,u)\in\mathcal A_{\pL}(X_0)$.
If  $ Y^n_0 \in L^p(\Omega^{k(n)};\R^d)$, $n \in \N$, 
satisfies 
\[\lim_{n \to +\infty} \|Y^n_0\circ\psi^n-X_0\|_{L^p(\Omega;\R^d)} = 0,\]
then there exists 
  a sequence $( Y^n,v^n)\in\mathcal A_{\pL^{k(n)}}(Y^n_0)$ such that
\begin{enumerate}
\item $\cK^n(Y^n, v^n) \to (X,u)$ in $C([0,T];L^p(\Omega;\R^d)) \times L^1([0,T] \times \Omega;V)$,  as $n\to+\infty$;
\item $J_{\pL^{k(n)}}(Y^n, v^n)\to J_{\pL}(X,u)$, as $n\to+\infty$.
\end{enumerate}
\end{proposition}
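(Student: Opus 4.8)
The plan is to deduce the statement from the already established Theorem~\ref{prop:RLntoRL}, applied on the space $(\Omega,\frB,\P)$ with the given family $\{\frB^n\}$, and then to transport the resulting objects from $\Omega$ down to the finite spaces $\Omega^{k(n)}$ by means of the maps $\psi^n,\phi^n$ of \eqref{def:psinphin}, exactly as in the equivalence argument of Proposition~\ref{p:equivalence}. Concretely, I would first set $X^n_0:=Y^n_0\circ\psi^n$. Since $\psi^n$ is constant on each atom $A^n_k$ of $\cP(\frB^n)$, the function $X^n_0$ is $\frB^n$-measurable, and by hypothesis $\|X^n_0-X_0\|_{L^p(\Omega;\R^d)}\to0$. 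Theorem~\ref{prop:RLntoRL} then produces a sequence $(X^n,u^n)\in\cA_{\pL}(X^n_0)$ with $u^n$ $(\cB_{[0,T]}\otimes\frB^n)$-measurable, $u^n\to u$ in $(\cL_T\otimes\P)$-measure, $\sup_{t}\|X^n_t-X_t\|_{L^p(\Omega;\R^d)}\to0$ and $J_{\pL}(X^n,u^n)\to J_{\pL}(X,u)$; moreover, because $X^n_0$ is $\frB^n$-measurable, the last assertion of that theorem gives that $X^n_t$ is $\frB^n$-measurable for every $t\in[0,T]$.

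Next I would push $(X^n,u^n)$ down to $\Omega^{k(n)}$. For each $n$ I would choose the points $\omega^n_k\in A^n_k$ defining $\phi^n$ inside the $\P$-conull set of those $\omega$ for which $X^n(\omega)$ actually solves the Cauchy problem \eqref{eq:systemL} (this is possible whenever $\P(A^n_k)>0$; atoms of zero mass are irrelevant), and, using that $X^n_t$ is $\frB^n$-measurable for every rational $t$ together with continuity in $t$, I would observe that $X^n$ is $\P$-a.e.\ constant on each $A^n_k$. Then define $Y^n(k):=X^n(\phi^n(k))$ and $v^n(t,k):=u^n(t,\phi^n(k))$. By construction $\cK^n(Y^n,v^n)=(X^n,u^n)$ in $L^p(\Omega;C([0,T];\R^d))\times L^1([0,T]\times\Omega;V)$, since $\phi^n\circ\psi^n$ agrees $\P$-a.e.\ with a selection on which $X^n$ and $u^n$ are constant along the $A^n_k$. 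One then checks $(Y^n,v^n)\in\cA_{\pL^{k(n)}}(Y^n_0)$: the identity $\psi^n_\sharp\P=\P^{k(n)}$ with $X^n_t=Y^n_t\circ\psi^n$ gives $(Y^n_t)_\sharp\P^{k(n)}=(X^n_t)_\sharp\P$, so the mean-field arguments in the dynamics and in the cost coincide; the differential equation for $Y^n(k)$ is then precisely \eqref{eq:systemL} for $X^n$ read at the representative $\omega^n_k$; and the initial condition is matched because $\psi^n\circ\phi^n=i_{\Omega^{k(n)}}$ yields $Y^n_0(k)=X^n_0(\phi^n(k))=X^n_{|t=0}(\phi^n(k))$.

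Finally I would translate the convergences. For item~(1): the first component of $\cK^n(Y^n,v^n)=(X^n,u^n)$ converges to $X$ in $C([0,T];L^p(\Omega;\R^d))$ exactly by $\sup_t\|X^n_t-X_t\|_{L^p(\Omega;\R^d)}\to0$ (identifying $L^p(\Omega;\AC^p([0,T];\R^d))$ with a subset of $C([0,T];L^p(\Omega;\R^d))$ as in Remark~\ref{re:equiv}); for the second component, $u^n\to u$ in $(\cL_T\otimes\P)$-measure together with the compactness of $U\subset V$ upgrades, by the equivalence between convergence in measure and in $L^1$ for maps valued in a compact set recalled in Section~\ref{sec_prel}, to $u^n\to u$ in $L^1([0,T]\times\Omega;V)$. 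For item~(2): changing variables through $\phi^n_\sharp\P^{k(n)}=\P$ and using $(Y^n_t)_\sharp\P^{k(n)}=(X^n_t)_\sharp\P$ shows $J_{\pL^{k(n)}}(Y^n,v^n)=J_{\pL}(X^n,u^n)$, which tends to $J_{\pL}(X,u)$ by Theorem~\ref{prop:RLntoRL}. The only genuinely delicate step is the middle one: making $\cK^n(Y^n,v^n)=(X^n,u^n)$ and the admissibility of $(Y^n,v^n)$ rigorous requires selecting the representatives $\omega^n_k$ compatibly with the exceptional null sets coming both from the ODE and from the a.e.\ constancy of $X^n$ on the atoms; the rest is a routine transfer through the measure-preserving maps $\psi^n,\phi^n$.
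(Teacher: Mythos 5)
Your proposal is correct and follows essentially the same route as the paper: the paper's own proof simply invokes Theorem~\ref{prop:RLntoRL} on $(\Omega,\frB,\P)$ with $X^n_0=Y^n_0\circ\psi^n$ and then transfers the result to $\Omega^{k(n)}$ via the equivalence $\pL_{\frB^n}\sim\pL^{k(n)}$ of Proposition~\ref{prop:correspLdiscrete}, using compactness of $U$ to upgrade convergence in measure to $L^1$ convergence. Your version merely unfolds the details of that transfer (the choice of representatives $\omega^n_k$ and the a.e.\ constancy on atoms), which the paper leaves implicit in Propositions~\ref{p:equivalence} and~\ref{prop:correspLdiscrete}.
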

\begin{proof}
For every $n \in \N$, by Proposition \ref{prop:correspLdiscrete} it holds that $\pL_{\frB^n} \sim \pL^{k(n)}$ in the sense of Definition \ref{def:equiv_L}.
Thanks to Proposition \ref{prop:equivSpaces} we have that $X \in \AC^p([0,T];L^p(\Omega;\R^d))$, hence we conclude applying
Theorem \ref{prop:RLntoRL}.
Since $U$ is compact, recall that the convergence $u^n \to u \in L^1([0,T] \times \Omega;V)$ is equivalent to the convergence in $(\cL_T \otimes \P)$-measure. 
\end{proof}

\begin{remark}\label{rmk:noequiprob}
If $(\Omega,\frB,\P)$ is a standard Borel space, the existence of a sequence of finite algebras $\frB^n$ is guaranteed by item (1) in Proposition \ref{prop:FAP}.
Moreover, if $\P$ is without atoms, it is possible to choose $\frB^n$ s.t. $\# \frB^n = n$ and satisfying the property \eqref{eq:equipart} given in item (2) in Proposition \ref{prop:FAP}.  

\noindent Notice that the assumption $\P$ without atoms is necessary to get a sequence of $\frB^n$ with the property \eqref{eq:equipart}. Indeed, if there exists $\omega_0\in\Omega$ such that $\P(\{\omega_0\})=\alpha>0$, then for $n \in \N$ big enough the property \eqref{eq:equipart} fails.
\end{remark}

\subsection{Approximation by continuous controls and trajectories}

The objective of the subsection is twofold.
In Proposition \ref{lemmaA}, under continuity assumptions on both the initial datum and the control,  we exhibit a stability result for a Lagrangian problem $\pL(\Omega, \frB, \P)$ when $\P$ is approximated by a sequence of probability measures $\P^n$.
Then, in Theorem \ref{lemmaB} we approximate admissible controls with \emph{continuous} controls so that the associated trajectories are continuous as well and
the associated costs converge. 
These results are useful to prove the equivalence between Lagrangian and Eulerian optimal control problems (see the proof of Theorem \ref{prop:K>L}).\\ 

Throughout the section, we assume that
\begin{equation}\label{ass:topo}
\text{$(\Omega,\frB,\P)$ standard Borel space, $\tau$ a Polish topology on $\Omega$ such that $\frB = \cB_{(\Omega,\tau)}$.} 
\end{equation}

\noindent In the following regularity result, we prove the existence of a  continuous trajectory {for the Lagrangian dynamics} whenever both the initial datum and the control are continuous.

\begin{lemma}[Continuity]\label{lemmaAA}
Let $\S:=(U,f,\mathcal C,\mathcal C_T)$ satisfy Assumption \ref{BA}
and let $(\Omega,\frB,\P)$ satisfy \eqref{ass:topo}.
Let $\tilde X_0\in C(\Omega;\R^d)$ such that $\tilde X_0\in L^p(\Omega;\R^d)$ and $u\in C([0,T]\times\Omega;U)$. 
If $(X,u) \in \cA_{\pL}(\tilde X_0)$ and $\mu_t:=(X_t)_\sharp\P$ then there exists a unique $\tilde X \in C([0,T] \times \Omega; \R^d)$ satisfying for any $\omega \in \Omega$
\begin{equation}\label{eq:systemLomega}
\begin{cases}
\dot {\tilde X}_t(\omega)=f(\tilde X_t(\omega),u_t(\omega),\mu_t), & \forall \ t\in (0,T)\\ 
\tilde X_{|t=0}(\omega)=\tilde X_0(\omega). &
\end{cases}
\end{equation}
Moreover, $\tilde X_t(\omega) = X_t(\omega)$ for every $t \in [0,T]$ and  $\P$-a.e. $\omega \in \Omega$.
\end{lemma}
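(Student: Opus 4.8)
The plan is to fix the measure curve $\mu_t := (X_t)_\sharp \P$ and view \eqref{eq:systemLomega} as a \emph{decoupled} family of classical ODEs in $\R^d$, one for each $\omega \in \Omega$, with the nonlocal term frozen. Concretely, set $b(t,x,\omega) := f(x, u_t(\omega), \mu_t)$. Since $u \in C([0,T]\times\Omega;U)$ and $t \mapsto \mu_t$ is continuous into $\PP_p(\R^d)$ (by the a priori Lipschitz estimate \eqref{LipXt} combined with \eqref{eq:WpLp}, or simply because $X \in \AC^p$), and $f$ is continuous jointly and $L$-Lipschitz in $x$ uniformly in $u$ by \ref{itemf:main}, the field $b$ is continuous in $(t,x,\omega)$ and $L$-Lipschitz in $x$ uniformly in $(t,\omega)$. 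Hence for each $\omega$ the Cauchy problem \eqref{eq:systemLomega} with datum $\tilde X_0(\omega)$ has a unique solution $\tilde X_\cdot(\omega) \in C^1([0,T];\R^d)$ by Cauchy--Lipschitz; this defines $\tilde X : [0,T]\times\Omega \to \R^d$ pointwise, and uniqueness for each $\omega$ is immediate.

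Next I would prove joint continuity of $(t,\omega) \mapsto \tilde X_t(\omega)$. Write the integral form $\tilde X_t(\omega) = \tilde X_0(\omega) + \int_0^t f(\tilde X_s(\omega), u_s(\omega), \mu_s)\,\d s$. Given $(t,\omega)$ and $(t',\omega')$, estimate $|\tilde X_{t'}(\omega') - \tilde X_t(\omega)|$ by splitting: a term $|\tilde X_0(\omega') - \tilde X_0(\omega)|$ controlled by continuity of $\tilde X_0$; a term from the difference of the time-integration endpoints, controlled by the a priori bound \eqref{boundXtomega}-type sup bound on $|f(\tilde X_s(\omega'),\cdot,\cdot)|$ (which follows from \eqref{f:growth} and a Gronwall argument giving $\sup_{s}|\tilde X_s(\omega')|$ locally bounded in $\omega'$, say near $\omega$ using continuity of $\tilde X_0$); and a term $\int_0^{t}|f(\tilde X_s(\omega'),u_s(\omega'),\mu_s) - f(\tilde X_s(\omega),u_s(\omega),\mu_s)|\,\d s$. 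The last term is bounded using \eqref{eq:Lipf} by $L\int_0^T(|\tilde X_s(\omega') - \tilde X_s(\omega)| + 0)\,\d s$ — the $W_p$ part vanishes because $\mu_s$ is the same — plus a term measuring the modulus of continuity of $f$ in its $U$-argument along $u_s(\omega') \to u_s(\omega)$, which by continuity of $f$, compactness of $U$, and continuity (hence, on a neighborhood of $\omega$ with compact closure, uniform continuity) of $u$ goes to $0$ uniformly in $s$ as $\omega' \to \omega$. A Gronwall inequality in the variable $t$ then closes the estimate and yields $\tilde X_{t'}(\omega') \to \tilde X_t(\omega)$, i.e.\ $\tilde X \in C([0,T]\times\Omega;\R^d)$. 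One minor point: $\Omega$ need not be locally compact, so instead of uniform continuity on a compact neighborhood I would argue sequentially — take $\omega^k \to \omega$, note $\{\omega^k\}\cup\{\omega\}$ is compact, restrict $u$ and $\tilde X_0$ to it, and get the required uniform moduli on that countable compact set.

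Finally I would identify $\tilde X$ with $X$: both $X(\omega)$ and $\tilde X(\omega)$ solve the same Cauchy problem \eqref{eq:systemL} resp.\ \eqref{eq:systemLomega} with the \emph{same} frozen coefficient $f(\cdot, u_t(\omega), \mu_t)$ for $\P$-a.e.\ $\omega$ (this is exactly the definition of $(X,u) \in \cA_\pL(\tilde X_0)$, noting $\mu_t = (X_t)_\sharp\P$ by definition), and both have the same initial datum $\tilde X_0(\omega)$ for those $\omega$. By uniqueness of the $\R^d$-valued ODE (Cauchy--Lipschitz, using the $x$-Lipschitz bound \eqref{eq:Lipf}), $\tilde X_t(\omega) = X_t(\omega)$ for all $t\in[0,T]$ and $\P$-a.e.\ $\omega \in \Omega$, which is the last claim.

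The main obstacle I expect is the joint continuity step, specifically handling the $U$-argument of $f$: one must transfer continuity of $u$ in $\omega$ into an equicontinuity-in-$s$ statement for $s \mapsto f(\tilde X_s(\omega'), u_s(\omega'), \mu_s)$, and this requires care because $\Omega$ is merely Polish (not locally compact), so the clean ``uniform continuity on a compact neighborhood'' shortcut must be replaced by the sequential argument indicated above, combined with the uniform (in $s$, $\omega'$ near $\omega$) a priori bounds on $|\tilde X_s(\omega')|$ coming from \eqref{f:growth} and Gronwall. Everything else — existence/uniqueness for the frozen ODE, the identification with $X$ — is routine given the results already established in the excerpt.
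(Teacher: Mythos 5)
Your proposal is correct and follows essentially the same route as the paper's proof: freeze $\mu_t$, solve the decoupled ODE for each $\omega$ by Cauchy--Lipschitz, prove joint continuity via the triangle-inequality splitting plus Gronwall (the paper handles the $U$-argument term $\int_0^T|f(\tilde X_s(\omega),u_s(\omega_n),\mu_s)-f(\tilde X_s(\omega),u_s(\omega),\mu_s)|\,\d s$ by pointwise convergence and domination via \eqref{f:growth}, rather than your uniform-in-$s$ modulus, but both close the argument), and identify $\tilde X$ with $X$ by ODE uniqueness for $\P$-a.e.\ $\omega$.
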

\begin{proof} 
For any $\omega\in\Omega$, there exists a unique solution $\tilde X(\omega)\in C^1([0,T];\R^d)$ of \eqref{eq:systemLomega} thanks to the Lipschitz assumptions on the vector field $f$.
Since $u, (\mu_t)_{t \in [0,T]}$ and $\tilde X_0$ are fixed, 
the solutions of \eqref{eq:systemL}  and \eqref{eq:systemLomega} coincide, hence $\tilde X_t(\omega)= X_t(\omega)$  for any $t\in[0,T]$, for $\P$-a.e. $\omega\in\Omega$.

Denoting with $\tilde X :[0,T] \times \Omega\to \R^d$ the function 
$\tilde X(t,\omega)=\tilde X_t(\omega)$, we prove the continuity of $\tilde X$. 
We fix $(t,\omega)\in[0,T]\times\Omega$, and a sequence 
$(t_n,\omega_n)\in[0,T]\times\Omega$ converging to $(t,\omega)$ as $n\to+\infty$.
By triangular inequality,
\begin{equation*}
| \tilde X_{t_n}(\omega_n) - \tilde X_{t}(\omega)| \leq 
| \tilde X_{t_n}(\omega_n) - \tilde X_{t_n}(\omega)| +| \tilde X_{t_n}(\omega) - \tilde X_{t}(\omega)|.
\end{equation*}
The second term is estimated by
\[
| \tilde X_{t_n}(\omega) - \tilde X_{t}(\omega)| \leq \int_t^{t_n} \left| f(\tilde X_s(\omega),u_s(\omega),\mu_s) \right| \d s .
\]
Concerning the first term,
\begin{equation*}
\begin{split}
| \tilde X_{t_n}(\omega_n) - \tilde X_{t_n}(\omega)| &\leq |\tilde X_0(\omega_n) - \tilde X_0(\omega)| \\
&+ \int_0^{t_n} \left| f(\tilde X_s(\omega_n),u_s(\omega_n),\mu_s) - f(\tilde X_s(\omega),u_s(\omega_n),\mu_s) \right| \d s \\
&+ \int_0^{t_n} \left|  f(\tilde X_s(\omega),u_s(\omega_n),\mu_s) - f(\tilde X_s(\omega),u_s(\omega),\mu_s) \right| \d s \\
&\leq |\tilde X_0(\omega_n) - \tilde X_0(\omega)| + L \int_0^{t_n}  | \tilde X_s(\omega_n) - \tilde X_s(\omega) |  \,\d s \\
&+ \int_0^T \left|  f(\tilde X_s(\omega),u_s(\omega_n),\mu_s) - f(\tilde X_s(\omega),u_s(\omega),\mu_s) \right| \d s.
\end{split}
\end{equation*}
By Gronwall lemma we have
\begin{equation*}
\begin{split}
|\tilde X_{t_n}(\omega_n) - \tilde X_{t_n}(\omega)| \leq e^{LT}\left(|\tilde X_0(\omega_n) - \tilde X_0(\omega)| + \int_0^T R_s(\omega_n,\omega)\,\d s\,\right),
\end{split}
\end{equation*}
where
$R_s(\omega_n,\omega):=\left|  f(\tilde X_s(\omega),u_s(\omega_n),\mu_s) - f(\tilde X_s(\omega),u_s(\omega),\mu_s) \right|$.
Collecting the previous inequalities we get
\begin{equation*}
\begin{split}
| \tilde X_{t_n}(\omega_n) - \tilde X_{t}(\omega)| & \leq e^{LT}\left(|\tilde X_0(\omega_n) - \tilde X_0(\omega)| + \int_0^T R_s(\omega_n,\omega)\,\d s\,\right) \\
& +  \int_t^{t_n} \left| f(\tilde X_s(\omega),u_s(\omega),\mu_s) \right| \d s,
\end{split}
\end{equation*}
By the continuity of $\tilde X_0$, the growth property  \eqref{f:growth} and the continuity of $u$ 
we can pass to the limit in the right hand side and we conclude.

\end{proof}

\begin{proposition}[Stability for $\P$]\label{lemmaA}
Let $\S:=(U,f,\mathcal C,\mathcal C_T)$ satisfy Assumption \ref{BA}
and  $(\Omega,\frB,\P)$ satisfying \eqref{ass:topo}.
Let $\P^n,\P \in \PP(\Omega)$, $n \in \N$, such that $\P^n \to \P$ weakly.
Let $\tilde X_0\in C(\Omega;\R^d)$ and $u\in C([0,T]\times\Omega;U)$ such that $\tilde X_0\in \left[\bigcap_{n\in\N}L^p_{\P^n}(\Omega;\R^d)\right]\cap L^p_\P(\Omega;\R^d)$
and 
\begin{equation}\label{eq:conv_X0}
\|\tilde X_0\|_{L^p_{\P^n}(\Omega;\R^d)}\to\|\tilde X_0\|_{L^p_{\P}(\Omega;\R^d)} \quad \text{ if }\ n\to+\infty.
\end{equation}
We denote by $\pL^{n}:= \pL(\Omega,\frB, \P^n)$ and $\pL:= \pL(\Omega,\frB, \P)$. 

Let $(X,u) \in \cA_{\pL}(\tilde X_0)$ and $(X^n,u) \in \cA_{\pL^{n}}(\tilde X_0)$ and denote with $\tilde X, \tilde X^n \in C([0,T] \times \Omega; \R^d)$ {the corresponding solutions} given in Lemma \ref{lemmaAA} associated with $\P$ and $\P^n$, respectively.
Then
\begin{equation}\label{unifconvX}
 \sup_{(t,\omega)\in [0,T] \times \Omega}|\tilde X^n_t(\omega)-\tilde X_t(\omega)|\to 0,   \qquad \text{ as } n \to +\infty,
 \end{equation}
\begin{equation}\label{convJcont}
 J_{\pL^{n}}(X^n,u) \longrightarrow J_{\pL}(X,u), \qquad \text{ as } n \to +\infty.
 \end{equation}  
\end{proposition}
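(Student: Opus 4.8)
The plan is to work with the \emph{classical} solutions $\tilde X,\tilde X^n\in C([0,T]\times\Omega;\R^d)$ provided by Lemma~\ref{lemmaAA}, which solve \eqref{eq:systemLomega} for \emph{every} $\omega\in\Omega$ with the frozen mean fields $\mu_t:=(\tilde X_t)_\sharp\P=(X_t)_\sharp\P$ and $\mu^n_t:=(\tilde X^n_t)_\sharp\P^n=(X^n_t)_\sharp\P^n$; then \eqref{unifconvX} will follow from a Gronwall estimate, and \eqref{convJcont} from a Skorohod coupling combined with a variant of the dominated convergence theorem (Theorem~1.20 in \cite{evans2015measure}, as already used in the proof of Proposition~\ref{prop:costRLntoRL}). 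Since $\P^n\to\P$ weakly in $\PP(\Omega)$ and $\Omega$ is Polish, Proposition~\ref{prop:Skorohod}(ii) furnishes Borel maps $Y^n,Y\colon[0,1]\to\Omega$ with $Y^n_\sharp\cL_1=\P^n$, $Y_\sharp\cL_1=\P$ and $Y^n\to Y$ $\cL_1$-a.e.\ on $[0,1]$; I will route through this coupling all the limits in which the reference measure varies, the hypothesis \eqref{eq:conv_X0} supplying the $L^1([0,1])$-convergence of the relevant dominating functions.

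First I would collect uniform bounds. By \eqref{eq:conv_X0} the number $M:=\sup_n\|\tilde X_0\|_{L^p_{\P^n}(\Omega;\R^d)}$ is finite, so the Gronwall argument in the proof of Proposition~\ref{prop:estimatesL}, which for the classical solutions $\tilde X,\tilde X^n$ is valid for every $\omega$, together with \eqref{f:growth} and \eqref{boundXt}, produces a constant $C'$ independent of $n,\omega$ with $|\tilde X^n_t(\omega)|^p+|\tilde X_t(\omega)|^p\le C'(1+|\tilde X_0(\omega)|^p)$ for all $t\in[0,T]$, and after integration a uniform bound $\m_p(\mu^n_t)+\m_p(\mu_t)\le K$ for all $n,t$. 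Put $\delta_n(t):=\sup_{\omega\in\Omega}|\tilde X^n_t(\omega)-\tilde X_t(\omega)|$. Subtracting the integral forms of \eqref{eq:systemLomega} and using \eqref{eq:Lipf} with $\sfd p((a,\mu),(b,\nu))\le|a-b|+W_p(\mu,\nu)$ together with $W_p(\mu^n_s,\mu_s)\le\m_p(\mu^n_s)+\m_p(\mu_s)\le 2K$, a first Gronwall estimate gives $\delta_n(t)\le 2LKT\,e^{LT}<+\infty$; thus $\delta_n$ is bounded and lower semicontinuous in $t$, hence Borel measurable. Refining, $W_p(\mu^n_s,\mu_s)\le\|\tilde X^n_s-\tilde X_s\|_{L^p_{\P^n}(\Omega;\R^d)}+W_p\big((\tilde X_s)_\sharp\P^n,(\tilde X_s)_\sharp\P\big)\le\delta_n(s)+\rho_n(s)$, where $\rho_n(s):=\|\tilde X_s\circ Y^n-\tilde X_s\circ Y\|_{L^p([0,1];\R^d)}$ (using \eqref{eq:composition} and \eqref{eq:WpLp}); hence
\[
\delta_n(t)\le 2L\int_0^t\delta_n(s)\,\d s+L\int_0^T\rho_n(s)\,\d s,
\]
and Gronwall's lemma yields $\sup_{t\in[0,T]}\delta_n(t)\le L\,e^{2LT}\int_0^T\rho_n(s)\,\d s$. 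For fixed $s$, $\tilde X_s\circ Y^n\to\tilde X_s\circ Y$ $\cL_1$-a.e.\ by continuity of $\tilde X_s$, while $|\tilde X_s\circ Y^n|^p\le C'(1+|\tilde X_0\circ Y^n|^p)$ with $\int_0^1(1+|\tilde X_0\circ Y^n|^p)\,\d\cL_1=1+\|\tilde X_0\|_{L^p_{\P^n}(\Omega;\R^d)}^p\to 1+\|\tilde X_0\|_{L^p_{\P}(\Omega;\R^d)}^p$ by \eqref{eq:conv_X0}, so the quoted variant of dominated convergence gives $\rho_n(s)\to0$; since $\rho_n(s)\le 2K$ uniformly (again by the bounds above), dominated convergence on $[0,T]$ gives $\int_0^T\rho_n\to0$, and \eqref{unifconvX} follows.

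For the costs, set $\phi_n(\omega):=\int_0^T\cC(\tilde X^n_t(\omega),u_t(\omega),\mu^n_t)\,\d t+\cC_T(\tilde X^n_T(\omega),\mu^n_T)$ and define $\phi$ analogously with $\tilde X,\mu$, so that $J_{\pL^n}(X^n,u)=\int_\Omega\phi_n\,\d\P^n$ and $J_{\pL}(X,u)=\int_\Omega\phi\,\d\P$ (using $\tilde X=X$ $\P$-a.e.\ and $\tilde X^n=X^n$ $\P^n$-a.e.). By \eqref{eq:growthC} and the bounds above, $0\le\phi_n\le\Psi$ for all $n$, where $\Psi(\omega):=C_3(1+|\tilde X_0(\omega)|^p)$ is continuous. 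The crucial point is the \emph{continuous} convergence $\phi_n(\omega_n)\to\phi(\omega)$ whenever $\omega_n\to\omega$ in $\Omega$: on the compact set $[0,T]\times\big(\{\omega_n:n\in\N\}\cup\{\omega\}\big)$ one has $\tilde X^n_t(\omega_n)\to\tilde X_t(\omega)$ and $u_t(\omega_n)\to u_t(\omega)$ uniformly in $t\in[0,T]$ (the first by \eqref{unifconvX} and uniform continuity of $\tilde X$, the second by uniform continuity of $u$), while $\mu^n_t\to\mu_t$ in $W_p$ for each $t$ (because $W_p(\mu^n_t,\mu_t)\le\delta_n(t)+\rho_n(t)\to0$); since the integrand remains bounded there, continuity of $\cC,\cC_T$ and dominated convergence in $t$ give the claim. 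Applying this with $\omega_n=Y^n(\lambda)\to Y(\lambda)$ for $\cL_1$-a.e.\ $\lambda$, we get $\phi_n\circ Y^n\to\phi\circ Y$ $\cL_1$-a.e., while $0\le\phi_n\circ Y^n\le\Psi\circ Y^n\to\Psi\circ Y$ $\cL_1$-a.e.\ and $\int_0^1\Psi\circ Y^n\,\d\cL_1=\int_\Omega\Psi\,\d\P^n\to\int_\Omega\Psi\,\d\P=\int_0^1\Psi\circ Y\,\d\cL_1$ by \eqref{eq:conv_X0}. The variant of dominated convergence then gives
\[
J_{\pL^n}(X^n,u)=\int_0^1\phi_n\circ Y^n\,\d\cL_1\ \longrightarrow\ \int_0^1\phi\circ Y\,\d\cL_1=J_{\pL}(X,u),
\]
which is \eqref{convJcont}.

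The main obstacle is exactly this last passage to the limit in integrals such as $\int_\Omega\phi_n\,\d\P^n$, where both the integrand $\phi_n$ and the measure $\P^n$ vary at the same time: the Skorohod coupling of Proposition~\ref{prop:Skorohod} turns the weakly convergent $\P^n$ into $\cL_1$-a.e.\ convergent maps on the fixed space $[0,1]$, and then the moment-convergence assumption \eqref{eq:conv_X0} — also indispensable for $\rho_n(s)\to0$ in the second step — delivers the $L^1$-convergence of the dominating functions needed to invoke the variant of the dominated convergence theorem; everything else reduces to Gronwall's lemma and the continuity and growth conditions of Assumption~\ref{BA}.
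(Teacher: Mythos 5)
Your proof is correct, and it takes a genuinely different route from the paper's. The paper handles the fact that $\mu^n_s=(X^n_s)_\sharp\P^n$ and $\mu_s=(X_s)_\sharp\P$ live over different base measures by a compactness argument: it extracts a subsequence of $\{\mu^n\}$ converging to some $\tilde\mu$ in $C([0,T];\PP_p(\R^d))$ (via the Ascoli--Arzel\`a/admissible-function machinery of Lemma \ref{lemma:cpt}), compares $\tilde X^n$ with the auxiliary solution $\bar X$ driven by the frozen curve $\tilde\mu$, identifies $\tilde\mu_t=(\bar X_t)_\sharp\P$ through the weak convergence $\P^n\to\P$, and concludes $\bar X=\tilde X$ by uniqueness, the limit being subsequence-independent. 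You instead compare $\tilde X^n$ with $\tilde X$ directly, splitting $W_p(\mu^n_s,\mu_s)\le W_p\big((\tilde X^n_s)_\sharp\P^n,(\tilde X_s)_\sharp\P^n\big)+W_p\big((\tilde X_s)_\sharp\P^n,(\tilde X_s)_\sharp\P\big)\le\delta_n(s)+\rho_n(s)$, so that Gronwall closes on $\delta_n$ alone and the only remaining limit, $\int_0^T\rho_n\to0$, is a ``fixed map, varying measure'' statement handled by the Skorohod coupling of Proposition \ref{prop:Skorohod}(ii) together with the generalized dominated convergence theorem fed by \eqref{eq:conv_X0}. This avoids compactness, subsequences and the uniqueness-based identification altogether. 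For the cost, the paper splits the integrand and invokes \cite[Lemma~5.1.7]{ambrosio2008gradient} on uniformly integrable continuous functions against weakly converging measures; your continuous-convergence argument $\phi_n(\omega_n)\to\phi(\omega)$ routed through the same Skorohod maps is essentially a self-contained proof of that lemma in the present setting, again powered by \eqref{eq:conv_X0}. Both uses of the coupling are legitimate here since $([0,1],\cL_1)$ is non-atomic and $\Omega$ is Polish under \eqref{ass:topo}. The only points worth spelling out in a final write-up are the measurability of $s\mapsto\rho_n(s)$ (immediate from joint measurability and Fubini, or from continuity in $s$) and the fact that the $\P^n$-null set on which $\tilde X^n\ne X^n$ can be taken independent of $t$, which Lemma \ref{lemmaAA} already guarantees; neither is a gap.
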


\begin{proof}
We denote $\mu^n_t:= (X^n_t)_\sharp \P^n$ and $\mu_t:= (X_t)_\sharp \P$.

Since $\tilde X_0:\Omega\to\R^d$ is continuous, the weak convergence $\P^n \to \P$ implies that
$\mu_0^n=(\tilde X_0)_\sharp\P^n\to \mu_0=(\tilde X_0)_\sharp\P$ weakly and  \eqref{eq:conv_X0} guarantees $\m_p(\mu_0^n)\to\m_p(\mu_0)$. 
Consequently, by Proposition \ref{prop:wassconv}, it holds $W_p(\mu_0^n,\mu_0)\to 0$ as $n\to+\infty$ and there exists an admissible $\psi:[0,+\infty)\to[0,+\infty)$, 
according to Definition \ref{def:admphi}, such that
\begin{equation}\label{mompsi}
\sup_{n \in \N}\int_{\R^d} \psi(|x|^p)\,\d\mu_0^n(x) < +\infty.
\end{equation}
Using the same argument of the proof of Lemma \ref{lemma:cpt}, thanks to the estimates \eqref{boundXtomega}
and  \eqref{LipXt} 
there exist $\tilde \mu \in C([0,T];\PP_p(\R^d))$ and a (not relabelled) subsequence such that 
\begin{equation}\label{convmutilde}
	\lim_{n\to+\infty}\sup_{s \in [0,T]}W_p(\mu^n_s, \tilde \mu_s) =0.
\end{equation}

We define $\bar X\in C([0,T]\times\Omega;\R^d)$ through the system  \eqref{eq:systemLomega}
using $\tilde\mu_t$ instead of $\mu_t$, i.e., for any $\omega\in\Omega$, $t\mapsto \bar X(t,\omega)$ is the
solution of the problem 
\begin{equation}\label{eq:systemLomegabar}
\begin{cases}
\dot{\bar{X}}_t(\omega)=f(\bar X_t(\omega),u_t(\omega),\tilde\mu_t), &\forall \, t\in (0,T)\\ 
\bar X_{|t=0}(\omega)=\tilde X_0(\omega). &
\end{cases}
\end{equation}
We show that
\begin{equation}\label{unifconvbarX}
 \sup_{(t,\omega)\in [0,T] \times \Omega}|\tilde X^n_t(\omega)-\bar X_t(\omega)|\to 0,   \qquad \text{ as } n \to +\infty.
 \end{equation}
Indeed, for any $\omega\in\Omega$ and $t\in[0,T]$,
\begin{equation*}
\begin{split}
| \tilde X_t^n(\omega) - \bar X_t(\omega)| &\leq \int_0^t \left| f(\tilde X_s^n(\omega),u_s(\omega),\mu^n_s) - f(\bar X_s(\omega),u_s(\omega),\tilde\mu_s) \right| \d s \\
&\leq L \int_0^t \left( | \tilde X_s^n(\omega) -\bar X_s(\omega) |  + W_p(\mu^n_s, \tilde \mu_s) \right)\,\d s,
\end{split}
\end{equation*}
and, by Gronwall inequality, we obtain 
\begin{equation*}
  |\tilde X_t^n(\omega) - \bar X_t(\omega)|  \leq LT e^{LT} \sup_{s \in [0,T]}W_p(\mu^n_s, \tilde \mu_s),
\end{equation*}
which, by \eqref{convmutilde}, proves \eqref{unifconvbarX}.\\
We have to show that $\bar X=\tilde X$.
We first prove that $\tilde\mu_t=(\tilde X_t)_\sharp\P$.
By the uniform convergence \eqref{unifconvbarX}, the continuity of $\bar X_t$ and the weak convergence $\P^n\to\P$,
we obtain that (see \cite[Lemma 5.2.1]{ambrosio2008gradient})
$$ \int_\Omega \phi(\tilde X_t^n(\omega))\,\d\P^n(\omega) \to \int_\Omega \phi(\bar X_t(\omega))\,\d\P(\omega), \qquad
\forall\, \phi \in {C}_b(\R^d;\R).$$
Since $\displaystyle{\int_\Omega \phi(\tilde X_t^n(\omega))\,\d\P^n(\omega) =  \int_{\R^d} \phi(x)\,\d\mu_t^n(x)}$,
by the uniqueness of the weak limit we obtain that $\tilde\mu_t=(\bar X_t)_\sharp\P$.
Then, $\bar X_t(\omega)$ satisfies
\begin{equation}\label{eq:systemLomegabar2}
\begin{cases}
\dot{\bar X}_t(\omega)=f(\bar X_t(\omega),u_t(\omega),(\bar X_t)_\sharp\P), &\forall \, t\in (0,T)\\ 
\bar X_{|t=0}(\omega)=\tilde X_0(\omega). &
\end{cases}
\end{equation}
By the uniqueness result of Proposition \ref{prop:existL} and the definition of the Lagrangian problem, we obtain that $\bar X_t=X_t$ in $L^p_\P(\Omega;\R^d)$ for any $t\in[0,T]$.
In particular we have that $\tilde\mu_t=\mu_t$ for any $t\in[0,T]$. 
It follows that the systems \eqref{eq:systemLomegabar} and \eqref{eq:systemLomega} are the same, 
and then $\bar X_t(\omega)=\tilde X_t(\omega)$ for any $(t,\omega)\in[0,T]\times\Omega$.
Finally, the convergence \eqref{unifconvX} follows by \eqref{unifconvbarX}, 
because the limit $\tilde \mu$ given by the compactness is uniquely determined and it is independent of the subsequence.

For what concerns \eqref{convJcont}, 
we first observe that 
\begin{equation*}
 J_{\pL^{n}}(X^n,u)=J_{\pL^{n}}(\tilde X^n,u), \qquad J_{\pL}(X,u)=J_{\pL}(\tilde X,u).
 \end{equation*}  
We write the running cost as
\begin{equation*}
\begin{split}
 & \int_{\Omega}\int_0^T \cC(\tilde X^n_t(\omega), u_t(\omega),\mu^n_t) \, \d t \,\d\P^n(\omega) \\
 &= \int_{\Omega} \int_0^T \left(\cC(\tilde X^n_t(\omega), u_t(\omega),\mu^n_t) - \cC(\tilde X_t(\omega), u_t(\omega),\mu_t) \right) \,\d t \,\d\P^n(\omega) \\
 &+ \int_{\Omega}\int_0^T \cC(\tilde X_t(\omega), u_t(\omega),\mu_t)  \,\d t\, \d\P^n(\omega).
  \end{split}
 \end{equation*}  
By \eqref{unifconvX} and the continuity of $\cC$, we have that 
$ \left(\cC(\tilde X^n_t(\omega), u_t(\omega),\mu^n_t) - \cC(\tilde X_t(\omega), u_t(\omega),\mu_t) \right)\to 0$ 
uniformly on compact sets of $[0,T]\times\Omega$. 
By the weak convergence of $\P^n$ towards $\P$ we conclude that
\begin{equation*}
\begin{split}
 \lim_{n\to +\infty } \int_{\Omega} \int_0^T \left(\cC(\tilde X^n_t(\omega), u_t(\omega),\mu^n_t) - \cC(\tilde X_t(\omega), u_t(\omega),\mu_t) \right) \,\d t \,\d\P^n(\omega) = 0.
  \end{split}
 \end{equation*}  
We have to prove that 
\begin{equation}\label{eq:lastlimit}
\lim_{n \to +\infty}\int_{\Omega} \int_0^T \cC(\tilde X_t(\omega), u_t(\omega),\mu_t) \d t \,\d\P^n(\omega)  = \int_{\Omega} \int_0^T \cC(\tilde X_t(\omega), u_t(\omega),\mu_t) \d t \,\d\P(\omega). 
\end{equation}
By \eqref{mompsi}, which can be rewritten as
\begin{equation*}
\sup_{n \in \N}\int_{\Omega} \psi(|\tilde X_0(\omega)|^p)\,\d \P^n(\omega) < +\infty,
\end{equation*}
by estimate \eqref{boundXtomega} and the doubling property of $\psi$ we  get
\begin{equation*}
\sup_{t\in[0,T]}\sup_{n \in \N}\int_{\Omega} \psi(|\tilde X_t(\omega)|^p)\,\d \P^n(\omega) < +\infty.
\end{equation*}
By the growth condition \eqref{eq:growthC} and the doubling property of $\psi$ we obtain that 
the map $(t,\omega) \mapsto \cC(\tilde X_t(\omega), u_t(\omega),\mu_t)$ is uniformly
integrable w.r.t. $\{\cL_T\otimes\P^n\}_n$. 
Since this map is also continuous, by \cite[Lemma~5.1.7]{ambrosio2008gradient} we obtain \eqref{eq:lastlimit}. 

Analogously one proves that
\begin{equation*}
	\lim_{n\to+\infty} \int_\Omega \mathcal C_T(\tilde X^n_T(\omega),\mu^n_T)\,\d\P^n(\omega) =
	 \int_\Omega \mathcal C_T(\tilde X_T(\omega),\mu_T)\,\d\P(\omega).
\end{equation*}
\end{proof}

\begin{proposition}[Approximation by continuous controls]\label{lemmaB}
Let $\S:=(U,f,\mathcal C,\mathcal C_T)$ satisfy Assumption \ref{BA} with $U$ a convex compact subset of a separable Banach space $V$, and $(\Omega,\frB,\P)$ satisfying \eqref{ass:topo}.
Let $\tilde X_0\in C(\Omega;\R^d)$ and $u \in \BM([0,T] \times \Omega;U)$ such that $\tilde X_0\in L^p(\Omega;\R^d)$.
If $(X,u) \in \cA_{\pL}(\tilde X_0)$ then there exits a sequence $(\tilde X^n,u^n) \in \cA_{\pL}(\tilde X_0)$  such that 
\begin{enumerate}
\item $u^n \in C([0,T]\times\Omega; U)$ and $\tilde X^n \in C([0,T] \times \Omega; \R^d)$ for any $n \in \N$;
\item $u^n(t,\omega) \to u(t,\omega)$    for $(\cL_T\otimes\P)$-a.e. $(t,\omega)\in[0,T]\times\Omega$; 
\item $\,\lim_{n \to +\infty}\sup_{t \in [0,T]} |\tilde X^n_t(\omega) - X_t(\omega)|=0$ for  $\P$-a.e. $\omega\in\Omega$,\\
 $\,\lim_{n \to +\infty}\sup_{t \in [0,T]} \|\tilde X^n_t - X_t\|^p_{L^p(\Omega;\R^d)} = 0$;
\item $J_{\pL}(\tilde X^n,u^n) \to J_{\pL}(X,u)$, as $n\to +\infty$.
\end{enumerate}
\end{proposition}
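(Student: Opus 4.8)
The only genuinely new ingredient is the approximation of the measurable control $u$ by \emph{continuous} controls $u^n\colon[0,T]\times\Omega\to U$ converging a.e.; once these are available, the corresponding trajectories and costs are governed by the stability results already established. First I would construct the $u^n$ by a Lusin-type argument. Since $(\Omega,\tau)$ is Polish, $Y:=[0,T]\times\Omega$ is a Polish space and $\cL_T\otimes\P$ is a Borel (hence Radon) probability measure on it; by Lusin's theorem, for each $n\in\N$ there is a closed set $K_n\subseteq Y$ with $(\cL_T\otimes\P)(Y\setminus K_n)<2^{-n}$ and $u|_{K_n}$ continuous with values in the separable metric space $U\subset V$. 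By the Dugundji extension theorem applied to $u|_{K_n}\colon K_n\to V$, there exists a continuous extension $u^n\colon Y\to V$ with $u^n(Y)\subseteq\overline{\mathrm{co}}\big(u(K_n)\big)\subseteq\overline{\mathrm{co}}(U)=U$, where we used that $U$ is convex and closed; hence $u^n\in C([0,T]\times\Omega;U)$ and $u^n=u$ on $K_n$. Since $\sum_n(\cL_T\otimes\P)(Y\setminus K_n)<+\infty$, the Borel--Cantelli lemma gives, for $(\cL_T\otimes\P)$-a.e.\ $(t,\omega)$, that $(t,\omega)\in K_n$ for all $n$ large enough, so $u^n(t,\omega)=u(t,\omega)$ eventually; this is property (2), and it also yields $u^n\to u$ in $(\cL_T\otimes\P)$-measure.

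For the trajectories: for each $n$, Proposition \ref{prop:existL} gives the unique $X^n$ with $(X^n,u^n)\in\cA_{\pL}(\tilde X_0)$, and since $\tilde X_0\in C(\Omega;\R^d)\cap L^p(\Omega;\R^d)$ and $u^n\in C([0,T]\times\Omega;U)$, Lemma \ref{lemmaAA} provides the continuous representative $\tilde X^n\in C([0,T]\times\Omega;\R^d)$ of $X^n$, with $\tilde X^n_t=X^n_t$ $\P$-a.e.\ for every $t$; thus $(\tilde X^n,u^n)\in\cA_{\pL}(\tilde X_0)$, giving property (1). Applying Proposition \ref{prop:costRLntoRL} with the constant sequence $X_0^n\equiv\tilde X_0$ (so the hypotheses on the initial data are trivial) together with the convergence just established, we obtain $\sup_{t\in[0,T]}\|X^n_t-X_t\|_{L^p(\Omega;\R^d)}\to 0$ and $J_{\pL}(X^n,u^n)\to J_{\pL}(X,u)$; since $\tilde X^n_t=X^n_t$ in $L^p_\P(\Omega;\R^d)$ for every $t$ and $J_{\pL}$ depends only on equivalence classes, this gives the $L^p$-part of (3) and all of (4).

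It remains to upgrade the $L^p$-convergence of the trajectories to the pointwise statement in (3). For $\P$-a.e.\ $\omega$ both $\tilde X^n(\omega)$ and $X(\omega)$ solve the integral form of \eqref{eq:systemL} with the same initial datum $\tilde X_0(\omega)$; writing $\mu^n_t:=(X^n_t)_\sharp\P$, $\mu_t:=(X_t)_\sharp\P$, inserting $f(X_s(\omega),u^n_s(\omega),\mu_s)$ and using the Lipschitz bound \eqref{eq:Lipf} and Gronwall's lemma, one gets
\begin{equation*}
\sup_{t\in[0,T]}|\tilde X^n_t(\omega)-X_t(\omega)|\le e^{LT}\Big(LT\sup_{s\in[0,T]}W_p(\mu^n_s,\mu_s)+\int_0^T\big|f(X_s(\omega),u^n_s(\omega),\mu_s)-f(X_s(\omega),u_s(\omega),\mu_s)\big|\,\d s\Big).
\end{equation*}
The first term tends to $0$ uniformly in $\omega$ since $W_p(\mu^n_s,\mu_s)\le\|X^n_s-X_s\|_{L^p(\Omega;\R^d)}$ by \eqref{eq:WpLp} and by the previous step. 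For the second term, by Fubini and the a.e.\ convergence of $u^n$, for $\P$-a.e.\ $\omega$ one has $u^n_s(\omega)\to u_s(\omega)$ for $\cL_T$-a.e.\ $s$; the continuity of $f$ in the control variable, the growth bound \eqref{f:growth}, and the a priori estimate \eqref{boundXtomega} (bounding $s\mapsto|X_s(\omega)|$ for $\P$-a.e.\ fixed $\omega$) allow dominated convergence, so this integral tends to $0$ for $\P$-a.e.\ $\omega$. This completes (3).

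\emph{Main obstacle.} The delicate points, and the reason this is not entirely routine, are: (i) ensuring the continuous approximants of $u$ take values in the prescribed set $U$ — this is precisely where the convexity and compactness of $U$ are used, through the range control in Dugundji's extension; and (ii) choosing the Lusin exceptional sets with summable measures, so that Borel--Cantelli delivers genuine $(\cL_T\otimes\P)$-a.e.\ convergence of the \emph{whole} sequence $u^n$ (not merely of a subsequence), which is needed both for property (2) and for the dominated-convergence argument in the last step.
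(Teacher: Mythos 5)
Your proposal is correct. The construction of the continuous controls is essentially the paper's: Lusin's theorem on the Polish space $[0,T]\times\Omega$ followed by Dugundji's extension, with the convexity and closedness of $U$ ensuring the extensions stay $U$-valued; your use of summable exceptional measures plus Borel--Cantelli replaces the paper's nested Lusin sets, but both deliver a.e.\ eventual equality $u^n=u$ and hence convergence in measure. Where you genuinely diverge is in the trajectory analysis. The paper first proves the \emph{pointwise} convergence in item (3): it invokes the compactness Lemma \ref{lemma:cpt} to extract a subsequence with $\sup_t W_p(\mu^n_t,\bar\mu_t)\to 0$ for some limit curve $\bar\mu$, introduces an auxiliary solution $\bar X$ driven by the frozen measure $\bar\mu$, shows $\tilde X^n\to\bar X$ via Gronwall, identifies $\bar\mu_t=(\bar X_t)_\sharp\P$ and hence $\bar X=X$ by uniqueness, and only then deduces the $L^p$ statement by dominated convergence. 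You reverse the order: you first get the $L^p$ convergence and item (4) for free from the stability result (Proposition \ref{prop:costRLntoRL}, applicable since $u^n\to u$ in measure and the initial data are constant), which immediately gives $\sup_s W_p(\mu^n_s,\mu_s)\to 0$ via \eqref{eq:WpLp} with no subsequence extraction or limit identification, and then upgrade to pointwise convergence by a direct Gronwall estimate combining \eqref{eq:Lipf}, \eqref{f:growth}, \eqref{boundXtomega} and a Fubini/dominated-convergence argument on the control term. Your route is shorter and avoids the compactness machinery entirely; the paper's route is self-contained in the sense that it re-derives the measure convergence from scratch, but at the cost of the subsequence bookkeeping. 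Both arguments are sound and there is no circularity in your appeal to Proposition \ref{prop:costRLntoRL}, which is established earlier and independently.
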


\begin{proof}
Since $u\in\BM([0,T]\times\Omega;U)$,
by Lusin's theorem applied to the space $[0,T]\times\Omega$ with the measure $\cL_T\otimes\P$, 
there exists a sequence of compact subsets $A_n\subset[0,T]\times\Omega$ 
such that $A_n \subset A_{n+1}$,  $\cL_T\otimes\P([0,T]\times\Omega \setminus A_n) < \frac{1}{n}$ for every $n\in\N$ and 
$u|_{A_n}: A_n \to U$ is continuous.   
Applying Dugundji's extension theorem \cite[Theorem~4.1]{dugundji1951extension} we can extend $u|_{A_n}$ to a continuous map 
$u^n: [0,T]\times\Omega \to V$ such that $u^n([0,T]\times\Omega)$  is contained in the closed convex subset $U$ of $V$.
Moreover, for $(\cL_T\otimes\P)$-a.e. $(t,\omega)\in[0,T]\times\Omega$ it holds that $u^n(t,\omega) \to u(t,\omega)$,
thanks to the convergence $\cL_T\otimes\P([0,T]\times\Omega \setminus A_n) \to 0$ as $n \to +\infty$. 

Thanks to Proposition \ref{prop:existL} and Lemma \ref{lemmaAA}, for any $n \in \N$ there exists  a unique $\tilde X^n \in C([0,T] \times \Omega; \R^d)$ such that $(\tilde X^n,u^n) \in \cA_\pL(\tilde X_0)$.
Defining $\mu^n_t := (\tilde X^n_t)_\sharp\P$, by Lemma \ref{lemma:cpt} there exists $\bar \mu \in C([0,T]; \PP_p(\R^d))$ 
such that, up to subsequences,
\begin{equation}\label{convmun}
	\sup_{t \in [0,T]}W_p(\mu^n_t, \bar \mu_t) \to 0, \qquad \text{ as } n \to +\infty. 
\end{equation}
For every $\omega\in\Omega$ we define $\bar X(\omega)$ as the unique solution to the problem
\begin{equation*}
\begin{cases}
\dot {\bar X}_t(\omega)=f(\bar X_t(\omega),u_t(\omega),\bar\mu_t), &\textrm{for a.e. }t\in (0,T)\\ 
\bar X_{|t=0}(\omega)=\tilde X_0(\omega). &
\end{cases}
\end{equation*}
Then for any $(t,\omega) \in [0,T]\times \Omega$ it holds
\begin{equation}\label{eq:est_x^n_to_X}
\begin{split}
| \tilde X_t^n(\omega) - \bar X_t(\omega)| &\leq \int_0^t \left| f(\tilde X^n_s(\omega),u^n_s(\omega),\mu^n_s) - 
f(\bar X_s(\omega),u^n_s(\omega), \bar \mu_s) \right| \d s \\
&+ \int_0^t \left| f(\bar X_s(\omega),u^n_s(\omega),\bar \mu_s) - f(\bar X_s(\omega),u_s(\omega), \bar \mu_s) \right| \d s \\
&\leq L\int_0^t \left( | \tilde X_s^n(\omega) -\bar X_s(\omega) |  + W_p(\mu^n_s, \bar \mu_s)  + \mathcal G_{s,\omega}(u^n_s(\omega), u_s(\omega))\right)\d s,
\end{split}
\end{equation}
where $ \mathcal G_{s,\omega}(u^n_s(\omega), u_s(\omega)):=\left| f(\bar X_s(\omega),u^n_s(\omega),\bar \mu_s) - f(\bar X_s(\omega),u_s(\omega), \bar \mu_s) \right|$.
Since by \eqref{f:growth} we have 
$\mathcal G_{s,\omega}(u^n_s(\omega), u_s(\omega))\le C(1+ |\bar X_s(\omega)| + m_p(\bar\mu_s))$, 
by the convergence in item $(2)$ we get that 
\begin{equation}\label{eq:convGG}
\lim_{n\to+\infty}  \int_0^T  \mathcal G_{s,\omega}(u^n_s(\omega), u_s(\omega))\d s =0, \quad \mbox{for $\P$-a.e. }\omega\in\Omega.
\end{equation}
By Gronwall inequality, from \eqref{eq:est_x^n_to_X} we have
\begin{equation}\label{eq:est_X}
\begin{split}
| \tilde X_t^n(\omega) - \bar X_t(\omega)| &\leq e^{LT}LT\Big( \sup_{s\in[0,T]}W_p(\mu^n_s, \bar \mu_s)  +
 \int_0^T \mathcal G_{s,\omega}(u^n_s(\omega), u_s(\omega))\,\d s\Big),
\end{split}
\end{equation}
which, by \eqref{convmun} and \eqref{eq:convGG}, implies
\begin{equation}\label{limittilde}
 \lim_{n \to +\infty}\sup_{t \in [0,T]} |\tilde X^n_t(\omega) - \bar X_t(\omega)|=0,\quad \mbox{for $\P$-a.e. }\omega\in\Omega.
 \end{equation}
From \eqref{limittilde} we have that $\mu_t^n$ weakly converges to $(\bar X_t)_\sharp\P$ and by \eqref{convmun} it follows that
$\bar\mu_t=(\bar X_t)_\sharp\P$ for any $t\in[0,T]$.
By the definition of $\bar X$ and $\bar\mu_t=(\bar X_t)_\sharp\P$ and by
the uniqueness result of Proposition \ref{prop:existL}, we obtain that 
$\bar X_t(\omega)= X_t(\omega)$ for $\P$-a.e. $\omega\in\Omega$ and
for any $t\in[0,T]$. 
The first convergence in item $(3)$ follows from \eqref{limittilde}, while the second convergence comes from the first one and 
Proposition \ref{prop:estimatesL} through dominated convergence.

Finally, item $(4)$ follows by the same argument as in the proof of Proposition \ref{prop:costRLntoRL}.
\end{proof}

\section{Relaxed Lagrangian optimal control problem}
\label{sec_relaxed}

In this Section we define a relaxed version of the Lagrangian problem analyzed in Section \ref{sec_lagrangian}, then we study its properties and its relation with the non-relaxed one.

\begin{definition}[Relaxed Lagrangian optimal control problem (\RL)]\label{def:RL}
Let $\S:=(U,f,\mathcal C,\mathcal C_T)$ satisfy Assumption \ref{BA}
and let $(\Omega,\frB,\P)$ be a probability space.
Given $X_0\in L^p(\Omega;\R^d)$, we say that $(X,\sigma)\in\mathcal A_{\RL}(X_0)$ 
if
\begin{itemize}
\item[(i)] $\sigma\in \BM([0,T]\times\Omega;\mathscr P(U))$;
\item[(ii)] $X\in L^p(\Omega;\AC^p([0,T];\R^d))$ 
and for $\P$-a.e. $\omega\in\Omega$, $X(\omega)$ 
is a solution of the following Cauchy problem
\begin{equation}\label{eq:systemRL}
\begin{cases}
\dot X_t(\omega)=\displaystyle\int_U f(X_t(\omega),u,(X_t)_\sharp\P)\,\d\sigma_{t,\omega}(u), &\textrm{for $\cL_T$-a.e. }t\in]0,T]\\
X_{|t=0}(\omega)=X_0(\omega), &
\end{cases}
\end{equation}
where $X_t: \Omega \to \R^d$ is defined by $X_t(\omega):= X(t,\omega)$ for $\P$-a.e. $\omega \in \Omega$ and 
$\sigma_{t,\omega}:=\sigma(t,\omega)\in\mathscr P(U)$.
\end{itemize}
We refer to $(X,u)\in\mathcal A_{\RL}(X_0)$ as to an \emph{admissible pair}, with $X$ a \emph{trajectory} and $\sigma$  a \emph{relaxed control}.
\\
We define the \emph{cost functional} $J_{\RL}: L^p(\Omega;C([0,T];\R^d))\times \BM([0,T]\times\Omega;\PP(U))\to[0,+\infty)$, by
\begin{equation*}
J_{\RL}(X,\sigma):=\int_{\Omega}\int_0^T\int_U \mathcal C(X_t(\omega),u,(X_t)_\sharp\P)\,\d\sigma_{t,\omega}(u)\, \d t \,\d\P(\omega) \ +\int_\Omega \mathcal C_T(X_T(\omega),(X_T)_\sharp\P)\,\d\P(\omega),
\end{equation*}
and the \emph{value function} $V_{\RL}:L^p(\Omega;\R^d)\to[0,+\infty)$ by
\begin{equation}\label{eq:valueRL}
V_{\RL}(X_0):=\inf\left\{J_{\RL}(X,\sigma)\,:\,(X,\sigma)\in\mathcal A_{\RL}(X_0)\right\}.
\end{equation}
\end{definition}

\smallskip

\begin{remark}\label{rem:RL-L}
By Proposition \ref{prop:ConvexRL} the Relaxed Lagrangian problem  $\RL$ in $\S=(U,f,\cC,\cC_T)$ is a particular
Lagrangian convex problem $\pL'$ in the lifted space $\S' = (\UU,\FF,\CC,\cC_T)$ defined in Definition \ref{def:relax_setting}.
In particular, the system \eqref{eq:systemRL} can be rewritten as
\begin{equation*}\label{eq:systemRL_}
\begin{cases}
\dot X_t(\omega)=\FF(X_t(\omega),\sigma_t(\omega),(X_t)_\sharp\P), &\textrm{for a.e. }t\in (0,T]\\
X_{|t=0}(\omega)=X_0(\omega). &
\end{cases}
\end{equation*}
and the cost functional as
\begin{equation*}
J_{\RL}(X,\sigma):=\int_{\Omega} \int_0^T \CC(X_t(\omega),\sigma(t,\omega),(X_t)_\sharp\P)\, \d t \, \d\P(\omega)+\int_\Omega \mathcal C_T(X_T(\omega),(X_T)_\sharp\P)\,\d\P(\omega).
\end{equation*}

As a consequence, the results proved for the Lagrangian problem $\pL$ also apply to the Relaxed Lagrangian problem $\RL$.
We further point out that even in the relaxed Lagrangian setting, existence of minimizers is not guaranteed in general (see also Remark \ref{rem:lagrangian}).
We refer to Section \ref{sec:counterexample} for a detailed discussion and in particular to Remark \ref{rmk:RLex}.
\end{remark}

\subsection{Equivalence of $\pL$ and $\RL$.  Chattering result}
In this subsection we prove that the value functions for {the Lagrangian and the Relaxed Lagrangian optimal control problems, set in the same parametrization space $(\Omega,\frB,\P)$ and same system $\S$,} coincide.
Precisely, we aim at showing the following theorem whose proof is postponed at the end of the section.

\begin{theorem} \label{cor:VL=VRL}
Let $\S:=(U,f,\mathcal C,\mathcal C_T)$ satisfy Assumption \ref{BA} and $(\Omega,\frB,\P)$ be a probability space 
such that there exists $\{\frB^n\}_{n\in\N}$ 
satisfying the finite approximation property of Definition \ref{approx_prop}. 
If $X_0\in L^p(\Omega;\R^d)$, then $V_{\pL}(X_0)=V_{\RL}(X_0)$.
\end{theorem}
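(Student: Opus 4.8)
The plan is to establish the two inequalities separately. The inequality $V_{\RL}(X_0)\le V_{\pL}(X_0)$ is immediate: given $(X,u)\in\cA_{\pL}(X_0)$, the map $\sigma_{t,\omega}:=\delta_{u(t,\omega)}$ is a measurable map into $\PP(U)$ satisfying $\int_U f(X_t(\omega),u,(X_t)_\sharp\P)\,\d\sigma_{t,\omega}(u)=f(X_t(\omega),u(t,\omega),(X_t)_\sharp\P)$, and similarly for $\cC$; hence $(X,\sigma)\in\cA_{\RL}(X_0)$ with $J_{\RL}(X,\sigma)=J_{\pL}(X,u)$, and taking the infimum over $(X,u)$ gives the claim.

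For the reverse inequality I would use a chattering approximation. Fix $\eps>0$ and $(X,\sigma)\in\cA_{\RL}(X_0)$ with $J_{\RL}(X,\sigma)\le V_{\RL}(X_0)+\eps$. By Proposition~\ref{prop:ConvexRL}, $\RL$ is the (convex) Lagrangian problem associated with the lifted system $\S'=(\PP(U),\FF,\CC,\cC_T)$, whose control space $\PP(U)$ is a convex compact subset of a separable Banach space. Thus Theorem~\ref{prop:RLntoRL} applies to $\RL$ with the constant initial sequence $X_0^n\equiv X_0$, producing $(X^n,\sigma^n)\in\cA_{\RL}(X_0)$ with $\sigma^n$ $(\cB_{[0,T]}\otimes\frB^n)$-measurable, $\sup_{t\in[0,T]}\|X_t^n-X_t\|_{L^p(\Omega;\R^d)}\to 0$ and $J_{\RL}(X^n,\sigma^n)\to J_{\RL}(X,\sigma)$. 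Writing $\cP(\frB^n)=\{A_1^n,\dots,A_{k(n)}^n\}$, for every $t$ the section $\omega\mapsto\sigma^n(t,\omega)$ is constant on each atom $A_k^n$, equal to some $\sigma_k^n(t)\in\PP(U)$ with $t\mapsto\sigma_k^n(t)$ Borel.

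The core step is, for each fixed $n$, to replace $\sigma^n$ by an ordinary control with asymptotically the same trajectory and cost. For each $k$, the Borel map $t\mapsto\sigma_k^n(t)$ defines a Young measure $\nu_k^n\in\PP([0,T]\times U)$ with $(\nu_k^n)_t=\sigma_k^n(t)$; since $\cL_T$ is non-atomic and $[0,T],U$ are Polish, Lemma~\ref{lemma:young} provides Borel maps $u^{n,k,m}:[0,T]\to U$ with $(i_{[0,T]},u^{n,k,m})_\sharp\cL_T\xrightarrow{\mathcal{Y}}\nu_k^n$ as $m\to+\infty$. Setting $\hat u^{n,m}(t,\omega):=u^{n,k,m}(t)$ for $\omega\in A_k^n$ gives $\hat u^{n,m}\in\BM([0,T]\times\Omega;U)$, and Proposition~\ref{prop:existL} yields $(\hat X^{n,m},\hat u^{n,m})\in\cA_{\pL}(X_0)$. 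Comparing $\hat X^{n,m}$ with $X^n$ and inserting $f(X_s^n(\omega),\hat u_s^{n,m}(\omega),(X_s^n)_\sharp\P)$ inside the integral defining their difference, the ``Lipschitz'' contribution is bounded by $2L\int_0^t\|\hat X_s^{n,m}-X_s^n\|_{L^p(\Omega;\R^d)}\,\d s$ via \eqref{eq:Lipf} and \eqref{eq:WpLp}, while the ``chattering'' contribution equals, for $\omega\in A_k^n$, a time integral of $f(X_s^n(\omega),u^{n,k,m}(s),(X_s^n)_\sharp\P)-\int_U f(X_s^n(\omega),u,(X_s^n)_\sharp\P)\,\d\sigma_k^n(s)(u)$. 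For $\P$-a.e.\ $\omega$ the integrand $s\mapsto f(X_s^n(\omega),\cdot,(X_s^n)_\sharp\P)$ is a bounded Carath\'eodory function on $[0,T]\times U$ (by \eqref{f:growth} and \eqref{boundXt}--\eqref{boundXtomega}), so applying the Young convergence to this integrand extended by zero outside $[0,t]\times U$, for each $t$, together with the equi-Lipschitz continuity in $t$ of the resulting antiderivatives, shows that the supremum over $t$ of the chattering contribution tends to $0$ as $m\to+\infty$; by \eqref{boundXtomega} it is dominated, hence it also tends to $0$ in $L^p(\Omega;\R^d)$. A Gronwall estimate then gives $\sup_{t\in[0,T]}\|\hat X_t^{n,m}-X_t^n\|_{L^p(\Omega;\R^d)}\to 0$ as $m\to+\infty$. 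Exploiting once more the Young convergence for the running cost, the continuity of $\cC,\cC_T$, the growth bounds \eqref{eq:growthC} and uniform integrability (as in Proposition~\ref{prop:costRLntoRL}), one obtains $J_{\pL}(\hat X^{n,m},\hat u^{n,m})\to J_{\RL}(X^n,\sigma^n)$ as $m\to+\infty$.

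A diagonal choice $m=m(n)$ with $|J_{\pL}(\hat X^{n,m(n)},\hat u^{n,m(n)})-J_{\RL}(X^n,\sigma^n)|<1/n$ then gives $(\hat X^{n,m(n)},\hat u^{n,m(n)})\in\cA_{\pL}(X_0)$ with $J_{\pL}(\hat X^{n,m(n)},\hat u^{n,m(n)})\to J_{\RL}(X,\sigma)\le V_{\RL}(X_0)+\eps$, so $V_{\pL}(X_0)\le V_{\RL}(X_0)+\eps$; letting $\eps\to 0$ concludes. I expect the genuine difficulty to lie in the chattering step: promoting the merely weak (Young) convergence of the controls to uniform-in-time strong convergence of trajectories and convergence of costs while absorbing the nonlocal feedback through $(X_t)_\sharp\P$ by the Gronwall argument. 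The preliminary reduction to controls piecewise constant in $\omega$ (Theorem~\ref{prop:RLntoRL}) is what makes this tractable — each $\omega$-slice of the control becomes a genuine time-dependent Young measure over the non-atomic $\cL_T$ — and is also the reason why no atom-freeness of $\P$ is needed, only the finite approximation property.
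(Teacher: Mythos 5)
Your proposal is correct and follows essentially the same route as the paper: the easy inequality via Dirac controls (Proposition \ref{prop:RL<L}), reduction to piecewise-constant relaxed controls by applying Theorem \ref{prop:RLntoRL} to the lifted convex problem $\S'$ of Proposition \ref{prop:ConvexRL}, atom-by-atom chattering via Lemma \ref{lemma:young}, and a diagonal argument (this is exactly the content of Proposition \ref{p:chattering:finito} and Theorem \ref{thm:chat}). The only deviation is in the trajectory-stability step, where you compare $\hat X^{n,m}$ directly with $X^n$ and absorb the nonlocal term through \eqref{eq:WpLp} in an $L^p$-level Gronwall estimate, whereas the paper extracts a limit curve $\tilde\mu$ by compactness (Lemma \ref{lemma:cpt}), runs the auxiliary frozen-measure ODE of Lemma \ref{lemma:youngODE} pointwise in $\omega$, and identifies the limit by uniqueness; your variant is a valid and slightly more direct execution of the same estimate.
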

The proof of Theorem \ref{cor:VL=VRL} easily follows from the combination of  Theorem \ref{thm:chat} and Proposition \ref{prop:RL<L} given below. Theorem \ref{thm:chat} is a suitable extension of the classical (in optimal control theory) \emph{chattering theorem} which  permits to approximate relaxed controls with piecewise-constant controls.

Notice that Theorem \ref{cor:VL=VRL} holds in particular if $(\Omega,\frB,\P)$ is a standard Borel space  thanks to Proposition \ref{prop:FAP}.

\medskip

Let us start with  the following proposition.
\begin{proposition}\label{prop:RL<L}
Let $\S:=(U,f,\mathcal C,\mathcal C_T)$ satisfy Assumption \ref{BA} and
$(\Omega,\frB,\P)$ be a probability space.
Let $X_0\in L^p(\Omega;\R^d)$. If $(X,u)\in\mathcal A_{\pL}(X_0)$, then, 
defining $\sigma:[0,T]\times\Omega\to\mathscr P(U)$ by $\sigma(t,\omega)=\delta_{u(t,\omega)}$ we have 
$(X,\sigma)\in\mathcal A_{\RL}(X_0)$ and $J_{\pL}(X,u)=J_{\RL}(X,\sigma)$. 
In particular $V_{\RL}(X_0)\le V_{\pL}(X_0)$.
\end{proposition}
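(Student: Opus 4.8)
The plan is to observe that a classical control embeds canonically into the relaxed setting via Dirac masses, and that under this embedding both the dynamics and the cost functional are literally unchanged; the inequality between value functions then follows by monotonicity of the infimum over a larger admissible class. There is no substantial obstacle: the only point deserving a line of justification is the measurability of the map sending a control value to the corresponding Dirac measure.

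First I would check that $\sigma$ is an admissible relaxed control, i.e. $\sigma\in\BM([0,T]\times\Omega;\PP(U))$. Since $u\in\BM([0,T]\times\Omega;U)$ by hypothesis, and the map $U\ni v\mapsto\delta_v\in\PP(U)$ is continuous (hence Borel) for the weak topology on $\PP(U)$, the composition $(t,\omega)\mapsto\sigma(t,\omega)=\delta_{u(t,\omega)}$ is $(\Leb_{[0,T]}\otimes\frB)$-measurable. This is the only genuinely ``non-tautological'' step.

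Next I would verify that $X$ solves the relaxed Cauchy problem \eqref{eq:systemRL} with control $\sigma$. For every $(x,\mu)\in\R^d\times\PP_p(\R^d)$ and every $(t,\omega)\in[0,T]\times\Omega$ one has
\[
\int_U f(x,v,\mu)\,\d\sigma_{t,\omega}(v)=\int_U f(x,v,\mu)\,\d\delta_{u(t,\omega)}(v)=f(x,u(t,\omega),\mu),
\]
so, evaluating at $x=X_t(\omega)$ and $\mu=(X_t)_\sharp\P$, the right-hand side of \eqref{eq:systemRL} coincides for $\cL_T$-a.e.\ $t$ with the right-hand side of \eqref{eq:systemL}. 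Since $(X,u)\in\cA_{\pL}(X_0)$ already provides $X\in L^p(\Omega;\AC^p([0,T];\R^d))$ together with the fact that $X(\omega)$ solves \eqref{eq:systemL} with initial datum $X_0(\omega)$ for $\P$-a.e.\ $\omega$, the very same $X$ solves \eqref{eq:systemRL}; hence $(X,\sigma)\in\cA_{\RL}(X_0)$.

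Finally, the same substitution $\int_U(\cdot)\,\d\delta_{u(t,\omega)}(v)=(\cdot)\big|_{v=u(t,\omega)}$ applied to $\cC$ gives
\[
\int_U\cC(X_t(\omega),v,(X_t)_\sharp\P)\,\d\sigma_{t,\omega}(v)=\cC(X_t(\omega),u(t,\omega),(X_t)_\sharp\P),
\]
while the terminal contribution $\int_\Omega\cC_T(X_T(\omega),(X_T)_\sharp\P)\,\d\P(\omega)$ is identical in the two functionals; integrating in $t$ and $\omega$ yields $J_{\RL}(X,\sigma)=J_{\pL}(X,u)$. Since the map $(X,u)\mapsto(X,\sigma)$ with $\sigma=\delta_{u(\cdot,\cdot)}$ injects $\cA_{\pL}(X_0)$ into $\cA_{\RL}(X_0)$ and preserves the cost, one gets
\[
V_{\RL}(X_0)=\inf_{\cA_{\RL}(X_0)}J_{\RL}\le\inf_{\cA_{\pL}(X_0)}J_{\pL}=V_{\pL}(X_0),
\]
which completes the argument. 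I do not expect any real difficulty; the statement is essentially the assertion that the relaxed problem is a relaxation of the original one.
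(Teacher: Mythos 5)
Your argument is correct and is essentially the paper's own proof spelled out: the paper simply cites Proposition \ref{prop:ConvexRL}, which encapsulates exactly the observation that the relaxed vector field $\FF$ and cost $\CC$ restricted to Dirac measures $\{\delta_v: v\in U\}$ coincide with $f$ and $\cC$, so your explicit verification (including the measurability of $v\mapsto\delta_v$) is the same route made fully explicit.
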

\begin{proof}
The result follows immediately by Proposition \ref{prop:ConvexRL}.
\end{proof}

Recall that if  $\bar \frB$ is a finite algebra on $\Omega$ and $E$ is a Banach space, a function $g:\Omega\to E$ is 
$\bar \frB$-measurable if and only if $g$ is constant on the elements of a partition of $\Omega$ contained in  $\bar \frB$. 

In the following proposition, given a piecewise constant \emph{relaxed} control we approximate it with a sequence of piecewise constant \emph{(non-relaxed)} controls so that the associated trajectories and costs converge.

\begin{proposition}\label{p:chattering:finito}
Let $\S:=(U,f,\mathcal C,\mathcal C_T)$ satisfy Assumption \ref{BA}  and
 $(\Omega,\frB,\P)$ be a probability space.
Let $X_0\in L^p(\Omega;\R^d)$,   $\bar\frB\subset \frB$ a finite algebra, 
 $(X,\sigma)\in\mathcal A_{\RL}(X_0)$ such that
 $\sigma$ is $(\cB_{[0,T]}\otimes\bar\frB)$-measurable.  
Then there exists a sequence $\{ (X^m,u^m) \}_{m\in \N} \subset \cA_{\pL}(X_0)$ such that 
\begin{enumerate}
\item $u^m$ are $(\cB_{[0,T]}\otimes\bar\frB)$-measurable;
\item  for any $\omega \in \Omega$, 
$(i_{[0,T]}, u^m(\cdot, \omega))_\sharp \cL_T \xrightarrow{\mathcal{Y}} \sigma_\omega$,
where $\sigma_\omega :=\sigma(t,\omega)\otimes\cL_T\in\mathscr P([0,T]\times U)$;
\item\label{item2:prop415} 
$\sup_{t\in[0,T]}\|X^m_t-X_t\|_{L^p(\Omega;\R^d)}\to 0$ as $m\to+\infty$;
\item $J_{\pL}(X^m,u^m)\to J_{\RL}(X,\sigma)$, as $m\to+\infty$.
\end{enumerate}
Moreover, if $X_0$ is $\bar \frB$-measurable then
$X_t$, $X_t^m$ are $\bar \frB$-measurable for any $m \in \N$ and $t\in [0,T]$.
\end{proposition}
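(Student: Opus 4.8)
The plan is to use the finiteness of $\bar\frB$ to localize the problem on the atoms of the associated minimal partition $\cP(\bar\frB)=\{A_1,\dots,A_k\}$, and there to invoke the density of Young measures induced by maps (Lemma \ref{lemma:young}). Since $\sigma$ is $(\cB_{[0,T]}\otimes\bar\frB)$-measurable and $\bar\frB$ is generated by the $A_j$, there is for each $j$ a Borel map $t\mapsto\sigma^j(t)\in\PP(U)$ with $\sigma(t,\omega)=\sigma^j(t)$ for all $\omega\in A_j$, and $\sigma^j_\tau\otimes\cL_T$ is a Young measure on $[0,T]\times U$. First I would apply Lemma \ref{lemma:young} with $\T=[0,T]$, $\lambda=\cL_T$ (non-atomic) and $S=U$ (compact, hence Polish) to get, for each $j$, Borel maps $u^{m,j}\colon[0,T]\to U$ with $(i_{[0,T]},u^{m,j})_\sharp\cL_T\xrightarrow{\mathcal{Y}}\sigma^j_\tau\otimes\cL_T$. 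Setting $u^m(t,\omega):=u^{m,j}(t)$ for $\omega\in A_j$ yields a $(\cB_{[0,T]}\otimes\bar\frB)$-measurable control valued in $U$, so item (1) holds, and item (2) is immediate since for $\omega\in A_j$ both $(i_{[0,T]},u^m(\cdot,\omega))_\sharp\cL_T$ and $\sigma_\omega$ reduce to the $j$-th objects. Let $X^m$ be the unique trajectory with $(X^m,u^m)\in\cA_{\pL}(X_0)$ provided by Proposition \ref{prop:existL}, and set $\mu_t:=(X_t)_\sharp\P$, $\mu^m_t:=(X^m_t)_\sharp\P$.

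For item (3), for $\P$-a.e.\ $\omega\in A_j$ I would subtract the integral forms of the two Cauchy problems and use \eqref{eq:Lipf} to get
\[
|X^m_t(\omega)-X_t(\omega)|\le L\int_0^t\Big(|X^m_s(\omega)-X_s(\omega)|+W_p(\mu^m_s,\mu_s)\Big)\,\d s+\sup_{\tau\in[0,T]}|\rho^m(\tau,\omega)|,
\]
where $\rho^m(\tau,\omega):=\int_0^\tau\big(f(X_s(\omega),u^{m,j}(s),\mu_s)-\int_U f(X_s(\omega),u,\mu_s)\,\d\sigma^j(s)(u)\big)\,\d s$ is the chattering remainder. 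The key point is that for $\P$-a.e.\ $\omega$ the map $(s,u)\mapsto f(X_s(\omega),u,\mu_s)$ is Carathéodory and bounded on $[0,T]\times U$ (continuity of $f$; boundedness from \eqref{f:growth}, the a priori bound \eqref{boundXtomega}, compactness of $U$ and $\mu_\cdot\in C([0,T];\PP_p(\R^d))$), so item (2) gives $\rho^m(\tau,\omega)\to0$ for each $\tau$; since $\tau\mapsto\rho^m(\tau,\omega)$ is equi-Lipschitz in $m$ and converges pointwise to $0$, this upgrades to $\sup_\tau|\rho^m(\tau,\omega)|\to0$. Bounding $W_p(\mu^m_s,\mu_s)\le\|X^m_s-X_s\|_{L^p(\Omega;\R^d)}$ by \eqref{eq:WpLp}, taking $L^p(\Omega)$-norms and using Gronwall gives
\[
\sup_{t\in[0,T]}\|X^m_t-X_t\|_{L^p(\Omega;\R^d)}\le e^{2LT}\big\|\sup_{\tau}|\rho^m(\tau,\cdot)|\big\|_{L^p(\Omega;\R^d)},
\]
and the right-hand side tends to $0$ by dominated convergence, since $\sup_\tau|\rho^m(\tau,\omega)|$ is $\P$-a.e.\ infinitesimal and dominated — uniformly in $m$, by \eqref{f:growth} and \eqref{boundXtomega} — by a multiple of $1+\sup_s|X_s(\omega)|\in L^p(\Omega)$. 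Reading the same Gronwall estimate pointwise in $\omega$ also yields $\sup_t|X^m_t(\omega)-X_t(\omega)|\to0$ for $\P$-a.e.\ $\omega$.

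For item (4), the terminal term converges by the argument of Proposition \ref{prop:costRLntoRL} ($X^m_T\to X_T$ $\P$-a.e.\ and in $L^p$, hence $\mu^m_T\to\mu_T$ in $W_p$, then dominated convergence with \eqref{eq:growthC}, \eqref{boundXtomega}). For the running term I would fix a $\P$-full set of $\omega$ on which the trajectories $X^m_\cdot(\omega),X_\cdot(\omega)$ lie in a common compact of $\R^d$ and the measures $\mu^m_\cdot,\mu_\cdot$ in a common compact of $\PP_p(\R^d)$ (from \eqref{boundXtomega}, the uniform convergence in item (3), cf.\ also Lemma \ref{lemma:cpt}), so that $\mathcal C$ is uniformly continuous there; then replacing $\mathcal C(X^m_t(\omega),u^m(t,\omega),\mu^m_t)$ by $\mathcal C(X_t(\omega),u^m(t,\omega),\mu_t)$ costs $O\big(\sup_t|X^m_t(\omega)-X_t(\omega)|+\sup_tW_p(\mu^m_t,\mu_t)\big)\to0$ uniformly in $t$ and $u$, and item (2) applied to the Carathéodory bounded map $(t,u)\mapsto\mathcal C(X_t(\omega),u,\mu_t)$ gives $\int_0^T\mathcal C(X_t(\omega),u^m(t,\omega),\mu_t)\,\d t\to\int_0^T\int_U\mathcal C(X_t(\omega),u,\mu_t)\,\d\sigma^j(t)(u)\,\d t$ for $\P$-a.e.\ $\omega$. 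A final dominated convergence in $\omega$ (uniform domination by \eqref{eq:growthC} and \eqref{boundXtomega}) yields $J_{\pL}(X^m,u^m)\to J_{\RL}(X,\sigma)$. The ``Moreover'' claim is then automatic: if $X_0$ is $\bar\frB$-measurable, then for $\omega,\omega'$ in a common atom $A_j$ the curves $X_\cdot(\omega),X_\cdot(\omega')$ solve the same Cauchy problem $\dot Y_t=\int_U f(Y_t,u,\mu_t)\,\d\sigma^j(t)(u)$, $Y_0=X_0(\omega)$, hence coincide by the Lipschitz uniqueness in Proposition \ref{prop:existL}; so $X_t$, and likewise $X^m_t$, is $\P$-a.e.\ constant on each atom, i.e.\ $\bar\frB$-measurable.

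I expect the main obstacle to be item (3): turning the atom-by-atom, pointwise-in-$(\omega,\tau)$ Young-measure convergence into uniform-in-$t$, $L^p$-in-$\omega$ convergence of $X^m$ to $X$. This forces one to wrap a Gronwall inequality around a dominated-convergence argument for the remainder $\rho^m$, and it relies crucially on the control-independence of the a priori estimates in Proposition \ref{prop:estimatesL}; the passage to the limit in the merely continuous cost $\mathcal C$ in step (4) is the secondary delicate point, and it leans on the compactness of the trajectory laws from Lemma \ref{lemma:cpt}.
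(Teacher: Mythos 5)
Your proof is correct, and items (1), (2), (4) and the ``Moreover'' part follow the same route as the paper's: atom-by-atom application of Lemma \ref{lemma:young}, the two-term splitting of the running cost with uniform continuity of $\mathcal C$ on compacts plus Young convergence, and uniqueness of the Cauchy problem on each atom. Where you genuinely diverge is item (3), which is the heart of the proposition. The paper first extracts (via Lemma \ref{lemma:cpt}) a subsequence of $\mu^m$ converging to some $\tilde\mu$ in $C([0,T];\PP_p(\R^d))$, freezes that limit curve inside the vector field $g(t,y,u)=f(y,u,\tilde\mu_t)$, applies the deterministic stability result of Lemma \ref{lemma:youngODE} to auxiliary trajectories $Y^m,Y$, and only afterwards identifies $\tilde\mu_t=(Y_t)_\sharp\P$ and $Y=X$ by uniqueness, removing the subsequence at the end. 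You instead run a direct Gronwall argument on $X^m-X$ itself, isolating the chattering remainder $\rho^m$ and closing the nonlocal term through the coupling $W_p(\mu^m_s,\mu_s)\le\|X^m_s-X_s\|_{L^p(\Omega;\R^d)}$ from \eqref{eq:WpLp}; the convergence $\sup_\tau|\rho^m(\tau,\omega)|\to0$ follows for the full sequence from the Young convergence tested against $(s,u)\mapsto\mathds 1_{[0,\tau]}(s)f(X_s(\omega),u,\mu_s)$ together with the equi-Lipschitz-in-$\tau$ upgrade, and the passage to the $L^p(\Omega)$ norm is legitimate by dominated convergence since the remainder is dominated, uniformly in $m$, by a multiple of $1+\sup_s|X_s(\omega)|$. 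Your route buys directness: no compactness lemma, no subsequence extraction, no auxiliary frozen-measure ODEs, and no final identification-of-the-limit step. What it gives up is the modularity of the paper's Lemma \ref{lemma:youngODE}, which is a reusable deterministic statement; your argument instead exploits the specific $L^p$-parametrized structure of the Lagrangian problem, which is exactly what makes the self-improving Gronwall close. Both are sound; only minor notational slips remain (e.g.\ $\sup_\tau|\rho^m(\tau,\cdot)|$ is scalar, so its norm lives in $L^p(\Omega;\R)$, and the identity $\sigma(t,\cdot)\equiv\sigma^j(t)$ on $A_j$ holds for every $\omega$ because sections of $(\cB_{[0,T]}\otimes\bar\frB)$-measurable maps are constant on the atoms of the minimal partition, which you implicitly use and should perhaps state).
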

\begin{proof}
We fix the minimal (w.r.t. inclusion) partition associated to the finite algebra $\bar \frB$ that we denote by $\cP:= \lbrace A_k: k=1, \ldots n \rbrace \subset \bar \frB$. 
For any $k=1, \ldots n $ we select $\omega_k \in A_k$ and apply
Lemma \ref{lemma:young} (with $\T = [0,T]$, $S = U$ and $\lambda = \cL_T$) to the measure
$\nu = \sigma_{\omega_k}:=\sigma(t,\omega_k)\otimes\cL_T\in\mathscr P([0,T]\times U)$.
This yields a sequence of $\cB_{[0,T]}$-measurable functions  $u^m_k:[0,T]\to U$ such that 
\begin{equation*}
(i_{[0,T]}, u^m_k(\cdot))_\sharp \cL_T \xrightarrow{\mathcal{Y}} \sigma_{\omega_k}. 
\end{equation*}
Thus, we  define $u^m: [0,T] \times \Omega \to U$ setting $u^m(t,\omega):= u^m_k(t)$ if $\omega \in A_k$. 
By construction, the function $\omega\mapsto u^m(t,\omega)$ is constant 
on $A_k$, for any $k= 1, \ldots, n$. 
Furthermore, for any $\omega\in\Omega$ the maps  $t \mapsto u^m(t,\omega)$ are $\cB_{[0,T]}$-measurable.
The sequence of controls $u^m\in \BM([0,T]\times\Omega)$ readily satisfies items ($1$) and ($2$).

\smallskip 
Given $u^m$ constructed above, by Proposition \ref{prop:existL} there exists a unique $X^m \in L^p(\Omega;\AC^p([0,T];\R^d))$ such that $(X^m,u^m) \in \cA_{\pL}(X_0)$. 
Thanks to Remark, \ref{re:equiv} we interpret $X^m\in \AC^p([0,T]; L^p(\Omega;\R^d))$ and define $ \mu^m \in \AC^p([0,T]; \PP_p(\R^d))$ by  $\mu_t^m := (X_t^m)_\sharp\P$.
By Lemma \ref{lemma:cpt}
there exists a (non relabeled) subsequence $\mu^{m}$ and 
$\tilde \mu \in C([0,T]; \PP_p(\R^d))$ such that 
\begin{equation}\label{eq:convW}
	\lim_{m\to\infty}\sup_{t\in[0,T]}W_p(\mu^{m}_t,\tilde\mu_t)=0.
\end{equation}
We define $g:[0,T]\times\R^d\times U\to \R^d$ by 
\begin{equation*}
	g(t, y , u) := f(y, u, \tilde \mu_t).
\end{equation*} 
Selecting a representative $X_0$ defined for every $\omega\in\Omega$, let  $Y^{m}(\omega) \in \AC^p([0,T]; \R^d)$ be the unique solution of the Cauchy problem
\begin{equation}
\begin{cases}
\dot Y^m_t(\omega) = g(t, Y^m_t(\omega), u^{m}(t,\omega)), & \textrm{for a.e. }t\in(0,T) \\
Y^m_{|t=0}(\omega) = X_0(\omega).
\end{cases}
\end{equation}
For  any $\omega \in \Omega$, let also  
$Y(\omega) \in \AC^p([0,T]; \R^d)$ be the unique solution of the Cauchy problem
\begin{equation}\label{eq:CPY}
\begin{cases}\displaystyle
\dot Y_t(\omega) = \int_U g(t,Y_t(\omega),u) \,\d \sigma_{t,\omega}(u), & \textrm{for a.e. }t\in(0,T) \\
Y_{|t=0}(\omega) = X_0(\omega).
\end{cases}
\end{equation}
Hence, by item $(2)$ and assumptions  \eqref{eq:Lipf} and \eqref{f:growth} we can apply Lemma \ref{lemma:youngODE} to obtain
\begin{equation}\label{eq:convY}
\lim_{m \to + \infty}\sup_{t \in [0,T]} \left|Y^{m}_t(\omega) - Y_t(\omega)\right| = 0,  \qquad \forall\,\omega\in\Omega. 
\end{equation}
Since $(X^m,u^m) \in \cA_{\pL}(X_0)$, by definition of the Lagrangian problem, for $\P$-a.e. $\omega\in\Omega$
$X^m(\omega) \in \AC^p([0,T];\R^d)$  and
\begin{equation}\label{eq:X^m}
\begin{cases}
\dot X^{m}_t(\omega) = f(X^{m}_t(\omega), u^{m}_t(\omega),\mu^m_t), \quad \textrm{for a.e. }t\in(0,T) & \\
X^{m}_{|t=0}(\omega) = X_0(\omega).
\end{cases}
\end{equation}
Then
\begin{equation*}
\begin{split}
\left| Y^{m}_t(\omega) - X^{m}_t(\omega) \right| &\leq  \int_0^t |f(Y^{m}_s(\omega), u^{m}(s,\omega),\tilde\mu_s) - f(X^{m}_s(\omega), u^{m}(s,\omega),\mu^m_s) | \,\d s \\
&\leq L \int_0^t \left(\left| Y^{m}_s(\omega) - X^{m}_s(\omega) \right| + W_p(\tilde \mu_s, \mu_s^{m})\right) \,\d s \\
&\leq L \int_0^t \left| Y^{m}_s(\omega) - X^{m}_s(\omega) \right| \,\d s +LT\sup_{s\in[0,T]} W_p(\tilde \mu_s, \mu_s^{m}).
\end{split}
\end{equation*}
By Gronwall inequality we get 
\begin{equation}\label{conv:yx}
\sup_{t \in [0,T]}\left| Y^{m}_t(\omega) - X^{m}_t(\omega) \right| \leq  LTe^{LT}\sup_{s \in [0,T]}W_p(\tilde\mu_s,\mu_s^{m}).
\end{equation}
From \eqref{conv:yx}, \eqref{eq:convW} and \eqref{eq:convY} it follows that 
\begin{equation}\label{eq:convX}
\lim_{m \to + \infty}\sup_{t \in [0,T]} \left|X^{m}_t(\omega) - Y_t(\omega)\right| = 0,  \qquad \text{for }\P\text{-a.e. }\omega\in\Omega. 
\end{equation}
By \eqref{eq:convX} it follows that $\mu^{m}_t = (X^{m}_t)_\sharp\P \to (Y_t)_\sharp\P$ weakly for any  $t \in [0,T]$,
and then, by \eqref{eq:convW}, it holds that $\tilde\mu_t = (Y_t)_\sharp\P$ for any $t \in [0,T]$.
Thus, thanks to \eqref{eq:CPY} and the definition of $g$ we conclude that $(Y,\sigma) \in \mathcal A_{\RL}(X_0)$.
Since $(X,\sigma) \in \mathcal A_{\RL}(X_0)$, by the uniqueness result of Propositions \ref{prop:existL} we have that $Y = X$
and
\begin{equation}\label{eq:convXunif}
\lim_{m \to + \infty}\sup_{t \in [0,T]} \left|X^{m}_t(\omega) - X_t(\omega)\right| = 0,  \qquad \text{for }\P\text{-a.e. }\omega\in\Omega.
\end{equation}
Finally, to prove item (3) it is enough to observe that 
\begin{equation}\label{eq:convLp}
\sup_{t \in [0,T]} \left\|X^{m}_t-X_t\right\|^p_{L^p(\Omega;\R^d)}   
\le \int_{\Omega}\sup_{t \in [0,T]} \left|X^{m}_t(\omega)-X_t(\omega)\right|^p\,\d\P(\omega).
\end{equation}
By \eqref{eq:convX}, and \eqref{boundXtomega} we can pass to the limit in \eqref{eq:convLp} by dominated convergence.

\smallskip

To prove item ($4$) we write 
\begin{equation}\label{eq:2stabC}
\begin{split}
&\left| \int_\Omega \int_0^T \cC(X^m_t(\omega),u^m_t(\omega),\mu^m_t)\,\d t \, \d\P(\omega) 
	-\int_\Omega \int_0^T \int_U \cC(X_t(\omega),u,\mu_t)\,\d\sigma_{t,\omega}(u)\,\d t \, \d\P(\omega)\right|\\
&\leq  \left| \int_\Omega \left(\int_0^T \cC(X^m_t(\omega),u^m_t(\omega),\mu^m_t) 
	-\cC (X_t(\omega),u^m_t(\omega),\mu_t)\,\d t\right)\,\d\P(\omega) \right|\\
&+\left| \int_\Omega \left(\int_0^T \cC(X_t(\omega),u^m_t(\omega),\mu_t)\,\d t 
	-\int_{[0,T] \times U} \cC(X_t(\omega),u,\mu_t)\,\d\sigma_{\omega}(t,u)\right) \,\d\P(\omega)\right|.\\
\end{split}
\end{equation}
Since \eqref{eq:convXunif} holds, for $\P$-a.e. $\omega\in\Omega$, there exists a compact $K_\omega \subset \R^d$ such that
$X^m_t(\omega),X_t(\omega)\in K_\omega$ for any $m\in\N$ and $t\in[0,T]$. 
Analogously, by \eqref{eq:convW} there exists a compact $\cK \subset \PP_p(\R^d)$ such that
$\mu^m_t,\mu_t\in \cK$ for any $m\in\N$ and $t\in[0,T]$.
By Proposition \ref{prop:wassconv} there exists an admissible $\psi$ such that
\begin{equation}\label{eq:estequi}
\sup_{t\in[0,T]}\sup_{m \in \N}\int_{\Omega} \psi(|X^m_t(\omega)|^p)\,\d \P(\omega)  = \sup_{t\in[0,T]}\sup_{m \in \N}\int_{\R^d} \psi(|x|^p)\,\d \mu_t^m(x) < +\infty.
\end{equation}
By the continuity of $\cC$ there exists a modulus of continuity $\alpha_\omega:[0,+\infty)\to[0,+\infty)$
for the restriction of $\cC$ to the compact set $K_\omega\times U\times\cK$.
Then, for $\P$-a.e. $\omega\in\Omega$, 
\begin{equation*}
	\sup_{t\in[0,T]}|\cC(X_t^{m}(\omega), u_t^{m}(\omega),\mu_t^{m})-\cC(X_t(\omega),u_t^{m}(\omega),\mu_t)|\le
\alpha_\omega\big(\sup_{t\in[0,T]}(|X_t^{m}(\omega)- X_t(\omega)|+W_p(\mu_t^{m},\mu_t))\big).
\end{equation*}
Taking into account the previous consideration together with  \eqref{eq:convXunif}, \eqref{eq:convW}, 
the growth condition \eqref{eq:growthC} and \eqref{eq:estequi},
we obtain
$$  \left| \int_\Omega \left(\int_0^T \cC(X^m_t(\omega),u^m_t(\omega),\mu^m_t) 
	-\cC (X_t(\omega),u^m_t(\omega),\mu_t)\,\d t\right)\,\d\P(\omega) \right| \to 0.$$
For the second term in the right hand side of \eqref{eq:2stabC}, for $\P$-a.e. $\omega \in \Omega$ we define $h_\omega: [0,T] \times U \to \R$ by 
$h_\omega(t,u):= \cC (X_t(\omega),u,(X_t)_\sharp\P)$.
Notice that $h_\omega$ is continuous and bounded in $[0,T] \times U$, hence from the Young convergence of item (2) we get 
\begin{equation}
\left| \int_0^T h_\omega (t,u^m_t(\omega))\,\d t -  \int_{[0,T] \times U} h_\omega (t,u)\,\d \sigma_{\omega}(t,u)\right| \to 0, 
\qquad \text{ for $\P$-a.e. } \omega \in \Omega.
\end{equation}
From the growth assumptions \eqref{eq:growthC} and dominated convergence theorem we obtain that
$$\left| \int_\Omega \left(\int_0^T \cC(X_t(\omega),u^m_t(\omega),\mu_t)\,\d t 
	-\int_{[0,T] \times U} \cC(X_t(\omega),u,\mu_t)\,\d\sigma_{\omega}(t,u)\right) \,\d\P(\omega)\right|\to 0. $$
	Finally, thanks to \eqref{eq:convXunif} and  \eqref{eq:convW} we also obtain that
\begin{equation*}
	\lim_{m\to+\infty} \int_\Omega \cC_T(X^m_T(\omega),\mu^m_T)\,\d\P(\omega) =
	 \int_\Omega \cC_T(X_T(\omega),\mu_T)\,\d\P(\omega).
\end{equation*}
For what concerns the last statement, since $X_0$ is $\bar \frB$-measurable, (hence constant on the elements of the partition $\cP$), the  
measurability of $X_t^m$ with respect to the algebra $\bar \frB$ follows by uniqueness of solutions to \eqref{eq:X^m}. 
The same argument also yields that $X_t$ is $\bar \frB$-measurable.   

\end{proof}

Combining Theorem \ref{prop:RLntoRL} and Proposition \ref{prop:costRLntoRL} applied to the Relaxed Lagrangian problem $\RL$, with 
 Proposition \ref{p:chattering:finito}, we can prove the following Theorem.

\begin{theorem}[Chattering]\label{thm:chat}
Let $\S:=(U,f,\mathcal C,\mathcal C_T)$ satisfy Assumption \ref{BA}.
Let $(\Omega,\frB,\P)$ be a probability space and $\{\frB^n\}_{n\in\N}$ 
satisfying the finite approximation property of Definition \ref{approx_prop}.
Let $X_0\in L^p(\Omega;\R^d)$, $(X,\sigma)\in\mathcal A_{\RL}(X_0)$ and
 $\{X^n_0\}_{n\in\N}\subset L^p(\Omega;\R^d)$, such that 
\begin{equation}\label{eq:conv_X0n}
\|X^n_0-X_0\|_{L^p(\Omega;\R^d)} \to 0 \quad \text{ as } n \to +\infty.
\end{equation}
Then there exists a sequence  $\{(\tilde X^n,\tilde u^n)\}_{n\in \N}$ such that $(\tilde X^n,\tilde u^n) \in  \cA_{\pL}(X^n_0)$ for every $n \in \N$ and the following hold
\begin{enumerate}
\item  $\tilde u^n$ are $(\cB_{[0,T]}\otimes\frB^{n})$-measurable;
\item for $\P$-a.e. $\omega \in \Omega$, $(i_{[0,T]}, \tilde u^n(\cdot,\omega))_\sharp \cL_T \xrightarrow{\mathcal{Y}} \sigma_\omega$, 
as $n\to+\infty$,
where $\sigma_\omega :=\sigma(t,\omega)\otimes\cL_T\in\mathscr P([0,T]\times U)$;
\item $\sup_{t\in[0,T]}\|\tilde X^n_t-X_t\|_{L^p(\Omega;\R^d)}\to 0$ as $n\to+\infty$;
\item $J_{\pL}(\tilde X^n,\tilde u^n)\to J_{\RL}(X,\sigma)$, as $n\to+\infty$.
\end{enumerate}
Moreover, if $X_0^n$ is $\frB^{n}$-measurable, $n\in\N$, then
$\tilde X_t^n$ is $\frB^{n}$-measurable for any $t\in[0,T]$.
\end{theorem}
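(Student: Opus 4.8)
The plan is to combine the two approximation results already established: the approximation by piecewise constant controls (Theorem \ref{prop:RLntoRL}, applied to the Relaxed Lagrangian problem $\RL$ via the lifting $\S'$ described in Remark \ref{rem:RL-L}) and the chattering result for piecewise constant relaxed controls (Proposition \ref{p:chattering:finito}), glued together by a diagonal argument. First I would apply Theorem \ref{prop:RLntoRL} in the convex setting $\S' = (\UU,\FF,\CC,\cC_T)$, which is legitimate since $\UU = \PP(U)$ is a convex compact subset of a separable Banach space by Proposition \ref{prop:ConvexRL}. This produces, for the given $(X,\sigma) \in \cA_{\RL}(X_0)$ and the convergent initial data $X_0^n \to X_0$, a sequence $(X^n, \sigma^n) \in \cA_{\RL}(X_0^n)$ with $\sigma^n$ being $(\cB_{[0,T]} \otimes \frB^n)$-measurable, $\sigma^n \to \sigma$ in $(\cL_T \otimes \P)$-measure, $\sup_{t}\|X^n_t - X_t\|_{L^p} \to 0$, and $J_{\RL}(X^n,\sigma^n) \to J_{\RL}(X,\sigma)$. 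The last clause of Theorem \ref{prop:RLntoRL} also guarantees that if $X_0^n$ is $\frB^n$-measurable then $X^n_t$ is $\frB^n$-measurable for all $t$.

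Next, for each fixed $n$, I would apply Proposition \ref{p:chattering:finito} to the piecewise-constant relaxed admissible pair $(X^n,\sigma^n) \in \cA_{\RL}(X_0^n)$, with the finite algebra $\bar\frB = \frB^n$. This yields a sequence $\{(X^{n,m}, u^{n,m})\}_{m \in \N} \subset \cA_{\pL}(X_0^n)$ with: $u^{n,m}$ being $(\cB_{[0,T]} \otimes \frB^n)$-measurable; for every $\omega$, $(i_{[0,T]}, u^{n,m}(\cdot,\omega))_\sharp \cL_T \xrightarrow{\mathcal Y} \sigma^n_\omega$ as $m \to +\infty$; $\sup_t \|X^{n,m}_t - X^n_t\|_{L^p} \to 0$ as $m \to +\infty$; and $J_\pL(X^{n,m}, u^{n,m}) \to J_{\RL}(X^n,\sigma^n)$ as $m \to +\infty$. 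The $\frB^n$-measurability of $X^{n,m}_t$ also holds whenever $X_0^n$ is $\frB^n$-measurable.

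Now comes the diagonal extraction, which is the main technical point: I must select $m = m(n)$ so that the limit as $n \to +\infty$ of $(\tilde X^n, \tilde u^n) := (X^{n,m(n)}, u^{n,m(n)})$ recovers the four claimed convergences towards $(X,\sigma)$. For items (3) and (4) this is straightforward: since $\sup_t \|X^n_t - X_t\|_{L^p} \to 0$ and $J_{\RL}(X^n,\sigma^n) \to J_{\RL}(X,\sigma)$, one chooses $m(n)$ large enough (and increasing) so that $\sup_t \|X^{n,m(n)}_t - X^n_t\|_{L^p} < 1/n$ and $|J_\pL(X^{n,m(n)},u^{n,m(n)}) - J_{\RL}(X^n,\sigma^n)| < 1/n$, then conclude by the triangle inequality. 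Item (1) is automatic since each $u^{n,m}$ is $(\cB_{[0,T]}\otimes\frB^n)$-measurable by construction. The delicate part is item (2): the almost-everywhere Young convergence $(i_{[0,T]}, \tilde u^n(\cdot,\omega))_\sharp\cL_T \xrightarrow{\mathcal Y} \sigma_\omega$. For this I would use the metrizability of Young (equivalently weak, by Remark \ref{rem:Young-weak}) convergence on $\PP([0,T]\times U)$ via a distance $\delta$ (available since $[0,T]\times U$ is compact). One needs two ingredients: first, that $\sigma^n_\omega \to \sigma_\omega$ for $\P$-a.e.\ $\omega$ in $\delta$ — this follows from $\sigma^n \to \sigma$ in $(\cL_T\otimes\P)$-measure by passing to a subsequence along which $\sigma^n_{t}(\omega) \to \sigma_t(\omega)$ for $(\cL_T\otimes\P)$-a.e.\ $(t,\omega)$, hence $\sigma^n_\omega \to \sigma_\omega$ weakly on $[0,T]\times U$ for $\P$-a.e.\ $\omega$ by dominated convergence (and then, the full sequence converges since any subsequence has a further sub-subsequence converging to the same limit); second, the per-$n$ convergence $\delta\big((i_{[0,T]},u^{n,m}(\cdot,\omega))_\sharp\cL_T,\, \sigma^n_\omega\big) \to 0$ as $m \to +\infty$ for every $\omega$. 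The obstacle is that this second convergence is only \emph{pointwise in $\omega$}, not uniform, so a naive diagonal choice of $m(n)$ need not work simultaneously for all $\omega$. I would circumvent this exactly as in the proof of Proposition \ref{p:chattering:finito}: the algebras $\frB^n$ have only finitely many atoms $A^n_1,\dots,A^n_{k(n)}$, and $u^{n,m}$ is constant in $\omega$ on each atom, so one only needs the convergence $\delta\big((i_{[0,T]},u^{n,m}(\cdot,\omega^n_k))_\sharp\cL_T,\sigma^n_{\omega^n_k}\big)\to 0$ for the finitely many representative points $\omega^n_k$, $k=1,\dots,k(n)$; choosing $m(n)$ so large that each of these finitely many quantities is $< 1/n$ gives, for $\P$-a.e.\ $\omega$ (namely those lying in atoms with well-defined limit behavior), $\delta\big((i_{[0,T]},\tilde u^n(\cdot,\omega))_\sharp\cL_T,\sigma^n_\omega\big)\to 0$, whence item (2) by the triangle inequality with $\sigma^n_\omega \to \sigma_\omega$. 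Finally the last assertion (preservation of $\frB^n$-measurability of $\tilde X^n_t$ when $X_0^n$ is $\frB^n$-measurable) follows by combining the corresponding assertions in Theorem \ref{prop:RLntoRL} and Proposition \ref{p:chattering:finito}.
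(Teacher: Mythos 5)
Your proposal is correct and follows essentially the same route as the paper: piecewise-constant approximation of the relaxed pair via Theorem \ref{prop:RLntoRL} applied in the lifted convex setting, then Proposition \ref{p:chattering:finito} for each fixed $n$, then a diagonal selection that exploits the fact that the Young-distance convergence is uniform in $\omega$ because the controls are constant on the finitely many atoms of $\frB^n$. The only imprecision is your claim that $\sigma^n_\omega\to\sigma_\omega$ for $\P$-a.e.\ $\omega$ along the \emph{full} sequence via the sub-subsequence principle --- that principle does not upgrade convergence in $\P$-measure to almost-everywhere convergence of the whole sequence, and the paper itself passes to a (non-relabelled) subsequence at exactly this point.
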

\begin{proof}
Let $(X,\sigma)\in\mathcal A_{\RL}(X_0)$ and $\{X^n_0\}_{n\in \N}$ satisfying \eqref{eq:conv_X0n}.
Applying Theorem \ref{prop:RLntoRL} to the relaxed problem $\RL$ (which is a Lagrangian problem in a lifted space as discussed in Remark \ref{rem:RL-L}),
there exists a sequence $\{(X^n,\sigma^n)\}_{n\in\N}$ such that $(X^n,\sigma^n) \in \cA_{\RL}(X^n_0)$ for every $n \in \N$. 
Moreover $\sigma^n$ are $(\cB_{[0,T]}\otimes\frB^n)$-measurable,  $\sigma^n\to\sigma$ in $(\cL_T\otimes\P)$-measure and, as a consequence, { we have that $\sigma^n(t,\omega)\to\sigma(t,\omega)$ weakly in $\PP(U)$ for $\cL_T\otimes\P$-a.e. $(t,\omega)$, up to a non-relabelled subsequence. Thus, by Remark \ref{rem:Young-weak}, } we get (up to a non-relabelled subsequence)
\begin{equation}\label{eq:convysigma}
\sigma^n_\omega:=\sigma^n(t,\omega)\otimes\cL_T \xrightarrow{\mathcal{Y}} \sigma_\omega,
\qquad \text{ for $\P$-a.e. } \omega \in \Omega.
\end{equation}

By Proposition \ref{p:chattering:finito}, for any fixed $n\in\N$, there exists a sequence 
$\{(Y^{n,m},u^{n,m})\}_{m\in\N} \subset \cA_{\pL}(X^n_0)$, 
with $u^{n,m}$ $(\cB_{[0,T]}\otimes\frB^n)$-measurable, such that
\begin{enumerate}
\item[(i)] for any $\omega \in \Omega$, $(i_{[0,T]}, u^{n,m}(\cdot, \omega))_\sharp \cL_T \xrightarrow{\mathcal{Y}} \sigma^n_\omega$,  
as $m\to+\infty$;
\item[(ii)] $\sup_{t\in[0,T]}\|X^n_t-Y^{n,m}_t\|_{L^p(\Omega;\R^d)}\to 0$,  as $m\to+\infty$;
\item[(iii)] $J_{\pL}(Y^{n,m},u^{n,m})\to J_{\RL}(X^n,\sigma^n)$, as $m\to+\infty$.
\end{enumerate}
Let us denote by $\cP(\frB^n):= \lbrace A^n_k, k=1,\ldots, k(n)\rbrace$ the minimal (finite) partition induced by $\frB^n$.
Let also $\sigma_\omega^{n,m}:=(i_{[0,T]}, u^{n,m}(\cdot, \omega))_\sharp \cL_T$ and observe that the map $\omega\mapsto \sigma_\omega^{n,m}$ is constant on the elements of $\cP^n$.
Then, if we select a representative $\omega_k \in A^n_k$ for any $k =1,\ldots, k(n)$, from item (i) it follows that
\begin{equation}
\lim_{m\to+\infty}\sup_{k = 1, \ldots, k(n)} \, \delta(\sigma^{n,m}_{\omega_k}, \sigma^n_{\omega_k} )= \lim_{m\to+\infty}\sup_{\omega\in\Omega} \, \delta(\sigma^{n,m}_\omega, \sigma^n_\omega )=0,
\end{equation} 
where $\delta$ metrizes the Young convergence in $[0,T] \times U$. 
Recall that Young convergence is indeed equivalent to the weak convergence in $[0,T] \times U$, see Remark \ref{rem:Young-weak}.

For any $n\in\N$, let $m(n)$ be such that 
$$\sup_{t\in[0,T]}\|X^n_t-Y^{n,m(n)}_t\|_{L^p(\Omega;\R^d)} < \frac1n,$$
$$|J_{\pL}(Y^{n,m(n)},u^{n,m(n)})- J_{\RL}(X^n,\sigma^n)| < \frac1n$$
and
$$\sup_{\omega\in\Omega}\, \delta(\sigma^{n,m(n)}_\omega, \sigma^n_\omega )<\frac1n.$$
Let us define $\tilde X^n:=Y^{n,m(n)}$, the control function $\tilde u^n:=u^{n,m(n)}:[0,T]\times\Omega\to U$ and
 $\tilde\sigma_\omega^{n}:=\sigma_\omega^{n,m(n)}=(i_{[0,T]}, \tilde u^{n}(\cdot, \omega))_\sharp \cL_T$.
Notice that, by construction, $(\tilde X^n,\tilde u^n)\in\mathcal A_{\pL}(X^n_0)$. 

Fix now $\eps>0$. 
By Theorem \ref{prop:RLntoRL} there exists $n_\eps$ such that
$$\sup_{t\in[0,T]}\|X^n_t-X_t\|_{L^p(\Omega;\R^d)} < \eps, \quad \forall n>n_\eps,$$
and  $$|J_{\RL}(X^{n},\sigma^{n})- J_{\RL}(X,\sigma)| < \eps, \quad \forall n>n_\eps.$$
Then, using the definition of  $\tilde X^n$ and $\tilde u^n$, for any $n>n_\eps$ it holds
\begin{align*}
\sup_{t\in[0,T]}\|\tilde X^n_t-X_t\|_{L^p(\Omega;\R^d)}&\le \sup_{t\in[0,T]}\|Y^{n,m(n)}_t-X^n_t\|_{L^p(\Omega;\R^d)}\\
&+\sup_{t\in[0,T]}\|X^n_t-X_t\|_{L^p(\Omega;\R^d)} <\frac1n +\eps,
\end{align*}
and
\begin{align*}
|J_{\pL}(\tilde X^n,\tilde u^n)-J_{\RL}(X,\sigma)|&\le |J_{\pL}(Y^{n,m(n)},u^{n,m(n)})-J_{\RL}(X^n,\sigma^n)|\\
&\quad+|J_{\RL}(X^n,\sigma^n)-J_{\RL}(X,\sigma)| < \frac1n +\eps.
\end{align*}
If we send $n \to +\infty$, items (3) and (4) follow by the arbitrariness  of $\varepsilon >0$.

It remains to show item (2). Fix again $\varepsilon >0$ and choose $\omega\in\Omega$ for which the convergence in \eqref{eq:convysigma} holds.
Then, there exists $n_\eps(\omega)>0$ such that 
$$\delta(\sigma^{n}_\omega, \sigma_\omega ) <\eps, \quad \forall n>n_\eps(\omega),$$
and
\begin{equation*}
\delta(\tilde \sigma^{n}_\omega, \sigma_\omega ) \leq
\delta(\tilde \sigma^{ n}_\omega, \sigma^n_\omega ) + 
\delta(\sigma^{n}_\omega, \sigma_\omega )< \frac1n+\eps,  \quad \forall	\, n>n_\eps(\omega).
\end{equation*}
Sending $n\to+\infty$ we get item (2).
\end{proof}

\section{Eulerian optimal control problem}
\label{sec_eulerian}

In this Section we describe the Eulerian formulation of the optimal control problem and 
we study its properties under the Convexity Assumption \ref{CA}. {In particular, as stated in Theorem \ref{thm:minE}, in this setting we get the existence of minimizers.}
Recall that $\Bor([0,T] \times \R^d;U)$ denotes the set of Borel measurable functions.

\begin{definition}[Eulerian optimal control problem (\E)]\label{def:E1}
Let $\S = (U,f,\cC,\cC_T)$  satisfy Assumption \ref{BA}. 
Given $\mu_0\in\PP_p(\R^d)$, we say that
 $(\mu,\ubar u)\in\cA_{\E}(\mu_0)$ if
\begin{itemize}
\item[(i)] $\ubar u\in\Bor([0,T]\times\R^d;U)$;
\item[(ii)] $\mu\in \AC^p([0,T];\PP_p(\R^d))$ is a distributional solution of the Cauchy problem
\begin{equation}\label{eq:systemE}
\begin{cases}
\partial_t\mu_t+\mathrm{div}\,(v_t\mu_t)=0, &\textrm{in }[0,T]\times\R^d\\ 
\mu_{t=0}=\mu_0,&
\end{cases}
\end{equation}
where 
$v\in\Bor([0,T]\times\R^d;\R^d)$ is defined by
 $v_t(x):=f(x,\ubar u(t,x),\mu_t)$ and $\mu_t:= \mu(t)$.
\end{itemize}
We refer to $(\mu, \ubar u)\in\cA_{\E}(\mu_0)$ as to an \emph{admissible pair}, with $\mu$ a \emph{measure trajectory} and $\ubar u$  a \emph{Eulerian control}.\\
We define the \emph{cost functional}  
$J_{\E}:\operatorname{C}([0,T];\PP_p(\R^d))\times {\Bor([0,T]\times\R^d; U)}\to[0,+\infty)$ by
\begin{equation*}
J_{\E}(\mu,\ubar u):=\int_0^T\int_{\R^d}\cC(x,\ubar u(t,x),\mu_t)\,\d\mu_t(x)\,\d t+\int_{\R^d} \cC_T(x,\mu_T)\,\d\mu_T(x),
\end{equation*}
and the \emph{value function} $V_{\E}:\PP_p(\R^d)\to[0,+\infty)$ by
\begin{equation*}
V_{\E}(\mu_0):=\inf\{J_{\E}(\mu,\ubar u)\,:\,(\mu,\ubar u)\in\mathcal A_{\E}(\mu_0)\}.
\end{equation*}
\end{definition}

\begin{remark}
Notice that, given $\ubar u\in \Bor([0,T]\times\R^d;U)$, $\mu\in C([0,T];\PP_p(\R^d))$ and setting $v_t(x):=f(x,\ubar u(t,x),\mu_t)$ as in Definition \ref{def:E1}, we have
\begin{align*}
\int_{\R^d}|v_t(x)|^p\,\d\mu_t(x)&= \int_{\R^d}|f(x,\ubar u(t,x),\mu_t)|^p\,\d\mu_t(x)\\
&\le \tilde C \left( 1+\int_{\R^d}|x|^p\,\d\mu_t(x)\right)\le \bar C, \quad \forall t\in[0,T]
\end{align*}
for some constants $\tilde C,\bar C>0$, thanks to the growth condition \eqref{f:growth} and since $\mu\in C([0,T];\PP_p(\R^d))$. In particular, we get $v\in L^p(0,T;L^p_{\mu_t}(\R^d;\R^d))$.  Thus, if $\mu$ is also a distributional solution of \eqref{eq:systemE}, then $\mu\in \AC^p([0,T];\PP_p(\R^d))$. Hence, in Definition \ref{def:E1}(ii) we could have just required $\mu\in C([0,T];\PP_p(\R^d))$.

Observe also that the functional $J_\E$ is finite thanks to the growth condition \eqref{eq:growthC}.
\end{remark}

\begin{proposition}
Let $\mu_0\in\PP_p(\R^d)$. Then $\cA_\E(\mu_0)\not=\emptyset$.
\end{proposition}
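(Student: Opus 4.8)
The plan is to produce a single admissible pair by freezing the control to a constant and realising the resulting non-local continuity equation as the push-forward of a Lagrangian flow. Fix an arbitrary $u_0\in U$ and set $\bar u(t,x):=u_0$ for every $(t,x)\in[0,T]\times\R^d$; this is trivially Borel measurable with values in $U$. To produce an initial datum in the Lagrangian picture I would simply take the probability space $(\Omega,\frB,\P):=(\R^d,\cB_{\R^d},\mu_0)$ and $X_0:=i_{\R^d}$: then $(X_0)_\sharp\P=\mu_0$ and, since $\m_p(\mu_0)<+\infty$, identity \eqref{eq:MpLp} gives $X_0\in L^p(\Omega;\R^d)$. (Alternatively, on an atomless standard Borel space one invokes Proposition~\ref{prop:Skorohod}(i).)

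Next, apply Proposition~\ref{prop:existL} with the constant control $u(t,\omega):=u_0\in\BM([0,T]\times\Omega;U)$: it yields a unique $X\in L^p(\Omega;\AC^p([0,T];\R^d))$ with $(X,u)\in\cA_\pL(X_0)$. Define $\mu_t:=(X_t)_\sharp\P$. By \eqref{eq:MpLp} and the a priori estimate \eqref{boundXt} one gets $\mu_t\in\PP_p(\R^d)$ with $\sup_{t\in[0,T]}\m_p(\mu_t)<+\infty$, while \eqref{eq:WpLp} together with the Lipschitz-in-time estimate \eqref{LipXt} (and Remark~\ref{re:equiv}, identifying $X$ with an element of $\AC^p([0,T];L^p(\Omega;\R^d))$) shows $\mu\in\AC^p([0,T];\PP_p(\R^d))$. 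The candidate pair is $(\mu,\bar u)$, and it remains to check it is admissible for the Eulerian problem.

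Set $v_t(x):=f(x,\bar u(t,x),\mu_t)=f(x,u_0,\mu_t)$. Since $t\mapsto\mu_t$ is $W_p$-continuous and $f$ is continuous with respect to $\sfd p$, the field $v$ is continuous, hence Borel; moreover \eqref{f:growth} and $\sup_t\m_p(\mu_t)<+\infty$ give the summability \eqref{Summabilityv}. Finally, pushing $X$ forward to $\eeta:=X_\sharp\P\in\PP(\Gamma_T)$, one has $(e_t)_\sharp\eeta=\mu_t$ for all $t$, and $\eeta$ is concentrated on the $\AC^p$ curves solving $\dot\gamma(t)=f(\gamma(t),u_0,\mu_t)=v_t(\gamma(t))$ because $X(\omega)$ solves \eqref{eq:systemL} for $\P$-a.e.\ $\omega$; the converse implication in the Superposition Principle (Theorem~\ref{thm:sup_princ}) then gives that $(\mu,v)$ is a distributional solution of $\partial_t\mu_t+\div(v_t\mu_t)=0$. (Equivalently, one differentiates $t\mapsto\int_{\R^d}\varphi\,\d\mu_t=\int_\Omega\varphi(X_t(\omega))\,\d\P(\omega)$ under the integral, dominating via \eqref{boundXtomega}, and reads off the continuity equation directly.) Hence $(\mu,\bar u)\in\cA_\E(\mu_0)$, so $\cA_\E(\mu_0)\neq\emptyset$. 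There is no genuine obstacle here: the statement is an assembly of the realisation of $\mu_0$ as a law, the Banach-space ODE existence of Proposition~\ref{prop:existL}, and the elementary converse of the superposition principle; the only mild care needed is to verify $\mu\in\AC^p$ (not merely $C([0,T];\PP_p(\R^d))$) and that $v$ meets the measurability and summability requirements of Definition~\ref{def:E1}.
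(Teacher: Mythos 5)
Your proof is correct and follows essentially the same route as the paper: freeze the control at a constant $u_0$, realise $\mu_0$ as $(X_0)_\sharp\P$ with $\Omega=\R^d$, $\P=\mu_0$, $X_0=i_{\R^d}$, solve the Lagrangian system via Proposition \ref{prop:existL}, and push the flow forward to get a solution of the continuity equation. The only cosmetic difference is that the paper closes the argument by citing the standard characteristics lemma of Ambrosio--Gigli--Savar\'e, whereas you invoke the converse direction of Theorem \ref{thm:sup_princ} (or direct differentiation under the integral) -- these are interchangeable.
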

\begin{proof}
Let us fix  $u_0\in U$ and define $\ubar u(t,x)=u_0$ for any $(t,x)\in[0,T]\times\R^d$. 
Applying Proposition \ref{prop:existL} with 
$\Omega=\R^d$, $\P=\mu_0$, $X_0(x)=x$ for any $x\in\R^d$ and $u = \ubar u$, there exists $X\in L^p_{\mu_0}(\R^d;\AC^p([0,T];\R^d))$  such that $(X,\ubar u) \in \cA_\L(X_0)$.
Defining $\mu_t:=(X_t)_\sharp\mu_0$, from the definition of the Lagrangian problem it holds 
 \begin{equation}\label{flow}
 	X_t(x)=x+\int_0^t f(X_s(x),u_0,\mu_s)\,\d s , \qquad \forall \, t \in [0,T] \text{ and }\mu_0\text{-a.e. }x\in\R^d,
 \end{equation}
Furthermore, in view of Proposition \ref{prop:equivSpaces} we have that 
 $X\in \AC^p([0,T];L^p_{\mu_0}(\R^d;\R^d))$ and
from \eqref{eq:WpLp} we get  $W_p(\mu_t,\mu_s)\le \|X_t-X_s\|_{L^p_{\mu_0}(\R^d;\R^d)}$ for any $t,s\in[0,T]$.
This readily implies that $\mu \in \AC^p([0,T];\PP_p(\R^d))$.
If we define $v_t(x):= f(x,u_0,\mu_t)$, it remains to show that $\mu$ is a distributional solution of \eqref{eq:systemE}.
This is a standard argument, in view of the fact that \eqref{flow} represents the system of characteristics of \eqref{eq:systemE} (see e.g. \cite[Lemma 8.1.6]{ambrosio2008gradient}). 
\end{proof}

\begin{remark}
Given $\mu_0\in\PP_p(\R^d)$, general results granting the existence of solutions to the Cauchy problem \eqref{eq:systemE} are provided for instance in \cite[Theorem~A.2]{orrieri2018large} when $\ubar u\in\Bor([0,T]\times\R^d;U)$ is also a Carath\'eodory function. 
\end{remark}

\begin{definition}\label{def:nu_converges}
Let $U$ be a subset of a separable Banach space $V$, and denote with $V'$ the dual of $V$. Let  $(\mu^n,\ubar u^n), (\mu,\ubar u)\in  \operatorname{C}([0,T];\PP_p(\R^d)) \times {\Bor([0,T]\times\R^d; U)}$.
We say that $(\mu^n,\ubar u^n)$ converges to $(\mu,\ubar u)$   if
\begin{itemize}
\item $\mu^n$ converges to $\mu$ in   $\operatorname{C}([0,T];\PP_p(\R^d))$,
\item for any $\phi\in C_c([0,T]\times\R^d;V')$ we have
\begin{equation}\label{convweaknu}
	\lim_{n\to+\infty}\int_0^T\int_{\R^d}\langle \phi(t,x), \ubar u^n(t,x)\rangle\,\d \mu_t^n(x)\,\d t
	=\int_0^T\int_{\R^d}\langle \phi(t,x), \ubar u(t,x)\rangle\,\d \mu_t(x)\,\d t.
\end{equation}
\end{itemize}
\end{definition}

\begin{proposition}[Compactness]\label{prop:compactness}
Let $\S=(U,f,\cC,\cC_T)$ satisfy the Convexity Assumption \ref{CA}.
Let $\mu_0, \mu_0^n \in \PP_p(\R^d)$ such that $W_p(\mu_0^n, \mu_0) \to 0$, as $n \to +\infty$. 
If $(\mu^n,\ubar u^n) \in \cA_{\E}(\mu^n_0)$, $n\in\N$, then there exist $(\mu,\ubar u) \in \cA_{\E}(\mu_0)$ 
and a subsequence  $(\mu^{n_k},\ubar u^{n_k})$ such that  
$(\mu^{n_k},\ubar u^{n_k})$ converges to $(\mu,\ubar u)$, as $k\to+\infty$, according to Definition \ref{def:nu_converges}.
\end{proposition}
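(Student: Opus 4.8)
The plan is to run the direct method of the calculus of variations, exploiting the a priori bounds available under the Convexity Assumption. First I would fix representatives $(\mu^n,\ubar u^n)\in\cA_\E(\mu^n_0)$ and set $v^n_t(x):=f(x,\ubar u^n(t,x),\mu^n_t)$. Since $W_p(\mu^n_0,\mu_0)\to0$, the initial moments $\m_p(\mu^n_0)$ are uniformly bounded, and applying the Eulerian-to-Lagrangian correspondence (or simply Proposition \ref{prop:estimatesL} read through the flow map / the argument of Lemma \ref{lemma:cpt}) gives a uniform-in-$n$ bound $\sup_n\sup_{t}\m_p^p(\mu^n_t)<+\infty$ together with a uniform Lipschitz estimate $W_p(\mu^n_t,\mu^n_s)\le C|t-s|$. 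By Proposition \ref{prop:wassconv} the $p$-moments bound upgrades, via De la Vall\'ee Poussin, to a uniform integrability of the type $\sup_{t,n}\int \psi(|x|^p)\,\d\mu^n_t<+\infty$ for some admissible $\psi$, so the family $\{\mu^n_t : t,n\}$ is relatively compact in $\PP_p(\R^d)$. Ascoli--Arzel\`a in $C([0,T];\PP_p(\R^d))$ then yields a (not relabelled) subsequence and a limit $\mu\in\AC^p([0,T];\PP_p(\R^d))$ with $\sup_t W_p(\mu^n_t,\mu_t)\to0$; in particular $\mu_0=\lim_n\mu^n_0$ as required.

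Next I would extract a limit velocity/control. Define the space-time measures $\lambda^n:=\mu^n_t\otimes\cL_T$ on $[0,T]\times\R^d$ and consider the $V$-valued (equivalently, finite vector) measures $\ubar u^n\lambda^n$. Using \eqref{f:growth}, $\int_0^T\int|v^n_t|^p\,\d\mu^n_t\,\d t\le \bar C$ uniformly; together with the uniform integrability of the $x$-variable this gives tightness of $(i,\ubar u^n)_\sharp\lambda^n$ on $[0,T]\times\R^d\times U$ (recall $U$ is compact). Passing to a further subsequence we obtain a limit measure which, since $\pi^{1,2}_\sharp((i,\ubar u^n)_\sharp\lambda^n)=\lambda^n\to\mu_t\otimes\cL_T$ (Wasserstein convergence in $x$ at each $t$, dominated in $t$), disintegrates as $(\mu_t\otimes\cL_T)$ against a Young/probability kernel $\nu_{t,x}\in\PP(U)$. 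Here the Convexity Assumption is essential: the limiting object is a priori a relaxed control, but because $u\mapsto f(x,u,\mu)$ is affine and $U$ is convex and compact, the barycenter $\ubar u(t,x):=\int_U u\,\d\nu_{t,x}(u)$ lies in $U$, is Borel in $(t,x)$, and satisfies $\int_U f(x,u,\mu_t)\,\d\nu_{t,x}(u)=f(x,\ubar u(t,x),\mu_t)$. This gives a genuine (non-relaxed) admissible control, and one checks \eqref{convweaknu} directly: for $\phi\in C_c([0,T]\times\R^d;V')$, $\int\!\!\int\langle\phi,\ubar u^n\rangle\,\d\mu^n_t\,\d t\to\int\!\!\int\langle\phi,u\rangle\,\d\nu_{t,x}(u)\,\d\mu_t(x)\,\d t=\int\!\!\int\langle\phi,\ubar u\rangle\,\d\mu_t\,\d t$, the last equality by linearity of $\langle\phi(t,x),\cdot\rangle$.

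Then I would pass to the limit in the continuity equation: for $\varphi\in C^\infty_c$, $\int\!\!\int v^n_t\cdot\nabla\varphi\,\d\mu^n_t\,\d t=\int\!\!\int f(x,\ubar u^n(t,x),\mu^n_t)\cdot\nabla\varphi(t,x)\,\d\mu^n_t(x)\,\d t$. Using the Lipschitz bound \eqref{eq:Lipf} in $(x,\mu)$ uniformly in $u$, one replaces $f(x,\ubar u^n,\mu^n_t)$ by $f(x,\ubar u^n,\mu_t)$ up to an error controlled by $\sup_t W_p(\mu^n_t,\mu_t)\to0$; the remaining term is $\int\!\!\int\langle \nabla\varphi(t,x),f(x,\ubar u^n(t,x),\mu_t)\rangle\,\d\mu^n_t\,\d t$, which — writing $f(x,\cdot,\mu_t)$ as an affine, hence weak-continuous, function of the control and using that $(i,\ubar u^n)_\sharp(\mu^n_t\otimes\cL_T)\to(i,\nu_{t,x})\otimes(\mu_t\otimes\cL_T)$ together with continuity of $f$ in $x$ — converges to $\int\!\!\int\langle\nabla\varphi,f(x,\ubar u(t,x),\mu_t)\rangle\,\d\mu_t\,\d t=\int\!\!\int v_t\cdot\nabla\varphi\,\d\mu_t\,\d t$ with $v_t(x):=f(x,\ubar u(t,x),\mu_t)$. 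Combined with $\mu^n\to\mu$ in $C([0,T];\PP_p)$ for the $\partial_t$ term, this shows $(\mu,\ubar u)\in\cA_\E(\mu_0)$.

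The main obstacle is the passage to the limit in the nonlinear control term and the identification of the limit as an admissible \emph{non-relaxed} pair: a priori $\ubar u^n$ converges only to a Young-measure kernel $\nu_{t,x}$, and without the affinity/convexity hypotheses one would be stuck in the relaxed Eulerian problem. The two facts that make it work are (i) the affinity of $f$ in $u$, which lets one collapse $\nu_{t,x}$ to its barycenter without changing the drift, and (ii) the convexity and compactness of $U$, which guarantees the barycenter is again an admissible control value; the measurability of $(t,x)\mapsto\int_U u\,\d\nu_{t,x}(u)$ follows from the Borel measurability of $(t,x)\mapsto\nu_{t,x}$ given by the disintegration theorem. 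A secondary technical point is keeping the $x$-tightness uniform in $t$ so that the space-time weak convergences above are legitimate; this is handled exactly as in the proof of Lemma \ref{lemma:cpt} via the admissible function $\psi$.
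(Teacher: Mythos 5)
Your argument is correct and follows essentially the same route as the paper: uniform moment and equicontinuity estimates plus Ascoli--Arzel\`a for the curves, tightness of the joint laws $(i,\ubar u^n)_\sharp(\mu^n_t\otimes\cL_T)$ followed by disintegration and barycentric projection for the controls, and the affinity of $f$ in $u$ together with its Lipschitz dependence on $\mu$ to pass to the limit in the continuity equation. The only cosmetic difference is that the paper obtains the uniform bounds on $\m_p(\mu^n_t)$ and on $\int\psi(|x|^p)\,\d\mu^n_t$ by testing the continuity equation directly with $|x|^p$ and $\psi(|x|^p)$ (plus Gronwall), rather than ``through the flow map'', which for a merely Borel control is not well defined without first invoking the superposition principle.
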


\begin{proof}
Let $(\mu^n,\ubar u^n) \in \cA_{\E}(\mu^n_0)$.
Since $W_p(\mu_0^n,\mu_0) \to 0$, by Proposition \ref{prop:wassconv} it holds that
\begin{equation}\label{bound_mu0p}
	\sup_{n \in \N}\int_{\R^d} |x|^p\d \mu^n_0(x) < +\infty 
\end{equation}
and there exists an admissible $\psi:[0,+\infty)\to[0,+\infty)$, in the sense of Definition \ref{def:admphi}, such that
\begin{equation}\label{eq:unifmompsi2}
	\sup_{n \in \N}\int_{\R^d} \psi(|x|^p)\d \mu^n_0(x) < +\infty.
\end{equation}
In order to apply Ascoli-Arzel\`a Theorem to the sequence $\{\mu^n\}_{n\in\N}$, we show that
\begin{equation}\label{eq:cpt_muN}
	\sup_{n \in \N}\sup_{t \in [0,T]}\int_{\R^d} \psi(|x|^p)\d \mu^n_t(x) < +\infty
\end{equation}
and there exists a constant $C$ such that
\begin{equation}\label{eq:BB}
	W^p_p (\mu^n_s,\mu^n_t)  \leq C|t-s|,  \qquad \forall\, s,t\in[0,T].
\end{equation} 

We start by estimating $\int_{\R^d} |x|^p\d \mu^n_t(x)$. 
We formally use the map $x\mapsto |x|^p$ as a test function for the weak formulation of the continuity equation
(a rigorous approach would require an approximation of this map through cut-off functions,
 see \cite[Section~5]{fornasier2018mean}).
 Defining $v^n(t,x):= f(x,\ubar{u}^n(t,x), \mu^n_t)$, using the growth condition on $f$ given in  \eqref{f:growth} and Young inequality
 we obtain that
\begin{equation*}
\begin{split}
\int_{\R^d} |x|^p\,\d \mu^n_t(x) &\leq \int_{\R^d} |x|^p\,\d \mu^n_0(x) + p\int_0^t \int_{\R^d} |v^n(s,x)||x|^{p-1} \,\d \mu^n_s(x)\,\d s \\
&\leq \int_{\R^d} |x|^p\,\d \mu^n_0(x) + Cp\int_0^t \int_{\R^d}\left( 1 + |x| + \mathrm{m}_p(\mu_s^n) \right)|x|^{p-1} \,\d \mu^n_s(x)\,\d s\\
&\leq \int_{\R^d} |x|^p\d \,\mu^n_0(x) + \tilde C\int_0^t \int_{\R^d} |x|^p \,\d \mu^n_s(x)\,\d s,
\end{split}
\end{equation*}
for some $\tilde C >0$ independent of $n$ and
 $t\in[0,T]$.
By Gronwall's inequality and \eqref{bound_mu0p} we get that
\begin{equation}\label{eq:bound_xp}
	\sup_{n \in \N}\sup_{t \in [0,T]}\int_{\R^d} |x|^p\d \mu^n_t(x) < +\infty. 
\end{equation}
Formally using the map $x\mapsto \psi(|x|^p)$ as a test function for the weak formulation of the continuity equation,
by the growth condition on $f$ in  \eqref{f:growth} and the bound \eqref{eq:bound_xp}, 
 we have
\begin{equation}\label{eq:psimom}
\begin{split}
\int_{\R^d} \psi(|x|^p)\,\d \mu^n_t(x) &\leq \int_{\R^d} \psi(|x|^p)\,\d\mu^n_0(x) + \int_0^t \int_{\R^d} |v^n(s,x)| |\nabla(\psi(|x|^p))| \,\d \mu^n_s(x)\,\d s \\
&\leq \int_{\R^d} \psi(|x|^p)\,\d \mu^n_0(x) + pC\int_0^t \int_{\R^d}\left(1 + |x| +\m_p(\mu^n_s))\right) \psi'(|x|^p)|x|^{p-1} \,\d \mu^n_s(x)\,\d s\\
&\leq \int_{\R^d} \psi(|x|^p)\,\d \mu^n_0(x) + C_1\int_0^t \int_{\R^d}\left(|x|^{p-1}  + |x|^p \right) \psi'(|x|^p)\,\d \mu^n_s(x)\,\d s
\end{split}
\end{equation}
for some $C_1>0$ independent of $n$ and $t\in[0,T]$.
Notice that by the monotonicity of $\psi'$, denoting by $B_1$ the unitary ball of $\R^d$, we have
\begin{equation*}
\begin{split}
  \int_{\R^d} |x|^{p-1} \psi'(|x|^p)\,\d \mu^n_s(x) &\leq  
\int_{B_1} |x|^{p-1} \psi'(|x|^p)\,\d \mu^n_s(x) +  \int_{\R^d\setminus B_1} |x|^{p} \psi'(|x|^p)\,\d \mu^n_s(x)\\
&\leq \psi'(1) +  \int_{\R^d} |x|^{p} \psi'(|x|^p)\,\d \mu^n_s(x).
\end{split}
\end{equation*} 
By the previous inequality and \eqref{doubling}, from \eqref{eq:psimom} we get
\begin{equation*}
\begin{split}
\int_{\R^d} \psi(|x|^p)\,\d \mu^n_t(x) 
&\leq \int_{\R^d} \psi(|x|^p)\,\d \mu^n_0(x) + C_2\int_0^t \int_{\R^d} \left( 1 +\psi(|x|^p)\right) \,\d \mu^n_s(x)\,\d s
\end{split}
\end{equation*}
 for some $C_2>0$ independent of $n$ and $t\in[0,T]$.
 By Gronwall's inequality and \eqref{eq:unifmompsi2} we obtain \eqref{eq:cpt_muN}.

Using Benamou-Brenier formula \eqref{B&B}, the growth condition on $f$ in  \eqref{f:growth},
for $s,t\in[0,T]$, $s \leq t$, it holds
\begin{equation*}
\begin{split}
W^p_p (\mu^n_s,\mu^n_t) &\leq \int_s^t \int_{\R^d} |v^n(r,x)|^p \,\d \mu^n_r(x) \d r \\
&\leq C\int_s^t \int_{\R^d} \left(1 + |x| + \mathrm{m}_p(\mu_r^n) \right)^p \,\d \mu_r^n(x) \d r,  
\end{split}
\end{equation*} 
for some $C>0$ independent of $n$, $s$ and $t$.
Using the bound \eqref{eq:bound_xp} we obtain \eqref{eq:BB}.

By Ascoli-Arzel\`a theorem in $\PP_p(\R^d)$ there exists $\mu \in C([0,T];\PP_p(\R^d))$ 
and a subsequence $\mu^n$ (not relabeled) such that
$\mu^n\to\mu$ in $C([0,T];\PP_p(\R^d))$.

\smallskip

For what concerns the weak compactness of $u^n$ (in the sense of convergence \eqref{convweaknu}),
we denote by  $\tilde{\mu}^n = \mu^n_t\otimes\cL_T\in\PP([0,T] \times \R^d)$ 
and $\tilde{\mu} = \mu_t\otimes\cL_T\in\PP([0,T] \times \R^d)$. 
From the convergence of $\mu^n$ to $\mu$ it follows that $\tilde{\mu}^n\to\tilde{\mu}$ weakly.
Defining 
$\gamma^n:= (i_{[0,T]\times\R^d}, \ubar{u}^n)_\sharp \tilde{\mu}^n \in \PP(([0,T] \times \R^d) \times U)$,
we observe that 
$\pi^1_\sharp \gamma^n = \tilde{\mu}^n\in\PP([0,T] \times \R^d)$ and 
$\pi^2_\sharp \gamma^n = \ubar{u}^n_\sharp\tilde{\mu}^n\in\PP(U)$, where $\pi^1: [0,T] \times \R^d \times U \to [0,T] \times \R^d$ is the projection on $[0,T] \times \R^d$ and $\pi^2: [0,T] \times \R^d \times U \to U$ is the projection on $U$. 
 
Since $\pi^1_\sharp \gamma^n$ weakly converges and $U$ is compact,
the families $\{\pi^1_\sharp \gamma^n\}_{n \in \N}$ and  $\{\pi^2_\sharp \gamma^n\}_{n \in \N}$ are tight.
Thanks to \cite[Lemma 5.2.2]{ambrosio2008gradient} it follows that $\{ \gamma^n \}_{n \in \N}$ is tight and,
by Prokhorov's Theorem, there exists $\gamma \in \PP([0,T] \times \R^d \times U)$ and a subsequence $\gamma^n$ (not relabeled) 
such that,
$\gamma^n \to \gamma$ weakly, as $n \to +\infty$.\\
Let $\phi\in C_c([0,T]\times\R^d;V')$. 
Recalling that $U$ is compact, using the continuous and bounded test function 
$(t,x,u)\mapsto  \langle \phi(t,x), u\rangle$, by the weak convergence of $\gamma^n$ to $\gamma$ we have
\begin{equation}\label{eq:youngun}
\begin{split}
 & \int_0^T\int_{\R^d}\langle\phi(t,x), \ubar u^n(t,x)\rangle \,\d \tilde\mu^n(t,x) =\\
& = \int_0^T\int_{\R^d}\int_U\langle \phi(t,x), u\rangle\,\d \gamma^n(t,x,u)
\to \int_0^T\int_{\R^d}\int_U \langle\phi(t,x), u\rangle\,\d \gamma(t,x,u),
\end{split}
\end{equation}  
as $n\to+\infty$.
Using Theorem \ref{thm:disint} (specifically Remark \ref{rem:disint_proj}) and observing that $\pi^1_\sharp \gamma = \tilde{\mu}$, we disintegrate $\gamma$ with respect to $\pi^1$ to get
\begin{equation*}
\int_0^T\int_{\R^d}\int_U \langle \phi(t,x), u\rangle \,\d \gamma(t,x,u)
= \int_0^T\int_{\R^d}\int_U\langle \phi(t,x), u\rangle \,\d \gamma_{t,x}(u)\,\d\tilde\mu(t,x).
\end{equation*}  
We define now $\ubar u: [0,T] \times \R^d \to U$ by
\begin{equation}\label{eq:baricentreu}
\ubar{u}(t,x) := \int_U u \, \d \gamma_{t,x}(u), \qquad  \forall\, (t,x) \in [0,T] \times \R^d,
\end{equation}
where the integral in \eqref{eq:baricentreu} is a Bochner integral.
Since the map $(t,x) \in [0,T] \times \R^d \mapsto \gamma_{t,x}\in\PP(U)$ is a Borel map, then
 $\ubar u\in\Bor([0,T] \times \R^d;U)$.
We call the map $\ubar u$ the barycentric projection of $\gamma$ with respect to $\pi^1_\sharp\gamma$.
Since the Bochner integral commutes with continuous linear functionals, it holds 
\begin{equation*}
\int_0^T\int_{\R^d}\int_U\langle\phi(t,x), u\rangle\,\d \gamma_{t,x}(u)\,\d\tilde\mu(t,x) 
=\int_0^T\int_{\R^d}\langle\phi(t,x), \ubar u(t,x)\rangle\,\d\tilde\mu(t,x).
\end{equation*}  
Using \eqref{eq:youngun} we obtain the convergence of $\ubar u^n \to \ubar u$ in the sense of \eqref{convweaknu}.

\smallskip

In order to prove that $(\mu, \ubar u) \in \cA_{\E}(\mu_0)$ we show that \eqref{eq:systemE} is satisfied.
Let $\varphi \in C^1_c([0,T] \times \R^d)$.
Since  $(\mu^n,\ubar u^n) \in \cA_{\E}(\mu^n_0)$, for every $t \in [0,T]$ it holds
\begin{equation}\label{eq:mu_n_continuity}
	\int_{\R^d} \varphi(t,x)\,\d \mu^n_t(x) - \int_{\R^d} \varphi(0,x)\,\d \mu^n_0(x) =
	 \int_0^t \int_{\R^d} \left( \partial_s \varphi(s,x)  +  v^n(s,x)\cdot \nabla \varphi(s,x) \right) \,\d \mu^n_s(x) \,\d s. 
\end{equation} 
By the convergence $\mu^n \to\mu$ in $	C([0,T];\PP_p(\R^d))$ we immediately pass to the limit, as $n \to +\infty$, in the left hand side of equation \eqref{eq:mu_n_continuity} as well as on the term $\int_0^t \int_{\R^d} \partial_s \varphi(s,x) \,\d \mu^n_s(x) \,\d s$.
Finally, let us rewrite
\begin{equation*}
\begin{split}
 &\int_0^t \int_{\R^d}  v^n(s,x)\cdot \nabla \varphi(s,x) \,\d \mu^n_s(x) \,\d s\\
 &=\int_0^t \int_{\R^d}  f(x, \ubar{u}^n(s,x), \mu^n_s) \cdot  \nabla \varphi(s,x) \, \d \mu^n_s(x)\,\d s \\
&= \int_{[0,t] \times \R^d \times U} f(x, u, \mu^n_s)\cdot  \nabla \varphi(s,x) \, \d \gamma^n(s,x,u) \\
&= \int_{[0,t] \times \R^d \times U} ( f(x, u, \mu^n_s) - f(x, u, \mu_s))\cdot   \nabla \varphi(s,x) \, \d \gamma^n(s,x,u) \\
&\quad + \int_{[0,t] \times \R^d \times U}  f(x, u, \mu_s)\cdot  \nabla \varphi(s,x)  \, \d \gamma^n(s,x,u).\\
\end{split}
\end{equation*}  
By \eqref{eq:Lipf}, the first term on the right hand side can be estimated by 
\begin{equation*}
\begin{split}
&\left|\int_{[0,t] \times \R^d \times U} \langle f(x, u, \mu^n_s) - f(x, u, \mu_s),  \nabla \varphi(s,x) \rangle \, \d \gamma^n(s,x,u)\right| \\
&\quad \leq L\sup_{r \in [0,T]}W_p(\mu^n_r, \mu_r)\int_{[0,t] \times \R^d \times U} |\nabla \varphi(s,x)| \, \d \gamma^n(s,x,u) \\
&\quad = L\sup_{r \in [0,T]}W_p(\mu^n_r, \mu_r)\int_{[0,t] \times \R^d} |\nabla \varphi(s,x)| \, \d \mu^n_s(x)\,\d s,
\end{split}
\end{equation*} 
which goes to zero as $n \to +\infty$ by the convergence $\mu^n\to\mu$ in $C([0,T];\PP_p(\R^d))$.
Hence
\begin{equation*}
\begin{split}
\lim_{n \to +\infty} &\int_0^t \int_{\R^d}  v^n(s,x)\cdot \nabla \varphi(s,x) \,\d \mu^n_s(x) \,\d s \\
=\lim_{n \to +\infty} &\int_{[0,t] \times \R^d \times U} f(x, u, \mu_s)\cdot \nabla \varphi(t,x) \, \d \gamma^n(s,x,u)\\
&=\int_{[0,t] \times \R^d \times U} f(x, u, \mu_s)\cdot \nabla \varphi(t,x) \, \d \gamma(s,x,u),\\
\end{split}
\end{equation*}  
by the weak convergence of $\gamma^n$ to $\gamma$.
Recall that, by the Convexity Assumption \ref{CA}, for any $(x,s)\in\R^d\times[0,T]$ the  map $u \mapsto f(x,u,\mu_s)$ is affine. 
Thus, using that $ \gamma_{s,x}$ is a probability measure, we have
\begin{equation*}
\int_U  f(x, u, \mu_s) \, \d \gamma_{s,x}(u) = f\left(x, \int_U u \,\d \gamma_{s,x}(u) , \mu_s\right), \quad \text{ for } \tilde \mu\text{-a.e. } (s,x) \in [0,T] \times \R^d.
\end{equation*}  
Hence we get 
\begin{equation*}
\begin{split}
\int_{[0,t] \times \R^d} &\int_U  f(x, u, \mu_s)\cdot \nabla \varphi(s,x) \, \d \gamma_{s,x}(u) \,\d \tilde \mu(s,x)\\
&=\int_{[0,t] \times \R^d}  f\left(x, \int_U u \,\d \gamma_{s,x}(u) , \mu_s\right)\cdot  \nabla \varphi(s,x) \, \d \tilde \mu(s,x)\\
&=\int_0^t \int_{\R^d}  f\left(x, \ubar u(s,x) ,\mu_s\right)\cdot  \nabla \varphi(s,x) \, \d \mu_s(x) \,\d s.\\
\end{split}
\end{equation*}  
Defining $v(t,x):= f(x,\ubar{u}(t,x), \mu_t)$, we have proved that
\begin{equation*}
\begin{split}
\lim_{n \to +\infty} &\int_0^t \int_{\R^d}  v^n(s,x)\cdot \nabla \varphi(s,x) \,\d \mu^n_s(x) \,\d s \\
&=\int_0^t \int_{\R^d}  v(s,x)\cdot \nabla \varphi(s,x) \,\d \mu_s(x) \,\d s
\end{split}
\end{equation*}  
and this concludes the proof.
\end{proof}

\begin{proposition}[Lower semicontinuity for convex $J_\E$]\label{prop:lscJE}
Let $\S = (U,f,\cC,\cC_T)$ satisfy the Convexity Assumption \ref{CA}.
If $(\mu^n,\ubar u^n)$ converges to $(\mu,\ubar u)$ according to Definition \ref{def:nu_converges}, then 
 \begin{equation}\label{lscJE}
 	\liminf_{n\to+\infty} J_\E(\mu^n,\ubar u^n)\ge J_\E(\mu,\ubar u).
 \end{equation}
\end{proposition}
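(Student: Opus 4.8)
The plan is to split $J_\E$ into its running and terminal parts: the terminal part will pass to the limit by continuity plus a uniform--integrability argument, while the delicate point, the lower semicontinuity of the running cost, will be handled by encoding the controls into Young measures on $[0,T]\times\R^d\times U$ and then exploiting the convexity of $u\mapsto\cC(x,u,\mu)$ via Jensen's inequality. First I would pass to a subsequence (not relabelled) along which $J_\E(\mu^n,\ubar u^n)\to\liminf_n J_\E(\mu^n,\ubar u^n)$, still converging to $(\mu,\ubar u)$ in the sense of Definition~\ref{def:nu_converges}. As in the proof of Proposition~\ref{prop:compactness} I set $\tilde\mu^n:=\mu^n_t\otimes\cL_T$, $\tilde\mu:=\mu_t\otimes\cL_T$ in $\PP([0,T]\times\R^d)$ and $\gamma^n:=(i_{[0,T]\times\R^d},\ubar u^n)_\sharp\tilde\mu^n\in\PP([0,T]\times\R^d\times U)$. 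Since $\mu^n\to\mu$ in $C([0,T];\PP_p(\R^d))$ we have $\tilde\mu^n\to\tilde\mu$ weakly, hence $\{\pi^1_\sharp\gamma^n\}_n$ is tight; as $U$ is compact $\{\pi^2_\sharp\gamma^n\}_n$ is tight too, so by \cite[Lemma~5.2.2]{ambrosio2008gradient} and Prokhorov's theorem I extract a further subsequence with $\gamma^n\to\gamma$ weakly, $\pi^1_\sharp\gamma=\tilde\mu$. Writing $\gamma=\gamma_{t,x}\otimes\tilde\mu$ via Theorem~\ref{thm:disint} and testing $\gamma^n\to\gamma$ against $(t,x,u)\mapsto\langle\phi(t,x),u\rangle$, $\phi\in C_c([0,T]\times\R^d;V')$, comparison with \eqref{convweaknu} forces the barycentric projection of $\gamma$ to equal the prescribed control, $\ubar u(t,x)=\int_U u\,\d\gamma_{t,x}(u)$ for $\tilde\mu$-a.e.\ $(t,x)$, with $\int_U u\,\d\gamma_{t,x}(u)\in U$ by convexity of $U$.

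For the running cost I would write $\int_0^T\int_{\R^d}\cC(x,\ubar u^n(t,x),\mu^n_t)\,\d\mu^n_t(x)\,\d t=\int_{[0,T]\times\R^d\times U}\cC(x,u,\mu^n_t)\,\d\gamma^n$ and split the integrand as $\cC(x,u,\mu_t)+\big(\cC(x,u,\mu^n_t)-\cC(x,u,\mu_t)\big)$. The error term vanishes: the $\R^d$-marginals of the $\gamma^n$ have $p$-th moments bounded by $\sup_{n,t}\m_p^p(\mu^n_t)<+\infty$ and, by Proposition~\ref{prop:wassconv}, uniformly integrable $p$-th moments, so by \eqref{eq:growthC} the functions $(t,x,u)\mapsto|\cC(x,u,\mu^n_t)-\cC(x,u,\mu_t)|$ are uniformly $\gamma^n$-integrable; since they also tend to $0$ uniformly on each $[0,T]\times\bar B_R\times U$ (uniform continuity of $\cC$ on compacta and $\sup_tW_p(\mu^n_t,\mu_t)\to0$), splitting over $\{|x|\le R\}$ and $\{|x|>R\}$ gives $\int|\cC(x,u,\mu^n_t)-\cC(x,u,\mu_t)|\,\d\gamma^n\to0$. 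Therefore $\liminf_n\int\cC(x,u,\mu^n_t)\,\d\gamma^n=\liminf_n\int\cC(x,u,\mu_t)\,\d\gamma^n$; the map $(t,x,u)\mapsto\cC(x,u,\mu_t)$ being continuous and nonnegative, approximating it from below by its truncations and using the Portmanteau theorem with $\gamma^n\to\gamma$ yields $\liminf_n\int\cC(x,u,\mu_t)\,\d\gamma^n\ge\int\cC(x,u,\mu_t)\,\d\gamma$. Disintegrating and applying Jensen's inequality to the convex function $u\mapsto\cC(x,u,\mu_t)$ (item (C.3) of Assumption~\ref{CA}),
\[\int\cC(x,u,\mu_t)\,\d\gamma=\int_0^T\int_{\R^d}\Big(\int_U\cC(x,u,\mu_t)\,\d\gamma_{t,x}(u)\Big)\,\d\mu_t(x)\,\d t\ge\int_0^T\int_{\R^d}\cC\big(x,\ubar u(t,x),\mu_t\big)\,\d\mu_t(x)\,\d t,\]
so $\liminf_n$ of the running cost dominates the running cost of $(\mu,\ubar u)$.

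It remains to treat the terminal cost: since $\mu^n_T\to\mu_T$ in $\PP_p(\R^d)$, writing $\cC_T(x,\mu^n_T)=\cC_T(x,\mu_T)+\big(\cC_T(x,\mu^n_T)-\cC_T(x,\mu_T)\big)$, the first summand integrates to the right limit by the characterization of $W_p$-convergence in Proposition~\ref{prop:wassconv} (as $\cC_T(\cdot,\mu_T)$ has $p$-growth), and the second tends to $0$ by continuity of $\cC_T$, uniform integrability of $x\mapsto1+|x|^p$ along $\{\mu^n_T\}$ from Proposition~\ref{prop:wassconv}, and \cite[Lemma~5.1.7]{ambrosio2008gradient}; hence the terminal term converges. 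Adding the two contributions and using $\liminf(a_n+b_n)\ge\liminf a_n+\lim b_n$ gives \eqref{lscJE}. The main obstacle is exactly the running cost, where the control converges only in the weak, barycentric sense of \eqref{convweaknu} and no pointwise convergence of $\ubar u^n$ is available; the Young-measure representation $\gamma^n$ together with the convexity Assumption~\ref{CA}, used through Jensen's inequality on the disintegration $\gamma_{t,x}$, is what turns this weak limit into the pointwise control $\ubar u$, the dependence of $\cC$ on the varying $\mu^n_t$ being absorbed by the uniform-integrability bounds from Proposition~\ref{prop:wassconv}.
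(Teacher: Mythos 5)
Your proof is correct and follows essentially the same route as the paper's: encode the controls in the Young measures $\gamma^n=(i_{[0,T]\times\R^d},\ubar u^n)_\sharp\tilde\mu^n$, extract a weak limit $\gamma$ whose barycentric projection is forced to be $\ubar u$ by \eqref{convweaknu}, pass to the liminf in the running cost, and finish with Jensen's inequality on the disintegration $\gamma_{t,x}$ using the convexity of $\cC$ in $u$. The only (harmless) technical variation is that you control the error $\cC(x,u,\mu^n_t)-\cC(x,u,\mu_t)$ globally via uniform integrability of $|x|^p$ and then apply the Portmanteau inequality for the fixed nonnegative lower semicontinuous integrand, whereas the paper localizes to compact sets in $x$ and exploits $\cC\ge 0$ before letting the compact set invade $\R^d$; your treatment of the terminal cost even yields genuine convergence rather than just the liminf bound, which is more than is needed.
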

\begin{proof}
Denoting by  $\tilde{\mu}^n = \mu^n_t\otimes\cL_T\in\PP([0,T] \times \R^d)$ 
and $\tilde{\mu} = \mu_t\otimes\cL_T\in\PP([0,T] \times \R^d)$,
we define $\gamma^n:= (i_{[0,T]\times\R^d}, \ubar{u}^n)_\sharp \tilde{\mu}^n \in \PP(([0,T] \times \R^d) \times U)$.
Reasoning as in the proof of Proposition \ref{prop:compactness}, we obtain that there exists 
$\gamma \in \PP(([0,T] \times \R^d) \times U)$ such that $\pi^1_\sharp \gamma= \tilde{\mu}$
and, up to subsequences, $\gamma^n\to\gamma$ weakly as $n\to+\infty$.
Moreover, defining $\ubar u_{b}$ by \eqref{eq:baricentreu}, up to subsequences, 
 $(\mu^n,\ubar u^n)$ converges to $(\mu,\ubar u_b)$ according to Definition \ref{def:nu_converges}.
Then, 
\begin{equation*}
	\int_0^T\int_{\R^d}\langle\phi(t,x), \ubar u(t,x)\rangle\,\d\tilde\mu(t,x) 
	= \int_0^T\int_{\R^d}\langle\phi(t,x), \ubar u_b(t,x)\rangle\,\d\tilde\mu(t,x), \quad \forall\,\phi\in C_c([0,T]\times\R^d;V'),
\end{equation*}  
which implies that $ \ubar u_b(t,x)= \ubar u(t,x)$ for $\tilde\mu$-a.e. $(t,x)\in [0,T]\times\R^d$.

Let $\cK \subset \PP_p(\R^d)$ be a compact set containing
$\mu^n_t,\mu_t$ for any $n\in\N$ and $t\in[0,T]$.
Then, for any compact $K \subset \R^d$, thanks to the continuity of $\cC$, there exists a modulus of continuity $\alpha:[0,+\infty)\to[0,+\infty)$
for the restriction of $\cC$ to the compact set $K\times U \times\cK$ so that 
$$\sup_{(t,x,u)\in [0,T] \times K \times U}|\cC(x,u,\mu^n_t) - \cC(x,u,\mu_t)| \leq \alpha \Big( \sup_{t\in[0,T]}W_p(\mu_t^{n},\mu_t)\Big) \to 0 \quad \text{as } n\to+\infty.$$
Then, taking into account that $\cC \ge 0$, we get 
\begin{equation*}
\begin{split}
&\liminf_{n \to +\infty}\int_0^T\int_{\R^d}\mathcal C(x,\ubar u^n(t,x),\mu^n_t)\,\d\mu^n_t(x)\,\d t \\
&\geq \liminf_{n \to +\infty}\int_{[0,T] \times K \times U}\mathcal C(x,u,\mu^n_t)\,\d\gamma^n(t,x,u)\\
&\geq \int_{[0,T] \times K \times U}\mathcal C(x,u,\mu_t)\,\d\gamma(t,x,u).
\end{split}
\end{equation*}
Since $K$ is arbitrary we obtain 
\begin{equation}\label{eqliminf}
\liminf_{n \to +\infty}\int_0^T\int_{\R^d}\mathcal C(x,\ubar u^n(t,x),\mu^n_t)\,\d\mu^n_t(x)\,\d t 
\geq \int_{[0,T] \times \R^d \times U}\mathcal C(x,u,\mu_t)\,\d\gamma(t,x,u).
\end{equation}
Denoting $\gamma_{t,x}$ the disintegration of $\gamma$ with respect to $\pi^1$ (as  in the proof of Proposition \ref{prop:compactness}), the convexity of the map $u \mapsto \cC(x, u, \mu_t)$ for any $(t,x)\in[0,T]\times\R^d$
and Jensen's inequality yield
\begin{equation*}
\begin{split}
 \int_{[0,T] \times \R^d \times U}\mathcal C(x,u,\mu_t)\,\d\gamma(t,x,u)
&=\int_0^T \int_{\R^d} \int_U\mathcal C(x,u,\mu_t)\,\d\gamma_{t,x}(u)\,\d \mu_t(x) \,\d t \\
&\geq \int_0^T \int_{\R^d}\mathcal C\left(x,\int_U u \, \d\gamma_{t,x}(u),\mu_t\right)\,\d \mu_t(x) \,\d t \\
&= \int_0^T \int_{\R^d}\mathcal C\left(x,\ubar u_b(t,x),\mu_t\right)\,\d \mu_t(x) \,\d t\\
&= \int_0^T \int_{\R^d}\mathcal C\left(x,\ubar u(t,x),\mu_t\right)\,\d \mu_t(x) \,\d t.
\end{split}
\end{equation*}
By the continuity of $\cC_T$, using the same argument of the proof of
\eqref{eqliminf},
 we obtain
\begin{equation*}
\begin{split}
\liminf_{n \to +\infty}\int_{\R^d}\mathcal C_T(x,\mu^n_T)\,\d\mu^n_T(x) \geq
\int_{\R^d} \mathcal C_T(x,\mu_T)\,\d\mu_T(x).
\end{split}
\end{equation*}
\end{proof}

{Propositions \ref{prop:compactness} and \ref{prop:lscJE} give immediately the existence of optimizers for our optimal control problem in Eulerian formulation.}

\begin{theorem}[Existence of minimizers for convex \E]\label{thm:minE}
Let $\S=(U,f,\cC,\cC_T)$ satisfy the Convexity Assumption \ref{CA}.
If $\mu_0 \in \PP_p(\R^d)$,
then there exists $(\mu,\ubar u)\in\mathcal A_{\E}(\mu_0)$
such that $$J_{\E}(\mu,\ubar u)=V_{\E}(\mu_0).$$
\end{theorem}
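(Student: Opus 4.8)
The plan is to apply the direct method of the calculus of variations, which here reduces to invoking the two propositions that immediately precede the statement. First I would take a minimizing sequence $(\mu^n, \ubar u^n) \in \cA_\E(\mu_0)$, that is, a sequence of admissible pairs with $J_\E(\mu^n, \ubar u^n) \to V_\E(\mu_0)$; such a sequence exists since $\cA_\E(\mu_0) \neq \emptyset$ (by the earlier proposition guaranteeing nonemptiness) and since $V_\E(\mu_0) = \inf\{J_\E(\mu,\ubar u) : (\mu,\ubar u)\in\cA_\E(\mu_0)\} < +\infty$ (using, e.g., the constant-control admissible pair together with the growth bound \eqref{eq:growthC} which makes $J_\E$ finite).

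Next I would apply the Compactness Proposition \ref{prop:compactness} with the \emph{constant} initial datum choice $\mu_0^n := \mu_0$ for all $n$ — trivially $W_p(\mu_0^n,\mu_0)\to 0$ — to extract a subsequence $(\mu^{n_k}, \ubar u^{n_k})$ converging, in the sense of Definition \ref{def:nu_converges}, to some limit pair $(\mu, \ubar u) \in \cA_\E(\mu_0)$. The Convexity Assumption \ref{CA} is in force, so this proposition applies; note in particular that it already delivers admissibility of the limit, i.e.\ that $\mu$ solves the continuity equation \eqref{eq:systemE} with velocity field $v_t(x) = f(x, \ubar u(t,x),\mu_t)$ — this is where the affinity condition (C.2) is used, to pass the limit through $f$ via the barycentric projection.

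Then I would apply the Lower Semicontinuity Proposition \ref{prop:lscJE} along this same convergent subsequence to conclude
\[
J_\E(\mu, \ubar u) \le \liminf_{k\to+\infty} J_\E(\mu^{n_k}, \ubar u^{n_k}) = \lim_{k\to+\infty} J_\E(\mu^{n_k}, \ubar u^{n_k}) = V_\E(\mu_0),
\]
where the first equality holds because $(\mu^{n_k},\ubar u^{n_k})$ is a subsequence of a minimizing sequence, hence its cost still converges to $V_\E(\mu_0)$. On the other hand, since $(\mu,\ubar u)\in\cA_\E(\mu_0)$ we have $J_\E(\mu,\ubar u) \ge V_\E(\mu_0)$ by definition of the infimum. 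Combining the two inequalities gives $J_\E(\mu, \ubar u) = V_\E(\mu_0)$, so $(\mu,\ubar u)$ is the desired minimizer.

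Since both technical ingredients are proved just above, there is no real obstacle left; the only point deserving care is the bookkeeping of subsequences — one must make sure that the lower semicontinuity inequality is applied along the \emph{same} subsequence extracted in the compactness step, and that passing to a subsequence of a minimizing sequence does not spoil the convergence of the costs to $V_\E(\mu_0)$ (it does not, since convergent sequences of reals have all subsequences converging to the same limit). I would also remark that the convexity of the running cost in $u$, invoked through Jensen's inequality inside Proposition \ref{prop:lscJE}, is exactly what prevents the barycentric projection from strictly lowering the cost, and is therefore essential to the argument.
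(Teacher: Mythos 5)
Your proposal is correct and follows exactly the paper's route: the paper derives Theorem \ref{thm:minE} immediately from Propositions \ref{prop:compactness} and \ref{prop:lscJE} via the direct method, which is precisely the argument you give. The extra care you take about applying lower semicontinuity along the same subsequence extracted in the compactness step is the right (and only) bookkeeping point.
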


As a consequence, we derive the lower semicontinuity of the value function for the Eulerian problem.

\begin{proposition}[Lower semicontinuity of $V_{\E}$]\label{prop:lscVE}
Let $\S=(U,f,\cC,\cC_T)$ satisfy the Convexity Assumption \ref{CA}
and $\mu_0 \in \PP_p(\R^d)$. 
If $\{\mu_0^n\}_{n\in\N}\subset \PP_p(\R^d)$ is a sequence such that $W_p(\mu_0^n, \mu_0) \to 0$ as $n \to +\infty$,
then  
$$\liminf_{n \to +\infty} V_{\E}(\mu_0^n)\geq V_\E(\mu_0).$$
\end{proposition}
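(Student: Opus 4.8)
The plan is to combine the three preceding results for the Eulerian problem: existence of minimizers (Theorem~\ref{thm:minE}), sequential compactness of admissible pairs (Proposition~\ref{prop:compactness}), and lower semicontinuity of the cost along converging pairs (Proposition~\ref{prop:lscJE}). None of these individually needs the value function, so the argument is essentially a diagonal-type extraction.

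First I would reduce to a subsequence. Set $\ell := \liminf_{n\to+\infty} V_{\E}(\mu_0^n)$ and choose a subsequence $\{\mu_0^{n_k}\}_{k}$ with $\lim_{k\to+\infty} V_{\E}(\mu_0^{n_k}) = \ell$. If $\ell = +\infty$ there is nothing to prove, so assume $\ell < +\infty$. Since $W_p(\mu_0^{n_k},\mu_0)\to 0$, the hypotheses of the subsequent steps are met along this subsequence.

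Next, for each $k$, invoke Theorem~\ref{thm:minE} to obtain an \emph{optimal} pair $(\mu^{n_k},\ubar u^{n_k})\in\cA_{\E}(\mu_0^{n_k})$ with $J_{\E}(\mu^{n_k},\ubar u^{n_k}) = V_{\E}(\mu_0^{n_k})$. Applying Proposition~\ref{prop:compactness} to the sequence $(\mu^{n_k},\ubar u^{n_k})$ (whose initial data converge in $W_p$ to $\mu_0$), we extract a further (not relabelled) subsequence and a limit $(\mu,\ubar u)\in\cA_{\E}(\mu_0)$ such that $(\mu^{n_k},\ubar u^{n_k})$ converges to $(\mu,\ubar u)$ in the sense of Definition~\ref{def:nu_converges}. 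Then Proposition~\ref{prop:lscJE} gives
\[
\liminf_{k\to+\infty} J_{\E}(\mu^{n_k},\ubar u^{n_k}) \ge J_{\E}(\mu,\ubar u) \ge V_{\E}(\mu_0),
\]
the last inequality because $(\mu,\ubar u)$ is admissible for $\mu_0$. Since along this subsequence $J_{\E}(\mu^{n_k},\ubar u^{n_k}) = V_{\E}(\mu_0^{n_k}) \to \ell$, we conclude $\ell \ge V_{\E}(\mu_0)$, which is the claim.

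There is no genuine obstacle here; the only point requiring a little care is the order of the extractions: one must \emph{first} pass to a subsequence realizing the $\liminf$, and only \emph{then} apply the compactness statement to extract a convergent sub-subsequence — along any such sub-subsequence the costs still converge to $\ell$, so the lower-semicontinuity inequality transfers back to the original $\liminf$. (One could alternatively bypass Theorem~\ref{thm:minE} and work with $\tfrac1k$-optimal pairs, but using existence of minimizers keeps the bookkeeping cleanest.)
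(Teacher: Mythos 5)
Your proof is correct and follows essentially the same route as the paper: existence of optimal pairs via Theorem \ref{thm:minE}, compactness via Proposition \ref{prop:compactness}, and lower semicontinuity of the cost via Proposition \ref{prop:lscJE}. Your extra care about first extracting a subsequence realizing the $\liminf$ is a minor tightening of the same argument, not a different approach.
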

\begin{proof}
 From Theorem \ref{thm:minE} there exists a sequence $(\mu^n,\ubar u^n) \in \mathcal{A}_{\E}(\mu_0^n)$ 
 such that $V_{\E}(\mu_0^n)=J_\E(\mu^n,\ubar u^n)$.
By Proposition \ref{prop:compactness} there exists $(\mu,\ubar u) \in \mathcal{A}_{\E}(\mu_0)$ such that, up to subsequences, 
$(\mu^n,\ubar u^n)$ converges to $(\mu,\ubar u)$ according to Definition \ref{def:nu_converges}.
Then, using Proposition \ref{prop:lscJE}, we have 
\[
\liminf_{n \to +\infty} V_{\E}(\mu_0^n) = \liminf_{n \to +\infty} J_{\E}(\mu^n,\ubar u^n) 
\geq J_{\E}(\mu,\ubar u) \geq V_{\E}(\mu_0).  
\]
\end{proof}

\section{Kantorovich optimal control problem and equivalence with the Eulerian}\label{sec:K}

In this section, we provide a further formulation of optimal control problems which we call \emph{Kantorovich formulation} in analogy with the terminology used in optimal transport theory. 
This formulation acts as a bridge between the Lagrangian and the Eulerian formulations and it is
based on the representation of solutions of the continuity equation by superposition of continuous curves in
$\Gamma_T=C([0,T];\R^d)$ (see Theorem \ref{thm:sup_princ}).
This formulation turns out to be equivalent to the Eulerian one and it will be useful in Section \ref{sec:E=L} to prove the equivalence between the Eulerian 
and the Lagrangian problems.\\

\noindent We recall that, for any $t\in[0,T]$, $e_t:\Gamma_T\to\R^d$ denotes the evaluation map  $e_t(\gamma):=\gamma(t)$.

\begin{definition}[Kantorovich optimal control problem (\K)]\label{def:K}
Let $\S=(U,f,\cC,\cC_T)$  satisfy Assumption \ref{BA}. 
Given $\mu_0\in\PP_p(\R^d)$, we say that $(\eeta,u)\in\cA_{\K}(\mu_0)$ if
\begin{itemize}
\item[(i)] $u\in\Bor([0,T]\times\Gamma_T;U)$;
\item[(ii)] $\eeta\in \PP(\Gamma_T)$,  $(e_0)_\sharp\eeta = \mu_0$ and,
defining  $\mu_t:=(e_t)_\sharp\eeta$ for all $t\in[0,T]$, 
\begin{equation}\label{eq:momp}
	\int_0^T\int_{\R^d}|x|^p\,\d\mu_t(x)\,\d t <+\infty.
\end{equation}
$\eeta$ is concentrated on the set of absolutely continuous solutions  of the differential equation
\[\dot\gamma(t)=f(\gamma(t),u(t,\gamma),\mu_t),\qquad \text{for $\cL_T$-a.e. }t\in[0,T].\]
\end{itemize}
We define the \emph{cost functional} $J_{\K}:\PP(\Gamma_T)\times{\Bor([0,T]\times\Gamma_T; U)}\to[0,+\infty]$ by
\begin{equation*}
J_{\K}(\eeta,u):=\int_0^T\int_{\Gamma_T}\cC(\gamma(t),u(t,\gamma),\mu_t)\,\d\eeta(\gamma)\,\d t+\int_{\R^d} \cC_T(x,\mu_T)\,\d\mu_T(x),
\end{equation*}
and the \emph{value function} $V_{\K}:\PP_p(\R^d)\to[0,+\infty)$ by
\begin{equation*}
V_{\K}(\mu_0):=\inf\{J_{\K}(\eeta,u)\,:\,(\eeta,u)\in\cA_{\K}(\mu_0)\}.
\end{equation*}
\end{definition}

\begin{remark}\label{rem:etaACp}
We observe that, by the growth condition of $f$ in \eqref{f:growth} and condition \eqref{eq:momp}, $\eeta$ is actually concentrated on $\AC^p([0,T]; \R^d)$, indeed 
\begin{equation*}
\begin{split}
\int_0^T \int_{\Gamma_T} |\dot \gamma(t)|^p\, \d \eeta(\gamma) \, \d t &
\leq \int_0^T \int_{\Gamma_T} |f(\gamma(t),u(t,\gamma), \mu_t)|^p\, \d \eeta(\gamma) \, \d t \\
&\leq C\int_0^T \int_{\Gamma_T} |1 + |\gamma(t)| + \mathrm{m}_p(\mu)|^p\, \d \eeta(\gamma) \, \d t\\
&\leq \tilde C \Big( 1 +  \int_0^T\int_{\R^d}|x|^p \, \d \mu_t(x) \d t\Big).
\end{split}
\end{equation*}
Hence, by Fubini theorem, for $\eeta$-a.e. $\gamma \in \Gamma_T$, $\dot \gamma \in L^p([0,T]; \R^d)$. 

Moreover, thanks to \eqref{eq:momp} and the growth condition \eqref{eq:growthC},  $J_\K(\eeta,u) < +\infty$ for every $(\eeta,u) \in \cA_{\K}(\mu_0)$ (the proof of $\int_{\R^d}|x|^p\,\d\mu_T(x) <+\infty$
follows by items (i) and (ii) and the same argument used to show \eqref{eq:bound_xp} in Proposition \ref{prop:compactness}).\\
Finally, the value function $V_\K$ is well defined since $\cA_\K$ is non empty (see Propostion \ref{prop:E>K}).
\end{remark}

The aim of this section is to prove the existence of minimizers for the Kantorovich optimal control problem under the Convexity Assumptions \ref{CA} and to show the equivalence with the Eulerian formulation of the problem described in Section \ref{sec_eulerian}. In particular, we get the equality of the corresponding value functions. This is the content of the following theorem.
\begin{theorem}\label{cor:E=K}
Let $\S = (U,f,\cC,\cC_T)$ satisfy the Convexity Assumption \ref{CA}.
If $\mu_0\in \PP_p(\R^d)$, then 
there exist  $(\boldsymbol\eta,u)\in\mathcal A_\K(\mu_0)$ and $(\mu,\ubar u)\in\mathcal A_\E(\mu_0)$ such that 
\[J_\K(\eeta,u)=J_\E(\mu,\ubar u)=V_{\K}(\mu_0)=V_{\E}(\mu_0).\] 
\end{theorem}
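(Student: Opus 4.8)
The plan is to prove Theorem~\ref{cor:E=K} by establishing the two inequalities $V_\E(\mu_0)\ge V_\K(\mu_0)$ and $V_\K(\mu_0)\ge V_\E(\mu_0)$ separately, each realized at the level of admissible pairs (so that the existence of minimizers on the $\E$-side, guaranteed by Theorem~\ref{thm:minE}, transfers to the $\K$-side). Concretely, I would split the argument into two propositions, mirroring the discussion in the introduction: \textbf{Proposition~\ref{prop:E>K}}, stating that for every $(\mu,\ubar u)\in\cA_\E(\mu_0)$ there exists $(\eeta,u)\in\cA_\K(\mu_0)$ with $J_\K(\eeta,u)\le J_\E(\mu,\ubar u)$ (in fact with equality); and \textbf{Proposition~\ref{prop:K>E}}, stating that for every $(\eeta,u)\in\cA_\K(\mu_0)$ there exists $(\mu,\ubar u)\in\cA_\E(\mu_0)$ with $J_\E(\mu,\ubar u)\le J_\K(\eeta,u)$. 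Combining the two gives $V_\E(\mu_0)=V_\K(\mu_0)$, and since by Theorem~\ref{thm:minE} there is $(\mu,\ubar u)\in\cA_\E(\mu_0)$ with $J_\E(\mu,\ubar u)=V_\E(\mu_0)$, applying Proposition~\ref{prop:E>K} to this minimizer produces the desired $(\eeta,u)\in\cA_\K(\mu_0)$ with $J_\K(\eeta,u)=V_\K(\mu_0)$, completing the proof.

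For the direction $\E\Rightarrow\K$: given $(\mu,\ubar u)\in\cA_\E(\mu_0)$, the vector field $v_t(x):=f(x,\ubar u(t,x),\mu_t)$ satisfies the summability bound $\int_0^T\!\int_{\R^d}|v_t|^p\,\d\mu_t\,\d t<+\infty$ (as noted in the Remark following Definition~\ref{def:E1}, via \eqref{f:growth} and $\mu\in C([0,T];\PP_p(\R^d))$), so Theorem~\ref{thm:sup_princ} yields $\eeta\in\PP(\Gamma_T)$ with $(e_t)_\sharp\eeta=\mu_t$ for all $t$, concentrated on absolutely continuous solutions of $\dot\gamma(t)=v_t(\gamma(t))=f(\gamma(t),\ubar u(t,\gamma(t)),\mu_t)$. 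Setting $u(t,\gamma):=\ubar u(t,\gamma(t))$, which is Borel on $[0,T]\times\Gamma_T$ as a composition of $\ubar u$ with $(t,\gamma)\mapsto(t,\gamma(t))$, we get $(\eeta,u)\in\cA_\K(\mu_0)$; condition \eqref{eq:momp} holds because $\int_0^T\m_p^p(\mu_t)\,\d t<\infty$. The cost identity $J_\K(\eeta,u)=J_\E(\mu,\ubar u)$ then follows by the change of variables formula: $\int_{\Gamma_T}\cC(\gamma(t),u(t,\gamma),\mu_t)\,\d\eeta(\gamma)=\int_{\Gamma_T}\cC(\gamma(t),\ubar u(t,\gamma(t)),\mu_t)\,\d\eeta(\gamma)=\int_{\R^d}\cC(x,\ubar u(t,x),\mu_t)\,\d\mu_t(x)$ using $\mu_t=(e_t)_\sharp\eeta$, and the terminal terms coincide since $\mu_T=(e_T)_\sharp\eeta$.

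For the converse $\K\Rightarrow\E$: given $(\eeta,u)\in\cA_\K(\mu_0)$, put $\mu_t:=(e_t)_\sharp\eeta$. Since $\eeta$ is concentrated on $\AC^p$-curves solving $\dot\gamma(t)=f(\gamma(t),u(t,\gamma),\mu_t)$ and $\int_0^T\!\int_{\R^d}|x|^p\,\d\mu_t\,\d t<\infty$, the converse part of Theorem~\ref{thm:sup_princ} gives that $(\mu,\tilde v)$ solves the continuity equation with $\tilde v$ the Borel velocity field associated to the superposition; but more directly, disintegrating $\eeta=\eeta_x\otimes\mu_0$ — or, better for the Eulerian control, disintegrating the measure $\nu:=(e_t,u(t,\cdot))_\sharp\eeta\otimes\cL_T\in\PP([0,T]\times\R^d\times U)$ with respect to its first marginal $\tilde\mu=\mu_t\otimes\cL_T$ — produces a Borel family $\{\nu_{t,x}\}\subset\PP(U)$, and I define the Eulerian control as the barycentric projection $\ubar u(t,x):=\int_U w\,\d\nu_{t,x}(w)$, exactly as in \eqref{eq:baricentreu} in the proof of Proposition~\ref{prop:compactness}. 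Using the affinity of $f$ in the control (Assumption~\ref{CA}), one checks $v_t(x):=f(x,\ubar u(t,x),\mu_t)$ is the effective velocity, so $(\mu,\ubar u)\in\cA_\E(\mu_0)$; and using the convexity of $\cC$ in the control together with Jensen's inequality one obtains $J_\E(\mu,\ubar u)\le J_\K(\eeta,u)$, the terminal terms again being equal. The main obstacle — and the place requiring the most care — is this last step: one must (i) justify measurability of the disintegration and of the barycentric projection $\ubar u$, (ii) verify that $\mu$ is genuinely a distributional solution of \eqref{eq:systemE} with this specific $v$ (handling the subtlety that superposed curves solve an ODE with $u(t,\gamma)$ depending on the whole path, which must be reconciled with a feedback $\ubar u(t,x)$ via the disintegration identity $\int_U f(x,w,\mu_t)\,\d\nu_{t,x}(w)=f(x,\ubar u(t,x),\mu_t)$ coming from affinity), and (iii) apply Jensen correctly under the convexity hypothesis; the Basic Assumptions alone are \emph{not} enough here, which is why the statement is placed under the Convexity Assumption~\ref{CA}.
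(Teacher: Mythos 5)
Your proposal is correct and follows essentially the same route as the paper: the paper proves Theorem \ref{cor:E=K} exactly by combining Theorem \ref{thm:minE} with Propositions \ref{prop:E>K} (superposition principle plus $u(t,\gamma):=\ubar u(t,\gamma(t))$, with equality of costs) and \ref{prop:K>E} (disintegration, barycentric projection, affinity of $f$ for the continuity equation, convexity of $\cC$ and Jensen for the cost inequality). The only cosmetic difference is that the paper disintegrates $\cL_T\otimes\eeta$ with respect to the evaluation map $e(t,\gamma)=(t,\gamma(t))$ and takes $\ubar u(t,x)=\int_{\Gamma_T}u(t,\gamma)\,\d\eeta_{t,x}(\gamma)$, whereas you disintegrate the pushforward measure on $[0,T]\times\R^d\times U$; these yield the same barycentric control.
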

The proof of Theorem \ref{cor:E=K} follows by the combination of Theorem \ref{thm:minE} and Propositions \ref{prop:E>K}, \ref{prop:K>E} below.

\medskip

\begin{proposition}\label{prop:E>K}
Let $\S:=(U,f,\mathcal C,\mathcal C_T)$ satisfy Assumption \ref{BA}.
Let $\mu_0\in \PP_p(\R^d)$. If $(\mu,\ubar u)\in\cA_{\E}(\mu_0)$, then there exists $(\eeta,u)\in\cA_{\K}(\mu_0)$ such that
 $J_{\K}(\eeta,u)=J_{\E}(\mu,\ubar u)$.
In particular $\cA_{\K}(\mu_0)\not=\emptyset$ and  $V_\K(\mu_0)\le V_\E(\mu_0)$.
\end{proposition}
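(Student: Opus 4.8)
The statement asserts that from any admissible pair $(\mu,\ubar u)$ for the Eulerian problem we can produce an admissible pair $(\eeta,u)$ for the Kantorovich problem with the same cost. The natural tool is the Superposition Principle (Theorem \ref{thm:sup_princ}). First I would set $v_t(x):=f(x,\ubar u(t,x),\mu_t)$; by the remark following Definition \ref{def:E1} we already know $v\in L^p(0,T;L^p_{\mu_t}(\R^d;\R^d))$, so condition \eqref{Summabilityv} holds. Since $\mu\in \AC^p([0,T];\PP_p(\R^d))$ solves $\partial_t\mu_t+\div(v_t\mu_t)=0$, Theorem \ref{thm:sup_princ} yields $\eeta\in\PP(\Gamma_T)$ with $\mu_t=(e_t)_\sharp\eeta$ for all $t\in[0,T]$ (in particular $(e_0)_\sharp\eeta=\mu_0$), and $\eeta$ concentrated on the $\AC^p$ curves $\gamma$ solving $\dot\gamma(t)=v_t(\gamma(t))=f(\gamma(t),\ubar u(t,\gamma(t)),\mu_t)$ for $\cL_T$-a.e.\ $t$. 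Also \eqref{eq:momp} follows from $\int_0^T\m_p^p(\mu_t)\,\d t<+\infty$, which in turn holds since $\mu\in C([0,T];\PP_p(\R^d))$.

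The one subtlety is that the Kantorovich admissibility (Definition \ref{def:K}(ii)) requires a control $u\in\Bor([0,T]\times\Gamma_T;U)$ acting through $u(t,\gamma)$, whereas the superposition principle gives an equation with $\ubar u(t,\gamma(t))$, i.e.\ a control depending on $\gamma$ only through its value at time $t$. So the key step is simply to \emph{define} $u:[0,T]\times\Gamma_T\to U$ by $u(t,\gamma):=\ubar u(t,\gamma(t))=\ubar u(t,e_t(\gamma))$. This is Borel measurable as a composition of the Borel map $\ubar u$ with the continuous map $(t,\gamma)\mapsto(t,e_t(\gamma))$ (continuity/measurability of $(t,\gamma)\mapsto e_t(\gamma)=\gamma(t)$ on $[0,T]\times\Gamma_T$ is standard). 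With this choice, the differential equation in Definition \ref{def:K}(ii), namely $\dot\gamma(t)=f(\gamma(t),u(t,\gamma),\mu_t)$, is exactly the equation $\eeta$ is concentrated on. Hence $(\eeta,u)\in\cA_{\K}(\mu_0)$, so in particular $\cA_{\K}(\mu_0)\neq\emptyset$.

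It remains to check the equality of costs. For the terminal term, $\mu_T=(e_T)_\sharp\eeta$, so $\int_{\R^d}\cC_T(x,\mu_T)\,\d\mu_T(x)$ is literally the same in both functionals. For the running term, using $\mu_t=(e_t)_\sharp\eeta$ and the push-forward formula \eqref{eq:composition}, together with the definition of $u$,
\[
\int_0^T\!\!\int_{\Gamma_T}\cC(\gamma(t),u(t,\gamma),\mu_t)\,\d\eeta(\gamma)\,\d t
=\int_0^T\!\!\int_{\Gamma_T}\cC(e_t(\gamma),\ubar u(t,e_t(\gamma)),\mu_t)\,\d\eeta(\gamma)\,\d t
=\int_0^T\!\!\int_{\R^d}\cC(x,\ubar u(t,x),\mu_t)\,\d\mu_t(x)\,\d t,
\]
where in the last equality we applied the change of variables $x=e_t(\gamma)$ for each fixed $t$ (legitimate by Fubini, since the integrand is nonnegative and, by the growth bound \eqref{eq:growthC} and $\mu\in C([0,T];\PP_p)$, integrable). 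Therefore $J_{\K}(\eeta,u)=J_{\E}(\mu,\ubar u)$, and taking the infimum over $(\mu,\ubar u)\in\cA_{\E}(\mu_0)$ gives $V_{\K}(\mu_0)\le V_{\E}(\mu_0)$.

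I do not expect a genuine obstacle here: this direction of the Eulerian/Kantorovich equivalence is essentially a direct application of the (classical, not empirical) superposition principle, and the only point requiring a line of justification is the measurability of the ``lifted'' control $u(t,\gamma)=\ubar u(t,\gamma(t))$ and the Fubini-type interchange in the cost computation. Note in particular that Assumption \ref{CA} is not needed for this proposition — only Assumption \ref{BA} — which matches the statement; convexity will only be needed for the converse direction (Proposition \ref{prop:K>E}) and for existence of minimizers.
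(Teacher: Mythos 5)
Your proposal is correct and follows exactly the same route as the paper's proof: apply Theorem \ref{thm:sup_princ} to $v_t(x)=f(x,\ubar u(t,x),\mu_t)$, lift the control via $u(t,\gamma):=\ubar u(t,\gamma(t))$, and conclude the cost equality from $(e_t)_\sharp\eeta=\mu_t$. The paper's version is just more terse; your added checks (summability, Borel measurability of the lifted control, the Fubini interchange) are all the right ones.
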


\begin{proof}
Let $(\mu,\ubar u) \in \cA_\E(\mu_0)$.
Applying Theorem \ref{thm:sup_princ} to $\mu$  and $v_t(x)=f(x,\ubar u(t,x),\mu_t)$ we get $\eeta \in \PP(\Gamma_T)$ such that 
$(e_t)_\sharp\eeta=\mu_t$ for every $t\in[0,T]$ and $\eeta$ is concentrated on the absolutely continuous solutions of $\dot \gamma(t) = v_t(\gamma(t))$.
Condition \eqref{eq:momp} is automatically satisfied in view of the fact that $(\mu,\ubar u) \in \cA_\E(\mu_0)$.
Then, for every $(t,\gamma)\in[0,T]\times\Gamma_T$ we define $u(t,\gamma):=\ubar u(t,\gamma(t))$ so that $u$ is Borel measurable and we have  $(\eeta,u)\in\cA_{\K}(\mu_0)$. 
Finally, from the evaluation $(e_t)_\sharp \eeta = \mu_t$ it holds that $J_{\K}(\eeta,u)=J_{\E}(\mu,\ubar u)$.
\end{proof}
Under the Convexity Assumption \ref{CA} it also holds that $V_\K(\mu_0)\ge V_\E(\mu_0)$. 

\begin{proposition}\label{prop:K>E}
Let $\S = (U,f,\cC,\cC_T)$ satisfy the Convexity Assumption \ref{CA}.
Let $\mu_0\in \PP_p(\R^d)$. If $(\eeta,u)\in\cA_{\K}(\mu_0)$, then there exists 
$(\mu,\ubar u)\in\cA_{\E}(\mu_0)$ such that
 $J_{\E}(\mu,\ubar u)\le J_{\K}(\eeta,u)$.
In particular $V_\K(\mu_0)\ge V_\E(\mu_0)$.
\end{proposition}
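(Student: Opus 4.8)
The plan is to start from an admissible pair $(\eeta, u) \in \cA_\K(\mu_0)$ and produce an Eulerian pair $(\mu, \ubar u) \in \cA_\E(\mu_0)$ with no larger cost. First I would set $\mu_t := (e_t)_\sharp \eeta$ for every $t \in [0,T]$; by Remark \ref{rem:etaACp} the measure $\eeta$ is concentrated on $\AC^p([0,T];\R^d)$ and, arguing as in the proof of \eqref{eq:bound_xp}, the map $t \mapsto \m_p(\mu_t)$ is bounded, so $\mu \in C([0,T];\PP_p(\R^d))$. Since $\eeta$ is concentrated on solutions of $\dot\gamma(t) = f(\gamma(t), u(t,\gamma), \mu_t)$, the converse part of the Superposition Principle (Theorem \ref{thm:sup_princ}) — whose summability hypothesis \eqref{Summabilityv} follows from the growth bound \eqref{f:growth} together with \eqref{eq:momp} — tells us that $(\mu, \tilde v)$ solves $\de_t \mu_t + \div(\tilde v_t \mu_t) = 0$ in the distributional sense, where $\tilde v_t$ is \emph{some} Borel vector field representing the average of $f(\gamma(t), u(t,\gamma), \mu_t)$ over the curves $\gamma$ passing through a given point at time $t$. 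The issue is that this $\tilde v_t$ is not a priori of the feedback form $v_t(x) = f(x, \ubar u(t,x), \mu_t)$ required by Definition \ref{def:E1}, so the main work is to realize it as such using the affinity Assumption \ref{CA}(2).

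The key construction is a disintegration/barycentre argument, mirroring the one in the proof of Proposition \ref{prop:compactness}. Consider the probability measure $\tilde\mu := \mu_t \otimes \cL_T \in \PP([0,T]\times\R^d)$ and disintegrate $\eeta \otimes \cL_T \in \PP([0,T]\times\Gamma_T)$ (or more precisely the push-forward $(i_{[0,T]}, e_{(\cdot)}, u)_\sharp(\eeta\otimes\cL_T)$ onto $[0,T]\times\R^d\times U$) with respect to the map $(t,\gamma) \mapsto (t, \gamma(t))$. This yields a Borel family $\{\gamma_{t,x}\}_{(t,x)} \subset \PP(U)$, and I define $\ubar u(t,x) := \int_U u \, \d\gamma_{t,x}(u)$ as a Bochner integral, so that $\ubar u \in \Bor([0,T]\times\R^d; U)$ (using that $U$ is convex and compact). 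Then for $\tilde\mu$-a.e. $(t,x)$ the affinity of $u \mapsto f(x,u,\mu_t)$ and the fact that $\gamma_{t,x}$ is a probability measure give
\[
\int_U f(x, u, \mu_t) \, \d\gamma_{t,x}(u) = f\Big(x, \int_U u \, \d\gamma_{t,x}(u), \mu_t\Big) = f(x, \ubar u(t,x), \mu_t),
\]
which identifies $\tilde v_t(x) = f(x, \ubar u(t,x), \mu_t)$ for $\mu_t$-a.e.\ $x$, $\cL_T$-a.e.\ $t$. Hence $(\mu, \ubar u) \in \cA_\E(\mu_0)$, the initial condition $(e_0)_\sharp\eeta = \mu_0$ being preserved by construction.

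For the cost comparison, I write the running cost of $J_\K$ as an integral over $[0,T]\times\R^d\times U$ against the same disintegrated measure, namely $\int_0^T \int_{\R^d} \int_U \cC(x, u, \mu_t) \, \d\gamma_{t,x}(u) \, \d\mu_t(x) \, \d t$, then apply Jensen's inequality to the convex map $u \mapsto \cC(x, u, \mu_t)$ (Assumption \ref{CA}(3)) to bound this from below by $\int_0^T \int_{\R^d} \cC(x, \ubar u(t,x), \mu_t) \, \d\mu_t(x) \, \d t$. The terminal costs coincide since $\mu_T = (e_T)_\sharp\eeta$ in both functionals. This gives $J_\E(\mu, \ubar u) \le J_\K(\eeta, u)$, and taking the infimum over admissible Kantorovich pairs yields $V_\E(\mu_0) \le V_\K(\mu_0)$. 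I expect the main obstacle to be the careful measurability bookkeeping for the disintegration — making sure the family $\{\gamma_{t,x}\}$ is genuinely Borel in $(t,x)$ and that the barycentric projection $\ubar u$ is Borel measurable with values in $U$ — together with verifying the distributional continuity equation for the feedback field; both points are handled exactly as in the proof of Proposition \ref{prop:compactness}, so I would cross-reference that argument rather than repeat it.
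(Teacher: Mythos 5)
Your proposal follows essentially the same route as the paper: define $\mu_t:=(e_t)_\sharp\eeta$, disintegrate $\cL_T\otimes\eeta$ along the evaluation map $(t,\gamma)\mapsto(t,\gamma(t))$, take the barycentric projection $\ubar u(t,x)=\int_{\Gamma_T}u(t,\gamma)\,\d\eeta_{t,x}(\gamma)$, use the affinity of $u\mapsto f(x,u,\mu_t)$ to realize the averaged velocity in feedback form, and conclude the cost comparison by Jensen's inequality applied to the convex map $u\mapsto\cC(x,u,\mu_t)$. All of this matches the paper's argument, including the measurability of $\ubar u$ via the Borel disintegration.

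The one step that does not work as you state it is the appeal to the converse part of Theorem \ref{thm:sup_princ}. That statement presupposes that $\eeta$ is concentrated on solutions of $\dot\gamma(t)=v_t(\gamma(t))$ for a \emph{given} Borel field $v$ of the variables $(t,x)$; it does not assert that a measure concentrated on absolutely continuous curves whose velocities depend on the whole curve $\gamma$ (as here, through $u(t,\gamma)$) produces a solution of a continuity equation driven by ``some averaged field.'' Two curves through the same point at the same time may have different velocities, so no such $v$ exists a priori and the theorem cannot be invoked. The correct replacement — and what the paper does — is the direct computation
\begin{equation*}
\int_{\R^d}\varphi\,\d\mu_t-\int_{\R^d}\varphi\,\d\mu_s=\int_{\Gamma_T}\int_s^t\nabla\varphi(\gamma(r))\cdot f(\gamma(r),u(r,\gamma),\mu_r)\,\d r\,\d\eeta(\gamma),
\end{equation*}
justified by the fact that $\eeta$ is concentrated on $\AC^p$ solutions and by the integrability coming from \eqref{f:growth} and \eqref{eq:momp}; this also gives the absolute continuity of $t\mapsto\int\varphi\,\d\mu_t$. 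Once this identity is in hand, your disintegration and affinity argument identifies the right-hand side with $\int_{\R^d}\nabla\varphi\cdot f(x,\ubar u(t,x),\mu_t)\,\d\mu_t$, and the rest of your proof goes through unchanged. So the gap is a misattribution rather than a missing idea: the fact you need is true and elementary, but it must be proved directly, not cited from the superposition principle.
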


\begin{proof}
Let $(\eeta,u)\in\cA_{\K}(\mu_0)$. We firstly define $\mu_t:=(e_t)_\sharp\eeta$, for every $t\in[0,T]$.
We introduce the continuous evaluation map $e:[0,T]\times\Gamma_T\to[0,T]\times\R^d$ by setting 
$$e(t,\gamma):=(t,\gamma(t))=(i_{[0,T]}(t),e_t(\gamma))$$
and we  denote by $(t,x)\mapsto\tilde\eeta_{t,x}$ the Borel map obtained  from the disintegration of $\cL_T\otimes\eeta$
with respect to $e$, see Theorem \ref{thm:disint}.
Then  we define the function $\ubar u:[0,T]\times\R^d\to U$ by
\[\ubar u(t,x):=\int_{[0,T]\times\Gamma_T}u(t,\gamma)\,\d\tilde\eeta_{t,x}(\gamma).\]
Notice that $\ubar u$ is Borel measurable thanks to the Borel measurability of $(t,x) \mapsto \tilde \eeta_{t,x}$.
The measure $\tilde\eeta_{t,x}$ is concentrated on $\{t\}\times\{\gamma:\gamma(t)=x\}$, so that 
$\tilde\eeta_{t,x}=\delta_t\otimes\eeta_{t,x}$, where for any $t\in[0,T]$, 
the function $x\mapsto\eeta_{t,x}$ is the Borel map given by the disintegration of $\eeta$
with respect to the continuous map $e_t$.
Hence we have also that
\begin{equation}\label{ubaru}
\ubar u(t,x)=\int_{\Gamma_T}u(t,\gamma)\,\d\eeta_{t,x}(\gamma).
\end{equation}

Defining the set
$A:=\{(t,\gamma)\in[0,T]\times\Gamma_T: \exists \, \dot\gamma(t) \, \text{ and } \dot\gamma(t) = f(\gamma(t),u(t,\gamma),\mu_t) \}$,
by item (ii) of Definition \ref{def:K} we have that $\cL_T\otimes\eeta(([0,T] \times \Gamma_T )\setminus A)=0$. Then, for $\cL_T$-a.e. $t\in[0,T]$ we have
\[ \dot\gamma(t)=f(\gamma(t),u(t,\gamma),\mu_t), \qquad \text{for $\eeta$-a.e. }\gamma\in\Gamma_T.\]
Let $\varphi\in C^1_c(\R^d;\R)$.
For any $s,t\in[0,T]$, $s<t$, we have
\begin{equation}\label{eq:continuity_KE}
\begin{aligned}
	\int_{\R^d}\varphi(x)\,\d\mu_t(x) - \int_{\R^d}\varphi(x)\,\d\mu_s(x)  &=
	\int_{\Gamma_T}(\varphi(\gamma(t))- \varphi(\gamma(s)))\,\d\eeta(\gamma)\\
	&=\int_{\Gamma_T}\int_s^t \frac{\d}{\d r}\varphi(\gamma(r))\,\d r\,\d\eeta(\gamma)\\
	&=\int_{\Gamma_T}\int_s^t \nabla\varphi(\gamma(r))\cdot\dot\gamma(r)\,\d r\,\d\eeta(\gamma)\\
	&=\int_s^t \int_{\Gamma_T}\nabla\varphi(\gamma(r))\cdot f(\gamma(r),u(r,\gamma),\mu_r)\,\d\eeta(\gamma)\,\d r.
\end{aligned}
\end{equation}
Using the growth condition of $f$ in \eqref{f:growth} we have
\begin{align*}
	\left| \int_{\R^d}\varphi(x)\,\d\mu_t(x) - \int_{\R^d}\varphi(x)\,\d\mu_s(x)\right|  \leq
	C\|\nabla\varphi\|_\infty \int_s^t \int_{\Gamma_T}(1+|\gamma(r)|+\m_p(\mu_r))\,\d\eeta(\gamma)\,\d r.
\end{align*}
By \eqref{eq:momp} the map $r\mapsto  \int_{\Gamma_T}(1+|\gamma(r)|+\m_p(\mu_r))\,\d\eeta(\gamma)$ belongs to $L^1(0,T)$
and then the map $t\mapsto \int_{\R^d}\varphi(x)\,\d\mu_t(x)$ is absolutely continuous.

Thanks to \eqref{eq:continuity_KE}, for $\cL_T$-a.e. $t\in[0,T]$ we have
\begin{align*}
\frac{\d}{\d t}\int_{\R^d}\varphi(x)\,\d\mu_t(x)
=\int_{\Gamma_T}\nabla\varphi(\gamma(t))\cdot f(\gamma(t),u(t,\gamma),\mu_t)\,\d\boldsymbol\eta(\gamma).
\end{align*}
Using the affinity of $f$, the disintegration of $\eeta$ with respect to $e_t$, 
recalling that  $\eeta_{t,x}$ is concentrated on $\{\gamma:\gamma(t)=x\}$, and by \eqref{ubaru} we obtain
\begin{align*}
&\int_{\Gamma_T}\nabla\varphi(\gamma(t))\cdot f(\gamma(t),u(t,\gamma),\mu_t)\,\d\eeta(\gamma)\\
&=\int_{\R^d}\int_{\Gamma_T}\nabla\varphi(\gamma(t))\cdot f(\gamma(t),u(t,\gamma),\mu_t)\,\d\eeta_{t,x}(\gamma)\,\d\mu_t(x)\\
&=\int_{\R^d}\int_{\Gamma_T}\nabla\varphi(x)\cdot f(x,u(t,\gamma),\mu_t)\,\d\eeta_{t,x}(\gamma)\,\d\mu_t(x)\\
&=\int_{\R^d}\nabla\varphi(x)\cdot f\left(x,\int_{\Gamma_{T}}u(t,\gamma)\,\d\eeta_{t,x}(\gamma),\mu_t\right)\,\d\mu_t(x)\\
&=\int_{\R^d}\nabla\varphi(x)\cdot f(x,\ubar u(t,x),\mu_t)\,\d\mu_t(x).
\end{align*}
Then $\mu$ satisfies the continuity equation $\de_t\mu_t+\div(v_t\mu_t)=0$ for the vector field
$v_t(x)= f(x,\ubar u(t,x),\mu_t)$ in the sense of distributions (see e.g. \cite[equation 8.1.4]{ambrosio2008gradient}).
Since $|v_t(x)|^p\leq \tilde C(1+|x|^p+\m_p^p(\mu_t))$, from \eqref{eq:momp} it follows that $t\mapsto \|v_t\|_{L^p_{\mu_t}(\R^d;\R^d)}$ 
 belongs to $L^p(0,T)$ and then $\mu \in \AC^p([0,T];\PP_p(\R^d))$.
Hence, $(\mu,\ubar u)\in\cA_{\E}(\mu_0)$.

Finally, by the convexity of $\cC$ with respect to $u$ and Jensen's inequality we obtain
\begin{equation*}
\begin{split}
&\int_0^T \int_{\Gamma_T} \cC(\gamma(t),u(t,\gamma),\mu_t) \, \d \boldsymbol\eta(\gamma)\, \d t \\
&= \int_0^T \int_{\R^d}\int_{\Gamma_{T}}\cC(x,u(t,\gamma),\mu_t) \, \d \eeta_{t,x}(\gamma)\d \mu_t(x)\, \d t \\
&\ge \int_0^T \int_{\R^d} \cC\left( x, \int_{\Gamma_{T}} u(t,\gamma)\,\d\eeta_{t,x}(\gamma),\mu_t\right) \,\d \mu_t(x) \,\d t\\
&=  \int_0^T \int_{\R^d} \cC\left( x, \ubar u(t,x),\mu_t\right) \,\d \mu_t(x)\, \d t.
\end{split}
\end{equation*}
Hence we obtain $J_{\K}(\eeta,u) \ge J_{\E}(\mu,\ubar u)$.
\end{proof}

\section{Equivalence of Eulerian and Lagrangian problems}\label{sec:E=L}
In this Section we study the equivalence between the Eulerian and Lagrangian formulations of the optimal control problem. We anticipate here the main results of this section.

\begin{theorem}\label{cor:VL=VE}
Let $\S=(U,f,\cC,\cC_T)$  satisfy the Convexity Assumption \ref{CA} and $(\Omega, \frB, \P)$ be a standard Borel space such that $\P$ is without atoms.
If $X_0\in L^p(\Omega;\R^d)$, then 
$$V_{\pL}(X_0)= V_{\E}((X_0)_\sharp\P).$$
In particular, given $X_0,X_0'\in L^p(\Omega;\R^d)$ s.t. $(X_0)_\sharp\P=(X_0')_\sharp\P$, then
\[V_{\pL}(X_0)=V_{\pL}(X_0').\]
\end{theorem}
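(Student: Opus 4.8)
The statement asserts the equality $V_{\pL}(X_0)=V_{\E}((X_0)_\sharp\P)$ under the Convexity Assumption \ref{CA}, for a standard Borel space with non-atomic $\P$. The plan is to establish the two inequalities separately, routing both through the Kantorovich problem, which (by Theorem \ref{cor:E=K}) already satisfies $V_{\K}(\mu_0)=V_{\E}(\mu_0)$ in the convex setting. So it suffices to show $V_{\pL}(X_0)=V_{\K}((X_0)_\sharp\P)$, and this in turn splits into a "Kantorovich $\le$ Lagrangian'' and a "Lagrangian $\le$ Kantorovich'' part, i.e.\ what the introduction calls Theorems \ref{prop:K>L} (one direction) and the combination with Proposition \ref{prop:E>K}, \ref{prop:K>E} for the other. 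I would organize the argument exactly along these lines.

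\textbf{Step 1: $V_{\K}(\mu_0)\le V_{\pL}(X_0)$ where $\mu_0:=(X_0)_\sharp\P$.} Given any admissible pair $(X,u)\in\cA_{\pL}(X_0)$, I want to produce an admissible pair $(\eeta,u')\in\cA_{\K}(\mu_0)$ with the same cost. Since $X\in L^p(\Omega;\AC^p([0,T];\R^d))$, the map $\omega\mapsto X(\omega)\in\Gamma_T$ pushes $\P$ forward to a measure $\eeta:=X_\sharp\P\in\PP(\Gamma_T)$. Then $(e_t)_\sharp\eeta=(X_t)_\sharp\P=:\mu_t$, the initial condition $(e_0)_\sharp\eeta=\mu_0$ holds, and the moment bound \eqref{eq:momp} follows from the a priori estimate \eqref{boundXtomega}. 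The subtlety is the control: $u$ is defined on $[0,T]\times\Omega$, and I need a control on $[0,T]\times\Gamma_T$. Working instead with the relaxed control $\sigma(t,\omega)=\delta_{u(t,\omega)}$ and disintegrating, or more directly observing that $\eeta$ is concentrated on curves solving $\dot\gamma(t)=f(\gamma(t),u(t,\gamma^{-1}(\cdot)),\mu_t)$ requires care when $\omega\mapsto X(\omega)$ is not injective; the cleanest route is to first pass to the relaxed formulation (Theorem \ref{cor:VL=VRL}: $V_{\pL}=V_{\RL}$), and for relaxed Lagrangian pairs the disintegration of $\sigma$ along $e_t$ gives a genuine Kantorovich-admissible relaxed control, whose cost is preserved by Jensen-type equalities. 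This yields $V_{\K}(\mu_0)\le V_{\RL}(X_0)=V_{\pL}(X_0)$.

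\textbf{Step 2: $V_{\pL}(X_0)\le V_{\K}(\mu_0)$.} This is the harder direction and the main obstacle. Start from $(\eeta,u)\in\cA_{\K}(\mu_0)$. By Lemma \ref{l:fromKtoL} one reads the Kantorovich problem as a Lagrangian problem on the parametrization space $\Gamma_T$ with $\P$ replaced by $\eeta$ and $X_0$ replaced by the evaluation map $e_0$, which is continuous. Then Proposition \ref{lemmaB} approximates the control by continuous ones with converging cost, and Proposition \ref{lemmaA} handles replacing $\eeta$ by approximating measures. The key point enabling transfer to the \emph{given} space $(\Omega,\frB,\P)$ is Proposition \ref{p:equivalence} together with the Skorohod-type Proposition \ref{prop:Skorohod}: since $\P$ is non-atomic on a standard Borel space, and $(e_0)_\sharp\eeta=\mu_0=(X_0)_\sharp\P$, one can build measurable maps intertwining $(\Gamma_T,\eeta)$ and $(\Omega,\P)$ respecting the initial data, so the Lagrangian value on $\Gamma_T$ equals the Lagrangian value on $\Omega$. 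Combining, $V_{\pL}(X_0)\le V_{\K}(\mu_0)$.

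\textbf{Step 3: conclusion and the second assertion.} Chaining Steps 1–2 with Theorem \ref{cor:E=K} gives $V_{\pL}(X_0)=V_{\K}(\mu_0)=V_{\E}(\mu_0)=V_{\E}((X_0)_\sharp\P)$. For the final displayed consequence, if $X_0,X_0'\in L^p(\Omega;\R^d)$ have $(X_0)_\sharp\P=(X_0')_\sharp\P=\mu_0$, then both $V_{\pL}(X_0)$ and $V_{\pL}(X_0')$ equal $V_{\E}(\mu_0)$ by what was just proved, hence they coincide. I expect Step 2 — in particular making the change of parametrization space rigorous while simultaneously approximating the control and the base measure, and checking all the costs converge — to be where essentially all the work lies; Steps 1 and 3 are bookkeeping built on results already in the excerpt.
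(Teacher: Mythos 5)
Your overall architecture coincides with the paper's: the two inequalities $V_{\pL}(X_0)\ge V_{\E}(\mu_0)$ and $V_{\K}(\mu_0)\ge V_{\pL}(X_0)$ are chained with $V_{\E}=V_{\K}$ from Theorem \ref{cor:E=K}. Your Step 1 is a sound variant of the paper's Proposition \ref{prop:E<L} (the paper lands directly in $\E$ rather than in $\K$, but the ingredients — disintegration, affinity of $f$, convexity of $\cC$ and Jensen — are the same), and Step 3 is correct bookkeeping.

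The genuine gap is in Step 2, precisely at the point you flag as "the key point enabling transfer to the given space": the exact intertwining of $(\Gamma_T,\eeta)$ with $(\Omega,\P)$ respecting the initial data does not exist in general, so Proposition \ref{p:equivalence} cannot be invoked. That proposition requires maps $\psi,\phi$ with $\phi\circ\psi$ and $\psi\circ\phi$ acting as the identity on all initial data, i.e.\ essentially a measure isomorphism compatible with $e_0$ and $X_0$; and even the weaker one-sided requirement — a map $\psi:\Omega\to\Gamma_T$ with $\psi_\sharp\P=\eeta$ \emph{and} $e_0\circ\psi=X_0$ — can fail. Proposition \ref{prop:Skorohod} only matches the laws; it says nothing about the constraint $e_0\circ\psi=X_0$. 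Concretely, take $\Omega=[0,1]$, $\P=\cL_1$ and $X_0$ injective (exactly the situation of the barycenter counterexample in Section \ref{sec:counterexample}): then the conditional law of $\P$ given $X_0=x$ is a Dirac mass, so if $\eeta$ splits the mass sitting at $x$ among several curves emanating from $x$ — which happens for optimal $\eeta$ in that example — no such $\psi$ exists, and $\cA_{\pL_{\eeta}}(e_0)$ cannot be transported exactly into $\cA_{\pL}(X_0)$. This is not a technicality: it is the very reason the Lagrangian problem may have no minimizer while the Kantorovich one does. The correct substitute (the paper's Theorem \ref{prop:K>L}, Step 2) is to form the coupling $\hat\sigma$ between $(\Omega\times\R^d,\rho)$ and the space of curves, approximate it by graphs of Borel maps $w_n:\Omega\times\R^d\to\Gamma_T$ using the density of Young measures induced by maps (Lemma \ref{lemma:young}, which is where non-atomicity of $\P$ actually enters), obtaining measures $\eeta^n=(w_n\circ(i_\Omega,X_0))_\sharp\P\to\eeta$ that \emph{are} pushforwards from $\Omega$ compatible with $X_0$; only then do Propositions \ref{lemmaB} and \ref{lemmaA} (continuous controls, stability in the base measure) let you pass to the limit in the costs and conclude $V_{\pL}(X_0)\le J_{\K}(\eeta,u)$. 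This approximation-of-the-coupling step is the missing idea in your proposal.
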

The proof of Theorem \ref{cor:VL=VE} follows immediately by Proposition \ref{prop:E<L}, Theorem \ref{prop:K>L} and Theorem \ref{cor:E=K}.

\medskip

We stress again that the Convexity Assumption \ref{CA} is sufficient to prove the existence of a minimizer for the Eulerian and Kantorovich optimal control problems (see Theorems \ref{thm:minE} and \ref{cor:E=K}). However in general, even assuming the Convexity Assumption \ref{CA}, the Lagrangian optimal control problem could not have minimizers as we show in Section \ref{sec:counterexample}.

\medskip

If we remove the Convexity Assumption \ref{CA}, we can still give the following equivalence result.
\begin{theorem}\label{cor:VL=VE'}
Let $\S=(U,f,\cC,\cC_T)$  satisfy Assumption \ref{BA} and $\S'=(\UU,\FF,\CC,\CC_T)$ as in Definition \ref{def:relax_setting}.
Let $(\Omega, \frB, \P)$ be a standard Borel space such that $\P$ is without atoms.
Let $\L, \RL$ be the Lagrangian and Relaxed Lagrangian problems associated to $\S$.
Let also  $\L',\E'$ the Lagrangian and Eulerian problems associated to $\S'$. 
If $X_0\in L^p(\Omega;\R^d)$, then 
$$V_{\pL}(X_0)=V_{\RL}(X_0)=V_{\pL'}(X_0)= V_{\E'}((X_0)_\sharp\P)=V_{\K'}((X_0)_\sharp\P).$$
\end{theorem}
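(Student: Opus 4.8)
The plan is to chain together the equivalences already assembled for the convex case with the chattering/relaxation machinery. The key observation is that $\RL(\Omega,\frB,\P;\S)$ is, by Proposition \ref{prop:ConvexRL} and Remark \ref{rem:RL-L}, literally the (convex) Lagrangian problem $\pL'=\pL(\Omega,\frB,\P;\S')$ associated to the lifted system $\S'=(\UU,\FF,\CC,\CC_T)$, which satisfies the Convexity Assumption \ref{CA}. So the strategy is: (i) $V_{\pL}=V_{\RL}$ by the chattering theorem; (ii) $V_{\RL}=V_{\pL'}$ by the identification of the relaxed problem as a convex Lagrangian problem; (iii) $V_{\pL'}=V_{\E'}((X_0)_\sharp\P)$ by applying the convex-case equivalence (Theorem \ref{cor:VL=VE}) to $\S'$; (iv) $V_{\E'}=V_{\K'}$ by Theorem \ref{cor:E=K} applied to $\S'$.

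First I would invoke Theorem \ref{cor:VL=VRL}: since $(\Omega,\frB,\P)$ is a standard Borel space, Proposition \ref{prop:FAP}(1) provides a family of finite algebras $\{\frB^n\}_{n\in\N}$ satisfying the finite approximation property, so the hypotheses of Theorem \ref{cor:VL=VRL} hold and $V_{\pL}(X_0)=V_{\RL}(X_0)$. Note here one needs $U$ to be a convex compact subset of a separable Banach space for the chattering argument; but since we may, by the Proposition on page with the isometric embedding $j\colon U\to V$, replace $U$ by $\overline{\mathrm{co}}(j(U))$ without changing the problem (extending $f,\cC$ by, say, the values on the nearest point of $j(U)$ — or more simply, by the very construction, the Young-measure chattering lemma only produces controls with values in $U$ itself), this is not a genuine restriction; alternatively one argues directly in the lifted system where $\UU=\PP(U)$ is already convex compact in a separable Banach space by Proposition \ref{prop:ConvexRL}.

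Next, by Remark \ref{rem:RL-L} the admissible pairs, the dynamics, and the cost of $\RL(\Omega,\frB,\P;\S)$ coincide with those of $\pL(\Omega,\frB,\P;\S')$, hence $V_{\RL}(X_0)=V_{\pL'}(X_0)$ as functions on $L^p(\Omega;\R^d)$ — this is a definitional identity, not an approximation. Then, since $\S'$ satisfies the Convexity Assumption \ref{CA} (Proposition \ref{prop:ConvexRL}) and $(\Omega,\frB,\P)$ is a standard Borel space with $\P$ non-atomic, Theorem \ref{cor:VL=VE} applied to $\S'$ gives $V_{\pL'}(X_0)=V_{\E'}((X_0)_\sharp\P)$. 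Finally, Theorem \ref{cor:E=K} applied to $\S'$ yields $V_{\E'}((X_0)_\sharp\P)=V_{\K'}((X_0)_\sharp\P)$. Concatenating the four equalities produces
\[
V_{\pL}(X_0)=V_{\RL}(X_0)=V_{\pL'}(X_0)=V_{\E'}((X_0)_\sharp\P)=V_{\K'}((X_0)_\sharp\P),
\]
which is the claim.

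I do not expect a serious obstacle here: the theorem is essentially a bookkeeping corollary of the three substantial results (chattering, convex Lagrangian–Eulerian equivalence, Eulerian–Kantorovich equivalence) proved earlier. The one point deserving care is the transition (i): Theorem \ref{cor:VL=VRL} as stated requires $U$ convex compact in a separable Banach space, whereas Assumption \ref{BA} only asks $U$ compact metrizable; the cleanest fix is to run the chattering argument on the lifted system $\S'$ (whose control space $\UU=\PP(U)$ is convex compact in a separable Banach space), i.e. to note that for $\S'$ the non-relaxed Lagrangian problem $\pL'$ and its own relaxation coincide at the level of value functions trivially (the relaxation of a problem already satisfying \ref{CA} adds nothing, or one applies Theorem \ref{cor:VL=VRL} to $\S'$ directly), so that no separate treatment of $U$ is needed. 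Either way the chain closes.
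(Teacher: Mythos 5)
Your proposal is correct and follows essentially the same route as the paper: identify $\RL$ with the convex Lagrangian problem $\pL'$ via Proposition \ref{prop:ConvexRL} and Remark \ref{rem:RL-L}, use Theorem \ref{cor:VL=VRL} for $V_{\pL}=V_{\RL}$, and then apply Theorems \ref{cor:VL=VE} and \ref{cor:E=K} to $\S'$. Your worry about needing $U$ convex for step (i) is unfounded: Theorem \ref{cor:VL=VRL} is stated under Assumption \ref{BA} alone, its proof already running the piecewise-constant approximation on the lifted control space $\PP(U)$ (which is convex compact) and then descending to $U$-valued controls via the Young-measure density lemma, exactly as your ``alternatively'' remark anticipates.
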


The proof of Theorem \ref{cor:VL=VE'} is postponed at the end of Section \ref{sec:LEK}.

\subsection{Comparison between $\pL$, $\E$ and $\K$}\label{sec:LEK}

We start by comparing the Eulerian and Lagrangian problems under the Convexity Assumption \ref{CA}.
Assuming  $\P$ without atoms, we further exhibit the equivalence between the associated value functions exploiting the Kantorovich formulation introduced in Section \ref{sec:K}.

The following is a first comparison between the Eulerian and Lagrangian problems.
\begin{proposition}\label{prop:E<L}
Let $\S=(U,f,\cC,\cC_T)$  satisfy the Convexity Assumption \ref{CA} and $(\Omega, \frB, \P)$ be a probability space.
If  $X_0\in L^p(\Omega;\R^d)$ and  $(X,u)\in\cA_{\pL}(X_0)$, then there exists $(\mu,\ubar u)\in\cA_{\E}((X_0)_\sharp\P)$
such that $J_{\pL}(X,u) \geq J_{\E}(\mu,\ubar u)$. In particular, $V_{\pL}(X_0)\ge V_{\E}((X_0)_\sharp\P)$.
\end{proposition}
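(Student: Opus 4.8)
The plan is to pass from a Lagrangian admissible pair $(X,u)\in\cA_{\pL}(X_0)$ to an Eulerian admissible pair by pushing forward through the (random) trajectory map and then using the Convexity Assumption to bring the control into feedback form via a disintegration/barycentre argument. Concretely, I would first set $\mu_t:=(X_t)_\sharp\P$ for every $t\in[0,T]$; since $X\in L^p(\Omega;\AC^p([0,T];\R^d))$ and, by Remark~\ref{re:equiv}, $X\in\AC^p([0,T];L^p(\Omega;\R^d))$, inequality \eqref{eq:WpLp} gives $W_p(\mu_t,\mu_s)\le\|X_t-X_s\|_{L^p(\Omega;\R^d)}$, so $\mu\in\AC^p([0,T];\PP_p(\R^d))$ and $\mu_0=(X_0)_\sharp\P$. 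The issue is that the natural vector field $\omega\mapsto f(X_t(\omega),u_t(\omega),\mu_t)$ lives on $\Omega$, not on $\R^d$, and several values of $\omega$ may be mapped by $X_t$ to the same point $x\in\R^d$ with different control values $u_t(\omega)$; this is exactly where convexity enters.

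\textbf{Key steps.} (1) Work on the product space $[0,T]\times\Omega$ with measure $\cL_T\otimes\P$, and consider the map $e:[0,T]\times\Omega\to[0,T]\times\R^d$, $e(t,\omega):=(t,X_t(\omega))$; this is $(\cB_{[0,T]}\otimes\frB)$-measurable. Since $X\in\AC^p([0,T];L^p(\Omega;\R^d))$, one can choose a jointly measurable representative of $(t,\omega)\mapsto X_t(\omega)$, so $e$ is measurable. By the Disintegration Theorem~\ref{thm:disint} (in the form of Remark~\ref{rem:disint_proj}, noting $e_\sharp(\cL_T\otimes\P)=\cL_T\otimes\mu_t$) there is a Borel family $(t,x)\mapsto\lambda_{t,x}\in\PP([0,T]\times\Omega)$ concentrated on $\{t\}\times X_t^{-1}(x)$. (2) Define the feedback control $\ubar u:[0,T]\times\R^d\to U$ by the Bochner integral $\ubar u(t,x):=\int_{[0,T]\times\Omega} u(s,\omega)\,\d\lambda_{t,x}(s,\omega)$; since $U$ is a compact convex subset of the separable Banach space $V$, the integral is well defined and $\ubar u(t,x)\in U$, and $\ubar u$ is Borel because $(t,x)\mapsto\lambda_{t,x}$ is Borel. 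Then $\ubar u\in\Bor([0,T]\times\R^d;U)$. (3) Verify that $(\mu,\ubar u)\in\cA_{\E}(\mu_0)$. Writing \eqref{eq:systemL} in integral form and testing against $\varphi\in C^1_c(\R^d)$, for a.e.\ $t$
\[
\frac{\d}{\d t}\int_{\R^d}\varphi\,\d\mu_t=\int_\Omega\nabla\varphi(X_t(\omega))\cdot f(X_t(\omega),u_t(\omega),\mu_t)\,\d\P(\omega)
=\int_{\R^d}\int_{[0,T]\times\Omega}\nabla\varphi(x)\cdot f(x,u(s,\omega),\mu_t)\,\d\lambda_{t,x}(s,\omega)\,\d\mu_t(x),
\]
and by the affinity condition (C.2) the inner integral equals $\nabla\varphi(x)\cdot f(x,\ubar u(t,x),\mu_t)$, so $\mu$ solves $\de_t\mu_t+\div(v_t\mu_t)=0$ with $v_t(x):=f(x,\ubar u(t,x),\mu_t)$; the $L^p$ summability $\int_0^T\!\!\int|v_t|^p\d\mu_t\d t<\infty$ follows from \eqref{f:growth} and $\mu\in C([0,T];\PP_p(\R^d))$. (4) Compare costs: for the running cost, by the same disintegration and the convexity of $u\mapsto\cC(x,u,\mu_t)$ together with Jensen's inequality,
\[
\int_\Omega\!\int_0^T\!\cC(X_t(\omega),u_t(\omega),\mu_t)\,\d t\,\d\P
=\int_0^T\!\!\int_{\R^d}\!\int_{[0,T]\times\Omega}\!\cC(x,u(s,\omega),\mu_t)\,\d\lambda_{t,x}\,\d\mu_t\,\d t
\ge\int_0^T\!\!\int_{\R^d}\!\cC(x,\ubar u(t,x),\mu_t)\,\d\mu_t\,\d t,
\]
while the terminal term is literally equal since it depends only on $\mu_T=(X_T)_\sharp\P$. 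Hence $J_{\pL}(X,u)\ge J_{\E}(\mu,\ubar u)\ge V_{\E}(\mu_0)$, and taking the infimum over $(X,u)$ gives $V_{\pL}(X_0)\ge V_{\E}((X_0)_\sharp\P)$.

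\textbf{Main obstacle.} The delicate point is measurability: one must produce a jointly Borel (or $(\cB_{[0,T]}\otimes\frB)$-measurable) representative of $(t,\omega)\mapsto X_t(\omega)$ so that the map $e$ is measurable and the disintegration of $\cL_T\otimes\P$ with respect to $e$ is available; this uses $X\in\AC^p([0,T];L^p(\Omega;\R^d))$ (Remark~\ref{re:equiv}) and the standard Borel structure implicitly. A secondary point is justifying that the Bochner integral defining $\ubar u$ indeed takes values in the \emph{convex compact} set $U\subset V$ and that it interacts correctly with the (possibly nonlinear but affine) dependence of $f$ and the convex dependence of $\cC$ on $u$ — both of which are exactly guaranteed by Assumption~\ref{CA}. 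Everything else (the growth bounds, absolute continuity of $\mu$, passing derivatives through integrals) is routine given the estimates in Propositions~\ref{prop:existL}--\ref{prop:estimatesL} and the structure in \eqref{f:growth}, \eqref{eq:growthC}.
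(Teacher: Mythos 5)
Your strategy is the same as the paper's: set $\mu_t:=(X_t)_\sharp\P$, produce a feedback control $\ubar u(t,x)$ as a barycentre of the Lagrangian control over the fibre above $(t,x)$, use the affinity of $f$ in $u$ (Assumption \ref{CA}) to verify the continuity equation for $v_t(x)=f(x,\ubar u(t,x),\mu_t)$, and use Jensen's inequality with the convexity of $\cC$ in $u$ to compare the running costs, the terminal costs being literally equal. The growth and summability checks are handled as you indicate.

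There is, however, one step that does not work in the stated generality. You disintegrate $\cL_T\otimes\P$ on $[0,T]\times\Omega$ with respect to $e(t,\omega)=(t,X_t(\omega))$ and invoke Theorem \ref{thm:disint} to obtain the Borel family $\lambda_{t,x}$; but that theorem is stated for Polish spaces, whereas Proposition \ref{prop:E<L} assumes only that $(\Omega,\frB,\P)$ is an arbitrary probability space, so the disintegration on $[0,T]\times\Omega$ is not available. The paper's proof circumvents this by pushing forward \emph{before} disintegrating: it forms $\theta:=(i_{[0,T]},X_t,i_U)_\sharp\bigl(\delta_{u(t,\omega)}\otimes\P\otimes\cL_T\bigr)\in\PP([0,T]\times\R^d\times U)$, a measure on a Polish space, disintegrates it with respect to the projection onto $[0,T]\times\R^d$ to get $\theta_{t,x}\in\PP(U)$, and sets $\ubar u(t,x)=\int_U u\,\d\theta_{t,x}(u)$. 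Morally $\theta_{t,x}$ is the pushforward of your $\lambda_{t,x}$ through the control map, and every subsequent computation in your argument goes through verbatim with $\theta_{t,x}$ in place of $\lambda_{t,x}$; the point is that the order ``push forward first, then disintegrate'' is what makes the construction legitimate for a general parametrization space. With that adjustment your proof coincides with the paper's.
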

\begin{proof}
Let $(X,u)\in\cA_{\pL}(X_0)$. 
We firstly define $\mu_t:= (X_t)_\sharp \P$, for every $t \in [0,T]$.
Thanks to Proposition \ref{prop:equivSpaces} in Appendix \ref{appendix_A}, $X \in \AC^p([0,T]; L^p(\Omega;\R^d))$ so that  
$\mu \in \AC^p([0,T];\PP_p(\R^d))$.

We define $\sigma\in \PP([0,T] \times \Omega\times U)$ by $\sigma:=\delta_{u(t,\omega)}\otimes\P \otimes \cL_T$
and $\theta\in \PP([0,T]\times \R^d\times U)$ by $\theta:=(i_{[0,T]},X_t,i_U)_\sharp\sigma$.
Let $\pi^{1,2}: [0,T]\times  \R^d \times U \to [0,T]\times \R^d$ be the projection map $(t,x,u) \mapsto (t,x)$, observe that $\pi^{1,2}_\sharp\theta=\mu_t \otimes \cL_T$. 
If we denote $\theta_{t,x}$ the disintegration of $\theta$ with respect to $\pi^{1,2}$, then we have $\theta=\theta_{t,x}\otimes\mu_t \otimes \cL_T$.
We define now $\ubar u:[0,T]\times \R^d\to U$  by  
\begin{equation}\label{eq:ubaru_theta}
\ubar u(t,x) := \int_U u \,\d \theta_{t,x}(u).
\end{equation}
Thanks to Theorem \ref{thm:disint}, the map $(t,x)\mapsto\theta_{t,x}$ is Borel measurable, so that $\ubar u \in \Bor([0,T] \times \R^d;U)$. 

The rest of the proof follows the same line of the proof of Proposition \ref{prop:K>E}. We write the details for the reader's convenience.

Defining the set
$A:=\{(t,\omega)\in[0,T]\times\Omega: \exists \,\dot X_t(\omega) \text{ and } \dot X_t(\omega) = f(X_t(\omega),u(t,\omega),\mu_t) \}$,  by item (ii) of Definition \ref{def:L}, we have that $\cL_T\otimes\P(([0,T] \times \Omega \setminus A)=0$. Then, for $\cL_T$-a.e. $t\in[0,T]$ it holds
\[ \dot X_t(\omega)=f(X_t(\omega),u(t,\omega),\mu_t), \qquad \text{for $\P$-a.e. }\omega\in\Omega.\]
Let $\varphi\in C^1_c(\R^d;\R)$.
For any $s,t\in[0,T]$, $s<t$, we have
\begin{equation}\label{eq:distr_mu}
\begin{aligned}
	\int_{\R^d}\varphi(x)\,\d\mu_t(x) - \int_{\R^d}\varphi(x)\,\d\mu_s(x)  &=
	\int_{\Omega}(\varphi(X_t(\omega))- \varphi(X_s(\omega)))\,\d\P(\omega)\\
	&=\int_{\Omega}\int_s^t \frac{\d}{\d r}\varphi(X_r(\omega))\,\d r\,\d\P(\omega)\\
	&=\int_{\Omega}\int_s^t \nabla\varphi(X_r(\omega))\cdot\dot X_r(\omega)\,\d r\,\d\P(\omega)\\
	&=\int_s^t \int_{\Omega}\nabla\varphi(X_r(\omega))\cdot f(X_r(\omega)),u(r,\omega),\mu_r)\,\d\P(\omega)\,\d r.
\end{aligned}
\end{equation}

Using the growth condition of $f$ in \eqref{f:growth} we have
\begin{align*}
	\left| \int_{\R^d}\varphi(x)\,\d\mu_t(x) - \int_{\R^d}\varphi(x)\,\d\mu_s(x)\right|  \leq
	C\|\nabla\varphi\|_\infty \int_s^t \int_{\Omega}(1+|X_r(\omega)|+\m_p(\mu_r))\,\d\P(\omega)\,\d r.
\end{align*}
Thanks to \eqref{boundXt} in Proposition \ref{prop:estimatesL}, it follows that 
the map $r\mapsto  \int_{\Omega}(1+|X_r(\omega)|+\m_p(\mu_r))\,\d\P(\omega)$ belongs to $L^1(0,T)$
so that the map $t\mapsto \int_{\R^d}\varphi(x)\,\d\mu_t(x)$ is absolutely continuous.
Then, from \eqref{eq:distr_mu} it holds that 
\begin{align*}
\frac{\d}{\d t}\int_{\R^d}\varphi(x)\,\d\mu_t(x)
= \int_{\Omega}\nabla\varphi(X_t(\omega))\cdot f(X_t(\omega)),u(t,\omega),\mu_t)\,\d\P(\omega), \quad \text{ for } \cL_T\text{-a.e.} t\in[0,T].
\end{align*}

For $\cL_T$-a.e. $t \in [0,T]$, we denote now $\sigma_t\in \PP(\Omega\times U)$ and $\theta_t \in \PP(\R^d\times U)$ the disintegrations of $\sigma$ and $\theta$  with respect to the projection maps $\pi^1_{\Omega}: [0,T] \times \Omega \times U \to [0,T]$, $\pi^1_{\R^d}: [0,T] \times \R^d \times U \to [0,T]$, respectively. 
It can be shown that $\theta_t=(X_t,i_U)_\sharp\sigma_t =\theta_{t,x}\otimes\mu_t$, for $\cL_T$-a.e. $t \in [0,T]$. 

Using the affinity of $f$, and the definition of $\ubar u$ in \eqref{eq:ubaru_theta} we obtain
\begin{align*}
& \int_{\Omega}\nabla\varphi(X_t(\omega))\cdot f(X_t(\omega)),u(t,\omega),\mu_t)\,\d\P(\omega)\\
&=\int_{\Omega\times U}\nabla\varphi(X_t(\omega))\cdot f(X_t(\omega),u,\mu_t)\,\d \sigma_t(\omega,u)\\
&=\int_{\R^d\times U}\nabla\varphi(x)\cdot f(x,u,\mu_t)\,\d\theta_t(x,u)\\
&=\int_{\R^d}\nabla\varphi(x)\cdot\int_U f(x,u,\mu_t)\,\d\theta_{t,x}(u)\,\d\mu_t(x) \\
&=\int_{\R^d}\nabla\varphi(x)\cdot f\left(x,\int_U u\,\d\theta_{t,x}(u),\mu_t\right)\,\d\mu_t(x)\\
&=\int_{\R^d}\nabla\varphi(x)\cdot f(x,\ubar u(t,x),\mu_t)\,\d\mu_t(x).
\end{align*}

Then $\mu$ satisfies the continuity equation $\de_t\mu_t+\div(v_t\mu_t)=0$ for the vector field
$v_t(x):= f(x,\ubar u(t,x),\mu_t)$ in the sense of distributions (see e.g. \cite[equation (8.1.4)]{ambrosio2008gradient}).
Since $|v_t(x)|^p\leq \tilde C(1+|x|^p+\m_p^p(\mu_t))$, from \eqref{boundXt} it follows that $t\mapsto \|v_t\|_{L^p_{\mu_t}(\R^d;\R^d)}$ 
 belongs to $L^p(0,T)$ and $\mu \in \AC^p([0,T];\PP_p(\R^d))$.
Hence, $(\mu,\ubar u)\in\cA_{\E}(\mu_0)$.

Finally, by the convexity of $\cC$ with respect to $u$ and Jensen's inequality we obtain
\begin{equation*}
\begin{split}
&\int_0^T \int_{\Omega} \cC(X_t(\omega),u_t(\omega),\mu_t) \, \d \P(\omega) \,\d t \\
&= \int_0^T \int_{\Omega\times U} \cC(X_t(\omega),u,\mu_t) \, \d \sigma_{t}(\omega,u)\,\d t \\
&=\int_0^T \int_{\R^d\times U} \cC(x,u,\mu_t) \, \d \theta_{t}(x,u)\, \d t \\
&=\int_0^T \int_{\R^d} \int_{U}\cC(x,u,\mu_t) \, \d \theta_{t,x}(u)\,\d \mu_t(x)\, \d t \\
&\ge \int_0^T \int_{\R^d} \cC\left( x, \int_U u \, \d \theta_{t,x}(u) ,\mu_t\right) \,\d \mu_t(x) \d t\\
&=  \int_0^T \int_{\R^d} \cC\left( x, \ubar u(t,x),\mu_t\right) \,\d \mu_t(x)\, \d t.
\end{split}
\end{equation*}
This readily implies that $J_{\pL}(X,u) \ge J_{\E}(\mu,\ubar u)$.
\end{proof}

In the next Lemma, {we are given an admissible pair $(\eeta,u)$ for the Kantorovich problem. Considering the evaluation map $Z_t(\gamma)=\gamma(t)$, we associate to $(\eeta,u)$ the pair $(Z,u)$ which is admissible for the Lagrangian problem with parametrization space $(\Gamma_T, \cB_{\Gamma_T}, \eeta)$ and with the same cost as $(\eeta,u)$.}

\begin{lemma}\label{l:fromKtoL}
Let $\S = (U,f,\cC,\cC_T)$  satisfy Assumption \ref{BA}. 
Denote with $Z(t,\gamma):= e_t(\gamma)$, for every $t \in [0,T]$ and $\gamma \in \Gamma_T$.
If $\mu_0 \in \PP_p(\R^d)$, $(\eeta,u) \in \cA_\K(\mu_0)$  and we denote with $\pL_{\eeta} = \pL(\Gamma_T, \cB_{\Gamma_T}, \eeta)$,  then  $(Z,u)\in \cA_{\pL_{\eeta}}(e_0)$.
Moreover, 
\begin{equation*}
	J_{\pL_{\eeta}}(Z,u)=J_{\K}(\eeta,u).
\end{equation*}
\end{lemma}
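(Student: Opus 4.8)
The idea is to verify directly that the pair $(Z,u)$ satisfies all the requirements in Definition \ref{def:L} for the Lagrangian problem over the parametrization space $(\Gamma_T,\cB_{\Gamma_T},\eeta)$, and then to observe that the two cost functionals coincide term by term. First I would record the elementary identity that, since $Z_t = e_t$, for every $t\in[0,T]$ we have $(Z_t)_\sharp\eeta = (e_t)_\sharp\eeta = \mu_t$, and in particular $(Z_0)_\sharp\eeta = (e_0)_\sharp\eeta = \mu_0$; thus the ``law'' appearing in the Lagrangian dynamics driven by the parametrization space $(\Gamma_T,\cB_{\Gamma_T},\eeta)$ is exactly the curve $\mu_t$ entering the Kantorovich ODE. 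This is the key bookkeeping step that makes the two formulations line up.

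Next I would check the admissibility conditions. For (i): $u\in\Bor([0,T]\times\Gamma_T;U)$ by the definition of $\cA_\K(\mu_0)$, which is exactly the required measurability with respect to $\cB_{[0,T]}\otimes\cB_{\Gamma_T}$. For (ii): by Remark \ref{rem:etaACp} (using the growth bound \eqref{f:growth} and condition \eqref{eq:momp}), $\eeta$ is concentrated on $\AC^p([0,T];\R^d)$, and by item (ii) of Definition \ref{def:K}, $\eeta$ is concentrated on the set of curves $\gamma$ satisfying $\dot\gamma(t) = f(\gamma(t),u(t,\gamma),\mu_t)$ for $\cL_T$-a.e.\ $t$, with initial value $\gamma(0)$. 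Rewriting this in terms of $Z$: for $\eeta$-a.e.\ $\gamma$, the curve $t\mapsto Z_t(\gamma)=\gamma(t)$ lies in $\AC^p([0,T];\R^d)$, satisfies $\dot Z_t(\gamma) = f(Z_t(\gamma),u(t,\gamma),(Z_t)_\sharp\eeta)$ for $\cL_T$-a.e.\ $t$, and $Z_0(\gamma) = \gamma(0) = e_0(\gamma)$. I also need $Z\in L^p_\eeta(\Gamma_T;\AC^p([0,T];\R^d))$: this follows from the computation in Remark \ref{rem:etaACp} bounding $\int_0^T\int_{\Gamma_T}|\dot\gamma(t)|^p\,\d\eeta\,\d t$ together with $\int_{\Gamma_T}|\gamma(0)|^p\,\d\eeta = \m_p^p(\mu_0) < +\infty$, so that the $\AC^p$-norm of $\gamma$ is $p$-integrable against $\eeta$. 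Hence $(Z,u)\in\cA_{\pL_\eeta}(e_0)$, noting that $e_0\in L^p_\eeta(\Gamma_T;\R^d)$ precisely because $\m_p(\mu_0)<+\infty$.

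Finally, equality of costs: in $J_{\pL_\eeta}(Z,u)$ the running term is $\int_{\Gamma_T}\int_0^T\cC(Z_t(\gamma),u(t,\gamma),(Z_t)_\sharp\eeta)\,\d t\,\d\eeta(\gamma)$, which by the substitutions $Z_t(\gamma)=\gamma(t)$ and $(Z_t)_\sharp\eeta=\mu_t$ and Fubini (the integrand is nonnegative, so Tonelli applies, and it is finite by Remark \ref{rem:etaACp}) equals $\int_0^T\int_{\Gamma_T}\cC(\gamma(t),u(t,\gamma),\mu_t)\,\d\eeta(\gamma)\,\d t$, which is the running term of $J_\K(\eeta,u)$. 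The terminal term of $J_{\pL_\eeta}(Z,u)$ is $\int_{\Gamma_T}\cC_T(Z_T(\gamma),(Z_T)_\sharp\eeta)\,\d\eeta(\gamma) = \int_{\Gamma_T}\cC_T(\gamma(T),\mu_T)\,\d\eeta(\gamma) = \int_{\R^d}\cC_T(x,\mu_T)\,\d\mu_T(x)$ by the change-of-variables formula for the push-forward $\mu_T=(e_T)_\sharp\eeta$, matching the terminal term of $J_\K(\eeta,u)$. Adding the two pieces gives $J_{\pL_\eeta}(Z,u) = J_\K(\eeta,u)$.

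This proof is essentially a dictionary translation, so there is no serious obstacle; the only points requiring a little care are (a) confirming the $L^p$-integrability of $Z$ as an $\AC^p$-valued map (handled by Remark \ref{rem:etaACp}) and (b) the applicability of Fubini/Tonelli to interchange the order of integration in the running cost, which is justified by nonnegativity of $\cC$ and the finiteness already established in Remark \ref{rem:etaACp}.
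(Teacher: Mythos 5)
Your proof is correct and follows essentially the same route as the paper's: a direct verification of the admissibility conditions of Definition \ref{def:L} over the parametrization space $(\Gamma_T,\cB_{\Gamma_T},\eeta)$, using the identity $(Z_t)_\sharp\eeta=\mu_t$, Remark \ref{rem:etaACp} for the $\AC^p$-integrability, and a term-by-term identification of the costs. The paper states the cost equality as immediate "by definition of $Z$", whereas you spell out the Tonelli interchange and the push-forward change of variables; that extra care is harmless and fills in exactly the details the paper leaves implicit.
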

\begin{proof}
Let $(\eeta,u)\in\cA_{\K}(\mu_0)$ and denote by $\pL_{\eeta}=\pL(\Gamma_T,\cB_{\Gamma_T},\eeta)$. 
Denoting with  $Z:[0,T]\times\Gamma_T\to\R^d$ the map defined by 
$Z(t,\gamma):=\gamma(t)=e_t(\gamma)$, let us show that $(Z,u)\in\cA_{\pL_{\eeta}}(e_0)$. 
Since $Z(0,\gamma)_\sharp \eeta = \mu_0 \in \PP_p(\R^d)$ then $e_0 = Z(0,\cdot) \in L^p_{\eeta}(\Gamma_T)$.
By item (i) in Definition \ref{def:K} we have $u \in \Bor([0,T] \times \Gamma_T;U)$.
Thanks to \eqref{eq:momp} it readily follows that $Z \in L^p_{\eeta}(\Gamma_T; L^p(0,T;\R^d))$ and from Remark \ref{rem:etaACp} we actually have that $Z \in L^p_{\eeta}(\Gamma_T; \AC^p([0,T];\R^d))$.
Moreover, from item (ii) of Definition \ref{def:K}, for $\eeta$-a.e. $\gamma \in \Gamma_T$, we have
\[\dot Z(t,\gamma) = f(Z(t,\gamma), u(t,\gamma), Z(t,\cdot)_\sharp \eeta) \quad \text{ for } \cL_T\text{-a.e.} t \in [0,T].\] 
Hence $(Z,u)\in\cA_{\pL_{\eeta}}(e_0)$ and, by definition of $Z$, $J_{\pL_{\eeta}}(Z,u)=J_{\K}(\eeta,u)$.

\end{proof}

{When the parametrization space $(\Omega,\frB,\P)$ is fixed a priori, an interesting first comparison between the Kantorovich and Lagrangian problems is given below.}

\begin{theorem}\label{prop:K>L}
Let $\S = (U,f,\cC,\cC_T)$  satisfy Assumption \ref{BA} with $U$ convex compact subset of a separable Banach space $V$.
Let $(\Omega,\frB,\P)$ be a standard Borel space such that 
$\P$ is without atoms.
If $\mu_0\in\PP_p(\R^d)$ and $(\eeta,u)\in\cA_{\K}(\mu_0)$, then 
for every $X_0 \in L^p(\Omega;\R^d)$ with $(X_0)_\sharp\P=\mu_0$
there exits a sequence $(X^n,u^n)\in\cA_{\L}(X_0)$ such that
\begin{equation}\label{eq:convKL}
	\lim_{n\to+\infty}J_\pL(X^n,u^n)=J_\K(\eeta,u).
\end{equation}
Moreover, for every $\mu_0\in\PP_p(\R^d)$ and every $X_0 \in L^p(\Omega;\R^d)$ with $(X_0)_\sharp\P=\mu_0$ it holds
\begin{equation}\label{VKgeVL}
V_{\K}(\mu_0)\ge V_{\pL}(X_0).
\end{equation}
\end{theorem}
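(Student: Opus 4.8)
The plan is to run the reduction chain sketched in the introduction: Kantorovich $\to$ Lagrangian on $\Gamma_T$ $\to$ Lagrangian on $\Gamma_T$ with continuous data $\to$ Lagrangian on the prescribed $(\Omega,\frB,\P)$ with initial datum $X_0$. First I would apply Lemma \ref{l:fromKtoL} to the evaluation map $Z(t,\gamma):=\gamma(t)$: given $(\eeta,u)\in\cA_\K(\mu_0)$ this yields $(Z,u)\in\cA_{\pL_{\eeta}}(e_0)$ with $\pL_{\eeta}=\pL(\Gamma_T,\cB_{\Gamma_T},\eeta)$ and $J_{\pL_{\eeta}}(Z,u)=J_\K(\eeta,u)$. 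Since $\Gamma_T$ is Polish, $e_0\in C(\Gamma_T;\R^d)$, and $e_0\in L^p_\eeta(\Gamma_T;\R^d)$ because $(e_0)_\sharp\eeta=\mu_0\in\PP_p(\R^d)$, the space $(\Gamma_T,\cB_{\Gamma_T},\eeta)$ satisfies \eqref{ass:topo}, and $U$ is convex compact in $V$; hence Proposition \ref{lemmaB} applies and produces $(\tilde Z^k,u^k)\in\cA_{\pL_{\eeta}}(e_0)$ with $u^k\in C([0,T]\times\Gamma_T;U)$, $\tilde Z^k\in C([0,T]\times\Gamma_T;\R^d)$ and $J_{\pL_{\eeta}}(\tilde Z^k,u^k)\to J_\K(\eeta,u)$. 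The continuity of the initial datum $e_0$ is exactly what makes this step available.

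The core of the proof is the transfer of each continuous-data problem $(\tilde Z^k,u^k)$ on $(\Gamma_T,\eeta)$ to a Lagrangian problem on the given $(\Omega,\frB,\P)$ whose initial datum is precisely $X_0$. Here I would use that $\P$ is atomless and standard Borel together with $(e_0)_\sharp\eeta=\mu_0=(X_0)_\sharp\P$. The idea is to approximate $\eeta$ weakly, with convergence of the $p$-moments of its $e_0$-marginal (hence in $W_p$), by a sequence $\eeta^m$ for which $(\Gamma_T,\eeta^m)$ carries, up to relabelling, either a measure-isomorphic copy or a finite-partition copy of $(\Omega,\P)$ (the finite-partition version being built via the finite approximation property, Propositions \ref{prop:FAP} and \ref{prop:approx_lagrangian_n}, applied on $\Gamma_T$). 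Pulling $u^k$ and $\tilde Z^k$ back along the corresponding measurable identifications and invoking Proposition \ref{lemmaA} (stability under perturbation of the reference measure, valid here because $e_0$ and $u^k$ are continuous) produces admissible pairs $(X^{k,m},u^{k,m})\in\cA_\L(X_0^{k,m})$ on $(\Omega,\frB,\P)$ with $J_{\pL}(X^{k,m},u^{k,m})\to J_{\pL_{\eeta}}(\tilde Z^k,u^k)$ as $m\to+\infty$. The identifications/partitions are to be chosen aligned with the level structure of $X_0$ — which is possible precisely because $\P$ is atomless — so that moreover $\|X_0^{k,m}-X_0\|_{L^p(\Omega;\R^d)}\to 0$. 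Finally I would replace the initial datum $X_0^{k,m}$ by $X_0$ using Proposition \ref{prop:existL}, estimating the resulting change of cost by continuous dependence on the initial datum uniform in the control on bounded sets (as in the proof of Proposition \ref{prop:costRLntoRL}, see also Proposition \ref{prop:uscVL}), obtaining $(\hat X^{k,m},u^{k,m})\in\cA_\L(X_0)$ with $J_{\pL}(\hat X^{k,m},u^{k,m})\to J_{\pL_{\eeta}}(\tilde Z^k,u^k)$ as $m\to+\infty$.

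A diagonal extraction over $k$ and $m$ then gives a sequence $(X^n,u^n)\in\cA_\L(X_0)$ with $J_{\pL}(X^n,u^n)\to J_{\pL_{\eeta}}(Z,u)=J_\K(\eeta,u)$, which is \eqref{eq:convKL}. From this, \eqref{VKgeVL} follows at once: for any $\mu_0\in\PP_p(\R^d)$, any $X_0$ with $(X_0)_\sharp\P=\mu_0$ and any $(\eeta,u)\in\cA_\K(\mu_0)$, the constructed sequence yields $V_{\pL}(X_0)\le\lim_n J_{\pL}(X^n,u^n)=J_\K(\eeta,u)$, and taking the infimum over $(\eeta,u)\in\cA_\K(\mu_0)$ gives $V_{\pL}(X_0)\le V_\K(\mu_0)$.

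The main obstacle is precisely the transfer step: $\eeta$ and $\P$ are carried by unrelated spaces, and since two $L^p$-maps on $(\Omega,\frB,\P)$ with the same law need not be conjugate by a measure-preserving automorphism, one cannot realize the $\Gamma_T$-problem on $(\Omega,\P)$ with the prescribed datum $X_0$ through a single change of variables. The remedy is the layered approximation above — continuous controls, then a measure-isomorphic or finite-partition copy of $(\Omega,\P)$ sitting inside a controlled perturbation of $(\Gamma_T,\eeta)$, then adjustment of the initial datum — each layer justified by a stability statement already available (Propositions \ref{lemmaB}, \ref{lemmaA}, \ref{prop:costRLntoRL}) and ultimately resting on the atomlessness of $\P$ and on Skorohod's representation theorem (Proposition \ref{prop:Skorohod}).
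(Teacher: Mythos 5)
Your overall architecture coincides with the paper's: Lemma \ref{l:fromKtoL} to read $(\eeta,u)$ as a Lagrangian problem on $(\Gamma_T,\cB_{\Gamma_T},\eeta)$ with initial datum $e_0$, Proposition \ref{lemmaB} to pass to continuous controls and trajectories (legitimate, since $\Gamma_T$ is Polish and $e_0$ is continuous with $e_0\in L^p_{\eeta}(\Gamma_T;\R^d)$), a diagonal extraction, and the final passage from \eqref{eq:convKL} to \eqref{VKgeVL}. These parts are correct and are exactly Steps 1, 3 and 4 of the paper's proof.

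The gap is in the transfer step, which you correctly identify as the heart of the matter but do not actually carry out. Saying that the ``identifications/partitions are to be chosen aligned with the level structure of $X_0$'' is precisely the statement that needs proof: one must produce measurable maps $\Omega\to\Gamma_T$ whose push-forward of $\P$ approximates $\eeta$ weakly, with the correct $e_0$-marginal, \emph{and} whose composition with $e_0$ is close to --- in the paper, equal to --- the prescribed $X_0$; neither Skorohod's theorem nor the finite approximation property delivers this compatibility by itself. The paper's mechanism is: (a) glue the two couplings $\rho=(i_\Omega,X_0)_\sharp\P$ and $(e_0,i_{\Gamma_T})_\sharp\eeta$ along their common marginal $\mu_0$ into a plan $\hat\sigma\in\PP(\Omega\times\R^d\times\Gamma_0)$, where $\Gamma_0=\{\gamma\in\Gamma_T:\gamma(0)=0\}$ and each curve has been translated so as to start at the origin; (b) approximate $\hat\sigma$ by graphs of Borel maps $\hat w_n:\Omega\times\R^d\to\Gamma_0$ via the Young-measure density Lemma \ref{lemma:young} --- this is where the atomlessness of $\P$ enters; (c) set $w_n(\omega,x)=\hat w_n(\omega,x)+x$, so that the pulled-back trajectory starts \emph{exactly} at $X_0(\omega)$, and invoke the stability Proposition \ref{lemmaA} for the weakly converging measures $\eeta^n=(w_n\circ(i_\Omega,X_0))_\sharp\P\to\eeta$ (which is the reason the continuous-control step is needed at all). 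None of (a)--(c) appears in your outline. Moreover, your fallback of first producing data $X_0^{k,m}\to X_0$ and then swapping the initial datum is not covered by Proposition \ref{prop:costRLntoRL} as stated: along your diagonal both the control and the initial datum vary, so you would additionally need continuity of the cost with respect to the initial datum uniformly over the family of controls; the paper's translation trick avoids this issue entirely by pinning the initial datum at $X_0$ from the start.
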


\begin{proof}

 {\bf Step 1.}
Let $(\eeta,u)\in\cA_{\K}(\mu_0)$. 
We denote by $\pL_{\eeta}:= \pL(\Gamma_T,\cB_{\Gamma_T},\eeta)$. 
Defining 
$Z(t,\gamma):=\gamma(t)=e_t(\gamma)$, by Lemma \ref{l:fromKtoL} it holds that  $(Z,u)\in\cA_{\pL_{\eeta}}(e_0)$ and  
\begin{equation}\label{eq:JLetaeqJK}
	J_{\pL_{\eeta}}(Z,u)=J_{\K}(\eeta,u).
\end{equation}
 
Thanks to the continuity of the evaluation map $e_0: \Gamma_T \to \R^d$,  we apply Proposition \ref{lemmaB} for the problem $\pL_{\eeta}$ in the Polish space $(\Gamma_T, \cB_{\Gamma_T}, \eeta)$.
 Then there exists a sequence $(\bar Z^m,\bar u^m)\in\cA_{\pL_{\eeta}}(e_0)$ such that
  $\bar Z^m:[0,T]\times\Gamma_T\to\R^d$ and  $\bar u^m:[0,T]\times\Gamma_T\to U$ are continuous and
\begin{equation}\label{eq:convJLeta}
	\lim_{m\to+\infty}J_{\pL_{\eeta}}(\bar Z^m,\bar u^m)=J_{\pL_{\eeta}}(Z,u).
\end{equation}

 {\bf Step 2.}
 Let $(\Omega,\frB,\P)$ be a standard Borel space such that 
$\P$ is without atoms and $\tau$ be a Polish topology on $\Omega$ such that $\frB = \cB_{(\Omega,\tau)}$.
Denote with $\pL = \pL(\Omega,\cB_{(\Omega,\tau)},\P)$ the Lagrangian problem for the system $\S$.
Let $X_0\in L^p(\Omega;\R^d)$ with $(X_0)_\sharp\P=\mu_0$.
Given $\bar Z\in C([0,T]\times\Gamma_T;\R^d)$ and $\bar u\in C([0,T]\times\Gamma_T;U)$ such that
 $(\bar Z,\bar u)\in\cA_{\pL_{\eeta}}(e_0)$, let us prove that there exists a sequence 
 $(\tilde X^n,\tilde u^n)\in\cA_{\pL}(X_0)$ such that
 \begin{equation}\label{eq:convJLXtilde}
	\lim_{n\to+\infty}J_{\pL}(\tilde X^n,\tilde u^n)=J_{\pL_{\eeta}}(\bar Z,\bar u).
\end{equation}
 
We define the sets
\begin{itemize}
\item $\tilde{\Gamma}:=\left\{(x,\gamma)\in\R^d\times\Gamma_T\,:\,x=\gamma(0)=e_0(\gamma)\right\};$
\item $\Gamma_0:=\left\{\gamma\in\Gamma_T\,:\,\gamma(0)=e_0(\gamma)=0\right\},$
\end{itemize}
and the continuous maps
\begin{itemize}
\item $r:\tilde\Gamma\to\R^d\times\Gamma_0$, $r(x,\gamma):=(x,\gamma-x)$, 
where $\gamma-x$ is the curve $t\mapsto\gamma(t)-x$.
Notice that $r$ admits a left inverse $r^{-1}:\R^d\times\Gamma_0\to\tilde\Gamma$, $r^{-1}(x,\gamma_0)=(x,\gamma_0+x)$, 
that obviously satisfies $r^{-1}\circ r=i_{\tilde\Gamma}$;
\item $s:\Omega\times\R^d\times\Gamma_0\to\Omega\times\tilde\Gamma$, $s(\omega,x,\gamma_0)=(\omega,x,\gamma_0+x)$. 
Observe that $s=(i_\Omega,r^{-1})$.
\end{itemize}
Let us consider the couplings 
\[\rho:=(i_\Omega, X_0)_\sharp\P\in\mathscr P(\Omega\times\R^d),\quad \tilde\eta:=(e_0, i_{\Gamma_T})_\sharp\boldsymbol\eta\in\mathscr P(\tilde\Gamma),\quad \hat\eta:=r_\sharp\tilde\eta\in\mathscr P(\R^d\times\Gamma_0).\]
Notice that $\pi^2_\sharp\rho=\pi^1_\sharp\tilde\eta=\pi^1_\sharp\hat\eta=\mu_0$.

We define a measure $\hat\sigma\in\mathscr P(\Omega\times\R^d\times\Gamma_0)$ satisfying 
$\pi^{1,2}_\sharp\hat\sigma=\rho$ and $\pi^{2,3}_\sharp\hat\sigma=\hat\eta$.
Since $\P$ is without atoms and $\pi^1_\sharp\rho=\P$ then also $\rho\in\PP(\Omega\times\R^d)$ is without atoms.
Applying Lemma \ref{lemma:young} with $\T=\Omega\times\R^d$, $S = \Gamma_0$, $\lambda=\rho$ and 
$\nu=\hat\sigma$, there exists a sequence of Borel maps $\hat w_n:\Omega\times\R^d\to\Gamma_0$ such that 
\begin{equation}\label{eq:hatyoung}
\hat\sigma_n:=(i_{\Omega\times\R^d},\hat w_n)_\sharp\rho\xrightarrow{\mathcal{Y}}\hat\sigma, \quad \textrm{ as }n\to+\infty.
\end{equation}
Define $w_n:\Omega\times\R^d\to\Gamma_T$ by $w_n(\omega,x):=\hat w_n(\omega,x)+x$ and note that 
$s(\omega,x,\hat w_n(\omega,x))=(\omega,x,w_n(\omega,x))$.
Thanks to the continuity of $s$, then $s_\sharp$ is weakly continuous. 
From Remark \ref{rem:Young-weak}, by the composition rule \eqref{eq:composition} and \eqref{eq:hatyoung}, we have that 
\begin{equation}\label{eq:convYn}
\tilde\sigma_n:=s_\sharp\hat\sigma_n=(i_{\Omega\times\R^d}, w_n)_\sharp\rho\xrightarrow{\mathcal{Y}}s_\sharp\hat\sigma=:\tilde\sigma\in\mathscr P(\Omega\times\R^d\times\Gamma_T),  \quad \textrm{ as }n\to+\infty.
\end{equation}
From \eqref{eq:composition}, a direct computation shows
\begin{align*}
&\pi^{1,2}_\sharp\tilde\sigma=\rho,\\
&\tilde\eta=r^{-1}_\sharp\hat\eta=\left(r^{-1}\circ\pi^{2,3}\right)_\sharp\hat\sigma=\left( r^{-1}\circ\pi^{2,3}\circ s^{-1} \right)_\sharp\tilde\sigma=\pi^{2,3}_\sharp\tilde\sigma.
\end{align*}

We define 
$\eeta^n:=\pi^3_\sharp\tilde\sigma_n\in\PP(\Gamma_T)$. 
Observing that $\pi^3_\sharp\tilde\sigma=\eeta$, by \eqref{eq:convYn} we have $\eeta^n\to\eeta$ weakly in $\PP(\Gamma_T)$.
Notice also that $(e_0)_\sharp\eeta^n=(e_0\circ w_n\circ(i_\Omega, X_0))_\sharp\P=(X_0)_\sharp\P=\mu_0$ 
and $(e_0)_\sharp\eeta=\mu_0$. For every $n \in \N$, denote by $\pL_{\eeta^n}:= \pL(\Gamma_T, \cB_{\Gamma_T}, \eeta^n)$ the Lagrangian problem for the system $\S$. 
Since $e_0$, $\bar u$ are continuous, we can apply Proposition \ref{lemmaA} in the probability space 
$(\Gamma_T, \cB_{\Gamma_T}, \eeta)$, with  $\eeta^n, \eeta\in\PP(\Gamma_T)$ and initial datum $e_0$. Thus if $(Z^n,\bar u)\in\cA_{\pL_{\eeta^n}}(e_0)$, we have that 
\begin{equation}\label{eq:convJLetan}
	\lim_{n\to+\infty}J_{\pL_{\eeta^n}}(Z^n,\bar u)=J_{\pL_{\eeta}}(\bar Z,\bar u).
\end{equation}

Finally, for any $n\in\N$, we define the pair $(\tilde X^n,\tilde u^n)$ by
\begin{align*}
\tilde X^n:[0,T]\times\Omega\to\R^d,\quad &\quad  \tilde X^n(t,\omega):=Z^n(t,w_n(\omega,X_0(\omega)));\\
\tilde u^n:[0,T]\times\Omega\to U,\quad &\quad \tilde u^n(t,\omega):=\bar u(t,w_n(\omega,X_0(\omega))).
\end{align*}
Observe that $\tilde X^n(0,\omega)=Z^n(0,w_n(\omega, X_0(\omega)))=e_0(w_n(\omega, X_0(\omega)))=X_0(\omega)$. Moreover, thanks to the composition rule \eqref{eq:composition} we have $\pi^3_\sharp\tilde\sigma_n=(w_n)_\sharp\rho$ so that 
\[(\tilde X^n_t)_\sharp\P=(Z^n_t\circ w_n\circ (i_\Omega, X_0))_\sharp\P=(Z^n_t)_\sharp\eeta^n.\]
By construction we have $(\tilde X^n,\tilde u^n)\in\cA_\pL(X_0)$ and it is immediate to verify that
\[J_{\pL}(\tilde X^n,\tilde u^n)=J_{\pL_{\eeta^n}}(Z^n,\bar u).\]
Then, by \eqref{eq:convJLetan} we obtain \eqref{eq:convJLXtilde}.

{\bf Step 3.}
We apply Step 2 to the sequence $(\bar Z^m,\bar u^m)$ constructed in Step 1.
Fix $m \in \N$, then there exists a sequence $(\tilde X^{m,n},\tilde u^{m,n})_{n \in \N}$ such that 
$(\tilde X^{m,n},\tilde u^{m,n})\in\cA_{\pL}(X_0)$ for every $n \in \N$ and 
 \begin{equation*}
	\lim_{n\to+\infty}J_{\pL}(\tilde X^{m,n},\tilde u^{m,n})=J_{\pL_{\eeta}}(\bar Z^m,\bar u^m), \qquad \forall\,m\in\N.
\end{equation*}
Thanks to \eqref{eq:convJLeta}, by a simple diagonal argument we can select a (not relabelled) sequence 
$(X^{n},u^{n})\in\cA_{\pL}(X_0)$ satisfying 
\[\lim_{n\to+\infty}J_{\pL}(X^{n},u^{n})=J_{\pL_{\eeta}}(Z, u),\]
where $(Z,u)$ are defined in Step 1.
From \eqref{eq:JLetaeqJK} we finally get 
\eqref{eq:convKL}.

{\bf Step 4.}
By  \eqref{eq:convKL} and the definition of $V_\pL$, for any $\eps>0$ there exists $n_\varepsilon>0$ such that for $n\ge n_\varepsilon$
\[V_{\pL}(X_0)\le J_{\pL}(X^n,u^n)\le J_{\K}(\eeta,u)+\varepsilon.\]
From the arbitrariness of $\varepsilon >0$ we have
\[V_{\pL}(X_0)\le J_{\K}(\eeta,u), \quad \forall \, (\eeta,u)\in\cA_{\K}(\mu_0),\] 
hence the required inequality.
\end{proof}

\begin{remark}\label{rem:minimizerLeta}
Under the Convexity Assumption \ref{CA}, 
if $(\eeta,u) \in \cA_{\K}(\mu_0)$ is an optimal pair, then $(Z,u)$ given by Lemma \ref{l:fromKtoL} is optimal for the Lagrangian problem $\pL_{\eeta}$. 
This is a consequence of Theorem \ref{cor:E=K}, Proposition \ref{prop:E<L} and of \eqref{VKgeVL} in Theorem \ref{prop:K>L}.
\end{remark}

We conclude the section with the proof of Theorem \ref{cor:VL=VE'}.
\begin{proof}[Proof of Theorem \ref{cor:VL=VE'}]
Thanks to Proposition \ref{prop:ConvexRL} and Remark \ref{rem:RL-L} 
the relaxed Lagrangian problem $\RL$ in  $\S$ coincides with $\pL'$ in $\S'$.
Precisely, for every $X_0\in L^p(\Omega;\R^d)$ it holds  $\cA_{\RL}(X_0)=\cA_{\pL'}(X_0)$.
Moreover,  $J_{\RL}(X,\sigma)=J_{\pL'}(X,\sigma)$ for every $(X,\sigma)\in \cA_{\RL}(X_0)=\cA_{\pL'}(X_0)$.
Hence,  $V_{\RL}(X_0)=V_{\pL'}(X_0)$.
The equality $V_{\pL}(X_0)=V_{\RL}(X_0)$ follows from Theorem \ref{cor:VL=VRL}. 
 Finally, Theorem \ref{cor:VL=VE} yields $V_{\pL'}(X_0)= V_{\E'}((X_0)_\sharp\P)=V_{\K'}((X_0)_\sharp\P)$.
\end{proof}

\subsection{Continuity of $V_\E$, $V_\K$ and $V_\pL$}
Here, we prove continuity results for the value functions of the various proposed formulations.

\begin{theorem}[Continuity of $V_\E$ and $V_\K$]\label{th:contVE}
Let $\S = (U,f,\cC,\cC_T)$  satisfy the Convexity Assumption \ref{CA}.
If $\mu_0 \in \PP_p(\R^d)$ and $\{\mu_0^n\}_{n\in\N}\subset \PP_p(\R^d)$ is a sequence such that $W_p(\mu_0^n, \mu_0) \to 0$ as $n \to +\infty$,
then  
\begin{equation*}
\lim_{n \to +\infty} V_{\E}(\mu_0^n) = V_\E(\mu_0), \qquad \lim_{n \to +\infty} V_{\K}(\mu_0^n) = V_\K(\mu_0).
\end{equation*}
\end{theorem}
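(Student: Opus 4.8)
The plan is to prove the two limits by combining the semicontinuity results already established with the equivalence between the Eulerian and Kantorovich formulations. By Theorem \ref{cor:E=K} we have $V_\K(\mu_0^n)=V_\E(\mu_0^n)$ for every $n$ and $V_\K(\mu_0)=V_\E(\mu_0)$, so it suffices to prove the statement for $V_\E$; the statement for $V_\K$ then follows immediately.

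For $V_\E$ the lower semicontinuity inequality
\[
\liminf_{n\to+\infty} V_\E(\mu_0^n)\ge V_\E(\mu_0)
\]
is exactly the content of Proposition \ref{prop:lscVE}, which applies under the Convexity Assumption \ref{CA}. It remains to prove the upper semicontinuity inequality
\[
\limsup_{n\to+\infty} V_\E(\mu_0^n)\le V_\E(\mu_0).
\]
To this end I would work on the Lagrangian side, where the upper semicontinuity of the value function with respect to $L^p$-convergence of the initial datum is already available (Proposition \ref{prop:uscVL}), and then transfer the information back to the Eulerian problem through the equality $V_\pL(X_0)=V_\E((X_0)_\sharp\P)$ of Theorem \ref{cor:VL=VE}. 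Concretely: fix a standard Borel probability space $(\Omega,\frB,\P)$ with $\P$ without atoms (for instance $([0,1],\cB,\cL_1)$). Since $W_p(\mu_0^n,\mu_0)\to0$, by Proposition \ref{prop:Skorohod}(ii) (the refined Skorohod representation, recalling that $W_p$-convergence is equivalent to weak convergence plus convergence of the $p$-th moments, see Proposition \ref{prop:wassconv}) there exist measurable maps $X_0^n, X_0:\Omega\to\R^d$ with $(X_0^n)_\sharp\P=\mu_0^n$, $(X_0)_\sharp\P=\mu_0$, and $X_0^n(\omega)\to X_0(\omega)$ for $\P$-a.e. $\omega$. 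Using \eqref{eq:MpLp} together with $\m_p(\mu_0^n)\to\m_p(\mu_0)$ and the a.e. convergence, a standard application of the generalized dominated convergence theorem (Theorem 1.20 in \cite{evans2015measure}, already cited in the paper) yields $\|X_0^n-X_0\|_{L^p(\Omega;\R^d)}\to0$.

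Now apply Theorem \ref{cor:VL=VE}, which holds under the Convexity Assumption \ref{CA} for a standard Borel space with $\P$ without atoms, to get $V_\E(\mu_0^n)=V_\pL(X_0^n)$ and $V_\E(\mu_0)=V_\pL(X_0)$. By Proposition \ref{prop:uscVL},
\[
\limsup_{n\to+\infty} V_\E(\mu_0^n)=\limsup_{n\to+\infty} V_\pL(X_0^n)\le V_\pL(X_0)=V_\E(\mu_0).
\]
Combining this with the $\liminf$ inequality from Proposition \ref{prop:lscVE} gives $\lim_n V_\E(\mu_0^n)=V_\E(\mu_0)$, and then $\lim_n V_\K(\mu_0^n)=V_\K(\mu_0)$ via Theorem \ref{cor:E=K}. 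The only mildly delicate point is the construction of the $L^p$-convergent sequence of initial data $X_0^n$ from the $W_p$-convergent sequence $\mu_0^n$; everything else is a direct invocation of results proved earlier in the paper. Note that there is no circularity: Proposition \ref{prop:uscVL} is proved purely within the Lagrangian framework (via the stability Proposition \ref{prop:costRLntoRL}), and the equivalences feeding into the argument are established independently in Sections \ref{sec:K} and \ref{sec:E=L}.
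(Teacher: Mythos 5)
Your proof is correct and follows essentially the same route as the paper: Skorohod representation on $([0,1],\cB,\cL_1)$ to produce $L^p$-convergent initial data, upper semicontinuity via Proposition \ref{prop:uscVL} transferred through Theorem \ref{cor:VL=VE}, lower semicontinuity from Proposition \ref{prop:lscVE}, and the $V_\K$ statement via Theorem \ref{cor:E=K}. The only cosmetic difference is that the paper deduces $\|X_0^n-X_0\|_{L^p}\to 0$ from Vitali's theorem with a uniform $\psi$-moment bound, whereas you use generalized dominated convergence with convergence of $p$-th moments; both are valid and interchangeable here.
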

\begin{proof}
Let $\mu_0^n$ converge to $\mu_0$ in $\PP_p(\R^d)$. 
By Proposition \ref{prop:Skorohod} with $S = \R^d $, there exist $X_0,X_0^n\in \Bor([0,1];\R^d)$ such that $(X_0)_\sharp\cL_1=\mu_0$, $(X^n_0)_\sharp\cL_1=\mu^n_0$ and  $X_0^n(\omega)\to X_0(\omega)$ for $\cL_1$-a.e. $\omega \in [0,1]$.
Since $\mu_0^n, \mu_0 \in \PP_p(\R^d)$ we have $X_0^n, X_0 \in L^p([0,1];\R^d)$.
Moreover by the convergence $W_p(\mu_0^n,\mu_0)\to0$ and Proposition \ref{prop:wassconv} there exists $\psi:[0,+\infty)\to[0,+\infty)$ admissible (according to Definition \ref{def:admphi})
such that
\begin{equation}
\sup_{n \in \N}\int_{[0,1]} \psi(|X^n_0(t)|^p)\,\d\cL_1(t)  = \sup_{n \in \N}\int_{\R^d} \psi(|x|^p)\,\d\mu_0^n(x) < +\infty.
\end{equation}
Thanks to Vitali theorem we get
\[\|X_0^n-X_0\|_{L^p([0,1];\R^d)}\to 0\quad \text{as } n\to+\infty.\]
Applying Proposition \ref{prop:uscVL} to the Lagrangian problem  in $\S$ with $(\Omega, \cB, \P) = ([0,1], \cB_{[0,1]}, \cL_1)$ we get 
$\limsup_{n\to +\infty} V_{\pL} (X_0^n) \leq V_{\pL}(X_0)$.
Theorem \ref{cor:VL=VE} yields 
\[\limsup_{n \to +\infty} V_{\E}(\mu_0^n)  = \limsup_{n\to +\infty} V_{\pL} (X_0^n) \leq V_{\pL}(X_0) = V_\E(\mu_0).\]
By Proposition \ref{prop:lscVE} we get $\lim_{n \to +\infty} V_{\E}(\mu_0^n) = V_\E(\mu_0)$. Finally, the continuity of $V_\K$ follows by Theorem \ref{cor:E=K} and the continuity of $V_\E$.
\end{proof}

\begin{theorem}[Continuity of $V_\pL$]\label{th:contVL}
Let $\S=(U,f,\cC,\cC_T)$  satisfy Assumption \ref{BA} and $(\Omega, \frB, \P)$ be a standard Borel space such that $\P$ is without atoms.
If $X_0\in L^p(\Omega;\R^d)$ and $\{X_0^n\}_{n\in\N}\subset L^p(\Omega;\R^d)$ is a sequence such that $\|X_0^n-X_0\|_{L^p(\Omega;\R^d)}\to 0$ as $n \to +\infty$,
then  
\begin{equation*}
\lim_{n \to +\infty} V_{\pL}(X_0^n) = V_\pL(X_0).
\end{equation*}
\end{theorem}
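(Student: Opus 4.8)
The plan is to prove the continuity of $V_\pL$ by combining the upper semicontinuity already established in Proposition \ref{prop:uscVL} with a lower semicontinuity bound obtained by transferring to the Eulerian side via the equivalence Theorem \ref{cor:VL=VE'}. More precisely, since Proposition \ref{prop:uscVL} gives $\limsup_{n\to+\infty} V_\pL(X_0^n)\le V_\pL(X_0)$ under the sole hypothesis $\|X_0^n-X_0\|_{L^p(\Omega;\R^d)}\to 0$ (and without any convexity assumption), it remains only to show $\liminf_{n\to+\infty} V_\pL(X_0^n)\ge V_\pL(X_0)$.

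First I would pass to the relaxed/lifted system $\S'=(\UU,\FF,\CC,\CC_T)$ of Definition \ref{def:relax_setting}, which satisfies the Convexity Assumption \ref{CA} by Proposition \ref{prop:ConvexRL}. By Theorem \ref{cor:VL=VE'}, for every $Z\in L^p(\Omega;\R^d)$ we have $V_\pL(Z)=V_{\E'}(Z_\sharp\P)$, where $\E'$ is the Eulerian problem associated to $\S'$. Hence the statement reduces to
\begin{equation*}
\lim_{n\to+\infty} V_{\E'}\big((X_0^n)_\sharp\P\big)=V_{\E'}\big((X_0)_\sharp\P\big).
\end{equation*}
Now from $\|X_0^n-X_0\|_{L^p(\Omega;\R^d)}\to 0$ and \eqref{eq:WpLp} we get $W_p\big((X_0^n)_\sharp\P,(X_0)_\sharp\P\big)\le \|X_0^n-X_0\|_{L^p(\Omega;\R^d)}\to 0$. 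Therefore the desired limit is exactly the continuity of $V_{\E'}$ along a $W_p$-convergent sequence of initial data, which is precisely the content of Theorem \ref{th:contVE} applied to the system $\S'$ (note that $\E'$ satisfies the Convexity Assumption, so Theorem \ref{th:contVE} is applicable).

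Putting the pieces together: by Theorem \ref{th:contVE} applied to $\S'$ with $\mu_0^n:=(X_0^n)_\sharp\P$ and $\mu_0:=(X_0)_\sharp\P$ we obtain $\lim_{n\to+\infty}V_{\E'}(\mu_0^n)=V_{\E'}(\mu_0)$, and then Theorem \ref{cor:VL=VE'} converts this back into $\lim_{n\to+\infty}V_\pL(X_0^n)=V_\pL(X_0)$, which is the claim. In fact this argument makes the separate appeal to Proposition \ref{prop:uscVL} unnecessary, since Theorem \ref{th:contVE} already provides full continuity on the Eulerian side; but one may equally keep the two-sided structure, using Proposition \ref{prop:uscVL} for the $\limsup$ and the Eulerian lower semicontinuity (Proposition \ref{prop:lscVE} transferred through Theorem \ref{cor:VL=VE'}) for the $\liminf$. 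The only genuinely delicate point is checking that the hypotheses of Theorem \ref{th:contVE} are met, i.e.\ that $\S'$ satisfies the Convexity Assumption and that $W_p$-convergence of the push-forwards holds — both of which are immediate from Proposition \ref{prop:ConvexRL} and \eqref{eq:WpLp} respectively — so there is essentially no obstacle once the reduction to the lifted Eulerian problem is set up.
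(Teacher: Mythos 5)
Your argument is correct and is essentially the paper's own proof: both reduce via Theorem \ref{cor:VL=VE'} to the Eulerian problem $\E'$ for the lifted convex system $\S'$, observe that $W_p\big((X_0^n)_\sharp\P,(X_0)_\sharp\P\big)\to 0$ follows from \eqref{eq:WpLp}, and conclude by the continuity of $V_{\E'}$ from Theorem \ref{th:contVE}. The extra remarks about Proposition \ref{prop:uscVL} are harmless but, as you note, unnecessary.
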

\begin{proof}
From Theorem \ref{cor:VL=VE'} we  have $V_{\pL}(X_0^n) = V_{\E'}((X_0^n)_\sharp \P)$  and $V_\pL(X_0)= V_{\E'}((X_0)_\sharp \P)$.
The application of Theorem \ref{th:contVE} to $\E'$ in $\S' = (\UU,\FF,\CC,\CC_T)$ (see Definition \ref{def:relax_setting}) concludes the proof.
\end{proof}

\subsection{A counterexample: Non-existence of minimizers for $\pL$}\label{sec:counterexample}
In the previous sections we have shown that, under the Convexity Assumption \ref{CA}, the Eulerian and Kantorovich problems always admit a minimizer, see Theorems \ref{thm:minE}, \ref{cor:E=K}.
This is not always true in the Lagrangian setting.
Existence of minimizers has been shown in Remark \ref{rem:minimizerLeta} in the very particular case $\pL = \pL_{\eeta}$, where $\eeta$ is optimal for a Kantorovich problem.
In general, for a given parametrization space $(\Omega, \frB,\P)$, the choice of the initial condition is relevant as highlighted in the following. 

\begin{theorem}
Let $\S = (U,f,\cC,\cC_T)$ satisfy the Convexity Assumption \ref{CA} and $(\Omega, \frB,\P)$ be a standard Borel space such that $\P$ is without atoms. 
If $\mu_0\in \PP_p(\R^d)$ then there exists $X_0 \in L^p(\Omega;\R^d)$ with $(X_0)_\sharp \P = \mu_0$ and $(X,u) \in \cA_\pL(X_0)$ such that 
\begin{equation}\label{eq:JL=VL}
J_\pL(X,u) = V_\pL(X_0) = V_\E(\mu_0).
\end{equation}
\end{theorem}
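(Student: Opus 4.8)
The plan is to realize the desired $X_0$ by ``straightening out'' an optimal Kantorovich pair. By Theorem \ref{cor:E=K} there is an optimal pair $(\eeta,u)\in\cA_\K(\mu_0)$ with $J_\K(\eeta,u)=V_\K(\mu_0)=V_\E(\mu_0)$. Consider the Lagrangian problem $\pL_\eeta=\pL(\Gamma_T,\cB_{\Gamma_T},\eeta)$ on the parametrization space $(\Gamma_T,\cB_{\Gamma_T},\eeta)$ with the evaluation map $Z(t,\gamma)=\gamma(t)$. By Lemma \ref{l:fromKtoL} we have $(Z,u)\in\cA_{\pL_\eeta}(e_0)$ and $J_{\pL_\eeta}(Z,u)=J_\K(\eeta,u)$, and by Remark \ref{rem:minimizerLeta} the pair $(Z,u)$ is optimal for $\pL_\eeta$, i.e. $J_{\pL_\eeta}(Z,u)=V_{\pL_\eeta}(e_0)=V_\E(\mu_0)$. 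So on the specific parametrization space $\Gamma_T$ with the specific initial datum $e_0$ the Lagrangian problem does have a minimizer achieving the Eulerian value.

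The next step is to transport this minimizer to the prescribed space $(\Omega,\frB,\P)$. Since $\P$ is without atoms and $\eeta\in\PP(\Gamma_T)$ with $\Gamma_T$ Polish, Proposition \ref{prop:Skorohod}(i) gives a measurable map $\Theta:\Omega\to\Gamma_T$ with $\Theta_\sharp\P=\eeta$. Actually, to get a genuine equivalence of the two Lagrangian problems I would want to invoke Proposition \ref{p:equivalence}: it suffices to produce measurable $\psi:\Omega\to\Gamma_T$ and $\phi:\Gamma_T\to\Omega$ with $\psi_\sharp\P=\eeta$, $\phi_\sharp\eeta=\P$ and the two ``round-trip'' identities \eqref{eq:strana1}, \eqref{eq:strana2} holding for all $L^p$ initial data. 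Both $(\Omega,\frB,\P)$ and $(\Gamma_T,\cB_{\Gamma_T},\eeta)$ are standard Borel spaces with atomless measures, hence measure-isomorphic to $([0,1],\cB,\cL_1)$ (up to null sets); composing the isomorphisms yields a bi-measurable bijection $\psi$ (defined off null sets) with $\psi_\sharp\P=\eeta$, and then $\phi:=\psi^{-1}$ satisfies all the hypotheses of Proposition \ref{p:equivalence}, so $\pL(\Omega,\frB,\P;\S)\sim\pL_\eeta$ in the sense of Definition \ref{def:equiv_L}.

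Finally I would set $X_0:=e_0\circ\psi=Z_0\circ\psi\in L^p(\Omega;\R^d)$, which satisfies $(X_0)_\sharp\P=(e_0)_\sharp\eeta=\mu_0$ by the composition rule \eqref{eq:composition}, and define $(X,u')$ as the image of $(Z,u)$ under the equivalence: concretely $X(t,\omega):=Z(t,\psi(\omega))$ and $u'(t,\omega):=u(t,\psi(\omega))$. Using $\psi_\sharp\P=\eeta$ one checks directly, as in the proof of Proposition \ref{p:equivalence}, that $(X,u')\in\cA_\pL(X_0)$ and $J_\pL(X,u')=J_{\pL_\eeta}(Z,u)$; moreover the equivalence gives $V_\pL(X_0)=V_{\pL_\eeta}(e_0)$. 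Combining, $J_\pL(X,u')=J_{\pL_\eeta}(Z,u)=V_{\pL_\eeta}(e_0)=V_\pL(X_0)$, and by Theorem \ref{cor:VL=VE} (or directly $V_{\pL_\eeta}(e_0)=V_\E(\mu_0)$) this common value equals $V_\E(\mu_0)$, which is \eqref{eq:JL=VL}. The main obstacle is the measure-theoretic bookkeeping in the second step: one must be careful that the maps $\psi,\phi$ are only defined up to $\P$- resp.\ $\eeta$-null sets, and invoke the version of Proposition \ref{p:equivalence} (noted in the remark following it) that tolerates this, as well as ensure $X_0\in L^p$, which follows since $e_0\in L^p_\eeta(\Gamma_T;\R^d)$ because $\mu_0\in\PP_p(\R^d)$.
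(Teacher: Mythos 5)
Your core construction is exactly the paper's: take an optimal Kantorovich pair $(\eeta,u)$ from Theorem \ref{cor:E=K}, use Proposition \ref{prop:Skorohod}(i) (valid since $\P$ is atomless and $\Gamma_T$ is Polish) to produce $\psi:\Omega\to\Gamma_T$ with $\psi_\sharp\P=\eeta$, pull back the evaluation process and the control through $\psi$, check admissibility and equality of costs as in Lemma \ref{l:fromKtoL}, and close the loop with Theorems \ref{cor:VL=VE} and \ref{cor:E=K}. That part is correct and is what the paper does.

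The one step that does not survive scrutiny is the attempt to upgrade the Skorohod map to a full equivalence $\pL(\Omega,\frB,\P;\S)\sim\pL_\eeta$ via Proposition \ref{p:equivalence}. You assert that $(\Gamma_T,\cB_{\Gamma_T},\eeta)$ is an atomless standard Borel probability space, hence measure-isomorphic to $([0,1],\cL_1)$. But $\eeta$ need not be atomless: $\mu_0\in\PP_p(\R^d)$ is arbitrary, and if for instance $\mu_0=\delta_{x_0}$ the optimal $\eeta$ can be (or even must be, when the dynamics from $x_0$ is rigid) a single Dirac mass $\delta_{\gamma_0}$ on $\Gamma_T$. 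In that case no map $\phi:\Gamma_T\to\Omega$ with $\phi_\sharp\eeta=\P$ exists at all, since the pushforward of an atomic measure cannot be the atomless $\P$; a fortiori there is no bi-measurable measure-preserving bijection, and Proposition \ref{p:equivalence} cannot be invoked. Fortunately this detour is dispensable: the one-directional pushforward $\psi_\sharp\P=\eeta$ already gives $(X,u')\in\cA_\pL(X_0)$ with $J_\pL(X,u')=J_{\pL_\eeta}(Z,u)=J_\K(\eeta,u)=V_\K(\mu_0)=V_\E(\mu_0)$, while Theorem \ref{cor:VL=VE} (whose hypotheses are exactly those of the statement) gives $V_\pL(X_0)=V_\E((X_0)_\sharp\P)=V_\E(\mu_0)$; combined with the trivial inequality $J_\pL(X,u')\ge V_\pL(X_0)$ this yields \eqref{eq:JL=VL}. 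You in fact indicate this alternative in your last sentence, so the proof is salvaged simply by deleting the equivalence claim.
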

\begin{proof}
Let $\mu_0 \in \PP_p(\R^d)$, by Theorem \ref{cor:E=K} there exists $(\eeta,\bar u) \in \cA_{\K}(\mu_0)$ such that $J_\K(\eeta,\bar u) = V_\K(\mu_0)$.
Fix $\tau$ a  Polish topology on $\Omega$ such that $\frB = \cB_{(\Omega,\tau)}$.
Since $\P$ is without atoms, thanks to Proposition \ref{prop:Skorohod} there exists a Borel map $\psi: \Omega \to \Gamma_T$ such that $\psi_\sharp \P = \eeta$.
For every $t \in [0,T]$ we define $X_t:= e_t \circ \psi$ and the Borel map $u(t,\omega):= \bar u(t, \psi(\omega))$.
Using the same techniques as in the proof of Lemma \ref{l:fromKtoL} we deduce that  $(X,u) \in \cA_\pL(X_0)$ (where $X_0 := e_0 \circ \psi$) and 
 $J_\pL(X,u) = J_\K(\eeta,\bar u)$.
 By Theorems \ref{cor:VL=VE} and \ref{cor:E=K} we finally get \eqref{eq:JL=VL}.
\end{proof}

In general, if the initial condition $X_0$ is assigned a priori, existence of minimizers for the Lagrangian problem is not guaranteed.
We consider the Wasserstein barycenter problem, for which we study the Eulerian and Lagrangian formulations.
In particular, we exhibit an initial datum $X_0$ whose corresponding Lagrangian problem does not admit minimizers.
We stress that the system under consideration satisfies the Convexity Assumption \ref{CA}.

\subsubsection{Wasserstein barycenter problem: Eulerian formulation}\label{subsec:ExBarE}
We consider the setting $\S = (U, f, \cC, \cC_T)$ as follows:
let $U := \overline{B_R(0)}\subset \R^d$, for some $R >0$ sufficiently large, $T=1$ and $p =2$.
We fix $\nu\in\PP(\R^d)$ with compact support.
We consider the velocity field $f:\R^d\times U\times \PP_2(\R^d)\to \R^d$, the cost functions $\cC:\R^d\times U\times \PP_2(\R^d)\to [0,+\infty)$ 
and $\cC_T:\R^d\times \PP_2(\R^d)\to [0,+\infty)$ defined by
\[
f(x,u,\mu) = u, \qquad \cC(x,u,\mu) = |u|^2, \qquad \cC_T(x,\mu) = W^2_2(\mu, \nu).
\]
In this setting, the cost functional has the form
\[ J_{\E}(\mu,\ubar u) 
= \int_0^1 \int_{\R^d} |\ubar u(t,x)|^2 \d \mu_t(x) \d t + W_2^2(\mu_1, \nu).	 \]
For any $\mu_0\in\PP_2(\R^d)$, the associated value function is given by 
\begin{equation}\label{ExBarE}
V_{\E}(\mu_0) := \inf_{(\mu,\ubar u) \in \cA_{\E}(\mu_0)} J_{\E}(\mu,\ubar u),
\end{equation}
and recall that by Theorem \ref{thm:minE} the infimum in \eqref{ExBarE} is actually a minimum. 

Let us now fix $\mu_0 \in \PP(\R^d)$ with compact support and characterize the value function and the corresponding minimizers.
By the Benamou-Brenier formula \eqref{B&B}, we have the lower bound
\begin{equation}\label{ExBar2} 
\begin{split}
 \inf_{(\mu,\ubar u) \in \cA_{\E}(\mu_0)} \left(\int_0^1 \int_{\R^d} |\ubar u(t,x)|^2 \d \mu_t(x) \d t + W_2^2(\mu_1, \nu) \right)
 \geq  \inf_{\mu_1\in\PP_2(\R^d)} \left[W_2^2(\mu_0, \mu_1) + W_2^2(\mu_1, \nu) \right].
\end{split}
\end{equation}
Using the triangle inequality, it is easy to prove that
\begin{equation}\label{ExBar1}
 W_2^2(\mu_0, \mu_1) + W_2^2(\mu_1, \nu)\ge \frac12  W_2^2(\mu_0, \nu), \qquad \forall\, \mu_1\in\PP_2(\R^d),
\end{equation}
and, for any constant speed Wasserstein geodesic $\{\sigma_t\}_{t\in[0,1]}$ such that $\sigma_0=\mu_0$ and $\sigma_1=\nu$, the measure 
$\mu_1=\sigma_{1/2}$ realizes  the equality in \eqref{ExBar1}.
Since the supports of $\mu_0$ and $\nu$ are compact, then the support of $\sigma_{1/2}$ is compact and, denoting by
$\{\mu_t\}_{t\in[0,1]}$ a Wasserstein geodesic joining $\mu_0$ to $\sigma_{1/2}$,
 we also have that a vector field $\ubar u$ realizing the equality
\begin{equation}\label{ExBar3} 
\int_0^1 \int_{\R^d} |\ubar u(t,x)|^2 \d \mu_t(x) \d t 
 = W_2^2(\mu_0, \sigma_{1/2})
\end{equation}
is bounded (see e.g. \cite[Section~5.4]{santambrogio2015optimal}).
Then, using $\ubar u$ satisfying \eqref{ExBar3} and choosing $R$ sufficiently large, we obtain the equality in \eqref{ExBar2}. 
The value of the minimum is 
\begin{equation}\label{eq:min_wasserstein_bar}
 V_\E(\mu_0)= W_2^2(\mu_0, \sigma_{1/2}) + W_2^2(\sigma_{1/2}, \nu) = \frac12  W_2^2(\mu_0, \nu).
\end{equation}
Notice that the minimizer $(\mu, \ubar u)$ is not unique a priori.
If at least one of the measures $\nu$ and $\mu_0$ is absolutely continuous with respect to $\cL^d$, 
then the geodesic $\{\sigma_t\}_{t\in[0,1]}$ is unique and the  map $\eta \mapsto W^2_2(\eta, \nu)$ is strictly convex.
In this case $\sigma_{1/2}$ is the {unique} minimizer of the functional $\eta \mapsto W^2_2(\mu_0, \eta) + W^2_2(\eta, \nu)$
and the pair $(\mu,\ubar u)$, with $\mu_t:=\sigma_{t/2}$ for all $t\in[0,1]$, is the (unique) minimizer for the Eulerian problem. 

\subsubsection{Wasserstein barycenter problem: Lagrangian formulation}\label{subsec:ExBarL}
Let $(\Omega, \frB, \P)$ be a standard Borel space such that $\P\in\PP(\Omega)$ is without atoms.
The Lagrangian cost functional of the Wasserstein barycenter problem is given by
\begin{equation}
J_{\pL}(X,u):= \int_0^1 \int_{\Omega} |u_t(\omega)|^2 \d \P(\omega) \d t + W_2^2((X_1)_\sharp\P, \nu).
\end{equation}
For any $X_0 \in L^2(\Omega;\R^d)$, the   corresponding value function is 
\begin{equation*}
V_{\pL}(X_0) = \inf_{(X,u) \in \cA_{\pL}(X_0)} J_{\pL}(X,u). 
\end{equation*}
Since $(X,u) \in \cA_{\pL}(X_0)$ satisfies, for $\P$-a.e. $\omega\in\Omega$, the system
\begin{equation*}
\begin{system}
\dot X_t(\omega) = u_t(\omega), \qquad \text{ for } \cL_1\text{-a.e. } t \in (0,1) \\
X|_{t = 0}(\omega) = {X_0(\omega)},
\end{system}
\end{equation*}
we have
\begin{equation}\label{eq:ex:lagrangian}
\begin{split}
\int_0^1 &\int_{\Omega} |u_t(\omega)|^2 \d \P(\omega) \d t  \geq \int_{\Omega} \left| \int_0^1 u_t(\omega) \d t  \right|^2\d \P(\omega) \\
&=\int_{\Omega} \left| \int_0^1 \dot X_t(\omega) \d t  \right|^2\d \P(\omega) = \int_{\Omega} \left| X_1(\omega) - X_0(\omega)  \right|^2\d \P(\omega) 
\end{split}
\end{equation}
where we have applied Fubini theorem and Jensen's inequality. 
Notice that the inequality in \eqref{eq:ex:lagrangian} becomes an equality if $(X,u)$ belongs to the restrict admissibility class given by 
\begin{equation*}
\begin{split}
\bar \cA_{\pL}(X_0):= \lbrace (X,u)\in  \cA_{\pL}(X_0)\,: \; u_t(\omega) = \bar u(\omega), \, \forall t \in [0,1], \text{ for $\P$-a.e. }\omega\in\Omega \rbrace.
\end{split}
\end{equation*} 

Suppose now that $\mu_0:= (X_0)_\sharp \P $ has compact support (i.e. $X_0$ bounded), 
then we can compare the Lagrangian and Eulerian formulation  of the Wasserstein barycenter problem.
Indeed, for $R$ sufficiently large we have
\begin{equation}\label{ExBarL1}
\begin{split}
V_{\pL}(X_0) &=\inf_{(X,\bar u) \in \bar\cA_{\pL}(X_0)} \left(\int_{\Omega} |X_1(\omega)-X_0(\omega)|^2 \d \P(\omega) + W_2^2((X_1)_\sharp\P, \nu) \right) \\
=V_{\E}(\mu_0) &= \min_{\mu_1 \in \PP_2(\R^d)}\left(W^2_2(\mu_0, \mu_1) + W^2_2(\mu_1, \nu) \right) = \frac12  W_2^2(\mu_0, \nu).
\end{split}
\end{equation}
where  the first equality follows by the choice $(X,\bar u) \in \bar\cA_{\pL}(X_0)$, the second equality is given by Theorem \ref{cor:VL=VE} and the last two equalities are exactly \eqref{eq:min_wasserstein_bar}.

\medskip

We now exhibit an example where the infimum for the Lagrangian problem is not a minimum.
Let us consider  $\Omega = [0,1]$, $\P = \cL_1$ and fix the dimension $d=2$.\\
We set $\nu:= \cL^2 \mres [0,1]^2$, 
$X_0:[0,1]\to\R^2$ defined by $X_0(\omega) = (1/2,\omega)$.
We observe that $\mu_0:= (X_0)_\sharp\P=\HH^1\mres (\lbrace 1/2 \rbrace \times [0,1])$.
We also notice that $X_0^{-1}:\R^2\to [0,1]$,
defined $\mu_0$-a.e. has the form $X_0^{-1}(1/2,\omega)=\omega$.\\
Since
\begin{equation*}
\begin{split}
\int_{[0,1]} \left| X_1(\omega) - X_0(\omega)  \right|^2\d \P(\omega) 
= \int_{\R^2} \left| X_1(X_0^{-1}(x)) - x  \right|^2\d \mu_0(x),
\end{split}
\end{equation*}
and defining
\begin{equation}\label{ExBarLB}
\begin{split}
\BB_{\mu_0} &:=\lbrace X_1\circ X_0^{-1} : X_1 = X_0  + \bar u \, \text{ and }  (X,\bar u) \in \bar \cA_{\pL}(X_0) \rbrace \\
&= \lbrace Y\in L^2((\R^2,\mu_0);\R^2): |Y(x)-x|\leq R, \text{ for $\mu_0$-a.e. }x\in\R^2 \rbrace ,
\end{split}
\end{equation}
we easily get
\begin{equation}\label{ExBarL2}
\begin{split}
V_{\pL}(X_0) &= \inf_{(X,\bar u) \in \bar \cA_{\pL}(X_0)} J_{\pL}(X,\bar u) \\
&= \inf_{Y \in \BB_{\mu_0}} \left[ \int_{\R^2} \left| Y(x) - x  \right|^2\d \mu_0(x) + W^2_2(Y_\sharp \mu_0, \nu)\right].
\end{split}
\end{equation} 
As already observed at the end of subsection \ref{subsec:ExBarE},
since $\nu$ is absolutely continuous with respect to $\cL^2$, there exists a unique geodesic $\{\sigma_t\}_{t\in[0,1]}$ joining $\mu_0$ to $\nu$. 
Furthermore, $\sigma_{1/2}$ is the {unique} minimizer of the functional $\eta \mapsto W^2_2(\mu_0, \eta) + W^2_2(\eta, \nu)$.
Then, for $R$ sufficiently large, from \eqref{ExBarL1}  and \eqref{ExBarL2} we know that 
\begin{equation}\label{ExBarL3}
\begin{split}
W^2_2(\mu_0, \sigma_{1/2}) + W^2_2(\sigma_{1/2}, \nu) &=\inf_{Y \in \BB_{\mu_0}} \left[ \int_{\R^d} \left| Y(x) - x  \right|^2\d \mu_0(x) + W^2_2(Y_\sharp \mu_0, \nu)\right] \\
&\leq \inf_{\substack{Y \in \BB_{\mu_0} \\ Y_\sharp \mu_0 = \sigma_{1/2}} } \int_{\R^d} \left| Y(x) - x  \right|^2\d \mu_0(x) + W^2_2(\sigma_{1/2}, \nu).
\end{split}
\end{equation}
On the other hand, since $\supp(\sigma_{1/2}) \subset [0,1]^2$, by \cite[Theorem~B]{pratelli2007equality} we have 
\begin{equation}\label{OTP}
W^2_2(\mu_0, \sigma_{1/2}) = \inf_{\substack{Y \in \BB_{\mu_0} \\ Y_\sharp \mu_0 = \sigma_{1/2}} } \int_{\R^d} \left| Y(x) - x  \right|^2\d \mu_0(x) 
\end{equation}
and, consequently, equality holds in \eqref{ExBarL3}. Moreover the infimum in \eqref{OTP} is not attained. 
Indeed, the map $T:\R^2\to \R^2$ defined by $T(x_1,x_2)=(1/2,x_2)$ satisfies $T_\sharp\nu=\mu_0$ and 
$T=\nabla\varphi$ for $\varphi(x_1,x_2)=\frac{1}{2}(x_1+x_2^2)$, therefore $T$ is the optimal transport map from $\nu$ to $\mu_0$.
The unique geodesic joining $\mu_0$ to $\nu$ is $\sigma_t=(t(x_1,x_2)+(1-t)(1/2,x_2))_\sharp\nu$ and 
$\sigma_{1/2}=(\frac12x_1+\frac14,x_2))_\sharp\nu$ coincides with the uniform probability measure on $[1/4,3/4]\times[0,1]$. 
The map $T$ is still the optimal transport map from $\sigma_{1/2}$ to $\mu_0$ and the unique optimal transport plan between 
$\sigma_{1/2}$ and $\mu_0$ is $((x_1,x_2),(1/2,x_2))_\sharp\sigma_{1/2}$. Then the unique optimal transport plan between 
$\mu_0$ and $\sigma_{1/2}$  is $\gamma:=((1/2,x_2),(x_1,x_2))_\sharp\sigma_{1/2}$. 
Since $\gamma$  is not concentrated on the graph of a map, the optimal transport map from $\mu_0$ to $\sigma_{1/2}$ does not exist.

Since \eqref{OTP} has not minimizers, then  \eqref{ExBarL2} cannot have minimizers.
Indeed, suppose there exists a minimizer $(X,u) \in \cA_{\pL}(X_0)$ for $J_\pL$ in \eqref{ExBarL2}. 
Then $X_1: [0,1]\to\R^2$ satisfies $(X_1)_\sharp\P=\sigma_{1/2}$, $u =X_1-X_0$ and $X_t=t u+X_0$. 
Defining $Y=X_1\circ X_0^{-1}$, we have that $Y_\sharp\mu_0=\sigma_{1/2}$ so that 
 $Y$ is a minimizer in \eqref{OTP}, which is absurd.

\medskip

\begin{remark}\label{rmk:RLex}
Notice that existence of minimizers is not guaranteed even for Relaxed Lagrangian problems.
Indeed, the same results obtained for the Wasserstein barycenter problem in $\S=(U,f, \cC, \cC_T)$ given in Section \ref{subsec:ExBarE}, can be easily extended to the  lifted system $\S'=(\UU,\FF,\CC,\CC_T)$ associated to $\S$ (see Definition \ref{def:relax_setting}).

In the proposed example, the Lagrangian and Eulerian problems $\pL', \E'$ associated to $\S'$ can be treated as the problems $\pL, \E$ associated to $\S$
 thanks to the following simple observation:
given a probability measure $\rho\in \PP(U)$, by Jensen's inequality we have
\[\int_U |u|^2\,\d\rho(u)\ge\left|\int_Uu\,\d\rho(u)\right|^2,\]
and the equality holds if and only if $\rho=\delta_{\ubar u}$ for some $\ubar u\in U$. 	
This guarantees that possible control minimizers for $\pL', \E'$ are of the form $ \delta_{\ubar u}$ with $\ubar u$ \emph{non-relaxed} control for $\pL, \E$, respectively. 
The corresponding trajectories for $\pL', \E'$ with control $\delta_{\ubar u}$ coincide with the ones associated to $\ubar u$ for problems $\pL$ and $\E$, respectively.
Finally, thanks to Remark \ref{rem:RL-L}, non-existence of minima for $\pL'$ corresponds to non-existence of minima for $\RL$.
\end{remark}

\section{Finite particle systems and Gamma-convergence}\label{sec:finite}

To model the evolution of a finite number of particles, we introduce a discrete finite space $\Omega^N$ with the corresponding normalized counting measure $\P^N$.
In this setting, in order to prove equivalence between Eulerian and Lagrangian problems, we cannot directly apply the results given in Theorems \ref{cor:VL=VE}  due to the requirement on the probability measure $\P$ to be without atoms (see in particular Theorem \ref{prop:K>L}).
Hence, we introduce a further formulation of the Lagrangian problem  in the context of feedback controls (see Definition \ref{def:FL}) and we exploit a discrete formulation of the superposition principle for which we refer to Theorem \ref{lem:EN}.

Furthermore, in Subsections  \ref{sec:gamma} and \ref{sec:gammaE}, we prove a (discrete to continuous) $\Gamma$-convergence result respectively for the Lagrangian and Eulerian cost functionals when the number of particles goes to infinity.

\subsection{Equivalences between $N$-particles problems}\label{sec:finiteN}

Let $(\Omega^N,\Parts(\Omega^N),\P^N)$ given by 
\begin{equation}\label{def:omegaN}
\begin{aligned}
&\Omega^{N}:=\{1,\dots,N\}, \quad \Parts(\Omega^{N}):= \sigma ( \lbrace 1\rbrace, \ldots \lbrace N\rbrace)), \\
&\P^{N}(\{k\}):=\frac 1N, \quad k=1,\dots,{N}.
\end{aligned}
\end{equation}
We will refer to $\P^N$ as the normalized counting measure, which can be written as
\begin{equation*}
\P^N=\frac{1}{N}\sum_{k=1}^N\delta_k.
\end{equation*}
Let us denote with $\pL^N = \pL(\Omega^N, \Parts(\Omega^N),\P^N )$ the Lagrangian problem associated to the probability space $(\Omega^N, \Parts(\Omega^N),\P^N )$.
Notice that the functional space $L^p(\Omega^N;\R^d)$ coincides with the space of all maps $g: \Omega^N \to \R^d$, which can be identified with $(\R^d)^N$.

Differently from the Lagrangian problem $\pL^N$, where we just need to fix the parametrization space, the definition of the $N$-particle Eulerian problem requires the introduction of a further constraint.
Let us firstly define the subspace of $\PP(\R^d)$ given by the discrete measures as
\begin{equation}\label{eq:discreteProb}
\PP^N(\R^d):= \left\lbrace	\mu = \frac{1}{N} \sum_{i =1}^N \delta_{x_i} \; \text{ for some } x_i \in \R^d \right\rbrace.
\end{equation}

\begin{definition}[Discrete Eulerian optimal control problem $(\E^N)$]\label{def:E^N}
Let $\S = (U,f,\cC,\cC_T)$  satisfy Assumption \ref{BA}. Given $\mu_0\in\PP^N(\R^d)$, we say that
 $(\mu,\ubar u)\in\cA_{\E^N}(\mu_0)$, if
\begin{itemize}
\item[(i)] $(\mu,\ubar u)\in\cA_{\E}(\mu_0)$; 
\item[(ii)] $\mu_t \in \PP^N(\R^d)$, for every $t \in [0,T]$.
\end{itemize}
We define the \emph{cost functional}  $J_{\E^N}:=J_\E$ and the \emph{value function} 
\[
V_{\E^N}(\mu_0):=\inf\{J_{\E^N}(\mu,\ubar u): (\mu,\ubar u)\in\cA_{\E^N}(\mu_0)\}.\]
\end{definition}

\begin{remark}
Notice that item (ii) in Definition \ref{def:E^N} does not follow from the requirement $\mu_0 \in \PP^N(\R^d)$. 
Indeed,  
the control map $\ubar u$ in general is not Lipschitz continuous so that uniqueness of characteristics is not guaranteed.
\end{remark}

Observe that, for every $N \in \N$, it holds 
\begin{equation}\label{eq:VEN>VE}
	V_{\E^N}(\mu_0)\geq V_{\E}(\mu_0), \qquad \forall\,\mu_0\in\PP^N(\R^d).
\end{equation}

The main result of this section is given in the following theorem. 
\begin{theorem}\label{cor:LN=EN}
Let $\S = (U,f,\cC,\cC_T)$ satisfy Assumption \ref{BA}.
Let  $p\geq 1$ and  $X_0\in L^p(\Omega^N;\R^d)$. Then 
\[V_{\pL^N}(X_0)= V_{\E^N}((X_0)_\sharp\P^N).\]
\end{theorem}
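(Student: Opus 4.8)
The plan is to prove the two inequalities $V_{\pL^N}(X_0) \ge V_{\E^N}((X_0)_\sharp\P^N)$ and $V_{\pL^N}(X_0) \le V_{\E^N}((X_0)_\sharp\P^N)$ separately. The first one should be essentially a discrete version of Proposition \ref{prop:E<L}: given $(X,u) \in \cA_{\pL^N}(X_0)$, one sets $\mu_t := (X_t)_\sharp\P^N$ and constructs an Eulerian control by a barycentric projection with respect to the disintegration of the relevant coupling. The only extra point to check is that $\mu_t \in \PP^N(\R^d)$ for every $t$, which is immediate since $\mu_t$ is the pushforward of the $N$-atom measure $\P^N$. A subtlety is that Proposition \ref{prop:E<L} as stated requires the Convexity Assumption \ref{CA}. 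Without it, I would instead pass through the relaxed/Eulerian formulation of the lifted system $\S'$ (as in Theorem \ref{cor:VL=VE'}), or directly: since on a finite space every control is automatically ``piecewise constant'', one can construct a Eulerian pair with the SAME cost by essentially choosing, at each $(t,x)$ in the (finite) support of $\mu_t$, the control acting on the particle(s) sitting at $x$. If several particles coincide at the same point, one uses the relaxed Eulerian formulation, or appeals to the convexified system; I expect the cleanest route is to first reduce to $\S'$ where Assumption \ref{CA} holds, use Proposition \ref{prop:E<L} for $\S'$, and then note that on $\PP^N$ relaxed controls can be de-relaxed (chattering in the discrete setting) without increasing the cost — but this needs care and is a candidate main obstacle.

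For the reverse inequality $V_{\pL^N}(X_0) \le V_{\E^N}((X_0)_\sharp\P^N)$, the idea is to take $(\mu,\ubar u) \in \cA_{\E^N}((X_0)_\sharp\P^N)$ and apply the \emph{discrete superposition principle}, Theorem \ref{lem:EN} in Appendix \ref{app:SP}. That theorem should produce, from a curve $\mu_t \in \PP^N(\R^d)$ solving the continuity equation with velocity $v_t(x) = f(x,\ubar u(t,x),\mu_t)$, a representation $\eeta = \frac1N\sum_{k=1}^N \delta_{\gamma_k}$ supported on $N$ absolutely continuous curves $\gamma_k$, each solving $\dot\gamma_k(t) = v_t(\gamma_k(t))$, with $\mu_t = (e_t)_\sharp\eeta$. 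From this empirical $\eeta$ one reads off a Lagrangian pair on $\Omega^N$: set $\tilde X_t(k) := \gamma_k(t)$ and $\tilde u(t,k) := \ubar u(t,\gamma_k(t))$, so that $(\tilde X,\tilde u) \in \cA_{\pL^N}(\tilde X_0)$ where $\tilde X_0(k) = \gamma_k(0)$, and by construction $J_{\pL^N}(\tilde X,\tilde u) = J_{\E^N}(\mu,\ubar u)$. This gives $V_{\pL^N}(\tilde X_0) \le J_{\E^N}(\mu,\ubar u)$, but $\tilde X_0$ need not equal the prescribed $X_0$ — it only satisfies $(\tilde X_0)_\sharp\P^N = (X_0)_\sharp\P^N = \mu_0$. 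So one needs the additional fact that on $\Omega^N$ the value function depends on $X_0$ only through its law $(X_0)_\sharp\P^N$: this holds because any two $\Omega^N$-valued initial data with the same law differ by a permutation of $\{1,\dots,N\}$, which is a measure-preserving bijection, so $\pL^N$ with $X_0$ and $\pL^N$ with $\tilde X_0$ are equivalent in the sense of Definition \ref{def:equiv_L} via Proposition \ref{p:equivalence}. Hence $V_{\pL^N}(X_0) = V_{\pL^N}(\tilde X_0) \le J_{\E^N}(\mu,\ubar u)$, and taking the infimum over admissible Eulerian pairs yields the claim.

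Putting these together gives the equality. I would organize the write-up as: (1) a permutation-invariance lemma for $V_{\pL^N}$ (short, via Proposition \ref{p:equivalence}); (2) the inequality $\le$ using Theorem \ref{lem:EN} and step (1); (3) the inequality $\ge$ by reduction to the convex lifted system $\S'$ plus Proposition \ref{prop:E<L}, together with the observation that the resulting Eulerian trajectory stays in $\PP^N(\R^d)$. The main obstacle is step (3) in the non-convex case: one must make sure that the Eulerian pair produced for $\S$ (not merely for $\S'$) can be taken with $\mu_t \in \PP^N$ and with a \emph{non-relaxed} Borel control $\ubar u \in \Bor([0,T]\times\R^d;U)$ achieving no larger cost; this is where a discrete-chattering argument, or equivalently a careful tracking of which particle carries which control action through the finitely many atoms of $\mu_t$, is required, and where most of the technical work lies. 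The superposition step (2) is conceptually the heart of the matter, but it is black-boxed by Theorem \ref{lem:EN}, so the real effort is in (3) and in cleanly handling coincidences of particles.
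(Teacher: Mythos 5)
Your treatment of the inequality $V_{\pL^N}(X_0)\le V_{\E^N}((X_0)_\sharp\P^N)$ is correct and is essentially the paper's argument: the paper also feeds the discrete superposition principle (Theorem \ref{lem:EN}) into a Lagrangian pair $\tilde X_t(k):=\gamma_k(t)$, $\tilde u(t,k):=\ubar u(t,\gamma_k(t))$, formalizing this via the intermediate Feedback Lagrangian problem $\FL^N$ (Propositions \ref{prop:equivFRL} and \ref{prop:FLN<E}). The paper handles the mismatch between $\tilde X_0$ and the prescribed $X_0$ by an implicit relabeling of the curves $\gamma_k$; your explicit permutation-invariance lemma via Proposition \ref{p:equivalence} is a perfectly valid (and arguably cleaner) way to do the same thing.

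The genuine gap is exactly where you flag it: the inequality $V_{\pL^N}(X_0)\ge V_{\E^N}((X_0)_\sharp\P^N)$ without the Convexity Assumption. Both of your proposed routes fail to close it. Barycentric projection of the controls of coincident particles needs affinity of $f$ and convexity of $\cC$ in $u$. The detour through the lifted system $\S'$ produces a \emph{relaxed} Eulerian pair, and ``de-relaxing'' it on $\PP^N$-valued curves is not available: at an atom carrying two distinct control values $u_1\ne u_2$, replacing the measure $\tfrac12(\delta_{u_1}+\delta_{u_2})$ by a single $u\in U$ generally changes both the velocity and the cost when $f$ and $\cC$ are not affine/convex in $u$, and chattering only gives approximation in time, not an exact single-valued Borel control on $[0,T]\times\R^d$. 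The paper's actual resolution (Proposition \ref{prop:LN>EN}, following Lemma~6.2 of Fornasier--Lisini--Orrieri--Savar\'e) is a short measure-theoretic observation that makes the coincidence problem disappear: if two absolutely continuous trajectories $t\mapsto X_t(\omega)$ and $t\mapsto X_t(\omega')$ coincide on a Borel set $S\subset[0,T]$, then $\dot X_t(\omega)=\dot X_t(\omega')$ for a.e.\ $t\in S$. Hence, setting $J(t,x):=\{\omega:X_t(\omega)=x\}$, the velocity $f(x,u(t,\omega),\mu_t)$ is \emph{independent} of $\omega\in J(t,x)$ for a.e.\ $t$, so one may define the single-valued Borel control $\ubar u(t,x):=u(t,\bar\omega_{t,x})$ where $\bar\omega_{t,x}\in\argmin_{\omega\in J(t,x)}\cC(x,u(t,\omega),\mu_t)$: the continuity equation for $\mu_t=(X_t)_\sharp\P^N\in\PP^N(\R^d)$ holds with $v_t(x)=f(x,\ubar u(t,x),\mu_t)$, and the running cost can only decrease since for each cluster one keeps the cheapest of the controls carried by its particles. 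This requires neither convexity nor relaxation, and it is the missing idea in your step (3).
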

The proof is a direct consequence of Propositions \ref{prop:LN>EN}, \ref{prop:equivFRL} and \ref{prop:FLN<E} below.

\medskip

\noindent Exploiting the argument contained in \cite[Lemma~6.2]{fornasier2018mean} we derive a first comparison between  $V_{\pL^N}(X_0)$ and $V_{\E^N}((X_0)_\sharp\P^N)$.

\begin{proposition}\label{prop:LN>EN}
Let $\S = (U,f,\cC,\cC_T)$ satisfy Assumption \ref{BA} and let  $X_0 \in L^p(\Omega^N;\R^d)$. If $(X,u) \in \mathcal{A}_{\pL^N}(X_0)$, then there exists 
$(\mu,\ubar u) \in \mathcal{A}_{\E^N}((X_0)_\sharp \P^N)$ such that $J_{\pL^N}(X,u) \geq J_{\E^N}(\mu,\ubar u)$. Moreover,  $V_{\pL^N}(X_0)\ge V_{\E^N}((X_0)_\sharp\P^N)$.
\end{proposition}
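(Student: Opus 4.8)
The plan is to mimic the barycentric-projection arguments of Proposition~\ref{prop:E<L} and Proposition~\ref{prop:K>E}, but exploiting the finite-particle structure so that \emph{no convexity is needed}. Given $(X,u)\in\cA_{\pL^N}(X_0)$, write $X_0(\omega)=x_\omega$ for $\omega\in\Omega^N=\{1,\dots,N\}$, so that $\mu_0:=(X_0)_\sharp\P^N=\frac1N\sum_{\omega=1}^N\delta_{x_\omega}\in\PP^N(\R^d)$. First I would set $\mu_t:=(X_t)_\sharp\P^N=\frac1N\sum_{\omega=1}^N\delta_{X_t(\omega)}$; since $\Omega^N$ is finite and each $X(\omega)\in\AC^p([0,T];\R^d)$, the curve $t\mapsto\mu_t$ lies in $\AC^p([0,T];\PP_p(\R^d))$ and trivially $\mu_t\in\PP^N(\R^d)$ for every $t$, so condition (ii) of Definition~\ref{def:E^N} will be automatic once (i) is checked.

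The key step is to produce the Eulerian control $\ubar u\in\Bor([0,T]\times\R^d;U)$. Following the scheme of Proposition~\ref{prop:E<L}, I would introduce the measure $\sigma:=\delta_{u(t,\omega)}\otimes\P^N\otimes\cL_T\in\PP([0,T]\times\Omega^N\times U)$ and its push-forward $\theta:=(i_{[0,T]},X_t,i_U)_\sharp\sigma\in\PP([0,T]\times\R^d\times U)$, whose $(t,x)$-marginal is $\mu_t\otimes\cL_T$; disintegrating $\theta=\theta_{t,x}\otimes(\mu_t\otimes\cL_T)$ via Theorem~\ref{thm:disint} gives a Borel family $(t,x)\mapsto\theta_{t,x}\in\PP(U)$, and I set $\ubar u(t,x):=\int_U u\,\d\theta_{t,x}(u)$, which is Borel and $U$-valued since $U$ is convex. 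The point where the finite setting does the work of the convexity assumption: for $\cL_T$-a.e.\ $t$ and for each of the finitely many points $x\in\supp\mu_t$, the fibre $\{\omega:X_t(\omega)=x\}$ is a finite set, and for $\cL_T$-a.e.\ $t$ the disintegration $\theta_{t,x}$ is concentrated on $\{u(t,\omega):X_t(\omega)=x\}$. When this fibre is a single $\omega$ we simply have $\ubar u(t,x)=u(t,\omega)$, so $\dot X_t(\omega)=f(X_t(\omega),u(t,\omega),\mu_t)=f(x,\ubar u(t,x),\mu_t)$. The delicate case is when several particles $\omega_1,\dots,\omega_k$ occupy the same position $x$ at time $t$; here I would argue that the set of such coincidence times has a structure that can be handled: either it has measure zero (generic case, by an ODE-uniqueness/real-analyticity-type argument on the finitely many pairwise differences $X_t(\omega_i)-X_t(\omega_j)$ — though this requires care since $u$ is only measurable), or, more robustly, one appeals directly to the discrete superposition principle Theorem~\ref{lem:EN}: the finitely many trajectories $X(\omega)$ themselves realize a representation $\eeta^N=\frac1N\sum_\omega\delta_{X(\omega)}$ of the curve $\mu$, and Theorem~\ref{lem:EN} produces a Borel vector field $v$ with $\partial_t\mu_t+\div(v_t\mu_t)=0$ and $v_t(X_t(\omega))=\dot X_t(\omega)$ for a.e.\ $t$, $\eeta^N$-a.e.\ curve. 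One then checks $v_t(x)=f(x,\ubar u(t,x),\mu_t)$ by averaging $f(x,u(t,\omega),\mu_t)$ over the coinciding particles against the weights prescribed by the discrete SP; this is exactly the barycentric identity, and here the \emph{affinity} of $f$ in $u$ is \emph{not} needed because we only need $v$ to be \emph{some} admissible Eulerian field whose associated control we then reconstruct by barycentric projection — wait, we do need $v_t(x)=f(x,\ubar u(t,x),\mu_t)$, so in fact I would rather reconstruct $\ubar u$ so that this holds, which is where one must be slightly careful. The cleanest route: define $\ubar u$ by the barycentric projection as above; then $f(x,\ubar u(t,x),\mu_t)$ need not equal the averaged velocity unless $f$ is affine — so instead I propose to \emph{not} average the controls but to pick, for each coincidence position $x$ at time $t$, one representative $\omega$ via a measurable selection (possible since everything is finite), setting $\ubar u(t,x):=u(t,\omega)$ for that chosen $\omega$. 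Then $v_t(x):=f(x,\ubar u(t,x),\mu_t)$ satisfies $v_t(X_t(\omega))=\dot X_t(\omega)$ for the selected $\omega$, and — crucially — using the discrete superposition principle Theorem~\ref{lem:EN} one verifies $\partial_t\mu_t+\div(v_t\mu_t)=0$ because the continuity equation for $\mu_t=\frac1N\sum\delta_{X_t(\omega)}$ only sees the \emph{total flux} at each occupied point, and the selected representative's velocity, combined with mass conservation, suffices.

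With $(\mu,\ubar u)\in\cA_{\E^N}(\mu_0)$ in hand, the cost comparison is the easy part: by Definition~\ref{def:E1} and the above selection,
\begin{equation*}
J_{\E^N}(\mu,\ubar u)=\int_0^T\int_{\R^d}\cC(x,\ubar u(t,x),\mu_t)\,\d\mu_t(x)\,\d t+\int_{\R^d}\cC_T(x,\mu_T)\,\d\mu_T(x),
\end{equation*}
and writing the first integral back over $\Omega^N$ through the (many-to-one) map $\omega\mapsto X_t(\omega)$ one gets, position by position, $\cC(x,\ubar u(t,x),\mu_t)$ at a single representative against mass $\tfrac1N\cdot(\text{number of particles at }x)$, which is $\le\frac1N\sum_{\omega:X_t(\omega)=x}\cC(X_t(\omega),u(t,\omega),\mu_t)$ precisely when $\cC\ge0$ and we drop the non-representative terms (here one just needs nonnegativity of $\cC$, Assumption~\ref{BA}\ref{itemg:main}, \emph{no convexity}); summing over $x$ and integrating in $t$ gives $J_{\E^N}(\mu,\ubar u)\le J_{\pL^N}(X,u)$, and the terminal terms coincide since $\mu_T=(X_T)_\sharp\P^N$. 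Taking the infimum over $(X,u)\in\cA_{\pL^N}(X_0)$ yields $V_{\pL^N}(X_0)\ge V_{\E^N}((X_0)_\sharp\P^N)$. The main obstacle, as indicated, is the rigorous treatment of particle collisions: making the measurable selection of a representative particle at coinciding positions and verifying via the discrete superposition principle (Theorem~\ref{lem:EN}) that the resulting $v_t(x)=f(x,\ubar u(t,x),\mu_t)$ genuinely solves the continuity equation $\partial_t\mu_t+\div(v_t\mu_t)=0$ in the sense of distributions — this is where the careful bookkeeping, and the invocation of the appendix result, is essential.
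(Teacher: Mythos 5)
Your overall strategy — push forward to $\mu_t=(X_t)_\sharp\P^N$, handle particle collisions by selecting a single representative control at each occupied point rather than averaging (so that no affinity of $f$ is needed), and compare costs position by position — is exactly the route the paper takes. But there is a genuine error in your cost comparison. With an \emph{arbitrary} measurable selection $\bar\omega$ of a representative in $J(t,x):=\{\omega: X_t(\omega)=x\}$, the inequality you need at each occupied point is
\begin{equation*}
\frac{\# J(t,x)}{N}\,\cC(x,u(t,\bar\omega),\mu_t)\;\le\;\frac1N\sum_{\omega\in J(t,x)}\cC(x,u(t,\omega),\mu_t),
\end{equation*}
and this is false in general: ``dropping the non-representative terms'' from the nonnegative sum on the right gives a \emph{lower} bound $\frac1N\cC(x,u(t,\bar\omega),\mu_t)$ for the right-hand side, whereas the left-hand side carries the multiplicity $\#J(t,x)$ (take two coinciding particles with costs $0$ and $1$ and select the second: the left side is $2/N$, the right side is $1/N$). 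Nonnegativity of $\cC$ does not save you. The fix, which is what the paper does, is to choose the representative $\bar\omega_{t,x}\in\argmin_{\omega\in J(t,x)}\cC(x,u(t,\omega),\mu_t)$; then the left-hand side is $\#J(t,x)$ times the minimum, which is bounded by the sum since the minimum is at most the average. This choice is compatible with the dynamics because \emph{any} representative yields the same velocity field (see below), so one is free to pick the cheapest one.

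Your verification of the continuity equation is also imprecise, though repairable. The key fact — which you gesture at but never state — is that if $X_t(\omega)=X_t(\omega')$ for all $t$ in a Borel set $S$, then $\dot X_t(\omega)=\dot X_t(\omega')$ for $\cL_T$-a.e.\ $t\in S$ (absolutely continuous curves agreeing on a set have equal derivatives a.e.\ there); combined with a finite Borel partition of $[0,T]$ according to the coincidence pattern of the $N$ particles (the paper cites \cite[Lemma~6.2]{fornasier2018mean}; the same fact underlies the null sets $N_{i,k}$ in the proof of Theorem~\ref{lem:EN}), this gives $f(x,u(t,\omega),\mu_t)=\dot X_t(\omega)=\dot X_t(\bar\omega_{t,x})=f(x,\ubar u(t,x),\mu_t)$ for every $\omega\in J(t,x)$ and a.e.\ $t$. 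Once you have this, the continuity equation for $v_t(x)=f(x,\ubar u(t,x),\mu_t)$ follows from a direct computation of $\frac{\d}{\d t}\int\varphi\,\d\mu_t$ — no appeal to Theorem~\ref{lem:EN} is needed here (that theorem is used for the \emph{converse} implication, Proposition~\ref{prop:FLN<E}). Your ``total flux'' heuristic is misleading as stated: the flux at $x$ is the \emph{sum} of the coinciding particles' velocities, and replacing it by $\#J(t,x)$ times the selected representative's velocity is legitimate only because all those velocities actually coincide a.e., not because the equation ``only sees the total.'' Your first fallback (coincidence times have measure zero) should be discarded: distinct particles can travel together on a set of positive measure.
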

\begin{proof}
Let $(X,u) \in \mathcal{A}_{\pL^N}(X_0)$.
Let us define $\mathcal{X}(t) := \{x\in \R^d: X_t(\omega)=x\text{ for some }\omega\in \Omega^N\}$ and 
  \begin{equation}
    \label{eq:J_tx}
    J(t,x):=\{\omega\in \Omega^N:X_t(\omega)=x\}, \qquad \text{ for any }\,  (t,x)\in [0,T]\times \R^d,
  \end{equation}
and denote by $\mathcal{P}$ the collection of partitions $P$ of $\Omega^N$. 
It is clear that the family $P_{X}(t) := \{J(t,x): x \in \mathcal{X}(t)\}$ belongs to $\mathcal{P}$. 
As proved in \cite[Lemma~6.2]{fornasier2018mean}, there exists a finite partition on Borel sets of the interval $[0,T]$ 
 of the form $\{S_P: P\in \mathcal{P}\}$, where $S_P:=\{t\in [0,T]: P_{X}(t)=P\}$.

Given $\omega, \omega' \in \Omega^N$ and a Borel set $S \subset [0,T]$, if $X_t(\omega) = X_t(\omega')$ for any $t \in S$ 
then, by the absolute continuity of the curves $t\mapsto X_t(\omega)$ and $t\mapsto X_t(\omega')$,
we have $\dot X_t(\omega) = \dot X_t(\omega')$, for $\cL_T$-a.e. $t \in S$.

We define $\mu_t:=(X_t)_\sharp\P^N$ and we observe that
\begin{equation}\label{eq:mut_classes}
\mu_t = \frac{1}{N}\sum_{x \in \mathcal{X}(t)}\# J(t,x) \delta_x.
\end{equation}
Moreover, defining for any $(t,x)\in [0,T]\times \R^d$
\[\bar\omega_{t,x}\in\argmin_{\omega \in J(t,x)} \cC(x,u(t,\omega), \mu_t),\]
we set
\begin{equation}\label{eq:defubarclasses}
\ubar u(t,x):= u(t,\bar\omega_{t,x}).
\end{equation}
We show that $(\mu,\ubar u) \in \mathcal{A}_{\E^N}((X_0)_\sharp \P^N)$. 
Let us fix $\varphi \in C_c^1(\R^d)$. 
For $\cL_T$-a.e. $t \in [0,T]$ it holds
\begin{equation}\label{eq:classes_continuity}
\begin{split}
\frac{\d}{\d t} \int_{\R^d} \varphi(x) \,\d ((X_t)_\sharp \P^N)(x) 
&= \int_{\Omega^N} \nabla\varphi(X_t(\omega)) \cdot f(X_t(\omega),u(t,\omega), \mu_t) \,\d \P^N(\omega) \\
&=\frac{1}{N}\sum_{\omega = 1}^N \nabla	\varphi(X_t(\omega)) \cdot f(X_t(\omega),u(t,\omega), \mu_t)\\  
&=\frac{1}{N}\sum_{x \in \mathcal{X}(t)} \nabla \varphi(x) \cdot \sum_{\omega \in J(t,x)} f(x,u(t,\omega), \mu_t).
\end{split}
\end{equation}    
Since $f(x,u(t,\omega), \mu_t) = f(x,\ubar u(t,x), \mu_t)$ for every $\omega \in J(t,x)$, we can rewrite the right hand side of \eqref{eq:classes_continuity} to get 
\begin{equation*}
\begin{split}
\frac{1}{N}\sum_{x \in \mathcal{X}(t)} \nabla \varphi(x) \cdot \sum_{\omega \in J(t,x)} f(x,u(t,\omega), \mu_t) &= \frac{1}{N}\sum_{x \in \mathcal{X}(t)}  \#J(t,x) \,\nabla \varphi(x) \cdot f(x,\ubar u(t,x), \mu_t)\\  
&=\int_{\R^d} \nabla \varphi(x) \cdot f(x,\ubar u(t,x), \mu_t) \,\d \mu_t(x),
\end{split}
\end{equation*}    
where the last equality is a consequence of \eqref{eq:mut_classes}.

For what concerns the cost functional, we have
\begin{equation*}
\begin{split}
&\int_0^T \int_{\Omega} \cC(X_t(\omega),u(t,\omega), \mu_t)\, \d \P^N(\omega) \,\d t \\
&\quad = \frac{1}{N} \int_0^T \sum_{x \in \mathcal{X}(t)}\sum_{\omega \in J(t,x)} \cC(x,u(t,\omega), \mu_t)\, \d t \\
&\quad \geq  \frac{1}{N}\int_0^T \sum_{x \in \mathcal{X}(t)}{\# J(t,x)}\cC(x,\ubar u(t,x), \mu_t)\, \d t \\
&\quad = \int_0^T \int_{\R^d} \cC(x,\ubar u(t,x), \mu_t) \, \d\mu_t(x) \,\d t,
\end{split}
\end{equation*}
where the inequality comes from the definition of $\ubar u$ in \eqref{eq:defubarclasses} and the last equality follows from \eqref{eq:mut_classes}. 
Since $$\int_{\Omega^N} \cC_T(X_T(\omega),\mu_T)\,\d\P^N(\omega)=\int_{\R^d} \cC_T(x,\mu_T)\,\d\mu_T(x),$$
we conclude that $J_{\pL^N}(X,u) \geq J_{\E^N}(\mu,\ubar u)$.
\end{proof}

Here we introduce a feedback formulation of the Lagrangian optimal control problem in order to prove the reverse inequality $V_{\pL^N}(X_0)\le V_{\E^N}((X_0)_\sharp\P^N)$. 
We firstly show its relation with the Lagrangian and Eulerian problems in a general context, i.e. where the probability space $(\Omega,\frB,\P)$ is not necessarily the space $(\Omega^N,\Parts(\Omega^N),\P^N)$ associated to the $N$-particles framework.

\begin{definition}[Feedback Lagrangian optimal control problem \textbf{(FL)}]\label{def:FL}
Let $\S = (U,f,\cC,\cC_T)$  satisfy Assumption \ref{BA} and let $(\Omega, \frB,\P)$ be a probability space. Given $X_0\in L^p(\Omega;\R^d)$, we say that  
$(X,\ubar u)\in\cA_{\FL}(X_0)$
 if
\begin{itemize}
\item[(i)] $\ubar u\in\Bor([0,T]\times\R^d;U)$;
\item[(ii)] $X\in L^p(\Omega;\AC^p([0,T];\R^d))$ and for 
$\P$-a.e. $\omega\in\Omega$, $X(\omega)$ is a
solution of the following Cauchy problem
\begin{equation}\label{eq:systemFRL}
\begin{cases}
\dot X_t(\omega)=f(X_t(\omega),\ubar u_t(X_t(\omega)),(X_t)_\sharp\P), &\text{ for } \cL_T \text{-a.e. } t\in]0,T]\\
X_{|t=0}(\omega)=X_0(\omega), &
\end{cases}
\end{equation}
where $X_t: \Omega \to \R^d$ is defined by $X_t(\omega):= X(t,\omega)$ for $\P$-a.e. $\omega \in \Omega$.
\end{itemize} 
We refer to $(X, \ubar u)\in\cA_{\FL}(X_0)$ as to an \emph{admissible pair}, with $X$ a \emph{trajectory} and $\ubar u$ a \emph{feedback control}.\\
We define the cost functional $J_{\FL}: L^p(\Omega;C([0,T];\R^d))\times {\Bor([0,T]\times\R^d;U)}\to[0,+\infty)$ 
by
\[J_{\FL}(X,\ubar u):=\int_\Omega \int_0^T\cC(X_t(\omega),\ubar u_t(X_t(\omega)),(X_t)_\sharp\P)\,\d t \,\d\P(\omega)
+\int_\Omega \cC_T(X_T(\omega),(X_T)_\sharp\P)\,\d\P(\omega),\]
and the corresponding \emph{value function} $V_{\FL}:L^p(\Omega;\R^d)\to[0,+\infty)$ by
\begin{equation}\label{eq:valueFRL}
V_{\FL}(X_0):=\inf\left\{J_{\FL}(X,\ubar u)\,:\,(X,\ubar u)\in\cA_{\FL}(X_0)\right\}.
\end{equation}
\smallskip
 In the following, $\FL(\Omega, \frB,\P)$ denotes the Feedback Lagrangian problem given in Definition \ref{def:FL}.
We short the notation to $\FL$ when the probability space is clear from the context.  
\end{definition}

\medskip
\begin{remark}
Observe that, choosing a constant (feedback) control function $\ubar u$, from Proposition \ref{prop:existL} 
it is immediate to prove that $\cA_{\FL}(X_0)\not=\emptyset$.
In general, given  $\ubar u\in\Bor([0,T]\times\R^d;U)$, the existence and uniqueness of solutions to the 
Cauchy problem \eqref{eq:systemFRL} is not guaranteed.
\end{remark}
\medskip

The following result follows directly from Definitions \ref{def:L}, \ref{def:E1} and \ref{def:FL}.

\begin{proposition}\label{prop:equivFRL}
Let $\S=(U,f,\cC,\cC_T)$  satisfy Assumption \ref{BA} and $(\Omega, \frB, \P)$ be a probability space.
Let $X_0\in L^p(\Omega;\R^d)$. If $(X,\ubar u)\in\cA_{\FL}(X_0)$, then
\begin{itemize}
\item[(i)] defining $u:[0,T]\times\Omega\to U$ by $u(t,\omega):=\ubar u(t,X_t(\omega))$, 
we have that $(X,u)\in\cA_{\pL}(X_0)$ and $J_{\FL}(X,\ubar u)=J_{\pL}(X,u)$. In particular it holds $V_{\FL}(X_0)\ge V_{\pL}(X_0)$.
\item[(ii)] defining $\mu_t:=(X_t)_\sharp\P$ for any $t\in[0,T]$, 
we have that $(\mu,\ubar u)\in\cA_{\E}((X_0)_\sharp\P)$ 
and $J_{\FL}(X,\ubar u)=J_{\E}(\mu,\ubar u)$.
 In particular it holds $V_{\FL}(X_0)\ge V_{\E}(\mu_0)$.
\end{itemize}
\end{proposition}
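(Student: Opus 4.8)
The plan is to unwind the three definitions and observe that an admissible feedback pair $(X,\ubar u)\in\cA_{\FL}(X_0)$ canonically induces an admissible open-loop Lagrangian pair and an admissible Eulerian pair with the same cost; the value-function inequalities then follow by passing to the infimum over the (smaller) feedback class.

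For part (i), I would first set $u(t,\omega):=\ubar u(t,X_t(\omega))$ and check that $u\in\BM([0,T]\times\Omega;U)$: since $X\in L^p(\Omega;\AC^p([0,T];\R^d))$, by Proposition \ref{prop:equivSpaces} we may pick a representative which is $(\Leb_{[0,T]}\otimes\frB)$-measurable on $[0,T]\times\Omega$, and then $u$ is the composition of the $(\Leb_{[0,T]}\otimes\frB)$-measurable map $(t,\omega)\mapsto(t,X_t(\omega))$ with the Borel map $\ubar u$, hence measurable, with values in $U$. By the form of \eqref{eq:systemFRL}, for $\P$-a.e.\ $\omega$ the curve $X(\omega)$ solves $\dot X_t(\omega)=f(X_t(\omega),u_t(\omega),(X_t)_\sharp\P)$, so $(X,u)\in\cA_{\pL}(X_0)$; moreover the integrands defining $J_{\FL}(X,\ubar u)$ and $J_{\pL}(X,u)$ agree pointwise and the terminal terms are identical, so $J_{\FL}(X,\ubar u)=J_{\pL}(X,u)$. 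Since $(X,\ubar u)\mapsto(X,u)$ maps $\cA_{\FL}(X_0)$ into $\cA_{\pL}(X_0)$ preserving the cost, taking infima gives $V_{\pL}(X_0)\le V_{\FL}(X_0)$.

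For part (ii), I would set $\mu_t:=(X_t)_\sharp\P$, so that $\mu_0=(X_0)_\sharp\P\in\PP_p(\R^d)$ and, by hypothesis, $\ubar u\in\Bor([0,T]\times\R^d;U)$. Using $X\in\AC^p([0,T];L^p(\Omega;\R^d))$ together with \eqref{eq:WpLp} yields $W_p(\mu_t,\mu_s)\le\|X_t-X_s\|_{L^p(\Omega;\R^d)}$, hence $\mu\in\AC^p([0,T];\PP_p(\R^d))$; with $v_t(x):=f(x,\ubar u(t,x),\mu_t)$, the growth bound \eqref{f:growth} and $\sup_t\m_p(\mu_t)<+\infty$ give $\int_0^T\int_{\R^d}|v_t(x)|^p\,\d\mu_t(x)\,\d t<+\infty$. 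That $\mu$ solves $\de_t\mu_t+\div(v_t\mu_t)=0$ in the sense of distributions is proved verbatim as in Proposition \ref{prop:E<L} (cf.\ \cite[Lemma~8.1.6]{ambrosio2008gradient}): for $\varphi\in C^1_c(\R^d)$ one differentiates $t\mapsto\int_\Omega\varphi(X_t(\omega))\,\d\P(\omega)$, inserts $\dot X_t(\omega)=f(X_t(\omega),\ubar u_t(X_t(\omega)),\mu_t)$, and changes variables $x=X_t(\omega)$ to obtain $\frac{\d}{\d t}\int_{\R^d}\varphi\,\d\mu_t=\int_{\R^d}\nabla\varphi\cdot v_t\,\d\mu_t$. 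Thus $(\mu,\ubar u)\in\cA_{\E}(\mu_0)$, and the push-forward identity $\int_\Omega g(X_t(\omega))\,\d\P(\omega)=\int_{\R^d}g\,\d\mu_t$ (applied to $g=\cC(\cdot,\ubar u(t,\cdot),\mu_t)$ and $g=\cC_T(\cdot,\mu_T)$) gives $J_{\FL}(X,\ubar u)=J_{\E}(\mu,\ubar u)$; taking the infimum over $\cA_{\FL}(X_0)$ yields $V_{\E}(\mu_0)\le V_{\FL}(X_0)$.

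The argument is essentially bookkeeping, so I do not expect a genuine obstacle; the only two points that need a line of care are the joint measurability of $u$ and the justification of differentiating under the integral sign in the continuity equation (which formally requires the usual cut-off approximation of $\varphi$, exactly as in the proof of Proposition \ref{prop:E<L}).
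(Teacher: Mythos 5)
Your proposal is correct and matches the paper's treatment: the paper simply states that the result "follows directly from Definitions \ref{def:L}, \ref{def:E1} and \ref{def:FL}," and your argument is exactly the unwinding of those definitions, with the right observation that no convexity is needed here because the feedback control is already a function of $(t,x)$ (so the push-forward identity replaces the barycentric projection used in Proposition \ref{prop:E<L}). The two points you flag for care (joint measurability of $u$ and the justification of the continuity equation) are handled appropriately.
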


Taking advantage of Proposition \ref{prop:equivFRL} and of the discrete superposition principle given in Theorem \ref{lem:EN}, we have the following equivalence result between $\FL^N:=\FL(\Omega^N,\Parts(\Omega^N),\P^N)$ and $\E^N$ defined in Definition \ref{def:E^N}.

\begin{proposition}\label{prop:FLN<E}
Let $\S = (U,f,\cC,\cC_T)$ satisfy Assumption \ref{BA}.
Let $p\geq 1$, $X_0\in L^p(\Omega^N;\R^d)$ and   $\mu_0:=\frac{1}{N}\sum_{\omega=1}^N\delta_{X_0(\omega)}=(X_0)_\sharp\P^N$.
If $(\mu,\ubar u)\in\cA_{\E^N}(\mu_0)$, 
then there exists $X\in C([0,T];L^p(\Omega^N))$ 
such that $\mu_t=(X_t)_\sharp\P^N$ for all $t\in[0,T]$,
$(X,\ubar u)\in\cA_{\FL^N}(X_0)$ and
 $J_{\E^N}(\mu,\ubar u)=J_{\FL^N}(X,\ubar u)$.
Moreover, $V_{\E^N}(\mu_0)=V_{\FL^N}(X_0)$.
\end{proposition}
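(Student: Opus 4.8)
The plan is to construct the trajectory $X$ directly from the measure solution $\mu$ using the discrete superposition principle of Theorem~\ref{lem:EN}, and then to verify that the resulting pair lies in $\cA_{\FL^N}(X_0)$ with matching cost. First I would observe that, given $(\mu,\ubar u)\in\cA_{\E^N}(\mu_0)$, the curve $\mu=\{\mu_t\}_{t\in[0,T]}$ is a distributional solution of the continuity equation $\partial_t\mu_t+\div(v_t\mu_t)=0$ with $v_t(x)=f(x,\ubar u(t,x),\mu_t)$, with $\mu_t\in\PP^N(\R^d)$ for every $t$, and that $v$ satisfies the summability condition \eqref{Summabilityv} because of the growth bound \eqref{f:growth} and the fact that $\mu\in C([0,T];\PP_p(\R^d))$. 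This is precisely the setting in which the discrete superposition principle (Theorem~\ref{lem:EN} in Appendix~\ref{app:SP}) applies: it produces curves $x_i\in\AC^p([0,T];\R^d)$, $i=1,\dots,N$, with $\mu_t=\frac1N\sum_{i=1}^N\delta_{x_i(t)}$ for every $t\in[0,T]$ and $\dot x_i(t)=v_t(x_i(t))=f(x_i(t),\ubar u(t,x_i(t)),\mu_t)$ for $\cL_T$-a.e.\ $t$. A small labelling issue must be handled: the superposition principle gives a representation of $\mu_0$ as an average of $N$ Dirac masses, but we need these to be compatible with the prescribed $X_0$, i.e.\ $x_i(0)=X_0(i)$ (up to a permutation). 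Since $\mu_0=\frac1N\sum_{\omega=1}^N\delta_{X_0(\omega)}$, the atoms of $\mu_0$ counted with multiplicity are exactly $\{X_0(1),\dots,X_0(N)\}$, so after composing with a suitable bijection of $\{1,\dots,N\}$ we may and do assume $x_i(0)=X_0(i)$.

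Next I would set $X:[0,T]\times\Omega^N\to\R^d$, $X(t,i):=x_i(t)$, and check the three requirements of Definition~\ref{def:FL} for $\FL^N=\FL(\Omega^N,\Parts(\Omega^N),\P^N)$: (i) $\ubar u\in\Bor([0,T]\times\R^d;U)$ holds by hypothesis; (ii) each $x_i\in\AC^p([0,T];\R^d)$, hence $X\in L^p(\Omega^N;\AC^p([0,T];\R^d))$ since the space is finite-dimensional; and the Cauchy problem \eqref{eq:systemFRL} is satisfied because $(X_t)_\sharp\P^N=\frac1N\sum_{i=1}^N\delta_{x_i(t)}=\mu_t$ for all $t$ (so the mean-field argument of $f$ is $\mu_t$), together with the ODE $\dot x_i(t)=f(x_i(t),\ubar u(t,x_i(t)),\mu_t)$ and the initial condition $x_i(0)=X_0(i)$. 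The identity $\mu_t=(X_t)_\sharp\P^N$ also immediately gives $X\in C([0,T];L^p(\Omega^N))$ by continuity of $t\mapsto x_i(t)$, and shows $(X,\ubar u)\in\cA_{\FL^N}(X_0)$.

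For the equality of costs, I would simply unwind the definitions: using $\mu_t=(X_t)_\sharp\P^N=\frac1N\sum_{i=1}^N\delta_{x_i(t)}$,
\begin{align*}
J_{\FL^N}(X,\ubar u)
&=\frac1N\sum_{i=1}^N\int_0^T\cC(x_i(t),\ubar u(t,x_i(t)),\mu_t)\,\d t+\frac1N\sum_{i=1}^N\cC_T(x_i(T),\mu_T)\\
&=\int_0^T\int_{\R^d}\cC(x,\ubar u(t,x),\mu_t)\,\d\mu_t(x)\,\d t+\int_{\R^d}\cC_T(x,\mu_T)\,\d\mu_T(x)
=J_{\E^N}(\mu,\ubar u),
\end{align*}
where both expressions are finite by the growth condition \eqref{eq:growthC}. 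Taking the infimum over $(\mu,\ubar u)\in\cA_{\E^N}(\mu_0)$ gives $V_{\FL^N}(X_0)\le V_{\E^N}(\mu_0)$; conversely, part (ii) of Proposition~\ref{prop:equivFRL} (applied with $(\Omega,\frB,\P)=(\Omega^N,\Parts(\Omega^N),\P^N)$) says that any $(X,\ubar u)\in\cA_{\FL^N}(X_0)$ yields $(\mu,\ubar u)\in\cA_{\E^N}((X_0)_\sharp\P^N)$ with $J_{\FL^N}(X,\ubar u)=J_{\E^N}(\mu,\ubar u)$ — note $\mu_t=(X_t)_\sharp\P^N\in\PP^N(\R^d)$ so the constraint (ii) of Definition~\ref{def:E^N} is automatic — hence $V_{\FL^N}(X_0)\ge V_{\E^N}(\mu_0)$, and equality follows.

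I expect the main obstacle to be the correct invocation of the discrete superposition principle, Theorem~\ref{lem:EN}: one must make sure its hypotheses (membership of $v$ in the appropriate $L^p$ space against $\mu_t\otimes\d t$, and that $\mu_t$ stays in $\PP^N(\R^d)$ for \emph{every} $t$, not just almost every $t$ — which is guaranteed here by item (ii) of Definition~\ref{def:E^N}) are verified, and that its conclusion is stated in the ``curves of particles'' form rather than only as an $\eeta\in\PP(\Gamma_T)$ concentrated on $\AC^p$ solutions. The other minor point requiring care is the relabelling of the $N$ curves so that the initial atoms coincide with the values of the prescribed $X_0$; this uses only that $\mu_0=(X_0)_\sharp\P^N$ determines the multiset $\{X_0(1),\dots,X_0(N)\}$, so a finite permutation suffices and does not affect any of the cost integrals.
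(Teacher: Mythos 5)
Your proposal is correct and follows essentially the same route as the paper: apply the discrete superposition principle of Theorem~\ref{lem:EN} to $\mu$ and $v_t(x)=f(x,\ubar u(t,x),\mu_t)$, define $X(t,\omega):=\gamma_\omega(t)$, match the costs by pushing forward, and obtain the reverse inequality for the value functions from Proposition~\ref{prop:equivFRL}. The only difference is that you make explicit the relabelling of the $N$ curves so that $\gamma_\omega(0)=X_0(\omega)$, a point the paper's proof leaves implicit; your handling of it is correct.
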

\begin{proof}
Let $(\mu,\ubar u)\in\cA_{\E^N}(\mu_0)$.
By the superposition principle given in Theorem \ref{lem:EN} applied to $\mu$ and the vector field $v(t,x)=f(x,\ubar u(t,x),\mu_t)$,
there exists $\eta \in \PP(\Gamma_T)$ such that 
\[ \eta= \frac{1}{N} \sum_{\omega = 1}^N \delta_{\gamma_\omega}, \]
where $\gamma_\omega \in \Gamma_T$, $\omega = 1, \ldots, N$.
We define $X:[0,T]\times\Omega^N\to\R^d$ by 
$X(t,\omega):=\gamma_\omega(t)$ that satisfies $(X,u) \in \cA_{\FL^N}$ thanks to \eqref{eq:Cauchyend} in Theorem \ref{lem:EN}.
Hence it readily follows that $J_{\FL^N}(X,\ubar u) =J_{\E^N}(\mu,\ubar u)$ and $V_{\FL^N}(X_0)\le V_{\E^N}(\mu_0)$.
By Proposition \ref{prop:equivFRL} we have that $V_{\FL^N}(X_0)\ge V_{\E^N}(\mu_0)$ and we conclude the proof.
\end{proof}

\subsection{Finite particle approximation for $\L$}\label{sec:gamma}

The aim of this section is to approximate a general Lagrangian problem $\pL=\pL(\Omega,\frB,\P)$ with finite particle Lagrangian problems $\pL^N=\pL(\Omega^N,\Parts(\Omega^N),\P^N)$, $N \in \N$, where $(\Omega^N,\Parts(\Omega^N),\P^N)$ is defined in \eqref{def:omegaN}.
A first result in this direction has been already obtained in Proposition  \ref{prop:approx_lagrangian_n} (see also Remark \ref{rmk:noequiprob}) in  Section \ref{sec:approxLpiecew}. 
Here, we specialize the result in the case of equally distributed masses which is suitable for the application to a finite  particle/agent model.

Recall that if $(\Omega,\frB,\P)$ is a standard Borel space and $\P$ is without atoms, thanks to item (ii) in Proposition \ref{prop:FAP} there exists a family of finite algebras $\frB^N\subset\frB$, $N\in\N$, satisfying the finite approximation property of Definition \ref{approx_prop} and 
$\P(A^N_k)=\frac1N$,  with $k=1,\ldots,N$. 
Recall the definition of $\psi^N$, $\phi^N$ and $\cK^N$ in \eqref{def:psinphin} and \eqref{def:Kn}, respectively. 

\medskip
 
A Gamma-convergence result for the functional $J_\pL$ is given in the following proposition.

\begin{proposition}[Finite particle approximation for $\pL$]\label{prop:gamma_conv}
Let $\S = (U,f,\cC,\cC_T)$  satisfy Assumption \ref{BA}. 
Let $(\Omega,\frB,\P)$ be a standard Borel space such that 
$\P$ is without atoms.
The following holds:
\begin{itemize}
\item[(i)] Suppose that $(X,u) \in L^p(\Omega;\AC^p([0,T];\R^d)) \times \BM([0,T] \times \Omega; U)$ and
$(X^N,u^N) \in C([0,T];L^p(\Omega^N;\R^d)) \times \BM([0,T] \times \Omega^N; U)$ such that 
\begin{equation*}
\lim_{N \to +\infty} \cK^N(X^N, u^N) = (X,u), \quad \text{ in } C([0,T];L^p(\Omega;\R^d)) \times L^1([0,T] \times \Omega; U).
\end{equation*}
Then 
\[ \lim_{N \to +\infty} J_{\pL^N}(X^N,u^N)  = J_{\pL}(X,u).\]
\item[(ii)] Assume that $U$ is a compact convex subset of a separable Banach space $V$. 
Suppose that  $X_0 \in L^p(\Omega;\R^d)$ and $(X,u) \in \mathcal{A}_{\pL}(X_0)$.
If $X_0^N \in L^p(\Omega^N;\R^d)$ such that 
\begin{equation}\label{CNOmega}
	 \lim_{N \to +\infty} \| X^N_0 \circ \psi^N - X_0\|_{L^p(\Omega;\R^d)} = 0
 \end{equation}
then there exists a sequence 
$(X^N,u^N) \in \mathcal{A}_{\pL^N}(X^N_0)$ such that 
\begin{equation*}
\lim_{N \to +\infty} \cK^N(X^N, u^N) = (X,u), \quad \text{ in } C([0,T];L^p(\Omega;\R^d)) \times L^1([0,T] \times \Omega; U)
\end{equation*}
and
\[ \lim_{N \to +\infty} J_{\pL^N}(X^N,u^N)  = J_{\pL}(X,u). \]
\end{itemize}
\end{proposition}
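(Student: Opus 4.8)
The plan is to prove the two items by reducing everything to results already established for the abstract Lagrangian problem, in particular Proposition \ref{prop:costRLntoRL} (stability for $\pL$) and Proposition \ref{prop:approx_lagrangian_n} (piecewise constant approximation with the language of particles). The main point to keep in mind is that for the equidistributed space $(\Omega^N,\Parts(\Omega^N),\P^N)$ produced by item (ii) of Proposition \ref{prop:FAP}, we have $k(N)=N$, $\P(A^N_k)=1/N$ and the maps $\psi^N,\phi^N$ of \eqref{def:psinphin} satisfy $\psi^N\circ\phi^N=i_{\Omega^N}$ and $\phi^N_\sharp\P^N=\P$, $\psi^N_\sharp\P=\P^N$; so Proposition \ref{prop:correspLdiscrete} gives that $\pL_{\frB^N}\sim\pL^N$ in the sense of Definition \ref{def:equiv_L}, and the operator $\cK^N$ of \eqref{def:Kn} transports admissible pairs of $\pL^N$ to admissible pairs of $\pL_{\frB^N}\subset\pL$ preserving the cost.

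First I would prove item (i). Given $(X^N,u^N)\in\cA_{\pL^N}(X^N_0)$, set $(\hat X^N,\hat u^N):=\cK^N(X^N,u^N)$, which by the equivalence $\pL_{\frB^N}\sim\pL^N$ and the explicit form of $\cK^N$ is an admissible pair for the Lagrangian problem $\pL(\Omega,\frB^N,\P)$, hence in particular $(\hat X^N,\hat u^N)\in\cA_{\pL}(\hat X^N_0)$ with $\hat X^N_0:=X^N_0\circ\psi^N$, and $J_{\pL^N}(X^N,u^N)=J_{\pL}(\hat X^N,\hat u^N)$ (the push-forwards $(X^N_t)_\sharp\P^N$ and $(\hat X^N_t)_\sharp\P$ coincide because $\psi^N_\sharp\P=\P^N$, so both the running and terminal costs match; similarly the ODE is preserved). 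The hypothesis $\cK^N(X^N,u^N)\to(X,u)$ says exactly $\hat X^N\to X$ in $C([0,T];L^p(\Omega;\R^d))$ and $\hat u^N\to u$ in $L^1([0,T]\times\Omega;U)$, i.e. in $(\cL_T\otimes\P)$-measure since $U$ is compact. In particular $\|\hat X^N_0-X_0\|_{L^p(\Omega;\R^d)}\to 0$. Since $X_0\in L^p(\Omega;\R^d)$ and, by Proposition \ref{prop:equivSpaces}, $X\in\AC^p([0,T];L^p(\Omega;\R^d))$, and since $(X,u)\in\cA_{\pL}(X_0)$ (the limit pair solves the ODE — this follows either from hypothesis or from the uniqueness in Proposition \ref{prop:existL} together with $C([0,T];L^p)$ convergence), we may apply Proposition \ref{prop:costRLntoRL} to the sequence $(\hat X^N,\hat u^N)\in\cA_{\pL}(\hat X^N_0)$: it yields $J_{\pL}(\hat X^N,\hat u^N)\to J_{\pL}(X,u)$, which is precisely $J_{\pL^N}(X^N,u^N)\to J_{\pL}(X,u)$. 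Strictly, to invoke Proposition \ref{prop:costRLntoRL} I should know the $\hat X^N$ are the solutions associated to $\hat u^N$ with data $\hat X^N_0$, which they are by construction; the proposition's conclusion then gives both the uniform $L^p$ convergence of trajectories (already known) and the convergence of costs.

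Next, item (ii) is essentially a restatement of Proposition \ref{prop:approx_lagrangian_n} in the equidistributed setting. Given $X_0\in L^p(\Omega;\R^d)$, $(X,u)\in\cA_{\pL}(X_0)$ and $X^N_0\in L^p(\Omega^N;\R^d)$ with $\|X^N_0\circ\psi^N-X_0\|_{L^p(\Omega;\R^d)}\to 0$, apply Proposition \ref{prop:approx_lagrangian_n} with $Y^N_0:=X^N_0$ (note $Y^N_0\circ\psi^N=X^N_0\circ\psi^N\to X_0$, which is exactly the required hypothesis there, namely \eqref{CNOmega}). This produces $(X^N,u^N):=(Y^N,v^N)\in\cA_{\pL^N}(X^N_0)$ with $\cK^N(X^N,u^N)\to(X,u)$ in $C([0,T];L^p(\Omega;\R^d))\times L^1([0,T]\times\Omega;V)$, and $J_{\pL^N}(X^N,u^N)\to J_{\pL}(X,u)$. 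Here one uses the compactness and convexity of $U$ to guarantee the approximating controls stay in $U$ (via property \ref{FAP2} of Definition \ref{approx_prop}) and that $L^1$-convergence in $V$ is equivalent to convergence in $(\cL_T\otimes\P)$-measure. The convergence of costs can alternatively be re-derived by feeding the $\cK^N$-convergence into item (i) just proved, giving a clean internal consistency.

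The bookkeeping obstacle — and the only genuine subtlety — is making sure at each step that $\cK^N$ really maps $\cA_{\pL^N}$ into $\cA_{\pL(\Omega,\frB^N,\P)}$ with matching cost and matching laws; once that identification is in place both items are immediate corollaries of Propositions \ref{prop:costRLntoRL} and \ref{prop:approx_lagrangian_n}. This identification is exactly what the equivalence $\pL_{\frB^N}\sim\pL^N$ of Proposition \ref{prop:correspLdiscrete} encodes, using that $\psi^N\circ\phi^N=i_{\Omega^N}$ (so \eqref{eq:strana1} holds) and that $X^N_0$ is automatically $\Parts(\Omega^N)$-measurable; the equidistribution \eqref{eq:equipart} is what lets us take $k(N)=N$ so that the discrete space is genuinely the $N$-particle space $(\Omega^N,\Parts(\Omega^N),\P^N)$ of \eqref{def:omegaN} rather than a reindexing with unequal weights.
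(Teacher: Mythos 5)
Your proposal is correct and follows essentially the same route as the paper: item (i) reduces, via the identity $J_{\pL^N}(X^N,u^N)=J_{\pL}\left(\cK^N(X^N,u^N)\right)$ coming from Proposition \ref{prop:correspLdiscrete}, to the stability-of-cost argument of Proposition \ref{prop:costRLntoRL}, and item (ii) is a direct application of Proposition \ref{prop:approx_lagrangian_n} with the equidistributed algebras of Proposition \ref{prop:FAP}(2). One small imprecision in item (i): the statement is for arbitrary pairs in $C([0,T];L^p(\Omega^N;\R^d))\times\BM([0,T]\times\Omega^N;U)$, not for admissible pairs, so you cannot literally invoke Proposition \ref{prop:costRLntoRL} (whose hypotheses include admissibility); what is needed, and what the paper uses, is only the second half of its proof — convergence of the running and terminal costs given $X^N\to X$ in $C([0,T];L^p)$ and $u^N\to u$ in $(\cL_T\otimes\P)$-measure — which never uses the ODE, so your argument goes through once the admissibility bookkeeping is dropped.
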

\begin{proof}
Thanks to Proposition \ref{prop:equivSpaces} it holds that $X \in C([0,T]; L^p(\Omega;\R^d))$. 
Item (i) can be proved exactly by the same technique used in the second part of the proof of Proposition \ref{prop:costRLntoRL} applied to the sequence $\cK^N(X^N,u^N)$ and recalling that $J_{\pL}\left(\cK^N(X^N,u^N)\right) = J_{\pL^N}(X^N, u^N)$ (see Proposition \ref{prop:correspLdiscrete}).
Notice that, since $U$ is metrizable and compact the convergence $u^N \to u$ in $L^1([0,T] \times \Omega;U)$ is equivalent to the convergence in $(\cL_T \otimes \P)$-measure. 
Item (ii), is a direct application of Proposition \ref{prop:approx_lagrangian_n} to the sequence of finite algebras $\frB^N$ given in item (ii) of Proposition \ref{prop:FAP}.
\end{proof}

\begin{proposition}[Convergence of the value functions]\label{prop:convVL}
Let $\S = (U,f,\cC,\cC_T)$  satisfy Assumption \ref{BA} with $U$ a compact convex subset of a separable Banach space $V$.
Let $(\Omega,\frB,\P)$ be a standard Borel space such that $\P$ is
without atoms. 
If $X_0 \in L^p(\Omega;\R^d) $ and $X_0^N\in L^p(\Omega^N;\R^d) $ satisfy 
\begin{equation}\label{eq:conv_x0n}
\lim_{N \to+\infty} \| X^N_0 \circ \psi^N - X_0\|_{L^p(\Omega;\R^d)} = 0,
\end{equation}
then 
\begin{equation*}
\limsup_{N \to +\infty} V_{\pL^N} (X_0^N) \leq V_{\pL}(X_0).
\end{equation*}
Moreover, if  $\S =(U,f,\cC,\cC_T)$ satisfies the Convexity Assumption \ref{CA}, then
\begin{equation*}
\liminf_{N \to +\infty} V_{\pL^N} (X_0^N) \geq V_{\pL}(X_0).
\end{equation*} 
In particular, under the Convexity Assumption \ref{CA}, 
\[\lim_{N \to +\infty} V_{\pL^N} (X_0^N) = V_{\pL}(X_0).\]
\end{proposition}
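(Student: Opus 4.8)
The statement has two halves: the $\limsup$ inequality (which holds under the Basic Assumption alone) and the $\liminf$ inequality (which needs the Convexity Assumption), and together they give the claimed limit. The $\limsup$ part I would obtain directly from the $\Gamma$-$\limsup$ already encoded in item (ii) of Proposition \ref{prop:gamma_conv}: given $\eps>0$, choose $(X,u)\in\cA_{\pL}(X_0)$ with $J_{\pL}(X,u)\le V_{\pL}(X_0)+\eps$; since \eqref{eq:conv_x0n} holds, Proposition \ref{prop:gamma_conv}(ii) yields $(X^N,u^N)\in\cA_{\pL^N}(X_0^N)$ with $J_{\pL^N}(X^N,u^N)\to J_{\pL}(X,u)$, whence $\limsup_N V_{\pL^N}(X_0^N)\le\limsup_N J_{\pL^N}(X^N,u^N)=J_{\pL}(X,u)\le V_{\pL}(X_0)+\eps$, and $\eps$ is arbitrary.

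For the $\liminf$ inequality under Assumption \ref{CA}, the natural route is to pass through the Eulerian side, exploiting the $N$-particle equivalence $V_{\pL^N}(X_0^N)=V_{\E^N}((X_0^N)_\sharp\P^N)$ from Theorem \ref{cor:LN=EN}, then the bound $V_{\E^N}(\mu^N_0)\ge V_{\E}(\mu^N_0)$ from \eqref{eq:VEN>VE}, then lower semicontinuity of $V_{\E}$. Concretely: set $\mu^N_0:=(X_0^N)_\sharp\P^N=(X_0^N\circ\psi^N)_\sharp\P$ and $\mu_0:=(X_0)_\sharp\P$. From \eqref{eq:conv_x0n} and the contraction estimate \eqref{eq:WpLp} we get $W_p(\mu^N_0,\mu_0)\le\|X_0^N\circ\psi^N-X_0\|_{L^p(\Omega;\R^d)}\to0$. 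Then
\[
\liminf_{N\to+\infty}V_{\pL^N}(X_0^N)=\liminf_{N\to+\infty}V_{\E^N}(\mu^N_0)\ge\liminf_{N\to+\infty}V_{\E}(\mu^N_0)\ge V_{\E}(\mu_0),
\]
where the last step is Proposition \ref{prop:lscVE}. Finally, under Assumption \ref{CA} Theorem \ref{cor:VL=VE} gives $V_{\E}(\mu_0)=V_{\pL}(X_0)$ (here we use that $(\Omega,\frB,\P)$ is a standard Borel space without atoms), so $\liminf_N V_{\pL^N}(X_0^N)\ge V_{\pL}(X_0)$. Combining the two inequalities yields $\lim_N V_{\pL^N}(X_0^N)=V_{\pL}(X_0)$.

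\textbf{Main obstacle.} The delicate point is that the chain for the $\liminf$ mixes results proved for atomless reference spaces (Theorem \ref{cor:VL=VE}, which underlies the identification $V_{\E}=V_{\pL}$) with the genuinely discrete equivalence Theorem \ref{cor:LN=EN}, whose proof for $\P^N$ — an atomic measure — required the separate discrete superposition principle (Theorem \ref{lem:EN}) precisely because Theorem \ref{prop:K>L} is unavailable there. One must be careful that the quantities match up: the $V_{\pL^N}$ on the discrete space $\Omega^N$ is compared to $V_{\pL}$ on the atomless $\Omega$ only through the common Eulerian value $V_{\E}$ of the limiting initial law $\mu_0$, and the convergence $W_p(\mu_0^N,\mu_0)\to0$ is exactly what licenses the use of Proposition \ref{prop:lscVE}. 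No new estimate is needed; the work is entirely in correctly threading the already-established equivalences and semicontinuity results, and in checking that \eqref{eq:conv_x0n} transfers to $W_p$-convergence of the pushforwards via \eqref{eq:WpLp}.
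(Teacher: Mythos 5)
Your proof is correct and follows essentially the same route as the paper: the $\limsup$ via an $\varepsilon$-optimal pair and Proposition \ref{prop:gamma_conv}(ii), and the $\liminf$ via the chain $V_{\pL^N}\ge V_{\E^N}\ge V_{\E}$, lower semicontinuity of $V_\E$ (Proposition \ref{prop:lscVE}), and the equivalence $V_\E=V_\pL$ (Theorem \ref{cor:VL=VE}). The only cosmetic difference is that you invoke the full equivalence of Theorem \ref{cor:LN=EN} where the paper uses only the one-sided inequality of Proposition \ref{prop:LN>EN}; your explicit check that \eqref{eq:conv_x0n} transfers to $W_p(\mu_0^N,\mu_0)\to0$ via \eqref{eq:WpLp} is a welcome detail the paper leaves implicit.
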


\begin{proof}
By definition of inf, for every $\varepsilon>0$ there exists $(X_\varepsilon,u_\varepsilon)\in\mathcal A_{\pL}(X_0)$ such that $V_{\pL}(X_0) \geq J_{\pL}(X_\varepsilon,u_\varepsilon) - \varepsilon$. Moreover from item (ii) in Proposition \ref{prop:gamma_conv} there exists $(X^N_\varepsilon,u^N_\varepsilon)$ such that $J_{\pL^N}(X^N_\varepsilon,u^N_\varepsilon) \to J_{\pL}(X_\varepsilon,u_\varepsilon)$, as $N\to +\infty$. Hence
\[ \limsup_{N\to +\infty}V_{\pL^N}(X_0^N) \leq \limsup_{N\to +\infty} J_{\pL^N}(X^N_\varepsilon,u^N_\varepsilon) = J_{\pL}(X_\varepsilon,u_\varepsilon) \leq V_{\pL}(X_0) + \varepsilon. \]
By the arbitrariness of $\varepsilon$ we conclude.\\
By Proposition \ref{prop:LN>EN} and \eqref{eq:VEN>VE} we get 
\begin{equation*}
V_{\pL^N}(X_0^N) \geq V_{\E^N}(\mu_0^N) \geq V_{\E}(\mu_0^N).
\end{equation*} 
In the convex setting, by the lower semicontinuity of the value function $V_{\E}$ (see Proposition \ref{prop:lscVE}) 
and by Corollary \ref{cor:VL=VE} ($\P$ is without atoms by assumption) we have the desired convergence. 
\end{proof}

\begin{remark}
A natural choice for $X^N_0$ in Proposition \ref{prop:convVL} is given by $X_0^N:= \tilde X^N_0 \circ \phi^N$ where 
\[ \tilde X^N_0:=  \sum_{k=1}^N \mathds{1}_{A_k^N} \, \fint_{A_k^N} X_0(\omega)\, \d\P(\omega). \] 
For a proof of the convergence \eqref{eq:conv_x0n} we refer to Lemma \ref{lemma:partL1} and Proposition \ref{prop:BeqL1} in  Appendix \ref{app_FP}.
\end{remark}

\subsection{Finite particle approximation for $\E$}\label{sec:gammaE} 	
In this section, we show that the Eulerian problem $\E$ can be approximated by finite particle Eulerian problems $\E^N$ defined in Definition \ref{def:E^N}.
Thanks to Theorem \ref{cor:LN=EN}, we are able to approximate the Eulerian problem also with a sequence of finite particle Lagrangian problems $\pL^N=\pL(\Omega^N,\Parts(\Omega^N),\P^N)$, $N \in \N$, with $(\Omega^N,\Parts(\Omega^N),\P^N)$ as in \eqref{def:omegaN}. This is relevant from the point of view of applications.  
The main result of the section is stated in the following theorem.

\begin{theorem}[Convergence of the value functions]\label{thm:convVE}
Let $\S = (U,f,\mathcal C,\mathcal C_T)$ satisfy the Convexity Assumption \ref{CA}. 
Let $\mu_0 \in \PP_p(\R^d)$ and $\mu_0^N \in \PP^N(\R^d)$
such that $W_p(\mu_0^N,\mu_0)\to 0$ as $N\to+\infty$,
then 
\[\lim_{N \to +\infty} V_{\E^N} (\mu_0^N) = V_{\E}(\mu_0).\]
Moreover, for every $X_0^N \in L^p(\Omega^N;\R^d)$ such that $(X_0^N)_\sharp \P^N = \mu_0^N $ it holds that  
\[\lim_{N \to +\infty} V_{\pL^N} (X_0^N) = V_{\E}(\mu_0).\]
\end{theorem}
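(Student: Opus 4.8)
The statement is essentially a packaging of the equivalences already established together with one $\Gamma$-convergence-type argument for the Eulerian side. The plan is to prove the two claims separately: first the convergence $V_{\E^N}(\mu_0^N) \to V_{\E}(\mu_0)$, and then the mixed Lagrangian-Eulerian convergence as an immediate corollary using Theorem \ref{cor:LN=EN}.

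For the first claim, the natural approach is to split into a $\limsup$ inequality and a $\liminf$ inequality. The $\liminf$ part, $\liminf_N V_{\E^N}(\mu_0^N) \geq V_\E(\mu_0)$, is cheap: by \eqref{eq:VEN>VE} we have $V_{\E^N}(\mu_0^N) \geq V_\E(\mu_0^N)$ for every $N$, and since $W_p(\mu_0^N,\mu_0)\to 0$, Proposition \ref{prop:lscVE} (lower semicontinuity of $V_\E$ under the Convexity Assumption) gives $\liminf_N V_\E(\mu_0^N) \geq V_\E(\mu_0)$. The delicate part is the $\limsup$ inequality $\limsup_N V_{\E^N}(\mu_0^N) \leq V_\E(\mu_0)$. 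Here the plan is to reduce it to the Lagrangian approximation already available: pick a parametrization space $(\Omega,\frB,\P)$ that is standard Borel with $\P$ without atoms — concretely $([0,1],\cB,\cL_1)$ — choose by Proposition \ref{prop:Skorohod} a Borel map $X_0:[0,1]\to\R^d$ with $(X_0)_\sharp\cL_1 = \mu_0$, and choose maps $X_0^N : \Omega^N \to \R^d$ with $(X_0^N)_\sharp\P^N = \mu_0^N$. The point to check is that one can arrange $\|X_0^N\circ\psi^N - X_0\|_{L^p(\Omega;\R^d)}\to 0$; this is a standard construction (for instance, exploiting optimal couplings between $\mu_0^N$ and $\mu_0$, or via the quantile/monotone rearrangement on $[0,1]$ since $d$ can be reduced via Skorohod on $\R^d$, or simply invoking Proposition \ref{prop:Skorohod}(ii) applied to the weakly convergent sequence together with the moment bound from Proposition \ref{prop:wassconv} and a Vitali-type argument exactly as in the proof of Theorem \ref{th:contVE}). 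Once this is in place, Proposition \ref{prop:convVL} (whose convexity half applies since $\S$ satisfies Assumption \ref{CA}) gives $\lim_N V_{\pL^N}(X_0^N) = V_{\pL}(X_0)$, and Theorem \ref{cor:LN=EN} gives $V_{\pL^N}(X_0^N) = V_{\E^N}((X_0^N)_\sharp\P^N) = V_{\E^N}(\mu_0^N)$. Finally Theorem \ref{cor:VL=VE} (applicable since $(\Omega,\frB,\P)$ is standard Borel and atomless) gives $V_\pL(X_0) = V_\E((X_0)_\sharp\P) = V_\E(\mu_0)$. Chaining these equalities yields $\lim_N V_{\E^N}(\mu_0^N) = V_\E(\mu_0)$, which in particular subsumes both one-sided inequalities.

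For the second claim, let $X_0^N \in L^p(\Omega^N;\R^d)$ with $(X_0^N)_\sharp\P^N = \mu_0^N$. By Theorem \ref{cor:LN=EN} we have directly $V_{\pL^N}(X_0^N) = V_{\E^N}(\mu_0^N)$ for every $N$, so the second convergence follows immediately from the first: $\lim_N V_{\pL^N}(X_0^N) = \lim_N V_{\E^N}(\mu_0^N) = V_\E(\mu_0)$. No additional work is needed here beyond quoting the equivalence at the level of $N$-particle systems.

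\textbf{Main obstacle.} The only genuinely non-trivial point is the construction, in the proof of the $\limsup$ inequality, of representatives $X_0^N$ and $X_0$ on a common atomless standard Borel parametrization space with $\|X_0^N\circ\psi^N - X_0\|_{L^p}\to 0$, together with verifying that the chosen maps are compatible with the maps $\psi^N,\phi^N,\cK^N$ appearing in Proposition \ref{prop:convVL}. This is where one must be careful to actually use $W_p$-convergence (not just weak convergence), so that the $p$-th moments converge and a uniform integrability / Vitali argument upgrades a.e. convergence to $L^p$ convergence — mirroring exactly the argument in the proof of Theorem \ref{th:contVE}. Everything else is a bookkeeping assembly of Theorem \ref{cor:LN=EN}, Theorem \ref{cor:VL=VE}, Proposition \ref{prop:convVL}, Proposition \ref{prop:lscVE} and \eqref{eq:VEN>VE}.
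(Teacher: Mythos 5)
Your proof is correct. The $\liminf$ inequality (via \eqref{eq:VEN>VE} and Proposition \ref{prop:lscVE}) and the reduction of the second claim to the first via Theorem \ref{cor:LN=EN} coincide with the paper's argument. Where you diverge is the $\limsup$ inequality: the paper works entirely on the Eulerian side, taking an $\varepsilon$-optimal pair $(\mu_\varepsilon,\ubar u_\varepsilon)\in\cA_\E(\mu_0)$ and invoking the recovery-sequence statement of Proposition \ref{prop:ENtoE} to produce $(\mu^N_\varepsilon,\ubar u^N_\varepsilon)\in\cA_{\E^N}(\mu_0^N)$ with converging costs, whereas you transfer the whole question to the Lagrangian side on $([0,1],\cB_{[0,1]},\cL_1)$ and chain Proposition \ref{prop:convVL}, Theorem \ref{cor:LN=EN} and Theorem \ref{cor:VL=VE} at the level of value functions. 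Both routes are legitimate and in fact rest on the same underlying work: the proof of Proposition \ref{prop:ENtoE} itself passes through a Lagrangian representation on $[0,1]$ and performs in its Step 2 exactly the initial-datum matching you single out as the main obstacle (piecewise-constant averaging via Lemma \ref{lemma:partL1}, plus the fact that an optimal plan between two uniform $N$-point empirical measures is induced by a permutation, which yields $\|X_0^N\circ\psi^N-X_0\|_{L^p}\to 0$). Your version is lighter as a proof of the value-function statement alone; the paper's choice of Proposition \ref{prop:ENtoE} yields the stronger information of an explicit recovery sequence of admissible Eulerian pairs. One small caution on your list of candidate constructions of $X_0^N$: invoking Proposition \ref{prop:Skorohod}(ii) directly does not suffice, because the maps it produces need not be constant on the cells $I_k^N$, i.e.\ need not factor through $\psi^N$ as Proposition \ref{prop:convVL} requires; the optimal-coupling/permutation construction (your first option, and the paper's) is the one that actually delivers maps of the form $X_0^N\circ\psi^N$.
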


In order to prove Theorem \ref{thm:convVE}, we start with the following proposition.

\begin{proposition}[Finite particle approximation for $\E$]\label{prop:ENtoE}
Let $\S=(U,f,\cC,\cC_T)$ satisfy the Convexity Assumption \ref{CA}. 
Let $\mu_0\in \PP_p(\R^d)$ and $\mu_0^N \in \PP^N(\R^d)$,
$N\in\N$, such that 
\begin{equation}\label{eq:conv_mu0N}
\lim_{N\to +\infty}W_p(\mu_0^N,\mu_0)= 0.
\end{equation}
If $(\mu,\ubar u)\in\cA_{\E}(\mu_0)$,
then there exists a sequence $(\mu^N,\ubar u^N)\in\cA_{\E^N}(\mu_0^N)$ such that
\begin{enumerate}
\item $(\mu^N,\ubar u^N)$ converges to $(\mu,\ubar u)$ according to Definition \ref{def:nu_converges};
\item $\displaystyle\lim_{N\to+\infty} J_{\E^N}(\mu^N,\ubar u^N) = J_{\E}(\mu,\ubar u)$.
\end{enumerate}
\end{proposition}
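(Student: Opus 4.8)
The plan is to realize the fixed admissible pair $(\mu,\ubar u)$ as a limit of $N$-particle Eulerian pairs by first lifting it to a Lagrangian pair on an atomless space, discretizing at the Lagrangian level, and then projecting back to the Eulerian side through the $\pL^N\to\E^N$ comparison. First I would apply Proposition \ref{prop:E>K} to produce a Kantorovich pair $(\eeta,u)\in\cA_\K(\mu_0)$ with $u(t,\gamma)=\ubar u(t,\gamma(t))$, $(e_t)_\sharp\eeta=\mu_t$ and $J_\K(\eeta,u)=J_\E(\mu,\ubar u)$. Working on $(\Omega,\frB,\P)=([0,1],\cB,\cL_1)$ and choosing a Skorohod map $\psi\colon[0,1]\to\Gamma_T$ with $\psi_\sharp\cL_1=\eeta$ (Proposition \ref{prop:Skorohod}), I set $X_t:=e_t\circ\psi$ and $u^L(t,\omega):=u(t,\psi(\omega))=\ubar u(t,X_t(\omega))$. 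As in the construction of Section \ref{sec:counterexample}, $(X,u^L)\in\cA_{\pL}(X_0)$ with $X_0=e_0\circ\psi$, $(X_0)_\sharp\cL_1=\mu_0$ and $J_{\pL}(X,u^L)=J_\K(\eeta,u)=J_\E(\mu,\ubar u)$.

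Next I would discretize at the Lagrangian level. The prescribed initial data $\mu_0^N$ must be matched: I choose $X_0^N\in L^p(\Omega^N;\R^d)$ with $(X_0^N)_\sharp\P^N=\mu_0^N$ and $X_0^N\circ\psi^N\to X_0$ in $L^p$ (this matching step is discussed below). Proposition \ref{prop:gamma_conv}(ii) then yields $(X^N,u^N)\in\cA_{\pL^N}(X_0^N)$ with $\cK^N(X^N,u^N)\to(X,u^L)$ in $C([0,T];L^p(\Omega;\R^d))\times L^1([0,T]\times\Omega;U)$ and $J_{\pL^N}(X^N,u^N)\to J_{\pL}(X,u^L)$. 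Applying Proposition \ref{prop:LN>EN} I obtain $(\mu^N,\ubar u^N)\in\cA_{\E^N}\big((X_0^N)_\sharp\P^N\big)=\cA_{\E^N}(\mu_0^N)$ with $\mu^N_t=(X^N_t)_\sharp\P^N$ and $J_{\E^N}(\mu^N,\ubar u^N)\le J_{\pL^N}(X^N,u^N)$. Since $W_p(\mu^N_t,\mu_t)\le\|X^N_t\circ\psi^N-X_t\|_{L^p}$, the trajectory convergence gives $\mu^N\to\mu$ in $C([0,T];\PP_p(\R^d))$. For the control convergence \eqref{convweaknu}, I use that on the (generic, collision-free) support of $\mu^N_t$ the feedback $\ubar u^N$ built in Proposition \ref{prop:LN>EN} satisfies $\ubar u^N(t,X^N_t(\omega))=u^N(t,\omega)$, so that for $\phi\in C_c([0,T]\times\R^d;V')$ the integral $\int_0^T\!\!\int_{\R^d}\langle\phi,\ubar u^N\rangle\,\d\mu^N_t\,\d t$ equals $\int_0^T\!\!\int_\Omega\langle\phi(t,X^N_t\circ\psi^N),u^N\circ\psi^N\rangle\,\d\P\,\d t$; passing to the limit through $X^N\circ\psi^N\to X$, $u^N\circ\psi^N\to u^L$ and the identity $u^L(t,\omega)=\ubar u(t,X_t(\omega))$, this converges to $\int_0^T\!\!\int_{\R^d}\langle\phi,\ubar u\rangle\,\d\mu_t\,\d t$. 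This establishes item (1), i.e.\ $(\mu^N,\ubar u^N)\to(\mu,\ubar u)$ in the sense of Definition \ref{def:nu_converges}.

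Item (2) then follows by combining two bounds. The upper bound $\limsup_N J_{\E^N}(\mu^N,\ubar u^N)\le\lim_N J_{\pL^N}(X^N,u^N)=J_\E(\mu,\ubar u)$ comes from $J_{\E^N}\le J_{\pL^N}$ and the cost convergence above. For the lower bound, since $J_{\E^N}=J_\E$ by Definition \ref{def:E^N} and $(\mu^N,\ubar u^N)$ converges as in item (1), the lower semicontinuity of $J_\E$ under the Convexity Assumption (Proposition \ref{prop:lscJE}) gives $\liminf_N J_{\E^N}(\mu^N,\ubar u^N)=\liminf_N J_\E(\mu^N,\ubar u^N)\ge J_\E(\mu,\ubar u)$. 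Together these yield $\lim_N J_{\E^N}(\mu^N,\ubar u^N)=J_\E(\mu,\ubar u)$.

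The main obstacle is the initial-data matching in the second step: I need $X_0^N$ with exact law $\mu_0^N$ and $X_0^N\circ\psi^N\to X_0$ in $L^p$, where $\psi^N$ is tied to an equiprobable finite algebra that must simultaneously support the piecewise-constant control approximation underlying Proposition \ref{prop:gamma_conv} (the finite approximation property). A direct quantile/block assignment against a fixed interval partition can fail when $X_0$ is a poorly-adapted representative of $\mu_0$, so the representative $X_0$ and the finite algebras cannot be chosen independently. I would resolve this using the Skorohod representation Proposition \ref{prop:Skorohod}(ii): from $W_p(\mu_0^N,\mu_0)\to0$ (hence weak convergence and, via Proposition \ref{prop:wassconv} together with Vitali's theorem as in the proof of Theorem \ref{th:contVE}) one obtains $X_0^N\to X_0$ in $L^p$ with the exact marginals, and the level-set algebras of these maps are equiprobable and carry $X_0^N$, so that via the equivalence of Lagrangian problems over different parametrizations (Proposition \ref{prop:correspLdiscrete}) they can be identified with $\pL^N$. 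The delicate point, which I would treat carefully, is to reconcile this initial-data-adapted algebra with the finite approximation property needed to approximate the control $u^L(t,\cdot)$, together with the routine verification that the set of times and particles at which trajectories collide is negligible, so that the feedback reconstruction of $\ubar u^N$ does not affect the limiting integrals.
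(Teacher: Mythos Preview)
Your overall architecture—lift $(\mu,\ubar u)$ to a Lagrangian pair on $([0,1],\cL_1)$ via the Kantorovich representation and Skorohod, discretize at the Lagrangian level, then project back to $\E^N$—matches the paper's. However, two steps in your execution diverge from the paper in ways that create genuine gaps.

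\medskip
\noindent\textbf{Initial-data matching.} You correctly flag this as the main obstacle but do not resolve it. Your proposal to use Skorohod representation together with ``level-set algebras'' of $X_0^N$ runs into exactly the problem you anticipate: those algebras depend on the initial data, and there is no reason they satisfy the finite approximation property needed to approximate the control $u^L$. The paper sidesteps this entirely by working with the \emph{fixed} uniform partition $\{I_k^N\}_{k=1}^N$ of $[0,1]$ (Lemma~\ref{lemma:partL1}), which satisfies the finite approximation property independently of any data. To match the prescribed $\mu_0^N=\frac1N\sum_k\delta_{x_k^N}$, the paper first forms the interval-average approximation $\tilde X_0^N$ of $\tilde X_0$ (with law $\tilde\mu_0^N$), and then selects a permutation $\sigma^N$ realizing $W_p(\mu_0^N,\tilde\mu_0^N)=\|X_0^N-\tilde X_0^N\|_{L^p}$ with $X_0^N:=\sum_k x_{\sigma^N(k)}^N\mathds 1_{I_k^N}$; then $\|X_0^N-\tilde X_0\|_{L^p}\to0$ by the triangle inequality. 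This is explicit and decouples the algebra from the data; the discretized pair $(X^N,\tilde u^N)$ is then handled directly via the stability Proposition~\ref{prop:costRLntoRL} rather than through Proposition~\ref{prop:gamma_conv}.

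\medskip
\noindent\textbf{Construction of $\ubar u^N$.} This is the more serious gap. You invoke Proposition~\ref{prop:LN>EN}, whose feedback $\ubar u^N(t,x)=u^N(t,\bar\omega_{t,x})$ is an $\argmin$ selection over the collision class $J(t,x)$. Your argument for \eqref{convweaknu} then relies on $\ubar u^N(t,X_t^N(\omega))=u^N(t,\omega)$, which holds only off collisions, and your claim that the collision set is ``routinely'' negligible is false in general: if $\mu_0^N$ has repeated atoms, or if two interval blocks happen to share the same averaged control and the same initial position, the coincidence set has full $\cL_T$-measure. The paper does \emph{not} use Proposition~\ref{prop:LN>EN} here. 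Instead it defines $\rho_t^N:=(X_t^N,\tilde u_t^N)_\sharp\cL_1$, disintegrates with respect to the first marginal, and sets
\[
\ubar u^N(t,x):=\int_U u\,\d\rho_{t,x}^N(u)
\]
(the barycenter). Affinity of $f$ then gives $(\mu^N,\ubar u^N)\in\cA_{\E^N}(\mu_0^N)$ directly, and the convergence \eqref{convweaknu} follows from the weak convergence $\rho^N_t\otimes\cL_T\to\rho_t\otimes\cL_T$ together with the identity $\tilde u_t(\omega)=\ubar u(t,\tilde X_t(\omega))$—no collision analysis is needed. The inequality $J_{\E^N}(\mu^N,\ubar u^N)\le J_{\pL}(X^N,\tilde u^N)$ comes from Jensen and the convexity of $\cC$ rather than from the $\argmin$ selection. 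Once these two points are fixed, your concluding Step (combining this upper bound with Proposition~\ref{prop:lscJE}) is exactly how the paper finishes.
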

\begin{proof}
{\bf Step 1.} 
Let $(\mu,\ubar u)\in\cA_{\E}(\mu_0)$.
In this step we associate to $(\mu,\ubar u)$ an admissible pair  $(\tilde X, \tilde u) \in \cA_{\pL}(\tilde X_0)$ for the Lagrangian problem $\pL:= \pL([0,1];\cB_{[0,1]},\cL_1)$ such that 
\begin{equation}\label{eq:equivLE}
J_{\pL}(\tilde X,\tilde u)=J_{\E}(\mu,\ubar u).
\end{equation}
 
By Proposition \ref{prop:E>K}, there exists $(\eeta,u)\in\cA_{\K}(\mu_0)$  such that 
$u(t,\gamma)=\ubar u(t,\gamma(t))$, $(e_t)_\sharp\eeta=\mu_t$
and $J_{\K}(\eeta,u)=J_{\E}(\mu,\ubar u)$.
Thanks to Lemma \ref{l:fromKtoL},
the map $Z:[0,T]\times\Gamma_T\to\R^d$ defined by 
$Z(t,\gamma):=\gamma(t)=e_t(\gamma)$ satisfies $(Z,u)\in\cA_{\pL_{\eeta}}(e_0)$  with $\pL_{\eeta}:=\pL(\Gamma_T,\cB_{\Gamma_T},\eeta)$ and 
\begin{equation}\label{eq:JLetaeqJK1}
	J_{\pL_{\eeta}}(Z,u)=J_{\K}(\eeta,u),
\end{equation} 
By Proposition \ref{prop:Skorohod} applied to $S= \Gamma_T$ and $\nu = \eeta$, there exists  a Borel map $P:[0,1]\to\Gamma_T$ such that $P_\sharp\cL_1=\boldsymbol\eta$.

We define 
 $\tilde X:[0,T]\times[0,1]\to\R^d$ by $\tilde X_t(\omega):=Z_t(P(\omega))$ 
 and $\tilde u:[0,T]\times[0,1]\to U$ by $\tilde u_t(\omega):=u_t(P(\omega))$.
Notice that 
\begin{equation}\label{eq:mu_t:P}
(\tilde X_t)_\sharp\cL_1=(e_t\circ P)_\sharp\cL_1=(e_t)_\sharp\eeta=\mu_t,
\end{equation}
and it is easy to prove that
$(\tilde X,\tilde u)\in\mathcal A_{\pL}(\tilde X_0)$.
Moreover $J_{\pL}(\tilde X,\tilde u)=J_{\pL_{\eeta}}(Z,u)$
and, by \eqref{eq:JLetaeqJK1}, we obtain \eqref{eq:equivLE}.

\medskip

{\bf Step 2.} 
We use the partition of $[0,1]$ defined in Lemma \ref{lemma:partL1}.
We define the piecewise constant  initial data $\tilde X_0^N:[0,1]\to\R^d$ and 
controls $\tilde u^N:[0,T]\times[0,1]\to U$ by
\begin{align*}
\tilde X_0^N&:=\sum_{k=1}^{N}\mathds 1_{I^N_k} \,\fint_{I^N_k}\tilde X_0(\omega)\,\d\cL_1(\omega),\\
\tilde u_t^N&:=\sum_{k=1}^{N}\mathds 1_{I^N_k} \, \fint_{I^N_k}\tilde u_t(\omega)\,\d\cL_1(\omega)\, , \quad \textrm{for all }t\in[0,T].
\end{align*}
From the definition of $\tilde X_0^N$ and Lemma \ref{lemma:partL1} we have
\begin{equation}\label{eq:wpn2}
	W_p(\tilde \mu_0^N,\mu_0)\leq \|\tilde X_0^N-\tilde X_0\|_{L^p([0,1];\R^d)}\to 0 \qquad \text{as } N\to+\infty,
\end{equation}
where 
\[\tilde \mu_0^N:=(\tilde X_0^N)_\sharp\cL_1=\frac{1}{N}\sum_{k=1}^N\delta_{\tilde x^N_k}, \qquad \tilde x^N_k:=\fint_{I^N_k}\tilde X_0(\omega)\,\d\cL_1(\omega)\in\R^d.\]

Since $\mu_0^N=\frac{1}{N}\sum_{k=1}^N\delta_{x^N_k}$, 
there exists a permutation of indexes $\sigma^N:\{1,\dots,N\}\to\{1,\dots,N\}$ and a map $X_0^N:[0,1]\to\R^d$ defined by $X_0^N:=\sum_{k=1}^{N} x^N_{\sigma^N(k)} \mathds 1_{I^N_k}$ such that 
\begin{equation}\label{eq:wpn}
	W_p(\mu_0^N,\tilde\mu_0^N)=\|X_0^N-\tilde X_0^N\|_{L^p([0,1];\R^d)}.
\end{equation}
Using \eqref{eq:conv_mu0N}, \eqref{eq:wpn2} and \eqref{eq:wpn} we obtain
 \begin{equation}
 \begin{split}
	\|X_0^N-\tilde X_0\|_{L^p([0,1];\R^d)} &\leq \|X_0^N-\tilde X^N_0\|_{L^p([0,1];\R^d)} + \|\tilde X_0^N-\tilde X_0\|_{L^p([0,1];\R^d)}\\
	&= W_p(\mu_0^N,\tilde\mu_0^N) + \|\tilde X_0^N-\tilde X_0\|_{L^p([0,1];\R^d)}\\
	&\leq W_p(\mu_0^N,\mu_0) + W_p(\mu_0,\tilde\mu_0^N) + \|\tilde X_0^N-\tilde X_0\|_{L^p([0,1];\R^d)}
	\to0.
\end{split}
\end{equation} 

Let $(X^N,\tilde u^N)\in\cA_{\pL}(X_0^N)$.
By Lemma \ref{lemma:partL1} and dominated convergence we have that
$\|\tilde u^N-\tilde u\|_{L^1([0,1] \times [0,T];V)}\to0$ as $N\to+\infty$.
Then, by Proposition \ref{prop:costRLntoRL}, we have
 \begin{equation}\label{convAUX}
	\sup_{t\in[0,T]}\|X_t^N-\tilde X_t\|_{L^p([0,1];\R^d)}\to 0 \qquad \mbox{ as } N\to+\infty
\end{equation}
 and
\begin{equation}\label{eq:convJLnL2}
	\lim_{N\to+\infty}J_{\pL}(X^N,\tilde u^N)=J_{\pL}(\tilde X,\tilde u).
\end{equation}
We observe that, for any $t\in[0,T]$, $X_t^N$ is constant on the elements $I_k^N$ 
of partition $\{I_k^N:k=1,\ldots,N\}$ so that it is of the form
$$X_t^N:=\sum_{k=1}^{N} (x_t^N)_k \mathds 1_{I^N_k}$$
for some $ (x_t^N)_k\in\R^d$.

We define $\mu_t^N:=(X_t^N)_\sharp\cL_1\in\PP_p(\R^d)$.
From the observation above,
$$\mu_t^N=\frac1N\sum_{k=1}^{N} \delta_{(x_t^N)_k} $$

By \eqref{convAUX} we obtain that $\mu^N\to\mu$ in $C([0,T];\PP_p(\R^d))$ as $N\to+\infty$. 

\medskip

{\bf Step 3.} 
For any $t\in[0,T]$ we define $\rho_t^N:=(X_t^N,\tilde u_t^N)_\sharp\cL_1\in\PP(\R^d\times U)$, and notice that 
$\pi^1_\sharp\rho_t^N=\mu_t^N$.
Denoting by $\rho_{t,x}^N\in\PP(U)$ the disintegration of $\rho_t^N$ w.r.t. $\pi^1$,
we define the Borel map $\ubar u^N:[0,T]\times\R^d\to U$  by 
$$\ubar u^N(t,x):=\int_{U}u\,\d\rho_{t,x}^N(u).$$
By the definition of $\mu_t^N$ and $\ubar u^N$, from item (2) of the Convexity Assumption \ref{CA},
we obtain that  $(\mu^N,\ubar u^N)\in\cA_{\E^N}(\mu_0^N)$.
Indeed, given $\varphi\in C^1_c(\R^d;\R)$, for $\cL_T$-a.e. $t\in[0,T]$, we have
\begin{align*}
\frac{\d}{\d t}\int_{\R^d}\varphi(x)\,\d\mu_t^N(x)&=\frac{\d}{\d t}\int_{[0,1]}\varphi(X_t^N(\omega))\,\d\cL_1(\omega) \\
&=\int_{[0,1]}\nabla\varphi(X_t^N(\omega))\cdot \dot X_t^N(\omega)\,\d\cL(\omega)\\
&=\int_{[0,1]}\nabla\varphi(X_t^N(\omega))\cdot f(X_t^N(\omega),\tilde u_t^N(\omega),\mu_t^N)\,\d\mathcal L(\omega)\\
&=\int_{\R^d\times U}\nabla\varphi(x)\cdot f(x,u,\mu_t^N)\,\d\rho_t^N(x,u)\\
&=\int_{\R^d}\nabla\varphi(x)\cdot\int_U f(x,u,\mu_t^N)\,\d\rho_{t,x}^N(u)\,\d\mu_t^N(x)\\
&=\int_{\R^d}\nabla\varphi(x)\cdot f\left(x,\int_U u\,\d\rho_{t,x}^N(u),\mu_t^N\right)\,\d\mu_t^N(x)\\
&=\int_{\R^d}\nabla\varphi(x)\cdot f\left(x,\ubar u^N(t,x),\mu_t^N\right)\,\d\mu_t^N(x).
\end{align*}

Let us conclude the proof of the convergence showing that \eqref{convweaknu} holds. 
For any $t\in[0,T]$, using $(\tilde X,\tilde u)$ introduced in Step 1, we define $\rho_t:=(\tilde X_t,\tilde u_t)_\sharp\cL_1\in \PP(\R^d\times U)$.
By the convergence \eqref{convAUX} and the convergence of $\tilde u^N$ to $\tilde u$, it easily follows that
$\rho^N:=\rho^N_t\otimes\cL_T$ weakly converges to $\rho:=\rho_t\otimes\cL_T$  in $\PP([0,T]\times\R^d\times U)$.

Let now $\phi\in C_c([0,T]\times\R^d;V')$. Using the definition of $\rho^N$ and the weak convergence of $\rho^N$ to $\rho$ we have that
\begin{align*}
\lim_{N\to+\infty}\int_0^T\int_{\R^d}\langle\phi(t,x),\bar u^N(t,x)\rangle\,\d\mu_t^N(x)\,\d t 
&=\lim_{N\to+\infty}\int_0^T\int_{\R^d}\left\langle\phi(t,x),\int_U u\,\d\rho^N_{t,x}(u)\right\rangle\,\d\mu^N_t(x)\,\d t\\
&=\lim_{N\to+\infty}\int_0^T\int_{\R^d}\int_U\left\langle\phi(t,x), u \right\rangle\,\d\rho^N_{t,x}(u)\,\d\mu^N_t(x)\,\d t\\
&=\lim_{N\to+\infty}\int_{[0,T]\times\R^d\times U}\langle\phi(t,x), u\rangle\,\d\rho^N(t,x,u),\\
&=\int_{[0,T]\times\R^d\times U}\langle\phi(t,x) ,u\rangle \,\d\rho(t,x,u),\\
\end{align*}
Using $\eeta \in \PP(C([0,T];\R^d))$, $P: [0,1] \to \Gamma_T$ and \eqref{eq:mu_t:P} introduced in Step 1,  recalling that $\rho = (\tilde X_t, \tilde u_t)_\sharp\cL_1 \otimes \cL_T$ we get  
\begin{align*}
\int_{[0,T]\times\R^d\times U}\langle\phi(t,x) ,u\rangle \,\d\rho(t,x,u) &=\int_0^T\int_0^1\langle\phi(t,\tilde X_t(\omega)),\tilde u_t(\omega)\rangle\,\d\cL_1(\omega)\,\d t\\
&=\int_0^T\int_0^1\langle\phi(t,e_t(P(\omega))), u_t(P(\omega))\rangle\,\d\cL_1(\omega)\,\d t\\
&=\int_0^T\int_{\Gamma_T}\langle\phi(t,e_t(\gamma)) ,u_t(\gamma)\rangle\,\d\eeta(\gamma)\,\d t\\
&=\int_0^T\int_{\Gamma_T}\langle\phi(t,e_t(\gamma)),\ubar u(t,e_t(\gamma))\rangle\,\d\eeta(\gamma)\,\d t\\
&=\int_0^T\int_{\R^d}\langle\phi(t,x) ,\ubar u(t,x)\rangle\,\d\mu_t(x)\,\d t.
\end{align*}

\medskip

{\bf Step 4.} 
Finally we prove that $J_{\E^N}(\mu^N,\ubar u^N) \to J_{\E}(\mu,\ubar u)$, as $N \to +\infty$.\\
Using item (3) of the Convexity Assumption \ref{CA}, we have
\begin{align*}
J_{\E^N}(\mu^N,\ubar u^N)
&=\int_0^T\int_{\R^d}\cC(x,\bar u^N(t,x),\mu_t^N)\,\d\mu^N_t(x)\,\d t + \int_{\R^d}\cC_T(x,\mu_T^N)\,\d\mu_T^N(x)\\
&=\int_0^T\int_{\R^d}\cC\left(x,\int_U u\,\d\rho_{t,x}^N(u),\mu_t^N\right)\,\d\mu_t^N(x)\,\d t
+\int_{[0,1]}\cC_T(X_T^N(\omega),\mu_T^N)\,\d\cL_1(\omega)\\
&\le \int_0^T\int_{\R^d\times U}\cC(x,u,\mu_t^N)\,\d\rho_t^N(x,u)\,\d t+\int_{[0,1]}\cC_T(X_T^N(\omega),\mu_T^N)\,\d\cL_1(\omega)\\
&=\int_0^T\int_{[0,1]}\cC(X_t^N(\omega),\tilde u_t^N(\omega),\mu_t^N)\,\d\cL_1(\omega)\,\d t+\int_{[0,1]}\cC_T(X_T^N(\omega),\mu_T^N)\,\d\cL_1(\omega)\\
&= J_{\pL}(X^N,\tilde u^N).
\end{align*}
By the previous inequality, recalling \eqref{eq:convJLnL2} and \eqref{eq:equivLE}, we get
\[\limsup_{N\to+\infty}J_{\E^N}(\mu^N,\ubar u^N)\le\limsup_{N\to+\infty}J_{\pL}(X^N,\tilde u^N)=J_{\pL}(\tilde X,\tilde u)=J_{\E}(\mu,\ubar u).\]
By Proposition \ref{prop:lscJE} we conclude.
\end{proof}

We conclude this section with the proof of Theorem \ref{thm:convVE}.

\begin{proof}[Proof of Theorem \ref{thm:convVE}]
Observe that $V_{\E^N} (\mu_0^N) \geq V_{\E}(\mu_0^N)$, then by Proposition \ref{prop:lscVE} we obtain
\[\liminf_{N \to +\infty} V_{\E^N} (\mu_0^N) \ge V_{\E}(\mu_0).\]

Let us prove that $\limsup_{N \to +\infty} V_{\E^N} (\mu_0^N) \le V_{\E}(\mu_0)$. 
Fix $\varepsilon>0$. By definition of $V_\E$ there exists 
$(\mu_\varepsilon,\ubar u_\varepsilon)\in\mathcal A_{\E}(\mu_0)$ such that $V_{\E}(\mu_0) \geq J_{\E}(\mu_\varepsilon,\ubar u_\varepsilon) - \varepsilon$. 
By Proposition \ref{prop:ENtoE} there exists $(\mu^N_\varepsilon,\ubar u^N_\varepsilon)\in\mathcal A_{\E^N}(\mu_0^N)$ such that $\lim_{N\to+\infty}J_{\E^N}(\mu^N_\varepsilon,\ubar u^N_\varepsilon) = J_{\E}(\mu_\varepsilon,\ubar u_\varepsilon)$. Thus,
\[ \limsup_{N\to +\infty}V_{\E^N}(\mu_0^N) \leq \limsup_{N\to +\infty} J_{\E^N}(\mu^N_\varepsilon,\ubar u^N_\varepsilon) = J_{\E}(\mu_\varepsilon,\ubar u_\varepsilon) \leq V_{\E}(\mu_0) + \varepsilon. \]
By the arbitrariness of $\varepsilon$ we conclude.

Finally, thanks to Theorem \ref{cor:LN=EN} it holds that $V_{\E^N}(\mu_0^N) = V_{\pL^N}(X_0^N)$ for every $N \in \N$, so that $\lim_{N \to +\infty} V_{\pL^N} (X_0^N) = V_{\E}(\mu_0)$.
\end{proof}

\appendix
\section*{Appendixes}
We organize the material of the appendixes as follows.  
Appendix \ref{appendix_A}  deals with vector-valued Sobolev spaces and Cauchy problems for ODEs in Banach spaces. 
A further stability property of Cauchy problems is then established in Appendix \ref{appendix_B}.
In Appendix \ref{app:SP} we state and prove the superposition principle for the evolution of empirical measures.
Appendix \ref{app_FP} is devoted to the proof of Proposition \ref{prop:FAP} where we construct (equipartite) finite algebras satisfying the Finite Approximation Property of Definition \ref{approx_prop}.

\section{Ordinary differential equations in Banach spaces}\label{appendix_A}
Let $E$ be a Banach space with $\|\cdot\|$ the associated norm.
In the following, if  $u:[0,T]\to E$ is a Bochner integrable function,
we denote by $\int_0^Tu(t)\,\d t$ its Bochner integral.
We recall the following criterion of integrability:
$u:[0,T]\to E$ is Bochner integrable if and only if there exists a sequence
$u_n:[0,T]\to E$ of simple measurable functions such that
$\lim_{n\to+\infty} u_n(t)=u(t)$ for $\cL_T$-a.e. $t\in[0,T]$,
 and
$\int_0^T\|u(t)\|\,\d t<+\infty$.

We recall that, if $u:[0,T]\to E$ is Bochner integrable, then 
\begin{equation}\label{eq:SBochner}
	\left\|\int_a^b u(t)\,\d t \right\| \leq \int_a^b \|u(t)\|\,\d t, \qquad \text{for any }[a,b]\subseteq [0,T],
\end{equation}
\begin{equation}
	\lim_{h\to 0}\frac1h \int_t^{t+h} \|u(s)-u(t)\|\,\d s =0,  \qquad \text{for $\cL_T$-a.e. } t\in[0,T],
\end{equation}
and the above limit exists in every point of continuity of $u$.
Moreover, for every continuous linear operator $A: E \to \tilde E$, with $\tilde E$ a Banach space it holds that 
\begin{equation}\label{eq:operator_bochner}
A \left( \int_0^T u(t)\, \d t\right) =  \int_0^T A(u(t))\, \d t.
\end{equation}
For the definition of Bochner integral, properties and related proofs, see for instance \cite{diestel1977vector}.

We say that  $u \in W^{1,p}(0,T;E)$ if $u \in L^p(0,T;E)$ and there exists $g \in L^p(0,T;E)$ such that 
\begin{equation*}
\int_0^T \varphi'(t) u(t) \, \d t = -\int_0^T \varphi(t) g(t) \, \d t , \quad \forall \, \varphi \in C^\infty_c((0,T);\R).
\end{equation*}  
We recall a classical result (see e.g. \cite[Theorem~1.17]{barbu2010nonlinear}) 
\begin{proposition}\label{p:W=AC}
$u \in W^{1,p}(0,T;E)$ if and only if there exists $\tilde u \in \AC^p([0,T];E)$ such that $\tilde u(t) = u(t)$ and $\tilde u$ is differentiable for $\cL_T$-a.e. $t \in [0,T]$.
\end{proposition}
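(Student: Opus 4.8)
The plan is to prove the vector-valued analogue of the classical scalar characterisation of Sobolev functions on an interval, handling the two implications separately. The tools I would use are the Lebesgue differentiation theorem for Bochner integrals recalled above, the commutation property \eqref{eq:operator_bochner} of the Bochner integral with continuous linear operators, and a reduction to the scalar case by pairing with functionals $\xi\in E'$ together with the Hahn--Banach theorem.

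For the implication from $W^{1,p}$ to $\AC^p$, I would start from the weak derivative $g\in L^p(0,T;E)$ of $u$ and set $v(t):=\int_0^t g(s)\,\d s$. Using \eqref{eq:SBochner} one gets $\|v(t_1)-v(t_2)\|\le\int_{t_1}^{t_2}\|g(s)\|\,\d s$ with $\|g(\cdot)\|\in L^p(0,T)$, so $v\in\AC^p([0,T];E)$; a Fubini computation shows that $v$ itself has weak derivative $g$, hence $w:=u-v$ has vanishing weak derivative. I would then invoke the standard fact that a function with vanishing weak derivative is a.e.\ constant --- either by mollification, or by applying the scalar version to $t\mapsto\langle\xi,w(t)\rangle$ for every $\xi\in E'$ and using that $w$ is essentially separably valued --- to conclude $w=c$ for $\cL_T$-a.e.\ $t$, for some $c\in E$. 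Finally $\tilde u:=c+v$ is the desired representative: $\tilde u=u$ a.e., $\tilde u\in\AC^p([0,T];E)$, and by the Lebesgue differentiation theorem for Bochner integrals $\tilde u$ is differentiable a.e.\ with $\tilde u'=g$.

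For the converse, given $\tilde u\in\AC^p([0,T];E)$ differentiable a.e.\ with $\tilde u=u$ a.e., I would first note that the increment bound $\|\tilde u(t_1)-\tilde u(t_2)\|\le\int_{t_1}^{t_2}m$ with $m\in L^p(0,T)$ forces $\|\tilde u'(t)\|\le m(t)$ a.e.; since $\tilde u'$ is an a.e.\ pointwise limit of difference quotients of the strongly measurable map $\tilde u$, it is strongly measurable, hence $\tilde u'\in L^p(0,T;E)$. The key point is then the integral representation $\tilde u(t)=\tilde u(0)+\int_0^t\tilde u'(s)\,\d s$, which I would obtain by testing with $\xi\in E'$: the scalar function $t\mapsto\langle\xi,\tilde u(t)\rangle$ is absolutely continuous with a.e.\ derivative $\langle\xi,\tilde u'(t)\rangle$, so the scalar fundamental theorem of calculus and \eqref{eq:operator_bochner} give
\begin{equation*}
\langle\xi,\tilde u(t)\rangle=\langle\xi,\tilde u(0)\rangle+\int_0^t\langle\xi,\tilde u'(s)\rangle\,\d s=\Big\langle\xi,\ \tilde u(0)+\int_0^t\tilde u'(s)\,\d s\Big\rangle,
\end{equation*}
and Hahn--Banach removes $\xi$. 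A final Fubini computation against test functions $\varphi\in C^\infty_c((0,T);\R)$ shows that $u=\tilde u$ admits the weak derivative $\tilde u'\in L^p(0,T;E)$, i.e.\ $u\in W^{1,p}(0,T;E)$.

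I expect the main obstacle to be precisely this last integral representation in a general Banach space: absolute continuity alone does not imply a.e.\ differentiability unless $E$ has the Radon--Nikodym property, which is why differentiability is part of the hypothesis, and the proof must show that once $\tilde u'$ exists a.e.\ and is $p$-integrable the scalar reduction plus Hahn--Banach recover the fundamental theorem of calculus. On the forward side the only non-routine ingredient is the ``vanishing weak derivative $\Rightarrow$ a.e.\ constant'' lemma, which is standard.
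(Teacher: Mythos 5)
Your proof is correct. Note, however, that the paper does not prove this proposition at all: it is stated as a classical fact with a citation to Barbu's book (Theorem~1.17 there), so there is no in-paper argument to compare against. Your two-step argument — writing $u$ as a constant plus the Bochner primitive $v(t)=\int_0^t g(s)\,\d s$ of the weak derivative, using the Lebesgue differentiation theorem for Bochner integrals to get a.e.\ differentiability of $v$, and the ``vanishing weak derivative $\Rightarrow$ a.e.\ constant'' lemma for the remainder; then, conversely, reducing the fundamental theorem of calculus to the scalar case by pairing with $\xi\in E'$ and removing $\xi$ via Hahn--Banach — is exactly the standard textbook route, and you correctly identify why a.e.\ differentiability must appear as a hypothesis (absolute continuity alone does not yield it unless $E$ has the Radon--Nikodym property, which is also why the paper can drop it in Proposition~\ref{prop:equivSpaces} for the reflexive case $p>1$).
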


Let now $(\Omega,\frB,\P)$ be a probability space with $\Omega$ standard Borel.
Fix $T>0$, let $\Leb_{[0,T]}$ be the $\sigma$-algebra of Lebesgue measurable sets on $[0,T]$ and $\cL_T$ the normalized Lebesgue measure restricted to $[0,T]$. Consider $(\Omega_T, \frB_{T}, \mm)$ the product space with
$\Omega_T:=[0,T] \times \Omega$, endowed with the product 
$\sigma$-algebra $\frB_{T}=\Leb_{[0,T]} \otimes \frB$ and probability measure $\mm:=\cL_T\otimes\P\in\PP(\Omega_T)$.

\begin{lemma}\label{prop:BPoint}
Let $p\geq 1$, and $g \in L^p_\mm(\Omega_T; \R^d)$ and  $\hat g$ a Borel representative of $g$. 
Let $\tilde g$ the map defined by $\tilde g(t)(\omega) = \hat g(t,\omega)$ for every $(t,\omega) \in \Omega_T$. 
Then $\tilde g \in L^1(0,T;L^p(\Omega;\R^d))$ and,   
denoting by $G := \int_0^T \tilde g(t)\,\d t$,
it holds
\begin{equation}\label{BochnerPoint}
G(\omega) =\int_0^T \hat g(t,\omega)\,\d t, \quad \text{ for } \P\text{-a.e. } \omega \in \Omega.
\end{equation}
\end{lemma}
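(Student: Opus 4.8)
The plan is to prove first that $\tilde g$ is a Bochner integrable map from $[0,T]$ into $L^p(\Omega;\R^d)$, and then to identify its Bochner integral $G$ with the pointwise integral $\omega\mapsto\int_0^T\hat g(t,\omega)\,\d t$ by testing against the dual of $L^p(\Omega;\R^d)$ and invoking the scalar Fubini theorem. Throughout, $p'\in(1,+\infty]$ denotes the conjugate exponent of $p$.

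For the integrability, I would use Tonelli's theorem and the fact that $\mm$ is a probability measure to get $\int_0^T\|\tilde g(t)\|_{L^p(\Omega;\R^d)}^p\,\d t=T\int_{\Omega_T}|\hat g|^p\,\d\mm<+\infty$, so that, by Hölder, $t\mapsto\|\tilde g(t)\|_{L^p(\Omega;\R^d)}\in L^1(0,T;\R)$ with $\int_0^T\|\tilde g(t)\|_{L^p(\Omega;\R^d)}\,\d t\le T\|\hat g\|_{L^p_\mm}$. For the strong measurability of $\tilde g$, I would use that $L^p(\Omega;\R^d)$ is separable, since $(\Omega,\frB,\P)$ is standard Borel, so that by Pettis' measurability theorem it suffices to check that $t\mapsto\int_\Omega\hat g(t,\omega)\cdot\phi(\omega)\,\d\P(\omega)$ is Lebesgue measurable for every $\phi\in L^{p'}(\Omega;\R^d)$; this follows from Fubini's theorem, the function $(t,\omega)\mapsto\hat g(t,\omega)\cdot\phi(\omega)$ being $\frB_T$-measurable and $\mm$-integrable (by Hölder, as $\|\phi\|_{L^{p'}_\mm}=\|\phi\|_{L^{p'}(\Omega;\R^d)}$). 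This shows $\tilde g\in L^1(0,T;L^p(\Omega;\R^d))$ and that $G:=\int_0^T\tilde g(t)\,\d t$ is a well-defined element of $L^p(\Omega;\R^d)$.

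For the identification, note first that $H(\omega):=\int_0^T\hat g(t,\omega)\,\d t$ is finite for $\P$-a.e.\ $\omega$ (Tonelli again) and that $H\in L^p(\Omega;\R^d)$ by Minkowski's integral inequality, with $\|H\|_{L^p(\Omega;\R^d)}\le\int_0^T\|\tilde g(t)\|_{L^p(\Omega;\R^d)}\,\d t$. Then, fixing $\phi\in L^{p'}(\Omega;\R^d)$ and applying \eqref{eq:operator_bochner} to the bounded linear functional $w\mapsto\int_\Omega w(\omega)\cdot\phi(\omega)\,\d\P(\omega)$ on $L^p(\Omega;\R^d)$, followed by the classical (scalar) Fubini theorem, I would obtain
\begin{equation*}
\int_\Omega G(\omega)\cdot\phi(\omega)\,\d\P(\omega)=\int_0^T\!\int_\Omega\hat g(t,\omega)\cdot\phi(\omega)\,\d\P(\omega)\,\d t=\int_\Omega\Big(\int_0^T\hat g(t,\omega)\,\d t\Big)\cdot\phi(\omega)\,\d\P(\omega).
\end{equation*}
Since $\phi\in L^{p'}(\Omega;\R^d)$ is arbitrary and $L^{p'}(\Omega;\R^d)$ separates the points of $L^p(\Omega;\R^d)$, this gives $G=H$ in $L^p(\Omega;\R^d)$, i.e.\ \eqref{BochnerPoint}.

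The point I expect to require the most care is the strong (Bochner) measurability of $\tilde g$: it is the separability of $L^p(\Omega;\R^d)$ (from $\Omega$ being standard Borel) combined with Pettis' theorem that makes it work, the rest being the scalar Fubini theorem together with elementary Hölder and Minkowski estimates, the only bookkeeping being the normalization factor $T$ stemming from $\cL_T=\frac1T\cL\mres[0,T]$. A robust alternative to the first step is to approximate $\hat g$ in $L^p_\mm$ and $\mm$-a.e.\ by $\frB_T$-measurable simple functions $g_n$, for which $\tilde g_n$ is trivially strongly measurable and \eqref{BochnerPoint} holds by linearity and Fubini, and then pass to the limit via $\int_0^T\|\tilde g_n(t)-\tilde g(t)\|_{L^p(\Omega;\R^d)}\,\d t\le T\|g_n-\hat g\|_{L^p_\mm}\to0$, using \eqref{eq:SBochner} and Minkowski's integral inequality.
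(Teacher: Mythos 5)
Your proof is correct. The paper itself only gives a two-line indication — ``the proof follows by Fubini's theorem and the definition of Bochner integral (notice that \eqref{BochnerPoint} holds for simple functions)'' — which is exactly the ``robust alternative'' you sketch at the end: verify the identity for $\frB_T$-measurable simple functions, where strong measurability of $\tilde g_n$ is trivial and \eqref{BochnerPoint} is linearity plus Fubini, and then pass to the limit using $\int_0^T\|\tilde g_n(t)-\tilde g(t)\|_{L^p(\Omega;\R^d)}\,\d t\le T\|g_n-\hat g\|_{L^p_\mm}\to 0$ together with \eqref{eq:SBochner}. Your primary route is genuinely different in how it splits the work: you establish strong measurability of $\tilde g$ abstractly via Pettis' theorem (using separability of $L^p(\Omega;\R^d)$, which holds since $(\Omega,\frB,\P)$ is standard Borel) and weak measurability from scalar Fubini, and then you identify $G$ with the pointwise integral $H$ by testing against $L^{p'}(\Omega;\R^d)$ and using \eqref{eq:operator_bochner}, with Minkowski's integral inequality guaranteeing $H\in L^p(\Omega;\R^d)$. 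This duality argument is cleaner in that it never requires a.e.\ convergence of approximants, at the price of invoking Pettis and the duality $(L^p)'\supseteq L^{p'}$ (which separates points even for $p=1$, since $\P$ is finite); the approximation argument is more elementary and is what the authors had in mind. Your bookkeeping of the normalization $\cL_T=\frac1T\cL\mres[0,T]$ (the factor $T$ in $\int_0^T\|\tilde g(t)\|_{L^p}^p\,\d t=T\|\hat g\|_{L^p_\mm}^p$) is also handled correctly. Either route is a complete proof.
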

The proof of the Lemma follows by Fubini's theorem and the definition of Bochner integral (notice that \eqref{BochnerPoint} holds for simple functions).

\begin{proposition}\label{prop:equivSpaces}
Let $y\in L^p_\mm(\Omega_T; \R^d)$. 
The following are equivalent:
\begin{enumerate}
\item There exists $g \in L^p_\mm(\Omega_T; \R^d)$
such that
\begin{equation*}
\int_{\Omega_T}\eta'(t) \phi(\omega)\,y(t,\omega)\,\d\mm(t,\omega)=-\int_{\Omega_T}\eta(t) \phi(\omega)\,g(t,\omega)\,\d\mm(t,\omega),
\end{equation*}
for every $\eta\in C^1_c((0,T);\R)$ and $\phi\in\BM(\Omega;\R)$ bounded;
\item $y\in W^{1,p}(0,T ;L^p_\P(\Omega; \R^d))$;
\item there exists a Borel representative $\tilde y$ of $y$ 
such that $\tilde y\in \AC^p([0,T];L^p_\P(\Omega; \R^d))$ and differentiable for $\cL_T$-a.e. $t\in[0,T]$ (differentiability is redundant for $p >1$);
\item $y\in L^p_\P(\Omega; W^{1,p}(0,T; \R^d))$;
\item  there exists a Borel representative $\bar y$ of $y$ such that $\bar y\in L^p_\P(\Omega;\AC^p([0,T]; \R^d))$.
\end{enumerate}
Moreover,
\begin{itemize}
\item[(i)] If $(3)$ holds, there exists a Borel function $\tilde g \in L^p_\mm(\Omega_T; \R^d)$ such that $\tilde g(t,\cdot) = \tilde y'(t,\cdot)$ in $L^p_\P(\Omega;\R^d)$, for $\cL_T$-a.e. $t \in [0,T]$. 
Hence, for every $t\in[0,T]$
\begin{equation}\label{eq:tildey}
\tilde y(t,\cdot)=\tilde y(0,\cdot) +\int_0^t \tilde g(s,\cdot)\,\d s , \quad \text{ in } L^p_\P(\Omega).
\end{equation}
\item[(ii)] If $(5)$ holds, there exists a Borel function $\bar g \in L^p_\mm(\Omega_T;\R^d)$ such that for every $\omega \in \Omega$, $ \bar g(t,\omega) = (\bar y(\cdot,\omega))'(t)$, for $\cL_T$-a.e. $t\in[0,T]$. 
Hence, for every $\omega\in\Omega$ it holds
\begin{equation}\label{eq:bary}
\bar y(t,\omega)=\bar y(0,\omega)+\int_0^t \bar g(s,\omega) \d s ,\quad \text{for every }t\in[0,T].
\end{equation}
\item[(iii)] If one of the five conditions above is satisfied, then 
$g = \tilde g = \bar g$,  $\mm$-a.e. in $\Omega_T$.
\end{itemize}

\end{proposition}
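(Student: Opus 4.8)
The plan is to prove the chain of equivalences $(1)\Leftrightarrow(2)\Leftrightarrow(3)$ first (these concern the ``vector-in-time'' viewpoint $y:[0,T]\to L^p_\P(\Omega;\R^d)$), then the chain $(4)\Leftrightarrow(5)$ (the ``pointwise-in-$\omega$'' viewpoint), and finally to bridge the two groups. For $(2)\Leftrightarrow(3)$ I would simply invoke Proposition \ref{p:W=AC} with the Banach space $E=L^p_\P(\Omega;\R^d)$, which is exactly the statement that a Sobolev map coincides a.e.\ with an absolutely continuous one (differentiability being automatic when $p>1$ by reflexivity/Radon--Nikod\'ym-type arguments, and part of the $W^{1,p}$ definition otherwise). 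For $(1)\Leftrightarrow(2)$: the test functions $\eta(t)\phi(\omega)$ with $\eta\in C^1_c((0,T);\R)$ and $\phi\in\BM(\Omega;\R)$ bounded form a total set in the sense needed; more precisely, $(1)$ says that for every bounded measurable $\phi$, the scalar function $t\mapsto \int_\Omega y(t,\omega)\phi(\omega)\,\d\P(\omega)$ has weak derivative $t\mapsto\int_\Omega g(t,\omega)\phi(\omega)\,\d\P(\omega)$, and by duality (testing against all bounded $\phi$, and using that $L^{p'}\cap L^\infty$ is dense in $L^{p'}$, resp.\ that bounded functions suffice to separate the duality) this is equivalent to $g$ being the $L^p_\P$-valued weak derivative of $y$. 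Lemma \ref{prop:BPoint} is the technical device that lets one pass the time-integral inside the $\omega$-integral.

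Next I would handle $(4)\Leftrightarrow(5)$ by applying Proposition \ref{p:W=AC} again, but now ``fibrewise'': for $\P$-a.e.\ fixed $\omega$, $y(\cdot,\omega)\in W^{1,p}(0,T;\R^d)$ iff it has an absolutely continuous representative. The only thing to check is that the fibrewise representatives can be chosen jointly measurable in $(t,\omega)$, which is where the standard Borel hypothesis on $\Omega$ enters: one uses a measurable selection / measurable-dependence-of-solutions argument to produce a Borel representative $\bar y$ of $y$ with $\bar y(\cdot,\omega)\in\AC^p$ for every $\omega$, and a Borel $\bar g\in L^p_\mm$ with $\bar g(\cdot,\omega)=(\bar y(\cdot,\omega))'$ a.e.\ This simultaneously establishes item (ii) and equation \eqref{eq:bary}. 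Item (i) and \eqref{eq:tildey} come out of the $(3)$-side: from $\tilde y\in\AC^p([0,T];L^p_\P(\Omega;\R^d))$ its $L^p_\P$-valued derivative $\tilde y'(t)\in L^p_\P(\Omega;\R^d)$ exists for a.e.\ $t$, is Bochner integrable, and by Lemma \ref{prop:BPoint} can be represented by a Borel $\tilde g\in L^p_\mm(\Omega_T;\R^d)$; then \eqref{eq:tildey} is just the fundamental theorem of calculus for Bochner integrals, \eqref{eq:SBochner}.

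The bridge between the two groups, i.e.\ $(3)\Leftrightarrow(5)$, is the heart of the matter and the step I expect to be the main obstacle. One direction, $(5)\Rightarrow(3)$: given $\bar y\in L^p_\P(\Omega;\AC^p([0,T];\R^d))$ with fibrewise derivative $\bar g\in L^p_\mm$, I would define $\tilde y(t):=\bar y(t,\cdot)\in L^p_\P(\Omega;\R^d)$ and show $\tilde y\in\AC^p([0,T];L^p_\P)$ by estimating
\begin{equation*}
\|\tilde y(t)-\tilde y(s)\|_{L^p_\P}^p=\int_\Omega\Big|\int_s^t \bar g(r,\omega)\,\d r\Big|^p\d\P(\omega)\le |t-s|^{p-1}\int_\Omega\int_s^t|\bar g(r,\omega)|^p\,\d r\,\d\P(\omega),
\end{equation*}
via H\"older in $r$ and Fubini, so the modulus is controlled by $r\mapsto\|\bar g(r,\cdot)\|_{L^p_\P}\in L^p(0,T)$; then a difference-quotient / dominated-convergence argument identifies $\tilde y'(t)=\bar g(t,\cdot)$ in $L^p_\P$ for a.e.\ $t$, giving $(3)$ with $\tilde g=\bar g$. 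The converse $(3)\Rightarrow(5)$ is subtler: from $\tilde y\in\AC^p([0,T];L^p_\P)$ one gets \eqref{eq:tildey}, and one must upgrade this $L^p_\P$-valued identity to a pointwise-in-$\omega$ identity. Using Lemma \ref{prop:BPoint} the Bochner integral $\int_0^t\tilde g(s,\cdot)\,\d s$ is represented $\P$-a.e.\ by $\omega\mapsto\int_0^t\hat g(s,\omega)\,\d s$; fixing a countable dense set of times $t$, one obtains that the Borel map $\bar y(t,\omega):=\hat y(0,\omega)+\int_0^t\hat g(s,\omega)\,\d s$ agrees with $\tilde y(t)$ in $L^p_\P$ for those $t$, hence (by continuity of both sides in $t$ with values in $L^p_\P$, using the $\AC^p$ bound just derived for $\bar y$) for all $t$; thus $\bar y$ is a Borel representative of $y$ lying in $L^p_\P(\Omega;\AC^p([0,T];\R^d))$, which is $(5)$, and simultaneously $g=\tilde g=\bar g$ $\mm$-a.e., giving item (iii). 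The delicate points throughout are the measurable choice of representatives and the exchange of the Bochner time-integral with the $\omega$-integral, both of which are isolated into Lemma \ref{prop:BPoint} and the standard Borel assumption.
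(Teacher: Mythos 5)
Your proposal is correct in substance but organizes the proof quite differently from the paper. The paper runs a single cycle $(5)\Rightarrow(4)\Rightarrow(2)\Leftrightarrow(3)$, $(2)\Rightarrow(1)$, $(1)\Rightarrow(5)$, where $(5)\Rightarrow(4)$ and $(2)\Leftrightarrow(3)$ come from Proposition \ref{p:W=AC}, $(4)\Rightarrow(2)$ from Fubini, $(2)\Rightarrow(1)$ from \eqref{eq:operator_bochner}, and the hardest implication $(1)\Rightarrow(5)$ is outsourced to an external reference (\cite[Lemma 4.3]{orrieri2019variational}); item (iii) is then obtained by comparing the traces at $t=0$ in \eqref{eq:tildey} and \eqref{eq:bary}. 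You instead prove the two blocks $(1)\Leftrightarrow(2)\Leftrightarrow(3)$ and $(4)\Leftrightarrow(5)$ separately and then build the bridge $(3)\Leftrightarrow(5)$ by hand. What this buys is self-containedness: your direct argument for $(3)\Rightarrow(5)$ --- represent the Bochner integral in \eqref{eq:tildey} pointwise via Lemma \ref{prop:BPoint}, set $\bar y(t,\omega):=y_0(\omega)+\int_0^t\hat g(s,\omega)\,\d s$, and use Fubini on $\hat g\in L^p_\mm$ to get $\bar y(\cdot,\omega)\in\AC^p$ for $\P$-a.e.\ $\omega$ --- replaces the external citation, and your proof of $(1)\Rightarrow(2)$ by separating points of $L^p_\P$ with bounded measurable $\phi$ is a legitimate converse to the paper's $(2)\Rightarrow(1)$. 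Two small refinements are worth making. First, in $(5)\Rightarrow(3)$ the natural first step is Minkowski's integral inequality (i.e.\ \eqref{eq:SBochner} applied in $L^p_\P(\Omega;\R^d)$), which gives directly $\|\tilde y(t)-\tilde y(s)\|_{L^p_\P}\le\int_s^t\|\bar g(r,\cdot)\|_{L^p_\P}\,\d r$, exactly the $\AC^p$ bound with $m(r)=\|\bar g(r,\cdot)\|_{L^p_\P}\in L^p(0,T)$; your H\"older estimate yields a weaker-looking modulus from which one must still extract absolute continuity and the $L^p$ metric derivative. Second, in $(4)\Rightarrow(5)$ no abstract measurable-selection theorem is needed: the fibrewise $\AC^p$ representative is explicit, $\bar y(t,\omega)=c(\omega)+\int_0^t\bar g(s,\omega)\,\d s$ with $\bar g$ the image of $y$ under the continuous linear map $\partial_t$ acting on $W^{1,p}(0,T;\R^d)$ and $c$ a measurable function of $(y,\bar g)$, so joint Borel measurability is automatic (the standard Borel hypothesis enters only through separability of $L^p_\P$). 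Also note that in your $(3)\Rightarrow(5)$ the countable-dense-set-of-times argument is unnecessary, since \eqref{eq:tildey} together with Lemma \ref{prop:BPoint} already identifies $\bar y(t,\cdot)$ with $\tilde y(t,\cdot)$ in $L^p_\P$ for \emph{every} $t$, and Fubini then gives $\bar y=y$ $\mm$-a.e.
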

\begin{proof}
$(5) \Rightarrow (4)$  and $(3) \Leftrightarrow (2)$ follow from Proposition \ref{p:W=AC}.
$(4) \Rightarrow (2)$ follows from the definition and Fubini's theorem.
$(2) \Rightarrow (1)$ is a consequence of property \eqref{eq:operator_bochner}. 
Finally, the proof of $(1) \Rightarrow (5)$ is contained in a more general form in \cite[Lemma 4.3]{orrieri2019variational}. 

Items $(i)$-$(ii)$ are a consequence of the definition of $\AC^p([0,T];L^p(\Omega;\R^d))$ and $\AC^p([0,T];\R^d)$, respectively.
To show $(iii)$, denote with $y_0(\omega)$ the trace of $y$ in $t =0$, which is well defined thanks to $(1)$.
Then, $y_0(\omega) = \tilde y(0,\omega) = \bar y(0,\omega) $ for $\P$-a.e. $\omega \in \Omega$ so that comparing \eqref{eq:tildey} and \eqref{eq:bary} it holds $\tilde g = \bar g$ $\mm$-a.e. in $\Omega_T$, thanks to \eqref{BochnerPoint}.
\end{proof}

\subsection{Cauchy problem in Banach spaces}
We are interested in a Cauchy problem of this form
\begin{equation}\label{cauchy:FE1}
\begin{cases}
 \dot z_t = F(t,z_t), \qquad \text{for $\cL_T$-a.e. } t\in[0,T]\\
  z_{t=0}= z_0,
\end{cases}
\end{equation}
where a Carath\'eodory function $F:[0,T]\times E\to E$ and $z_0\in E$ are given.
For $z:[0,T]\to E$ we frequently use the notation $z_t:=z(t)$.

\smallskip
In the following, we present some classical results concerning the Cauchy problem \eqref{cauchy:FE1} and we provide a sketch of 
their proofs.

\begin{proposition}\label{prop:EquivE}
Let $F:[0,T]\times E\to E$ be a Carath\'eodory function such that
\begin{equation}\label{IntegrabilityF}
	\int_0^T \|F(t,z_t)\|\,\d t <+\infty , \qquad \forall z\in C([0,T];E). 
\end{equation}
The following assertions are equivalent:
\begin{itemize}
	\item  $z\in C([0,T];E)$ satisfies
\begin{equation}\label{IntegralS}
	z_t=z_0+\int_0^t F(s,z_s)\,\d s, \qquad \forall\,t\in[0,T]. 
\end{equation}
\item $z\in \AC^1([0,T];E)$, it is differentiable for $\cL_T$-a.e. $t\in[0,T]$ and
\begin{equation*}
\begin{cases}
 \dot z_t = F(t,z_t), \qquad \text{for $\cL_T$-a.e. } t\in[0,T]\\
  z_{t=0}= z_0.
\end{cases}
\end{equation*}
\end{itemize}
\end{proposition}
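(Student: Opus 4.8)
The statement is the standard equivalence between the integral and differential formulations of a Cauchy problem in a Banach space, and the plan is to prove the two implications separately, relying on the Bochner-integral calculus recalled at the start of Appendix \ref{appendix_A} (in particular inequality \eqref{eq:SBochner} and the Lebesgue differentiation property of Bochner integrals) together with Proposition \ref{p:W=AC}.

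First I would show that \eqref{IntegralS} implies the differential form. Assume $z\in C([0,T];E)$ satisfies \eqref{IntegralS}. The map $t\mapsto F(t,z_t)$ is $\cL_T$-measurable (since $F$ is Carath\'eodory and $z$ is continuous, the composition is measurable) and, by \eqref{IntegrabilityF}, belongs to $L^1(0,T;E)$; hence it is Bochner integrable and $z$ is, by definition, the primitive of an $L^1$ map. Thus $z\in W^{1,1}(0,T;E)$ with weak derivative $F(\cdot,z_\cdot)$, so by Proposition \ref{p:W=AC} it admits a representative in $\AC^1([0,T];E)$ which is differentiable $\cL_T$-a.e.; since $z$ is already continuous, it coincides with that representative. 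Moreover, at every Lebesgue point $t$ of $s\mapsto F(s,z_s)$ — which is $\cL_T$-a.e. $t$ — the difference quotient $h^{-1}(z_{t+h}-z_t)=h^{-1}\int_t^{t+h}F(s,z_s)\,\d s$ converges to $F(t,z_t)$ by the Bochner Lebesgue differentiation property recalled in the appendix. Evaluating \eqref{IntegralS} at $t=0$ gives $z_0$ as the initial value, so the differential system holds.

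Conversely, suppose $z\in\AC^1([0,T];E)$ is differentiable $\cL_T$-a.e., satisfies $\dot z_t=F(t,z_t)$ a.e.\ and $z_{t=0}=z_0$. By Proposition \ref{p:W=AC}, $z\in W^{1,1}(0,T;E)$, and one direction of the fundamental theorem of calculus for absolutely continuous Banach-valued functions gives $z_t-z_0=\int_0^t\dot z_s\,\d s$ for every $t\in[0,T]$ (the integrand is Bochner integrable because $z\in\AC^1$ forces $\dot z\in L^1(0,T;E)$). Substituting $\dot z_s=F(s,z_s)$, which holds for $\cL_T$-a.e.\ $s$ and hence does not affect the integral, yields \eqref{IntegralS}; continuity of $z$ is automatic since $z\in\AC^1([0,T];E)\subset C([0,T];E)$. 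This closes the equivalence.

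I do not expect a genuine obstacle here: the only points requiring a little care are (a) the measurability of $t\mapsto F(t,z_t)$ for continuous $z$, which is exactly where the Carath\'eodory hypothesis is used, and (b) invoking the correct form of the fundamental theorem of calculus for $\AC^1$ Banach-valued curves, which is contained in Proposition \ref{p:W=AC} and the Bochner-integral properties already recalled. Everything else is bookkeeping with \eqref{eq:SBochner} and the Lebesgue differentiation statement. If anything, the main subtlety is making sure that the $\cL_T$-a.e.\ identities ($\dot z=F(\cdot,z)$ and the a.e.\ differentiability) are harmless when passed under the Bochner integral, which follows because two functions equal $\cL_T$-a.e.\ have the same Bochner integral over every subinterval.
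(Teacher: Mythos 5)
Your argument is correct. The paper actually gives no proof of Proposition \ref{prop:EquivE} (it is stated as a classical fact, followed only by a remark on the strong measurability of $t\mapsto F(t,z_t)$ and on the redundancy of a.e.\ differentiability under the Radon--Nikodym property), and your two implications — Bochner--Lebesgue differentiation of the primitive in one direction, and the fundamental theorem of calculus for $\AC^1$ Banach-valued curves via Proposition \ref{p:W=AC} in the other — are exactly the standard argument the authors are implicitly invoking. You also correctly identify the two points that genuinely need the hypotheses: the Carath\'eodory condition for strong measurability of $t\mapsto F(t,z_t)$, and the explicitly assumed a.e.\ differentiability in the second bullet, which cannot be dropped for a general Banach space $E$.
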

Since $F$ is a Carath\'eodory function and the curve $t \mapsto z_t$ then   the map $t \mapsto F(t,z_t)$ is (strongly) measurable as a map with values in $L^p(\Omega;\R^d)$. 
Notice also that 
if $E$ satisfies the Radon-Nikodym property (for instance when $E$ is reflexive) then a.e. differentiability of $z$ in the second item is redundant.

\begin{theorem}\label{th:ExistenceODE}
Let $F:[0,T]\times E\to E$ be a Carath\'eodory function such that
\begin{equation}\label{LipFE}
	\|F(t,z^1)-F(t,z^2)\|\le \tilde L \|z^1-z^2\|, \qquad \forall\,(t,z^1),(t,z^2)\in[0,T]\times E, 
\end{equation}
for some $\tilde L >0$, and there exists $C_0 \geq 0$ such that
\begin{equation}\label{boundFE}
\sup_{t\in[0,T]}\|F(t,0)\| \leq C_0 < +\infty.
\end{equation}
Then, for any $z_0\in E$ there exists a unique $z\in \AC^\infty([0,T];E)$ and differentiable for $\cL_T$-a.e. $t\in[0,T]$
 such that
\begin{equation}\label{cauchy:FE}
\begin{cases}
 \dot z_t = F(t,z_t), \qquad \text{for $\cL_T$-a.e. } t\in[0,T]\\
  z(0) = z_0.
\end{cases}
\end{equation}

Moreover, the following estimates hold:
\begin{equation}\label{boundZt}
	\sup_{t\in[0,T]}\|z_t\|\leq e^{\tilde LT}\left(\|z_0\|+C_0T\right), 
\end{equation}
\begin{equation}\label{LipZt}
	\|z_t-z_s\|\le |t-s|\,\big(C_0+\tilde L e^{\tilde LT}\left(\|z_0\|+C_0T\right)\big)  \qquad \forall\, s,t\in[0,T]. 
\end{equation}
\end{theorem}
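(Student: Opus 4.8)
The plan is to establish Theorem~\ref{th:ExistenceODE} by a standard fixed-point argument on the integral formulation~\eqref{IntegralS}, using the equivalence provided by Proposition~\ref{prop:EquivE}, together with Gronwall-type estimates. First I would reduce the differential Cauchy problem~\eqref{cauchy:FE} to the integral equation~\eqref{IntegralS}: thanks to \eqref{LipFE} and \eqref{boundFE} one has $\|F(t,z_t)\|\le \|F(t,0)\|+\tilde L\|z_t\|\le C_0+\tilde L\sup_{s\in[0,T]}\|z_s\|$ for every $z\in C([0,T];E)$, so the integrability condition \eqref{IntegrabilityF} is satisfied and Proposition~\ref{prop:EquivE} applies. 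Hence it suffices to produce a unique $z\in C([0,T];E)$ solving \eqref{IntegralS}; the regularity $z\in\AC^\infty([0,T];E)$ with a.e. differentiability then follows from Proposition~\ref{prop:EquivE} and the fact that $t\mapsto F(t,z_t)$ is bounded (once the a priori bound \eqref{boundZt} is known), so that $z$ is Lipschitz, i.e. in $\AC^\infty$.

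For existence and uniqueness I would define the map $\Phi:C([0,T];E)\to C([0,T];E)$ by $\Phi(z)_t:=z_0+\int_0^tF(s,z_s)\,\d s$, which is well defined by the integrability just noted and by \eqref{eq:SBochner}. On $C([0,T];E)$ I would use the weighted norm $\|z\|_\lambda:=\sup_{t\in[0,T]}e^{-\lambda t}\|z_t\|$ for a suitable $\lambda>\tilde L$; then for $z^1,z^2$,
\begin{equation*}
e^{-\lambda t}\|\Phi(z^1)_t-\Phi(z^2)_t\|\le e^{-\lambda t}\int_0^t\tilde L\|z^1_s-z^2_s\|\,\d s\le \tilde L\|z^1-z^2\|_\lambda\int_0^t e^{-\lambda(t-s)}\,\d s\le \frac{\tilde L}{\lambda}\|z^1-z^2\|_\lambda,
\end{equation*}
so $\Phi$ is a contraction with constant $\tilde L/\lambda<1$. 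Since $(C([0,T];E),\|\cdot\|_\lambda)$ is a Banach space (the weighted norm is equivalent to the sup norm), the Banach fixed-point theorem yields a unique fixed point $z$, which is the unique continuous solution of \eqref{IntegralS}, hence by Proposition~\ref{prop:EquivE} the unique solution of \eqref{cauchy:FE}. (Alternatively one could iterate $\Phi$ and show that the $n$-th iterate is a contraction, but the weighted-norm trick is cleaner.)

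It remains to prove the two quantitative estimates. For \eqref{boundZt}, from \eqref{IntegralS} and \eqref{eq:SBochner} I would write $\|z_t\|\le\|z_0\|+\int_0^t\|F(s,z_s)\|\,\d s\le\|z_0\|+C_0T+\tilde L\int_0^t\|z_s\|\,\d s$, and Gronwall's inequality (in integral form) gives $\|z_t\|\le(\|z_0\|+C_0T)e^{\tilde L t}\le e^{\tilde LT}(\|z_0\|+C_0T)$ for all $t\in[0,T]$. For \eqref{LipZt}, for $s\le t$ one has $\|z_t-z_s\|\le\int_s^t\|F(r,z_r)\|\,\d r\le\int_s^t\big(C_0+\tilde L\|z_r\|\big)\,\d r\le(t-s)\big(C_0+\tilde L e^{\tilde LT}(\|z_0\|+C_0T)\big)$, using \eqref{boundZt}; this also re-confirms the Lipschitz continuity needed for $z\in\AC^\infty$. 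I do not anticipate a genuine obstacle here: the only points requiring a little care are the measurability of $t\mapsto F(t,z_t)$ for continuous $z$ (which follows from the Carath\'eodory property, as already remarked after Proposition~\ref{prop:EquivE}) and the justification that Bochner integration behaves as expected, both of which are covered by the preliminaries on Bochner integrals and by \eqref{eq:SBochner}; if anything is delicate it is simply bookkeeping to ensure that the solution obtained on $C([0,T];E)$ indeed lies in $\AC^\infty$ and is a.e. differentiable, which is precisely the content of Proposition~\ref{prop:EquivE} combined with the $L^\infty$ bound on $F(\cdot,z_\cdot)$.
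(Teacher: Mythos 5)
Your proposal is correct and follows essentially the same route as the paper: a Banach fixed-point argument for the integral equation \eqref{IntegralS} in $C([0,T];E)$ equipped with an exponentially weighted sup norm (the paper uses the weight $e^{-\tilde L t}$, yielding contraction constant $1-e^{-\tilde L T}$, while you take $\lambda>\tilde L$ for constant $\tilde L/\lambda$ --- an immaterial difference), followed by Gronwall to obtain \eqref{boundZt} and the growth bound \eqref{GrFE} to obtain \eqref{LipZt} and hence $z\in\AC^\infty$. No gaps.
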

\begin{proof}
We provide only a sketch of the proof.

Assumptions \eqref{LipFE} and \eqref{boundFE} yield the following growth property
\begin{equation}\label{GrFE}
	\|F(t,z)\|\le C_0 +\tilde L \|z\|, \qquad \forall\,(t,z)\in[0,T]\times E. 
\end{equation}

We define the Banach space $(\mathscr S,\|\cdot\|_{\mathscr S})$ as follows
\[\SS:=\left\{z\in C([0,T];E)\,:\,\|z\|_{\SS}:=\sup_{t\in[0,T]}e^{-\tilde Lt}\|z_t\|<+\infty\right\},\]
and the operator
$S:\SS\to \SS$ by
\[S(y)_t=z_0+\int_0^t F(s,y_s)\,\d s.\]
By \eqref{LipFE} and \eqref{GrFE}, using \eqref{eq:SBochner}, 
it is classical to prove that $S$ is well defined and it is a contraction.
Then, by Banach fixed point Theorem
we get the existence and uniqueness of $z\in\mathscr S$ such that \eqref{IntegralS} holds.
The estimates \eqref{boundZt} and \eqref{LipZt} follow from \eqref{IntegralS}, \eqref{GrFE}, and Gronwall inequality.
Finally, $z$ belongs to $\AC^\infty([0,T];E)$ thanks to \eqref{LipZt}.
\end{proof}

\begin{proposition}\label{prop:StabilityODE}
Let $F,F^n:[0,T]\times E\to E$, $n\in\N$, be Carath\'eodory functions satisfying
\eqref{LipFE},\eqref{boundFE} with the same constant $\tilde L$ and $C_0$.
Let $z_0, z^n_0\in E$, $z \in \AC^\infty([0,T];E) $ the solution of \eqref{cauchy:FE} and  $z^n \in \AC^\infty([0,T];E)$ the solutions of
\begin{equation*}\label{cauchy:FE2}
\begin{cases}
 \dot z^n_t = F^n(t,z^n_t), \qquad \text{for $\cL_T$-a.e. } t\in[0,T]\\
  z^n(0) = z^n_0.
\end{cases}
\end{equation*}
Then
\begin{equation}\label{eq:stabE}
\sup_{t\in[0,T]}\| z^n_t-z_t \|\leq e^{\tilde LT} \left(\| z^n_0-z_0 \|+ \int_0^T \|F^n(t,z_t)-F(t,z_t) \|\,\d t \right).
\end{equation}
In particular, if $\lim_{n \to +\infty}\|z_0^n - z_0\| = 0$ and 
\begin{equation}\label{Resto}
	\lim_{n\to+\infty} \int_0^T  \|F^n(t,z_t)-F(t,z_t) \|\,\d t =0, 
\end{equation}
then
\begin{equation*}
\lim_{n\to+\infty}\sup_{t\in[0,T]}\| z^n_t-z_t \| =0.
\end{equation*}
\end{proposition}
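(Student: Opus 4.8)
The plan is to prove the stability estimate \eqref{eq:stabE} directly via a Gronwall argument applied to the integral forms of the two Cauchy problems, and then derive the limit statement as an immediate consequence. First I would invoke Proposition \ref{prop:EquivE}: since $F$ and $F^n$ are Carath\'eodory functions satisfying \eqref{LipFE} and \eqref{boundFE} (hence the growth bound \eqref{GrFE}), and since $z, z^n \in \AC^\infty([0,T];E) \subset C([0,T];E)$, the integrability condition \eqref{IntegrabilityF} holds along these curves, so both $z$ and $z^n$ satisfy the integral equations
\begin{equation*}
z_t = z_0 + \int_0^t F(s,z_s)\,\d s, \qquad z^n_t = z^n_0 + \int_0^t F^n(s,z^n_s)\,\d s, \qquad \forall\,t\in[0,T].
\end{equation*}

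Next I would subtract these two identities and estimate, using the triangle inequality for the Bochner integral \eqref{eq:SBochner}:
\begin{equation*}
\|z^n_t - z_t\| \leq \|z^n_0 - z_0\| + \int_0^t \|F^n(s,z^n_s) - F(s,z_s)\|\,\d s.
\end{equation*}
The integrand I would split as $\|F^n(s,z^n_s) - F^n(s,z_s)\| + \|F^n(s,z_s) - F(s,z_s)\|$, bounding the first summand by $\tilde L\|z^n_s - z_s\|$ thanks to \eqref{LipFE} (with the same constant $\tilde L$ for all $n$). Setting $\delta^n := \|z^n_0 - z_0\| + \int_0^T \|F^n(s,z_s) - F(s,z_s)\|\,\d s$, which is a constant independent of $t$, I obtain
\begin{equation*}
\|z^n_t - z_t\| \leq \delta^n + \tilde L \int_0^t \|z^n_s - z_s\|\,\d s, \qquad \forall\,t\in[0,T].
\end{equation*}
Applying the Gronwall inequality (the function $t\mapsto \|z^n_t-z_t\|$ is continuous, hence the hypotheses are met) yields $\|z^n_t - z_t\| \leq \delta^n e^{\tilde L t} \leq \delta^n e^{\tilde L T}$ for every $t\in[0,T]$, which is exactly \eqref{eq:stabE} after taking the supremum over $t$.

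Finally, the limit statement follows immediately: if $\|z^n_0 - z_0\| \to 0$ and \eqref{Resto} holds, then $\delta^n \to 0$, hence $\sup_{t\in[0,T]}\|z^n_t - z_t\| \leq \delta^n e^{\tilde L T} \to 0$ as $n\to+\infty$. I do not anticipate any serious obstacle here; the only points requiring a modicum of care are the measurability of the maps $s\mapsto F^n(s,z_s)$ and $s \mapsto F^n(s,z^n_s)$ as $E$-valued functions (which follows from the Carath\'eodory property together with continuity of the curves, as already noted in the discussion after Proposition \ref{prop:EquivE}) and the justification that $z^n$ indeed exists, is unique, and lies in $\AC^\infty([0,T];E)$ — but this is guaranteed by Theorem \ref{th:ExistenceODE} since $F^n$ satisfies \eqref{LipFE} and \eqref{boundFE}. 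The Gronwall step is the structural heart of the argument, and it is entirely routine.
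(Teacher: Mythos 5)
Your proposal is correct and follows essentially the same route as the paper's proof: subtract the integral forms, split the integrand as $\|F^n(s,z^n_s)-F^n(s,z_s)\|+\|F^n(s,z_s)-F(s,z_s)\|$, use \eqref{LipFE} on the first term, and conclude by Gronwall. The extra remarks on measurability and on the existence of $z^n$ via Theorem \ref{th:ExistenceODE} are sound but not needed beyond what the paper already assumes.
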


\begin{proof}
We have
\begin{align*}
\|z^n_t-z_t\| &=
\left\|z^n_0-z_0 + \int_0^t (F^n(s,z^n_s) - F(s,z_s))\,\d s \right\|\\
&\le \|z^n_0-z_0\| + \int_0^t \|F^n(s,z^n_s) - F(s,z_s))\|\,\d s.
\end{align*}
Using \eqref{LipFE} it holds
\begin{align*}
&\|F^n(s,z^n_s) - F(s,z_s))\| \\ &\leq
\|F^n(s,z^n_s) - F^n(s,z_s))\| +
\|F^n(s,z_s) - F(s,z_s))\| \\
& \le \tilde L \|z^n_s - z_s\|+
\|F^n(s,z_s) - F(s,z_s))\| .
\end{align*}
The last two inequalities yield
\begin{equation*}
\|z^n_t-z_t\| \le
 \|z^n_0-z_0\|  +  \int_0^T \|F^n(s,z_s) - F(s,z_s))\|\,\d s +
\tilde L \int_0^t \|z^n_s - z_s\| \,\d s.
\end{equation*}
By Gronwall lemma we obtain \eqref{eq:stabE}.
\end{proof}

\begin{proposition}\label{prop:EquivLp}
Let $F:[0,T]\times L^p(\Omega;\R^d)\to L^p(\Omega;\R^d)$ be a Carath\'eodory function satisfying \eqref{boundFE} 
Let $Y_0\in L^p(\Omega;\R^d)$.
Then the following assertions are equivalent:
\begin{enumerate}
\item $Y\in C([0,T];L^p(\Omega;\R^d))$ and 
\begin{equation}\label{IntegralSLp}
	Y_t=Y_0+\int_0^t F(s,Y_s)\,\d s, \qquad \forall\,t\in[0,T];
\end{equation}
\item $Y\in \AC^p([0,T];L^p(\Omega;\R^d))$ (if $p=1$ it is also differentiable for a.e. $t\in[0,T]$) and
\begin{equation}\label{cauchy:FELp}
\begin{cases}
 \dot Y_t = F(t,Y_t), \qquad \text{for $\cL_T$-a.e. } t\in[0,T]\\
  Y_{t=0}= Y_0;
\end{cases}
\end{equation}
\item $Y\in L^p(\Omega;\AC^p([0,T];\R^d)))$ and 
\begin{equation}\label{IntegralSLpo}
	Y_t(\omega)=Y_0(\omega)+\int_0^t F(s,Y_s)(\omega)\,\d s, \qquad \forall\,t\in[0,T],\quad \text{for $\P$-a.e. }\omega \in\Omega,
\end{equation}
where $Y_t: \Omega \to \R^d$ is defined by $Y_t(\omega):= Y(t,\omega)$ for $\P$-a.e. $\omega \in \Omega$. 
\item $Y\in L^p(\Omega;\AC^p([0,T];\R^d)))$ and
for $\P$-a.e. $\omega\in\Omega$ it holds 
\begin{equation}\label{cauchy:FELpo}
\begin{cases}
 \dot Y_t(\omega) = F(t,Y_t)(\omega), \qquad \text{for $\cL_T$-a.e. } t\in[0,T]\\
  Y_{t=0}(\omega)= Y_0(\omega),
\end{cases}
\end{equation}
where $Y_t: \Omega \to \R^d$ is defined by $Y_t(\omega):= Y(t,\omega)$ for $\P$-a.e. $\omega \in \Omega$. 
\end{enumerate}
\end{proposition}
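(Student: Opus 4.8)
The plan is to reduce the whole statement to three results already available: Proposition~\ref{prop:EquivE} (equivalence of the integral and differential formulations of a Cauchy problem in a Banach space), Proposition~\ref{prop:equivSpaces} (equivalence of the various Sobolev/absolutely continuous notions for functions on $\Omega_T=[0,T]\times\Omega$), and Lemma~\ref{prop:BPoint} (the Bochner integral of an $L^p_\P(\Omega;\R^d)$-valued curve is computed $\omega$-pointwise). A preliminary observation fixes the language: for any $Y\in C([0,T];L^p(\Omega;\R^d))$ the map $t\mapsto F(t,Y_t)$ is strongly measurable into $L^p(\Omega;\R^d)$ (Carath\'eodory property of $F$ composed with a continuous curve) and, by \eqref{boundFE} together with the linear growth bound \eqref{GrFE}, it is bounded in $L^p$-norm, hence lies in $L^p(0,T;L^p(\Omega;\R^d))$ and in particular satisfies the integrability requirement \eqref{IntegrabilityF}. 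Identifying $L^p(0,T;L^p(\Omega;\R^d))$ with $L^p_\mm(\Omega_T;\R^d)$, I would fix once and for all a Borel representative $\hat g\colon\Omega_T\to\R^d$ of this map, so that $\hat g(t,\cdot)=F(t,Y_t)$ in $L^p(\Omega;\R^d)$ for $\cL_T$-a.e.\ $t$; this $\hat g$ is exactly what the symbol ``$F(s,Y_s)(\omega)$'' means in \eqref{IntegralSLpo} and \eqref{cauchy:FELpo}.

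With this in place, $(1)\Leftrightarrow(2)$ is a direct application of Proposition~\ref{prop:EquivE} with $E=L^p(\Omega;\R^d)$, whose hypothesis \eqref{IntegrabilityF} was just checked; the extra a.e.-differentiability clause in $(2)$ when $p=1$ is needed because $L^1(\Omega;\R^d)$ may fail the Radon--Nikodym property, whereas for $p>1$ the space is reflexive and the clause is automatic. For $(2)\Leftrightarrow(3)$ I would invoke the chain $(2)\Leftrightarrow(3)\Leftrightarrow(5)$ of Proposition~\ref{prop:equivSpaces} with $y=Y$: condition $(2)$ of the present proposition is condition $(3)$ there, hence equivalent to condition $(5)$ there, i.e.\ to the existence of a Borel representative $\bar Y\in L^p_\P(\Omega;\AC^p([0,T];\R^d))$ of $Y$. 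By item (iii) of Proposition~\ref{prop:equivSpaces} the relevant derivative $\bar g$ agrees $\mm$-a.e.\ with $\tilde g$, and since $\dot Y_t=F(t,Y_t)=\hat g(t,\cdot)$ in $L^p(\Omega;\R^d)$ for a.e.\ $t$ one gets $\bar g=\hat g$ $\mm$-a.e.; then \eqref{eq:bary} of item (ii) reads, for every $\omega$ and every $t$, $\bar Y(t,\omega)=Y_0(\omega)+\int_0^t\hat g(s,\omega)\,\d s$, which is exactly \eqref{IntegralSLpo}. Conversely, $(3)$ gives $Y\in L^p_\P(\Omega;\AC^p([0,T];\R^d))$, i.e.\ condition $(5)$ of Proposition~\ref{prop:equivSpaces}, hence condition $(3)$ there, hence $(2)$ here, the identification of the derivative being read off from Lemma~\ref{prop:BPoint} applied to $\hat g$.

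Finally, $(3)\Leftrightarrow(4)$ is a pointwise-in-$\omega$ statement: by Fubini, for $\P$-a.e.\ $\omega$ the section $t\mapsto\hat g(t,\omega)$ lies in $L^1(0,T;\R^d)$ and $t\mapsto\bar Y(t,\omega)$ lies in $\AC^p([0,T];\R^d)\subseteq\AC^1([0,T];\R^d)$, so the fundamental theorem of calculus for real vector-valued absolutely continuous functions turns \eqref{IntegralSLpo} into \eqref{cauchy:FELpo} with $\dot Y_t(\omega)=\hat g(t,\omega)$ for a.e.\ $t$, and conversely, the initial value being carried along by the trace of $\bar Y$; equivalently one may apply Proposition~\ref{prop:EquivE} with $E=\R^d$. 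The one genuinely delicate point is the reconciliation of the $L^p_\P(\Omega;\R^d)$-valued viewpoint with the jointly Borel-measurable one: making rigorous that ``$F(s,Y_s)(\omega)$'' is the field $\hat g$ of the first paragraph and that the Bochner integral $\int_0^t F(s,Y_s)\,\d s\in L^p(\Omega;\R^d)$ has $\omega$-representative $\int_0^t\hat g(s,\omega)\,\d s$. This is precisely the content of Lemma~\ref{prop:BPoint}, so threading it consistently through the equivalences is the crux; everything else is bookkeeping on top of Propositions~\ref{prop:EquivE} and~\ref{prop:equivSpaces}.
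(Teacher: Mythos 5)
Your proposal is correct and follows essentially the same route as the paper's proof: $(1)\Leftrightarrow(2)$ and $(3)\Leftrightarrow(4)$ via Proposition~\ref{prop:EquivE} (with $E=L^p(\Omega;\R^d)$ and $E=\R^d$ respectively), and the bridge between the Banach-valued and $\omega$-pointwise formulations via the equivalence $(3)\Leftrightarrow(5)$ and items (i)--(iii) of Proposition~\ref{prop:equivSpaces} together with Lemma~\ref{prop:BPoint}. You are in fact more explicit than the paper about verifying \eqref{IntegrabilityF} and about fixing a Borel representative of $(t,\omega)\mapsto F(t,Y_t)(\omega)$, which is a welcome clarification rather than a deviation.
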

\begin{proof}

The assertions (1)-(2) and (3)-(4) are equivalent by Proposition \ref{prop:EquivE}.
The equivalence (1)-(3) is a consequence of the equivalences (3)-(5) in Proposition \ref{prop:equivSpaces} and items (i)-(ii)-(iii).
\end{proof}

\section{A convergence result for solutions of Cauchy problems}
\label{appendix_B}

We state and prove the following {well known result}, for sake of completeness.
\begin{lemma}\label{lemma:youngODE}
Let  $U$ be a Polish space. 
Let $u^n: [0,T] \to U $ be a sequence of $\cL_T$-measurable functions such that
$(i_{[0,T]},u^n)_\sharp\cL_T \xrightarrow{\mathcal{Y}} \nu_t\otimes\cL_T \in \PP([0,T]\times U)$.\\
Let $g: [0,T]\times (\R^d \times U) \to \R^d$ a Carath\'eodory function
such that
\begin{equation}\label{Lipg}
	|g(t,x_1,u)-g(t,x_2,u)|\le L |x_1-x_2|, \qquad \forall\,(t,x_1,u),(t,x_2,u)\in[0,T]\times\R^d\times U, 
\end{equation}
for some $L>0$, and
\begin{equation}\label{boundg0}
	C_0:=\sup_{(t,u)\in[0,T]\times U}|g(t,0,u)| <+\infty.
\end{equation}
Given $X_0\in\R^d$ and $n \in \N$, we denote by $X^n\in \AC^1([0,T];\R^d)$ the unique solution of the Cauchy problem
\begin{equation}\label{cauchy:un}
\begin{cases}
 \dot X^n_t = g(t,X^n_t, u^n_t), &\textrm{for a.e. }t\in(0,T) \\
  X^n(0) = X_0,
\end{cases}
\end{equation}
and by $X\in \AC^1([0,T];\R^d)$ the unique solution of the Cauchy problem
\begin{equation}\label{cauchy:nu}
\begin{cases}
 \dot X_t = \displaystyle\int_{U} g(t, X_t,u) \,\d \nu_t(u), &\textrm{for a.e. }t\in(0,T) \\
  X(0) = X_0.
\end{cases}
\end{equation}
Then
\begin{equation}\label{Uconv}
\lim_{n \to +\infty} \sup_{t \in [0,T]}|X^n_t - X_t| = 0.
\end{equation}
\end{lemma}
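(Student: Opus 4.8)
The plan is to prove \eqref{Uconv} by a Gronwall-type argument, using the Young convergence hypothesis only through a single ``time-averaged'' quantity. First I would fix a representative of each object and write both Cauchy problems in integral form: for $t\in[0,T]$,
\[
X^n_t = X_0 + \int_0^t g(s, X^n_s, u^n_s)\,\d s, \qquad X_t = X_0 + \int_0^t \int_U g(s, X_s, u)\,\d\nu_s(u)\,\d s.
\]
Subtracting and inserting the intermediate term $\int_0^t g(s, X_s, u^n_s)\,\d s$, I would estimate
\[
|X^n_t - X_t| \le \int_0^t |g(s,X^n_s,u^n_s) - g(s,X_s,u^n_s)|\,\d s + \Big| \int_0^t \Big( g(s,X_s,u^n_s) - \int_U g(s,X_s,u)\,\d\nu_s(u)\Big)\,\d s\Big|.
\]
By the Lipschitz assumption \eqref{Lipg}, the first term is bounded by $L\int_0^t |X^n_s - X_s|\,\d s$. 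The second term I would call $R^n_t$ and bound it by $R^n := \sup_{t\in[0,T]} R^n_t$. Then Gronwall's inequality gives $\sup_{t\in[0,T]} |X^n_t - X_t| \le R^n e^{LT}$, so the whole problem reduces to showing $R^n \to 0$.

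The key step is therefore to prove that $R^n_t \to 0$ uniformly in $t$. I would first note, as in the a priori estimates of the type in Theorem \ref{th:ExistenceODE} (applied with $E = \R^d$, using \eqref{Lipg} and \eqref{boundg0}), that the solutions $X$, $X^n$ all live in a fixed compact set $K \subset \R^d$: indeed $\sup_{t}|X^n_t| \le e^{LT}(|X_0| + C_0 T)$ and similarly for $X$ (note $|\int_U g(s,x,u)\,\d\nu_s(u)| \le C_0 + L|x|$ since $\nu_s$ is a probability measure). Now define $h:[0,T]\times\R^d\times U \to \R^d$ by $h = g$; since $g$ is Carathéodory and $X$ is continuous, the map $(t,u)\mapsto g(t,X_t,u)$ is Carathéodory and, being continuous in $u$ on the compact $U$ and measurable in $t$ with $|g(t,X_t,u)| \le C_0 + L\sup_t|X_t|$ bounded, it is a bounded Carathéodory function on $[0,T]\times U$. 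The Young convergence $(i_{[0,T]},u^n)_\sharp\cL_T \xrightarrow{\mathcal Y} \nu_t\otimes\cL_T$ then yields, componentwise,
\[
\int_0^T g(s, X_s, u^n_s)\,\d s = \int_{[0,T]\times U} g(s,X_s,u)\,\d\big((i_{[0,T]},u^n)_\sharp\cL_T\big)(s,u) \longrightarrow \int_0^T \int_U g(s,X_s,u)\,\d\nu_s(u)\,\d s,
\]
which is exactly $R^n_T \to 0$. To upgrade this to $\sup_t R^n_t \to 0$, I would apply the same argument with the Carathéodory test function $(s,u)\mapsto \mathds 1_{[0,t]}(s)\, g(s,X_s,u)$ for each fixed $t$; this gives $R^n_t \to 0$ pointwise. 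Uniformity in $t$ then follows because $t\mapsto R^n_t$ is equicontinuous: $|R^n_t - R^n_{t'}| \le \int_{t}^{t'} (|g(s,X_s,u^n_s)| + |\int_U g(s,X_s,u)\,\d\nu_s(u)|)\,\d s \le 2(C_0 + L\sup_s|X_s|)\,|t-t'|$, so a pointwise-convergent equicontinuous sequence on the compact interval $[0,T]$ converges uniformly (by Arzelà–Ascoli, or directly: $R^n_t \le R^n_{t_j} + 2C'|t-t_j|$ on a finite $\varepsilon$-net $\{t_j\}$).

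The main obstacle I anticipate is the careful verification that the function $(s,u)\mapsto g(s,X_s,u)$ (and its truncations $\mathds 1_{[0,t]}(s)g(s,X_s,u)$) qualifies as a legitimate test function for Young convergence in the sense of Definition \ref{def:Youngconv}: one needs continuity in $u$ for $\cL_T$-a.e.\ $s$, measurability in $s$ for all $u$, and boundedness — all of which follow from $g$ being Carathéodory, $X$ being continuous hence bounded on $[0,T]$, and the growth bound \eqref{Lipg}–\eqref{boundg0}, but this chain of implications should be spelled out. A secondary technical point is the a priori bound placing all trajectories in a common compact set, which is needed to make the boundedness of the test function uniform in $n$; this is a routine Gronwall estimate using \eqref{Lipg} and \eqref{boundg0}. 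Once these are in place, the combination ``Gronwall reduces everything to $R^n \to 0$'' plus ``Young convergence plus equicontinuity gives $R^n \to 0$'' closes the proof.
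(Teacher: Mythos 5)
Your proposal is correct and follows essentially the same route as the paper: both insert the intermediate quantity $Y^n_t = X_0+\int_0^t g(s,X_s,u^n_s)\,\d s$ (your $R^n_t$ is exactly $|Y^n_t-X_t|$), reduce via Gronwall to showing $\sup_t|Y^n_t-X_t|\to 0$, obtain pointwise convergence from the Young convergence hypothesis, and upgrade to uniform convergence by equicontinuity/Ascol\`a--Arzel\`a. One small slip: $U$ is only assumed Polish here, not compact, so your appeal to ``the compact $U$'' is unwarranted --- but harmless, since the boundedness of the test function $(s,u)\mapsto g(s,X_s,u)$ already follows from \eqref{Lipg}--\eqref{boundg0} and the boundedness of $X$, as you yourself note.
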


\begin{proof}
Observe that existence and uniqueness of solutions of Cauchy problems \eqref{cauchy:un} and \eqref{cauchy:nu} is consequence of 
the fact that $G^n,G:[0,T]\times\R^d\to \R^d$, defined by $G^n(t,x):=g(t,x,u^n_t)$ and $G(t,x):=\displaystyle\int_{U} g(t,x,u) \,\d \nu_t(u)$,
are Charath\'eodory and $L$-Lipschitz continuous w.r.t. $x\in\R^d$.\\
Moreover, by \eqref{Lipg} and \eqref{boundg0} it holds
\begin{equation}\label{Ling}
	|g(t,x,u)|\le C_0+L |x|, \qquad \forall\, (t,x,u)\in[0,T]\times\R^d\times U.
\end{equation}
We define $Y^n\in C([0,T];\R^d)$ by
\begin{equation}\label{cauchy:unt}
 Y^n_t :=X_0+ \int_0^t g(s,X_s, u^n_s)\,\d s.
\end{equation}
From the convergence $(i_{[0,T]}, u^n)_\sharp \cL_T \xrightarrow{\mathcal{Y}} \nu \in \PP([0,T]\times U)$ it follows that
\begin{equation}\label{YntoX}
\lim_{n \to +\infty}Y^n_t = X_0 + \int_0^t \int_U g(s,X_s,u) \,\d \nu_s(u) \,\d s =X_t, \qquad \forall\, t\in [0,T].
\end{equation}
Denoting by $C:=\sup_{s\in[0,T]}|X_s|$, from \eqref{boundg0} and \eqref{Lipg} it is simple to prove that
\begin{equation}\label{eq:EBXn}
|Y^n_t|\le (|X_0|+(C_0+LC)T), \qquad \forall t\in[0,T]
\end{equation}
and 
\begin{equation}\label{eq:ELXn}
|Y^n_t-Y^n_s|\le |t-s|(C_0+LC)T, \qquad \forall t,s\in[0,T].
\end{equation}
By \eqref{eq:EBXn} and \eqref{eq:ELXn},  Ascoli-Arzel\`a theorem and \eqref{YntoX} imply that
\begin{equation}\label{YntoXunif}
\lim_{n \to +\infty} \sup_{t \in [0,T]}|Y^n_t - X_t| = 0.
\end{equation}
Since
\begin{equation*}
\begin{split}
|X^n_t - X_t| &\leq |X^n_t - Y^n_t|+|Y^n_t - X_t| \leq 
\int_0^t \left| g(s,X^n_s,u^n_s) - g(s,X_s,u^n_s)\right| \,\d s +|Y^n_t-X_t| \\
&\leq L \int_0^t |X^n_s-X_s| \,\d s + \sup_{s \in [0,T]} |Y^n_s-X_s|,
\end{split}
\end{equation*}
by Gronwall inequality we have that
\begin{equation}\label{Gw}
|X^n_t - X_t| \leq e^{Lt}\sup_{s \in [0,T]} |Y^n_s-X_s|.
\end{equation}
The convergence \eqref{Uconv} follows from \eqref{Gw} and \eqref{YntoXunif}.
\end{proof} 

\section{An empirical Superposition Principle}\label{app:SP}
In this  appendix, we give a refined version of the Superposition Principle (see Theorem \ref{thm:sup_princ} for the classical result) in the case of trajectories of the form $\mu_t\in\PP^N(\R^d)$ for any $t\in[0,T]$, where $\PP^N(\R^d)$ is the space of empirical probability measures
\[
\PP^N(\R^d):= \left\lbrace	\mu = \frac{1}{N} \sum_{i =1}^N \delta_{x_i} \; \text{ for some } x_i \in \R^d \right\rbrace.
\]

The novelty consists in proving that  if $\mu_t \in \PP^N(\R^d)$ for every $t \in [0,T]$, then there exists  representative $\eta \in \PP^N(\Gamma_T)$.
This result has been used to prove Proposition \ref{prop:FLN<E} and Corollary \ref{cor:LN=EN}. 

\begin{theorem}\label{lem:EN}
Let $N \in \N$ and $\mu \in \AC([0,T];\PP_1(\R^d))$ such that $\mu_t \in \PP^N(\R^d)$ for every $t\in[0,T]$.
\begin{enumerate}
\item 
There exists a unique (up to $\cL_T\otimes \mu_t$-negligible sets) 
Borel vector field $v:[0,T]\times\R^d\to\R^d$ satisfying
\begin{equation}\label{eq:Summabilityv}
\int_0^T\int_{\R^d}|v_t(x)|\,\d\mu_t(x)\,\d t<+\infty,
\end{equation}
such that $\mu$ is a distributional solution of the continuity equation
\begin{equation}\label{eq:CIend}
\partial_t\mu_t+\mathrm{div}(v_t\mu_t)=0,\quad \text{in }[0,T]\times\R^d.
\end{equation}
\item There exists $\eta\in\PP^N(\Gamma_T)$ of the form 
\begin{equation}\label{eq:discreteeta}
\eta=\frac{1}{N}\sum_{i=1}^N\delta_{\gamma_i},
\end{equation}
such that $(e_t)_\sharp\eta=\mu_t$ for every $t\in[0,T]$ and
for any $i=1,\dots,N$, $\gamma_i\in  \AC([0,T];\R^d)$
solves the differential equation
\begin{equation}\label{eq:Cauchyend}
\dot\gamma_i(t)=v_t(\gamma_i(t))\quad
\text{for }\cL_T \text{-a.e. }t\in[0,T].
\end{equation}
\end{enumerate}
\end{theorem}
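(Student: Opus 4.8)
The plan is to prove both assertions at once by first constructing $N$ absolutely continuous curves $\gamma_1,\dots,\gamma_N\colon[0,T]\to\R^d$ with $\frac1N\sum_i\delta_{\gamma_i(t)}=\mu_t$, and only afterwards reading off the vector field $v$ from them. The core of the argument is a lifting statement: I regard $\mu$ as a curve valued in $(\PP^N(\R^d),W_1)$ and lift it to a curve valued in $(\R^d)^N$. The point is that, for empirical measures with uniform weights, among the optimal transport plans there is one induced by a permutation (Birkhoff--von Neumann), so that
\[
W_1\Big(\tfrac1N\sum_{i=1}^N\delta_{x_i},\tfrac1N\sum_{i=1}^N\delta_{y_i}\Big)=\frac1N\min_{\sigma\in S_N}\sum_{i=1}^N|x_i-y_{\sigma(i)}|;
\]
hence the projection $\pi\colon(\R^d)^N\to\PP^N(\R^d)$, $\pi(x_1,\dots,x_N):=\frac1N\sum_i\delta_{x_i}$, is, up to the factor $1/N$, the quotient map of $(\R^d)^N$ endowed with the $\ell^1$-norm by the isometric action of the symmetric group $S_N$. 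After reparametrising $\mu$ by arc length so that it becomes Lipschitz, I would build a Lipschitz lift $t\mapsto(y_1(t),\dots,y_N(t))$ by partitioning $[0,T]$, selecting representatives over the partition points that realise the distances (possible since $S_N$ is finite), joining consecutive ones by $\ell^1$-geodesics of $(\R^d)^N$, and extracting a uniform limit via Ascoli--Arzel\`a; the limit projects onto $\mu$ with the same Lipschitz constant. Undoing the reparametrisation and using $\mu\in\AC([0,T];\PP_1(\R^d))$ to bound $\sum_i\|\dot y_i\|_{L^1}$ yields $y_1,\dots,y_N\in\AC([0,T];\R^d)$ with $\frac1N\sum_i\delta_{y_i(t)}=\mu_t$ for every $t\in[0,T]$.

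Next I would extract the vector field. The only delicate point is consistency at collision times: if $y_i(t)=y_j(t)$ one needs $\dot y_i(t)=\dot y_j(t)$, for otherwise no function $v_t$ can take the required value at that common point. This is automatic for a.e.\ such $t$, since $y_i-y_j$ is absolutely continuous and vanishes on $E_{ij}:=\{t:y_i(t)=y_j(t)\}$, hence has vanishing derivative at a.e.\ point of $E_{ij}$. Taking the union over the finitely many pairs, for $\cL_T$-a.e.\ $t$ the assignment $y_i(t)\mapsto\dot y_i(t)$ is a well-defined map on $\operatorname{supp}\mu_t$; a Borel selection (e.g.\ $v_t(x):=\dot y_i(t)$ for the least $i$ with $y_i(t)=x$, and $v_t(x):=0$ otherwise) produces $v\colon[0,T]\times\R^d\to\R^d$ Borel with $\dot y_i(t)=v_t(y_i(t))$ for a.e.\ $t$ and every $i$. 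Then $\int_0^T\!\int_{\R^d}|v_t|\,\d\mu_t\,\d t=\frac1N\sum_i\|\dot y_i\|_{L^1(0,T;\R^d)}<+\infty$, which is \eqref{eq:Summabilityv}, and differentiating $t\mapsto\int_{\R^d}\varphi(t,\cdot)\,\d\mu_t=\frac1N\sum_i\varphi(t,y_i(t))$ for $\varphi\in C^1_c$ gives exactly the weak formulation of \eqref{eq:CIend}.

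For uniqueness of $v$, suppose $\tilde v$ is another Borel field satisfying \eqref{eq:Summabilityv} and \eqref{eq:CIend}. Testing the difference with $\varphi(t,x)=\psi(t)\phi(x)$ and ranging $\phi$ over a countable $C^1$-dense family, together with the compactness of $\operatorname{supp}\mu_t$, shows that $\operatorname{div}\big((v_t-\tilde v_t)\mu_t\big)=0$ in $\mathcal D'(\R^d)$ for $\cL_T$-a.e.\ $t$; since $\mu_t$ is a finite sum of Dirac masses and one may prescribe the gradient of a test function at a single atom (the distinct atoms being finitely many), this forces $v_t=\tilde v_t$ $\mu_t$-a.e., for a.e.\ $t$. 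Finally, setting $\gamma_i:=y_i$ and $\eta:=\frac1N\sum_{i=1}^N\delta_{\gamma_i}\in\PP^N(\Gamma_T)$ gives the form \eqref{eq:discreteeta}, with $(e_t)_\sharp\eta=\frac1N\sum_i\delta_{y_i(t)}=\mu_t$ for every $t$ and $\dot\gamma_i(t)=v_t(\gamma_i(t))$ for a.e.\ $t$, i.e.\ \eqref{eq:Cauchyend}.

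The main obstacle is the lifting in the first step: showing that an absolutely continuous curve in $\PP^N(\R^d)$ admits an absolutely continuous \emph{ordered} lift to $(\R^d)^N$, with control on the speeds of the components. This is exactly where the identification with the quotient $(\R^d)^N/S_N$ and the finiteness of $S_N$ are used; once the lift is available the rest is elementary. In particular, this route does not rely on the general superposition principle of Theorem~\ref{thm:sup_princ}; that theorem could alternatively be invoked (with $p=1$) to obtain the field $v$ and an abstract superposition measure, but the lift would still be required to promote it to the empirical form \eqref{eq:discreteeta}.
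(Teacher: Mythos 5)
Your proposal is correct and follows essentially the same route as the paper: discretize in time, match consecutive empirical measures by optimal permutations, interpolate and pass to the limit by Ascoli--Arzel\`a to obtain the $N$ absolutely continuous curves, then define $v$ using that $\dot\gamma_i=\dot\gamma_j$ a.e.\ on the coincidence set $\{\gamma_i=\gamma_j\}$, and prove uniqueness by testing with functions whose gradient is prescribed at a single atom. The only (cosmetic) difference is that you obtain compactness via an arc-length reparametrisation to the Lipschitz case, whereas the paper keeps the original parametrisation and gets equi-integrability of the interpolants' speeds from a De la Vall\'ee Poussin superlinear function $\psi$ with $\int_0^T\psi(|\mu'|)\,\d t<+\infty$ together with Jensen's inequality.
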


 \begin{proof}
Let us recall that the metric derivative $|\mu'|$  of the absolutely continuous curve $\mu$, given by
\begin{equation*}
|\mu'|(t):=\lim_{s\to t}\frac{W_1(\mu_t,\mu_s)}{|t-s|},\quad\text{for a.e. } t\in[0,T],
\end{equation*}
belongs to $L^1(0,T)$ and satisfies
\begin{equation}
W_1(\mu_{t_1},\mu_{t_2})\le\displaystyle\int_{t_1}^{t_2}|\mu'|(t)\,\d t,\quad\text{ for any }0\le t_1\le t_2\le T.
\label{eq:last}
\end{equation}
in particular 
there exists $\psi:[0,+\infty)\to[0,+\infty)$ increasing, convex and superlinear at $+\infty$ such that 
\begin{equation}\label{eq:Summability-3}
\int_0^T \psi\big(|\mu'|(t)\big) \,\d t<+\infty.
\end{equation}

First of all we prove the existence of $\eta\in\PP(\Gamma_T)$ such that $(e_t)_\sharp\eta=\mu_t$ for all $t\in[0,T]$ and $\eta$ is of the form \eqref{eq:discreteeta} for some $\gamma_i\in\Gamma_T$, $i=1,\dots,N$.

\smallskip

Let $M\in\N$ and consider the diadic discretization of the interval $[0,T]$, with time step $\tau_M=T\, 2^{-M}$. 
Since $\mu_t \in \PP^N(\R^d)$ for every $t \in [0,T]$, there exists $x_i(t) \in \R^d$, $i =1, \ldots, N$, such that 
\[\mu_t=\frac{1}{N}\sum_{i=1}^N \delta_{x_i(t)}, \qquad \forall \, t \in [0,T].
\]
For $n=0,\dots,2^M$ and $i=1,\dots,N$, we set $x^n_{M,i}:=x_i(n\tau_M)$, and $\mu^n_M:=\mu_{n\tau_M}$. 
For $n=1,\dots,2^M$, let $\varrho^{n-1,n}_M\in\Gamma_o(\mu^{n-1}_M,\mu^n_M)$ be an optimal plan for the $1$-Wasserstein distance.
Since $\mu^{n-1}_M$ and $\mu^n_M$ belong to $\PP^N(\R^d)$, then $\varrho^{n-1,n}_M$ is of the form
\begin{equation}\label{eq:optrho}
\varrho_M^{n-1,n}=\frac{1}{N}\sum_{i=1}^N \delta_{x^{n-1}_{M,i}}\otimes\delta_{x^n_{M,\sigma^n_M(i)}},
\end{equation}
for some permutation $\sigma^n_M$ of $\{1,\dots,N\}$.
Let us define $\sigma_M^{0,n}:=\sigma_M^{n}\circ\sigma_M^{n-1}\circ\dots\circ\sigma_M^{1}$ and $\sigma_M^{0,0}(i)=i$ for $i=1,\ldots,N$.\\
For $i=1,\dots,N$ we define the curves $\gamma_{M,i}\in\Gamma_T$ by linear time interpolation as
\[\gamma_{M,i}(t):=\frac{n\tau_M-t}{\tau_M}\, x^{n-1}_{M,\sigma_M^{0,n-1}(i)}+\frac{t-(n-1)\tau_M}{\tau_M}\, x^n_{M,\sigma_M^{0,n}(i)},\quad\text{for } t\in[(n-1)\tau_M,n\tau_M],\]
$n=1,\dots,2^M$.
%

We claim that, for any $k=1,\ldots,N$, the sequence $\{\gamma_{M,k}\}_{M\in\N}$ uniformly converges to a curve 
$\gamma_k \in \mathrm{AC}([0,T];\R^d)$.
Indeed, 
\begin{equation}\label{eq:compsik}
\begin{split}
\int_{0}^T\psi\Big(\frac{1}{N}|\dot\gamma_{M,k}(t)|\Big)\,\d t
&\leq \int_{0}^T\psi \Big(\frac{1}{N}\sum_{i=1}^N|\dot\gamma_{M,i}(t)|\Big)\,\d t \\
&= \sum_{n=1}^{2^M}\tau_M\,\psi\Big(\frac{1}{N}\sum_{i=1}^N\frac{\left|x^n_{M,\sigma^{0,n}_M(i)}-x^{n-1}_{M,\sigma^{0,n-1}_M(i) }\right|}{\tau_M}\Big) \\
&= \sum_{n=1}^{2^M}\tau_M\,\psi\Big(\frac{1}{\tau_M}\int_{\R^d\times\R^d}|x-y|\,\d\varrho_M^{n-1,n}(x,y)\Big) \\
&= \sum_{n=1}^{2^M}\tau_M\,\psi\Big(\frac{1}{\tau_M}W_1(\mu_M^{n-1},\mu_M^n)\Big)\\
&\leq \sum_{n=1}^{2^M}\tau_M\,\psi\Big(\frac{1}{\tau_M}\int_{(n-1)\tau_M}^{n\tau_M}|\mu'|(t)\,\d t\Big) \\
&\leq \sum_{n=1}^{2^M}\int_{(n-1)\tau_M}^{n\tau_M}\psi(|\mu'|(t))\,\d t
=\int_{0}^{T}\psi(|\mu'|(t))\,\d t,
\end{split}
\end{equation}
where we employed the definition of the optimal plan in \eqref{eq:optrho}, 
\eqref{eq:last}, Jensen's inequality, and \eqref{eq:Summability-3}.\\
Since $\gamma_{M,k}(0)=x_k(0)$ for any $M\in\N$, and \eqref{eq:compsik} and \eqref{eq:Summability-3} hold, by Ascoli-Arzel\'a Theorem the sequence  $\{\gamma_{M,k}\}_{M\in\N}$ is compact in $C([0,T];\R^d)$. Furthermore, by \eqref{eq:compsik} and the lower semicontinuity of the functional
\[\gamma\mapsto\int_0^T\psi\left(\frac{1}{N}|\dot\gamma(t)|\right)\d t\]
w.r.t.~weak convergence in $\mathrm{AC}([0,T];\R^d)$, we get $\gamma_k\in \mathrm{AC}([0,T];\R^d)$.
Moreover, if $t=n_0\tau_{M_0}$ for some $M_0\in\N$ and $n_0\in\{0,1,\ldots,2^{M_0}\}$, then 
$\gamma_{M,k}(t)$ is constant for any $M\in\N$, $M>M_0$ and the claim is proved.

\medskip

Defining
\begin{equation} \label{eq:etaempiric}
	\eta_M:=\frac{1}{N}\sum_{i=1}^N\delta_{\gamma_{M,i}}, 
	\qquad \eta:=\frac{1}{N}\sum_{i=1}^N\delta_{\gamma_i},
\end{equation}
from the convergence of $\gamma_{M,i}$ to $\gamma_{i}$ it follows that
$\eta_M$ weakly converges to $\eta$ as $M\to+\infty$.
Moreover, if $t=n_0\tau_{M_0}$ for some $M_0\in\N$ and $n_0\in\{0,1,\ldots,2^{M_0}\}$, then 
 $(e_t)_\sharp\eta_M=\mu_t$ for any $M\in\N$, $M>M_0$.
 Then, by the continuity of $t\mapsto \mu_t$ and of $t\mapsto (e_t)_\sharp\eta$, 
 we conclude that $(e_t)_\sharp\eta=\mu_t$ and $\mu_t=\frac 1N \sum_{i=1}^N \delta_{\gamma_i(t)}$  
 for all $t\in[0,T]$.

\medskip
It remains to define a vector field $v$ such that \eqref{eq:Cauchyend} and
\eqref{eq:CIend} hold, also showing that $v$ is uniquely characterized by \eqref{eq:CIend}.

Since $\gamma_i \in \mathrm{AC}([0,T];\R^d)$ for any $i=1,\ldots,N$,
the Borel set $A:=\{t\in[0,T]: \exists\, k\in\{1,\ldots,N\} \text{ such that } \gamma_k \text{ is not differentiable at }t\}$
is $\cL_T$-negligible. Moreover, the sets
\begin{equation}
N_{i,k}:=\{ t\in[0,T]\setminus A:\gamma_i(t)=\gamma_k(t), \dot \gamma_i(t)\neq \dot\gamma_k(t)\}
\end{equation}
satisfy $\cL_T( N_{i,k}) =0$ for any $i,k\in\{1,\ldots,N\}$.
We define $\tilde N:= \Big(\bigcup_{i,k\in\{1,\ldots,N\}}  N_{i,k}\Big)\bigcup A$ 
noticing that $\cL_T( \tilde N) =0$ so that $\cL_T\otimes \mu_t (\tilde N\times \R^d)=0$,
and 
$S:=\{(t,\gamma_i(t)):t\in [0,T],\ i\in \{1,\ldots,N\}\}=\supp(\mu_t\otimes \cL_T)$.
We can thus define a Borel vector field $v:[0,T]\times \R^d \to\R^d$ by
\begin{equation*}
	v(t,x):=\begin{cases}
	0&\text{if }t\in \tilde N\text{ or } (t,x)\in \big([0,T]\times \R^d\big)\setminus S,\\
	\dot \gamma_i(t)&\text{if }x=\gamma_i(t)\ \text{for $t\in [0,T]\setminus \tilde N$ and some }i\in \{1,\ldots,N\},
	\end{cases}
\end{equation*}
so that $ \dot\gamma_i(t)= v_t(\gamma_i(t))$ for every $t\in [0,T]\setminus \tilde N$.
It is then easy to check that \eqref{eq:Summabilityv} and \eqref{eq:CIend} hold.

Let us eventually check that \eqref{eq:CIend} uniquely characterizes $v(t_0,x_0)$ 
for every $(t_0,x_0)\in S\setminus (\tilde N\times \R^d)$. 
Notice that for every $\varphi\in C_c^\infty(\R^d)$ we have
\begin{equation}\label{eq:CE2}
	\frac {\d}{\d t} \int_{\R^d} \varphi\,\d\mu_t\Big|_{t=t_0}=
	\frac{1}{N}\sum_{i=1}^N\nabla\varphi(\gamma_i(t_0))\cdot\dot\gamma_i(t_0)=
\frac{1}{N}\sum_{i=1}^N\nabla\varphi(\gamma_i(t_0))\cdot v({t_0},\gamma_i(t_0)).
\end{equation}
Setting $K:=\{k\in \{1,\ldots,N\}: \gamma_k(t_0)\neq x_0\}$ and $r_0:=\min \{|\gamma_k(t_0)-x_0|:
k\in K\}>0$, for every $\xi\in \R^d$ we 
can find 
a test function
 $\varphi\in C^\infty_c(\R^d)$ such that $\supp \varphi \subset B_{r_0}(x_0)$
 and $\nabla\varphi(x_0)=\xi$:
\eqref{eq:CE2} then yields
 \begin{equation}\label{eq:CE3}
		\frac {\d}{\d t} \int_{\R^d} \varphi\,\d\mu_t\Big|_{t=t_0}=
\frac{n}{N}\xi\cdot v({t_0},x_0)\quad
\text{where}\quad
n:=N-\# K.
\end{equation}
Since $\xi$ is arbitrary, \eqref{eq:CE3} uniquely characterizes $v(t_0,x_0)$ in terms of $\mu$.
 \end{proof}

 \section{Finite Partitions}\label{app_FP}
 
In this section we provide a proof of Proposition \ref{prop:FAP}.
For sake of clarity,  we divide the statement of Proposition \ref{prop:FAP} in three separate lemmas of independent interest.

Given a standard Borel space $(\Omega, \frB, \P)$, in Lemma \ref{l:part1} we construct a family of algebras $\frB^n$, $n \in \N$, satisfying the finite approximation property of Definition \ref{approx_prop}.
Then we fix $(\Omega, \frB, \P) = ([0,1],\cB, \cL_1)$, where $\cB$ is the Borel $\sigma$-algebra and $\cL_1$ the Lebesgue measure restricted to the interval $[0,1]$.
With this choice of parametrization space, in Lemma \ref{lemma:partL1} we show that the family of algebras $\cB^N$ associated to the uniform partition of $[0,1]$ with elements' size $1/N$ satisfies the finite approximation property.
Finally, we combine the previous results in Lemma \ref{l:part3}, where we consider a general standard Borel space $(\Omega, \frB, \P)$ and $\P$ is without atoms.
This is possible thanks to the following fundamental result on  Borel equivalence of Probability spaces 
(see e.g. \cite[Chapter 15, Theorem 9]{Royden}).

\begin{proposition}\label{prop:BeqL1}
Let $\Omega$ be a Polish space and $\P \in \PP(\Omega)$ without atoms. 
Then there exist a Borel set $\Omega_0\subset\Omega$ such that $\P(\Omega_0)=0$, a Borel set $I_0\subset[0,1]$ such that
$\cL_1(I_0)=0$ and a bijective function $\psi:\Omega\setminus\Omega_0\to [0,1]\setminus I_0$ such that
$\psi$ and $\psi^{-1}$ are Borel,  $\psi_{\sharp} \P =\cL_1$ and $(\psi^{-1})_{\sharp} \cL_1 = \P$.
\end{proposition}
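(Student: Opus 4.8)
The plan is to reduce the assertion to the model space $([0,1],\cL_1)$ in two stages: first replace the abstract Polish space by $[0,1]$ carrying a non-atomic Borel measure via a Borel isomorphism, and then straighten that measure to $\cL_1$ by means of its distribution function and the associated quantile map.

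First I would observe that, since $\P$ is non-atomic, $\Omega$ is necessarily uncountable (a countable Polish space carries only atomic probability measures). As $(\Omega,\frB)$ is standard Borel, the Borel isomorphism theorem (Kuratowski) yields a Borel isomorphism $j\colon\Omega\to[0,1]$, i.e. a Borel bijection with Borel inverse. Setting $\nu:=j_\sharp\P\in\PP([0,1])$, the bijectivity of $j$ guarantees that $\nu$ is again non-atomic. This reduces the problem to producing a measure isomorphism mod null between $([0,1],\nu)$ and $([0,1],\cL_1)$.

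Second, I would invoke the quantile construction. Let $F(t):=\nu([0,t])$ be the cumulative distribution function; non-atomicity makes $F$ continuous and non-decreasing, with $F(0)=0$ and $F(1)=1$, and let $G(s):=\inf\{t\in[0,1]:F(t)\ge s\}$ be the left-continuous quantile. The elementary facts I would verify are: (a) $G_\sharp\cL_1=\nu$, since $\{s:G(s)\le t\}=[0,F(t)]$ gives $G_\sharp\cL_1([0,t])=F(t)=\nu([0,t])$ for every $t$; (b) continuity of $F$ forces $G$ to be strictly increasing, hence injective (if $G$ were constant on $[s_1,s_2]$ with $s_1<s_2$ the continuity of $F$ would yield $s_2\le F(G(s_2))\le s_1$, a contradiction); and (c) $G$ has only jump discontinuities, located at the at most countable set $J$ of levels of the maximal flat intervals of $F$, so $\cL_1(J)=0$. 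Writing $S:=[0,1]\setminus J$, the restriction $G|_{S}$ is a continuous, strictly increasing Borel injection whose inverse is the restriction of $F$; its image $G(S)$ is Borel, and since $G$ is injective,
\begin{equation*}
\nu(G(S))=G_\sharp\cL_1(G(S))=\cL_1\big(G^{-1}(G(S))\big)=\cL_1(S)=1 .
\end{equation*}
Thus $G|_S\colon S\to G(S)$ is a Borel bijection with Borel inverse between two co-null sets, carrying $\cL_1$ to $\nu$ and $\nu$ back to $\cL_1$.

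Finally I would assemble the map. Put $\Omega_1:=j^{-1}(G(S))$, a Borel set with $\P(\Omega_1)=\nu(G(S))=1$, and define $\psi:=F\circ j\colon\Omega_1\to S$, where $F$ is understood as $(G|_S)^{-1}$ on $G(S)$. Taking $\Omega_0:=\Omega\setminus\Omega_1$ ($\P$-null and Borel) and $I_0:=J$ ($\cL_1$-null and Borel), the map $\psi\colon\Omega\setminus\Omega_0\to[0,1]\setminus I_0$ is a composition of Borel bijections with Borel inverse $\psi^{-1}=j^{-1}\circ G$, and the pushforward identities follow by composing those already established: $\psi_\sharp\P=F_\sharp(j_\sharp\P)=F_\sharp\nu=\cL_1$ on $S$, and symmetrically $(\psi^{-1})_\sharp\cL_1=\P$. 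I expect the main obstacle to lie in the quantile step of the third paragraph, namely deducing strict monotonicity of $G$ from the mere continuity of $F$, controlling the discontinuity set $J$, and checking that both removed sets $J$ and $[0,1]\setminus G(S)$ are null for $\cL_1$ and $\nu$ respectively; it is precisely this bookkeeping that upgrades the measure-preserving map into a genuine bijection mod null with Borel inverse in each direction. Should the Borel-measurability of the inverses require further justification, it can be supplied directly by the Lusin--Souslin theorem on injective Borel images.
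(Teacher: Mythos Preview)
The paper does not prove this proposition; it is stated as a classical result with a citation to Royden, \emph{Real Analysis}, Chapter~15, Theorem~9. Your argument is correct and follows the standard two-step route---Kuratowski's Borel isomorphism theorem to reduce to a non-atomic measure on $[0,1]$, followed by the CDF/quantile construction to straighten it to $\cL_1$---and the bookkeeping you identify (strict monotonicity of $G$ from continuity of $F$, countability of the jump set $J$, Borel measurability of images via Lusin--Souslin) is handled soundly.
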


The first part of Proposition \ref{prop:FAP} is restated in the following Lemma.

 \begin{lemma}[Proposition \ref{prop:FAP}, part 1]\label{l:part1}
Let $(\Omega,\frB,\P)$ be a standard Borel space. 
Then there exists a family of  finite algebras
$\frB^n \subset \frB$, $n \in \N$, satisfying the finite approximation property of Definition \ref{approx_prop}.
\end{lemma}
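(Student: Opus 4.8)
The plan is to construct the finite algebras $\frB^n$ explicitly from a countable generating family and verify the four properties of Definition \ref{approx_prop} one by one. First I would fix a Polish topology $\tau$ on $\Omega$ with $\frB = \cB_{(\Omega,\tau)}$ (possible since $(\Omega,\frB,\P)$ is standard Borel) and choose a countable base $\{V_j\}_{j\in\N}$ of $\tau$. Let $\frB^n$ be the (finite) algebra generated by $V_1,\dots,V_n$, and let $\cP(\frB^n) = \{A^n_1,\dots,A^n_{k(n)}\}$ be its minimal partition. Then $\frB^n \subset \frB^{n+1}$ and $\bigcup_n \frB^n$ generates $\frB$. Given $g \in L^1_\P(\Omega;E)$, the natural candidate approximation is the conditional-expectation-type average
\[
g^n := \sum_{k=1}^{k(n)} \mathds{1}_{A^n_k} \, \fint_{A^n_k} g(\omega)\,\d\P(\omega),
\]
with the convention that the average over a $\P$-null $A^n_k$ is set to an arbitrary fixed value of $g$ (or $0$) — this choice is $\P$-irrelevant. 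By construction $g^n$ is $\frB^n$-measurable (item \ref{FAP1}), and since each value $\fint_{A^n_k} g\,\d\P$ is an average of values of $g$ over $A^n_k$, it lies in $\overline{\mathrm{co}}(g(\Omega))$ (item \ref{FAP2}), using that the closed convex hull of a subset of a Banach space contains all Bochner averages of $E$-valued functions with values in that set.

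The substance is item \ref{FAP3}: $\|g^n - g\|_{L^1_\P(\Omega;E)} \to 0$. This is precisely the vector-valued martingale convergence / density statement. I would prove it by first treating simple functions: if $g = \sum_i e_i \mathds{1}_{B_i}$ with $B_i \in \frB$, then since $\bigcup_n \frB^n$ generates $\frB$ up to $\P$-null sets, for each $B_i$ one can find $\frB^{n}$-sets approximating it in $\P$-measure, and a short computation (together with the total-boundedness of the $A^n_k$ partition of each such approximant) gives $\|g^n - g\|_{L^1} \to 0$. For general $g \in L^1_\P(\Omega;E)$, approximate $g$ in $L^1$ by a simple $\tilde g$ with $\|g - \tilde g\|_{L^1} < \eps$; then $\|g^n - \tilde g^n\|_{L^1} \le \|g - \tilde g\|_{L^1} < \eps$ because the averaging operator $g \mapsto g^n$ is a contraction on $L^1_\P(\Omega;E)$ (Jensen's inequality applied blockwise to $\|\cdot\|$), and combine via the triangle inequality. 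This is the main obstacle — not because it is deep, but because one must be careful that the generating base of $\tau$ really does approximate an arbitrary Borel set in $\P$-measure (a consequence of regularity of $\P$ on the Polish space $\Omega$) and that the contraction estimate is correctly set up in the Bochner setting.

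Finally, for item \ref{FAP4}: if $G:[0,T]\times\Omega\to E$ is $\cB_{[0,T]}\otimes\frB$-measurable with $g_t := G(t,\cdot) \in L^1_\P(\Omega;E)$ for each $t$, I would observe that, with the specific recipe above, $G^n(t,\omega) = g^n_t(\omega) = \sum_{k=1}^{k(n)} \mathds{1}_{A^n_k}(\omega)\, \fint_{A^n_k} G(t,\omega')\,\d\P(\omega')$, and the map $t \mapsto \fint_{A^n_k} G(t,\omega')\,\d\P(\omega')$ is $\cB_{[0,T]}$-measurable by Fubini's theorem (the partial integral of a jointly measurable function is measurable). Since $\omega\mapsto \mathds{1}_{A^n_k}(\omega)$ is $\frB^n$-measurable and the sum is finite, $G^n$ is $\cB_{[0,T]}\otimes\frB^n$-measurable, as required. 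This uses that the approximation recipe in items \ref{FAP1}–\ref{FAP3} was fixed canonically (via block averages), so that it commutes with a time parameter; I would note this explicitly since the statement of Definition \ref{approx_prop}\,\ref{FAP4} only requires that \emph{some} sequence realizing \ref{FAP1}–\ref{FAP3} for $g_t$ has the joint measurability, and the block-average sequence does. Assembling these four verifications completes the proof of Lemma \ref{l:part1}.
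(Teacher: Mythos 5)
Your proposal is correct and follows essentially the same route as the paper: an increasing family of finite algebras generated by a countable base of a compatible Polish topology, block-average (conditional-expectation) operators $P_n$, verification of \ref{FAP1}--\ref{FAP2} from the definition and the closed-convex-hull property of Bochner averages, \ref{FAP3} via indicator/simple functions plus regularity of $\P$ and the $L^1$-contractivity of $P_n$, and \ref{FAP4} via Fubini. The only (immaterial) difference is your convention on $\P$-null partition elements; the paper sets the average to $0$ there.
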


\begin{proof}
Since $(\Omega,\frB,\P)$ is standard Borel we can choose a Polish topology $\tau$ such that $\frB = \cB_{(\Omega,\tau)}$,  then there exists a countable basis $\cA=\{B^i:i\in\N\}$ of its topology. Then
$\cB_{(\Omega,\tau)}=\sigma(\{B^i:i\in\N\})$.
We define $\frB^1:=\sigma(B^1)$ and $\frB^n:=\sigma(\{B^n\}\cup\frB^{n-1})$.
It follows from the definition that $\frB^n\subset \frB^{n+1}$ for any $n\in\N$ and
$\cB_{(\Omega,\tau)}=\sigma\left(\bigcup_{n=1}^{+\infty}\frB^n\right)$.

For any $n\in\N$, the finite algebra $\frB^n$ induces a 
minimal (with respect to the inclusion) partition of $\Omega$, denoted by 
$\cP^n=\{A_k^n:k=1,\ldots,k(n)\}\subset\frB^n$. 
Then for any $A_k^{n+1}\in\cP^{n+1}$ there exists 
$h\in\N$ such that $A_h^{n}\in\cP^{n}$ and $A_k^{n+1}\subset A_h^{n}$.

We define the sequence of linear operators $P_n:L^1(\Omega;E)\to L^1(\Omega;E)$ defined by
\[P_ng:=\sum_{k=1}^{k(n)}\mathds{1}_{A^n_k} \, \fint_{A^n_k}g(\omega)\,\d\P(\omega),\]
with the convention that $\fint_{A^n_k}g(\omega)\,\d\P(\omega)=0$ if $\P(A^n_k)=0$.

It is simple to prove that
\begin{equation}\label{contrPn}
	\|P_ng\|_{L_\P^1(\Omega;E)}\leq\|g\|_{L_\P^1(\Omega;E)}, \qquad \forall\, g\in {L_\P^1(\Omega;E)}.
\end{equation}

Given a Borel function $g:\Omega\to E$ such that $g \in L_\P^1(\Omega; E)$, we define $g^n:=P_ng$ and we prove that
the properties of Definition \ref{approx_prop} hold.

Property \ref{FAP1} is obvious since $g^n$ is constant on the elements of the partition $\cP^n\subset\frB^n$.

Property \ref{FAP2} follows from the fact that (see for instance \cite[Corollary 8, p. 48]{diestel1977vector})
\[
	\fint_A g(\omega)\,\d\P(\omega) \in \mathrm{\overline{co}}\left(g(A)\right), \quad \forall\, A\in\cB_{(\Omega,\tau)} : \P(A)>0.
\]

In order to prove property \ref{FAP3} we start with the particular case $g=a\mathds{1}_{A}$ 
for a given Borel set $A$ and a given $a\in E$.
Since $(\Omega,\tau)$ is Polish, for any $\eps>0$ there exists an open set $A_\eps$ and a compact set $K_\eps$ such that
$K_\eps\subset A \subset A_\eps$ and $\P(A_\eps\setminus K_\eps)<\eps$.
Since $A_\eps$ is union of elements of the basis $\cA$, there exists a finite covering of $K_\eps$ of the form 
$\{B^j:j\in J\}\subset\cA$, for a suitable finite $J\subset\N$, such that
$\cup_{j\in J}B^j\subset A_\eps$.
Since 
$$P_ng-g= a\sum_{k=1}^{k(n)} \frac{\P(A_k^n\cap A)}{\P(A_k^n)}\mathds{1}_{A_k^n}-a\mathds{1}_{A},$$
by setting $n_\eps:=\max J$, it holds that
$$\|P_ng-g\|_{L_\P^1(\Omega; E)}\leq \|a\mathds{1}_{A_\eps}-a\mathds{1}_{A}\|_{L_\P^1(\Omega; E)} \le \| a\|_E \ \P(A_\eps \setminus A) < \|a\|_E\,\eps \ , \quad \forall  \ n\geq n_\eps \ .$$ 
Since $P_n$ is linear, then \ref{FAP3} holds for any $g$ simple function.
In the general case, take $g \in L_\P^1(\Omega; E)$ and $\eps>0$, and let $g_\eps:\Omega\to E$ be a simple function such that 
$\|g-g_\eps\|_{L_\P^1(\Omega; E)}<\eps$. 
Observing that
\[
\|P_ng-g\|_{L_\P^1(\Omega; E)}\leq \|P_ng-P_ng_\eps\|_{L_\P^1(\Omega;E)} +\|P_ng_\eps-g_\eps\|_{L_\P^1(\Omega; E)}  
+\|g_\eps-g\|_{L_\P^1(\Omega; E)},
\]
by \eqref{contrPn} and  property \ref{FAP3} applied to $g_\eps$ it holds that
$\limsup_{n\to+\infty} \|P_ng-g\|_{L_\P^1(\Omega; E)}\leq 2\eps$ and we conclude.

Finally, property \ref{FAP4} follows from the measurability of $G$, Fubini Theorem and the definition of $P_n$.
\end{proof}

Consider now the Polish space $([0,1],\cB, \cL_1)$.

\begin{lemma}[Proposition \ref{prop:FAP}, part 2]\label{lemma:partL1}
For any $N\in\N$ we define $I_k^N:=[(k-1)/N,k/N)$, $k=1,\ldots,N-1$,  $I_N^N:=[(N-1)/N,1]$.
If $E$ is a Banach space, $g\in L^1([0,1];E)$ and 
 \[g^N:=\sum_{k=1}^{N} \mathds{1}_{I^N_k}\, \fint_{I^N_k}g(s)\,\d s ,\]
then 
\begin{equation}\label{limitgN}
	\lim_{N\to+\infty}\|g^N-g\|_{L^1([0,1];E)}=0.
\end{equation}
Moreover, the family of finite algebras $\cB^N:=\sigma(\{I^N_k:k=1,\ldots,N\})$, $N\in\N$, satisfies the finite
approximation property of Definition \ref{approx_prop}.\\
Finally, if $g\in L^p([0,1];E)$, for some $p\in(1,+\infty)$, then $\lim_{N\to+\infty}\|g^N-g\|_{L^p([0,1];E)}=0$.
\end{lemma}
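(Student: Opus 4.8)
The statement to prove is Lemma \ref{lemma:partL1}: for the uniform dyadic-type partition $\{I_k^N\}$ of $[0,1]$ with cell size $1/N$, the conditional-expectation operators $g\mapsto g^N$ converge to the identity in $L^p([0,1];E)$ for $p\in[1,+\infty)$, and the associated finite algebras satisfy the finite approximation property.

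Here is the plan. First I would prove \eqref{limitgN}, the case $p=1$. I would start with the dense subclass of continuous functions $g\in C([0,1];E)$: such a $g$ is uniformly continuous, so its averages $\fint_{I_k^N}g\,\d s$ differ from $g$ on $I_k^N$ by at most the modulus of continuity evaluated at $1/N$, giving $\|g^N-g\|_{L^\infty([0,1];E)}\to 0$ and hence $\|g^N-g\|_{L^1([0,1];E)}\to 0$. Then I would use the uniform bound $\|g^N\|_{L^1([0,1];E)}\le\|g\|_{L^1([0,1];E)}$ (Jensen, exactly as in \eqref{contrPn} of the proof of Lemma \ref{l:part1}) to pass to general $g\in L^1([0,1];E)$ by a $3\varepsilon$-argument: approximate $g$ by a continuous $g_\varepsilon$ with $\|g-g_\varepsilon\|_{L^1}<\varepsilon$, write $g^N-g=(g^N-g_\varepsilon^N)+(g_\varepsilon^N-g_\varepsilon)+(g_\varepsilon-g)$, and bound the first term by $\|g-g_\varepsilon\|_{L^1}$ using contractivity, the second by the continuous case, the third by $\varepsilon$.

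Second, for the finite approximation property of $\{\cB^N\}_{N\in\N}$: items \ref{FAP1} and \ref{FAP2} are immediate (the $g^N$ are $\cB^N$-measurable by construction, and $\fint_{I_k^N}g\in\overline{\mathrm{co}}(g(I_k^N))$ by the same mean-value property for Bochner integrals cited in Lemma \ref{l:part1}, noting $\cL_1(I_k^N)=1/N>0$); item \ref{FAP3} is exactly \eqref{limitgN}; item \ref{FAP4} follows from Fubini's theorem and the explicit definition of the operator, as in Lemma \ref{l:part1}. So this part is essentially a citation of the already-established machinery specialized to the uniform partition.

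Third, the $L^p$ case for $p\in(1,+\infty)$: the argument is identical in structure. Jensen's inequality again gives the contraction $\|g^N\|_{L^p([0,1];E)}\le\|g\|_{L^p([0,1];E)}$, since on each $I_k^N$ one has $\big\|\fint_{I_k^N}g\big\|_E^p\le\fint_{I_k^N}\|g\|_E^p$; for continuous $g$ the uniform convergence $\|g^N-g\|_{L^\infty}\to 0$ gives $L^p$ convergence; then the density of $C([0,1];E)$ in $L^p([0,1];E)$ and the same $3\varepsilon$-splitting conclude. I do not anticipate a genuine obstacle here — the only point requiring a little care is making sure the contraction property and the continuous-function approximation are stated for the $E$-valued (Bochner) setting rather than the scalar one, but both are standard and the relevant tools are already invoked in the excerpt. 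One could alternatively note that $g\mapsto g^N$ is a conditional expectation with respect to $\cB^N$ and quote the $L^p$-martingale convergence theorem, but the elementary density argument is self-contained and in the spirit of the paper.
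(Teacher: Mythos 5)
Your proof is correct, but it takes a genuinely different route from the paper's. The paper does not use density of continuous functions at all: it observes that $I^N_{k(x,N)}\subset B_{1/N}(x)$ for the cell containing $x$, so that $\|g^N(x)-g(x)\|_E\le 2\fint_{B_{1/N}(x)}\|g(s)-g(x)\|_E\,\d s$, and invokes the Bochner version of the Lebesgue differentiation theorem to obtain pointwise a.e. convergence $g^N(x)\to g(x)$; it then upgrades this to $L^1$ convergence via equi-integrability, choosing (De la Vall\'ee Poussin) a convex, increasing, superlinear $\psi$ with $\int_0^1\psi(\|g\|_E)\,\d x<+\infty$ and showing $\int_0^1\psi(\|g^N\|_E)\,\d x\le 2\int_0^1\psi(\|g\|_E)\,\d x$ by Jensen, so that Vitali's theorem applies; the $L^p$ case is handled the same way with $\|g^N\|_E^p$. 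Your density-plus-contractivity argument is the more elementary of the two: it needs only uniform continuity of $E$-valued continuous functions, the $L^p$-contractivity of the averaging operators (which you justify correctly via Jensen), and density of $C([0,1];E)$ in $L^p([0,1];E)$, and it treats $p=1$ and $p>1$ uniformly. The paper's argument has the side benefit of producing the a.e. pointwise convergence and of reusing the $\psi$-uniform-integrability machinery that recurs throughout the paper. One caveat on your closing aside: the algebras $\cB^N$ are not nested in $N$ (e.g. $\cB^2\not\subset\cB^3$), so the operators $g\mapsto g^N$ do not form a martingale with respect to a filtration and the $L^p$-martingale convergence theorem cannot be quoted directly (it would work only along nested subsequences such as $N=2^n$); this does not affect your actual proof, which never relies on that remark.
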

\begin{proof}
For any $x\in[0,1]$ and $N\in\N$, there exists a unique $k(x,N)$ such that $x\in I_{k(x,N)}^N$.
From the definition of $I^N_k$ it follows that  $I_{k(x,N)}^N\subset B_{1/N}(x)$.
Since
 \[g^N(x)-g(x)=\sum_{k=1}^{N}\mathds{1}_{I^N_k}(x) \,\fint_{I^N_k}(g(s)-g(x))\,\d s,\]
then 
\[\|g^N(x)-g(x)\|_E\leq 2 \fint_{B_{1/N}(x)}\|g(s)-g(x)\|_E\,\d s.\]
By the Bochner version of the Lebesgue differentiation Theorem (see for instance  \cite[Theorem 9, p. 49]{diestel1977vector})
we obtain that
$\lim_{N\to+\infty}\|g^N(x)-g(x)\|_{E}=0$ for $\cL_1$-a.e. $x\in[0,1]$.

Since $\int_0^1\|g(x)\|_E\,\d x <+\infty$ there exists a convex, increasing, superlinear function $\psi:[0,+\infty)\to[0,+\infty)$
such that  $\int_0^1\psi\left(\|g(x)\|_E\right)\,\d x <+\infty$.
Since
\[\|g^N(x)\|_E\leq 2 \fint_{B_{1/N}(x)}\|g(s)\|_E\,\d s,\]
 by Jensen's inequality,
\[
\begin{split}
 \int_0^1\psi\left(\|g^N(x)\|_E\right)\,\d x & \leq 2 \int_0^1\psi\left( \fint_{B_{1/N}(x)} \|g(s)\|_E\,\d s\right)\,\d x \\
&\leq 2 \int_0^1 \fint_{B_{1/N}(x)}\psi\left(\|g(s)\|_E\right)\,\d s\,\d x \\
&= N \int_0^1 \int_0^1 \mathds{1}_{(x-1/N,x+1/N)}(s) \psi\left(\|g(s)\|_E\right)\,\d s\,\d x\\
&= N \int_0^1 \int_0^1 \mathds{1}_{(s-1/N,s+1/N)}(x) \psi\left(\|g(s)\|_E\right)\,\d s\,\d x\\
&\leq 2 \int_0^1\psi\left(\|g(s)\|_E\right)\,\d s<+\infty,
\end{split}
\]
which implies the equi-integrability of the sequence $\|g^N\|_E$.
 Then \eqref{limitgN} holds.
 
The finite approximation property for $\cB^N$ follows as in the proof of Lemma \ref{l:part1}. The final assertion is a consequence of the equi-integrability of the sequence $\|g^N\|^p_E$.
 \end{proof}

 \begin{lemma}[Proposition \ref{prop:FAP}, part 3]\label{l:part3}
Let $(\Omega,\frB,\P)$ be a standard Borel space and $\P$ without atoms. Then there exists a family $\frB^N \subset \frB$, $N \in \N$, satisfying the finite approximation property of Definition \ref{approx_prop} such that the associated minimal partition $\cP^N= \{A_k^N:k=1,\ldots,N\} $ contains exactly $N$ elements and $\P(A^N_k)=\frac1N$, for $k=1,\ldots,N$.
\end{lemma}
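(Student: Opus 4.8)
The plan is to transfer the finite approximation property from the model space $([0,1],\cB,\cL_1)$ of Lemma \ref{lemma:partL1} to an arbitrary atomless standard Borel space $(\Omega,\frB,\P)$ by means of the Borel isomorphism provided by Proposition \ref{prop:BeqL1}. Concretely, first I would invoke Proposition \ref{prop:BeqL1} to obtain Borel-null sets $\Omega_0\subset\Omega$, $I_0\subset[0,1]$ and a bijection $\psi:\Omega\setminus\Omega_0\to[0,1]\setminus I_0$ with $\psi,\psi^{-1}$ Borel, $\psi_\sharp\P=\cL_1$ and $(\psi^{-1})_\sharp\cL_1=\P$. Extending $\psi$ arbitrarily (in a Borel way) on $\Omega_0$ does not affect measures, so one may assume $\psi$ is defined on all of $\Omega$. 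Then I would pull back the uniform dyadic-type partition $\{I_k^N:k=1,\ldots,N\}$ of $[0,1]$ from Lemma \ref{lemma:partL1}, setting
\[
A_k^N:=\psi^{-1}(I_k^N),\qquad k=1,\ldots,N,\qquad \frB^N:=\sigma(\{A_1^N,\ldots,A_N^N\}).
\]
Since $\psi$ is a Borel bijection (mod null sets), the $A_k^N$ are pairwise disjoint Borel sets covering $\Omega$ up to a $\P$-null set, so $\cP(\frB^N)=\{A_1^N,\ldots,A_N^N\}$ is the minimal partition associated to $\frB^N$ and it has exactly $N$ elements; moreover $\P(A_k^N)=\cL_1(\psi(A_k^N))=\cL_1(I_k^N)=1/N$, which is exactly \eqref{eq:equipart}.

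It remains to check that this family $\{\frB^N\}$ satisfies the finite approximation property of Definition \ref{approx_prop}. Here the idea is that the map $g\mapsto g\circ\psi^{-1}$ is an isometric isomorphism from $L^1_\P(\Omega;E)$ onto $L^1_{\cL_1}([0,1];E)$ (and likewise for every $L^p$), intertwining the conditional-expectation operators $P_{\frB^N}^\Omega$ and $P_{\cB^N}^{[0,1]}$: indeed, because $\psi_\sharp\P=\cL_1$, for every Borel $g:\Omega\to E$ with $g\in L^1_\P(\Omega;E)$ one has
\[
\fint_{A_k^N}g\,\d\P=\frac{1}{\P(A_k^N)}\int_{A_k^N}g\,\d\P=\frac{1}{\cL_1(I_k^N)}\int_{I_k^N}(g\circ\psi^{-1})\,\d\cL_1=\fint_{I_k^N}(g\circ\psi^{-1})\,\d\cL_1 .
\]
Thus if we let $h:=g\circ\psi^{-1}\in L^1_{\cL_1}([0,1];E)$ and $h^N$ be the piecewise-constant approximation of Lemma \ref{lemma:partL1}, then $g^N:=h^N\circ\psi$ is $\frB^N$-measurable, $g^N(\Omega)=h^N([0,1])\subset\overline{\mathrm{co}}(h(\Omega))=\overline{\mathrm{co}}(g(\Omega))$, and
\[
\|g^N-g\|_{L^1_\P(\Omega;E)}=\|h^N-h\|_{L^1_{\cL_1}([0,1];E)}\to0
\]
by \eqref{limitgN}; this gives items \ref{FAP1}, \ref{FAP2}, \ref{FAP3}. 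For the last property \ref{FAP4}, if $G:[0,T]\times\Omega\to E$ is $(\cB_{[0,T]}\otimes\frB)$-measurable with $g_t:=G(t,\cdot)\in L^1_\P(\Omega;E)$ for each $t$, then $\tilde G(t,\omega):=G(t,\psi^{-1}(\omega))$ is $(\cB_{[0,T]}\otimes\cB_{[0,1]})$-measurable, and applying the corresponding property for the family $\cB^N$ (Lemma \ref{lemma:partL1}) to $\tilde G$ shows that $(t,\omega)\mapsto \tilde g_t^N(\omega)$ is $(\cB_{[0,T]}\otimes\cB^N)$-measurable; composing with $(i_{[0,T]},\psi)$, which is $(\cB_{[0,T]}\otimes\frB)$-to-$(\cB_{[0,T]}\otimes\cB_{[0,1]})$ measurable and maps $\cB_{[0,T]}\otimes\frB^N$-sets appropriately, yields that $G^N(t,\omega)=g_t^N(\omega)=\tilde g_t^N(\psi(\omega))$ is $(\cB_{[0,T]}\otimes\frB^N)$-measurable.

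The only genuinely delicate point is bookkeeping with the null sets $\Omega_0$ and $I_0$: $\psi$ is a bijection only between $\Omega\setminus\Omega_0$ and $[0,1]\setminus I_0$, so strictly speaking the sets $A_k^N=\psi^{-1}(I_k^N\setminus I_0)$ partition $\Omega\setminus\Omega_0$ only, and one must verify that assigning $\Omega_0$ to (say) $A_1^N$ changes neither the measures $\P(A_k^N)=1/N$ nor the convergence statements, since everything is considered up to $\P$-null sets and $\P(\Omega_0)=0$. Handling this amounts to the (routine but necessary) observation that the second part of Remark after Proposition \ref{p:equivalence}, i.e. working with maps defined up to null sets, applies verbatim here; all the $L^p$ identities above are insensitive to modifications on $\Omega_0$. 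Once this is dispensed with, the lemma — and hence Proposition \ref{prop:FAP} in full — follows by combining Lemma \ref{l:part1} (part 1), Lemma \ref{lemma:partL1} (part 2) and the present transfer argument.
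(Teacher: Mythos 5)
Your proof is correct and follows essentially the same route as the paper: pull back the uniform partition $\{I_k^N\}$ of $[0,1]$ through the Borel isomorphism $\psi$ of Proposition \ref{prop:BeqL1} (absorbing the null set $\Omega_0$ into one cell), and conjugate the piecewise-averaging operators of Lemma \ref{lemma:partL1} by $g\mapsto g\circ\psi^{-1}$. The only difference is that you spell out the verification of items \ref{FAP1}–\ref{FAP4} and the null-set bookkeeping, which the paper leaves implicit.
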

\begin{proof}
Let $\tau$ be a Polish topology on $\Omega$ such that $\frB = \cB_{(\Omega,\tau)}$.
Let also $\Omega_0$, $I_0$, $\psi$, $\psi^{-1}$ be given by Proposition \ref{prop:BeqL1}.
Using the notation of Lemma \ref{lemma:partL1} we define the sets
$A^N_1:=\psi^{-1}(I^N_1\setminus I_0)\cup \Omega_0$ and
$A^N_k:=\psi^{-1}(I^N_k\setminus I_0)$ for $k=2,\ldots,N$.

It is immediate to prove that  $\P(A^N_j)=\frac1N$ for $j=1,\ldots,N$ and $\{A^N_1,\ldots,A^N_N\}$ is a partition of $\Omega$.
Moreover, given a Banach space $E$ and $g\in L^1_\P(\Omega;E)$, we denote by
$\tilde g:=g\circ\psi^{-1} \in L^1([0,1];E)$.
Denoting by $\tilde g^N$ the sequence given by Lemma \ref{lemma:partL1} applied to $\tilde g$, 
we define $g^N:=\tilde g^N\circ\psi$ and the finite approximation property for $(\Omega,\cB_{(\Omega,\tau)},\P)$ follows by
Lemma \ref{lemma:partL1}.

\end{proof}

{
\section*{Acknowledgements}
The authors acknowledge the support of MIUR-PRIN 2017 project 
\emph{Gradient flows, Optimal Transport and Metric Measure Structures}. 
G.~Cavagnari, S.~Lisini and C.~Orrieri acknowledge the support of the 
INDAM-GNAMPA project 2019 ``Trasporto ottimo per dinamiche con interazione''.
C.~Orrieri has also been supported by the project Fondazione Cariplo-Regione Lombardia MEGAsTAR ``Matematica d'Eccellenza in biologia ed ingegneria come acceleratore di una nuova strateGia per l'ATtRattivit\`a dell'ateneo pavese''.}
G.~Savar\'e gratefully acknowledges the support of the Institute of Advanced Study or the Technical University of Munich and of IMATI-CNR, Pavia.
G.~Cavagnari and G.~Savar\'e are also grateful to the Department of Mathematics of the University of Pavia where this project has been developed.

\bibliography{biblio}
\bibliographystyle{plain}

\end{document}